\theoremstyle{plain}
\newtheorem{theorem}{Theorem}[section]
\newtheorem*{thm}{Theorem}
\newtheorem*{ML}{The Morse Lemma}
\newtheorem{cor}[theorem]{Corollary}
\newtheorem*{cor1}{Corollary}
\newtheorem{lemma}[theorem]{Lemma}
\newtheorem*{claim}{Claim}
\newtheorem{prop}[theorem]{Proposition}
\newtheorem{definition-lemma}[theorem]{Definition-Lemma}
\theoremstyle{definition}
\newtheorem{definition}[theorem]{Definition}
\newtheorem{defn}[theorem]{Definition}
\newtheorem{example}[theorem]{Example}
\newtheorem{remark}[theorem]{Remark}
\newtheorem{obs}[theorem]{Observation}
\newtheorem{notation}[theorem]{Notation}
\newcommand {\RR}{\mathbb{R}}
\newcommand {\RP}{\mathbb{RP}}
\newcommand {\ZZ}{\mathbb{Z}}
\newcommand {\NN}{\mathbb{N}}
\newcommand {\LL}{\mathcal{L}}
\newcommand {\into}{\hookrightarrow}
\newcommand {\BB}{\mathcal{B}}
\newcommand {\Q}{\mathcal{Q}}
\newcommand {\inn}{\subseteq}
\newcommand {\al}{\alpha}
\newcommand {\out}{\mbox{Out}(F_n)}
\newcommand {\wt}{\widetilde}
\newcommand {\eps}{\epsilon}
\newcommand {\Lam}{\Lambda}
\newcommand {\lam}{\lambda}
\newcommand {\sig}{\sigma}
\newcommand {\ws}{\wt\sig}
\newcommand {\ST}{\textup{St}}
\newcommand {\im}{\textup{Im}}
\newcommand {\diam}{\textup{diam}}
\newcommand {\Lip}{\textup{Lip}}
\newcommand {\C}{\mathcal{C}}
\newcommand {\cayout}{\mathscr{C}}
\newcommand {\id}{\textup{id}}
\newcommand {\ve}{\varepsilon}
\newcommand {\tr}{\textup{tr}}
\newcommand {\lip}{\textup{Lip}}
\newcommand {\mcg}{\mathcal{MCG}}
\newcommand {\teich}{Teichm\"uller }
\newcommand {\Cone}{\text{Cone}}
\newcommand {\rb}[2]{B_{#2}(#1_{\rightarrow} ) }
\newcommand {\lb}[2]{B_{#2}(#1_{\leftarrow} ) }
\newcommand {\U}{\mathcal{U}}
\newcommand{\Teich}{\mathcal{T}(S_{g,p})}
\newcommand{\X}{\mathcal{X}}
\newcommand{\ra}{\rightarrow}
\newcommand{\os}{\mathcal{X}_n} 
\title{Strongly Contracting Geodesics in Outer Space}
\author{Yael Algom-Kfir}
\begin{document}
\begin{abstract}
We study the Lipschitz metric on Outer Space and prove that fully
irreducible elements of $\out$ act by hyperbolic isometries with axes
which are strongly contracting. As a corollary, we prove that the
axes of fully irreducible automorphisms in the Cayley graph of $\out$
are Morse, meaning that a quasi-geodesic with endpoints on the axis stays within a bounded distance from the axis. 
\end{abstract}

\maketitle

\section*{Introduction}

There exists a striking analogy between the mapping class groups of
surfaces, and the outer automorphism group $\out$ of a rank $n$ free
group. At the core of this analogy lies Culler and Vogtmann's
Outer Space $\os$ \cite{CV}, a contractible finite dimensional cell complex on which $\out$ has a
properly discontinuous action. Like \teich space, Outer Space has an
invariant spine on which the action is cocompact, making it a good
topological model for the study of $\out$. Indeed, Outer Space has
played a key role in proving theorems for $\out$, which were classically known
for the mapping class group. For example, the
action of a fully irreducible outer automorphism on the boundary of
$\os$ has been shown \cite{LL} to have North-South dynamics, and the
Tits alternative holds for $\out$ \cite{BFH00}, \cite{BFH05}. \\

However, while there have been several well studied metrics on \teich space 
(the \teich metric, the Weil-Petersson metric, and the Lipschitz
metric), the geometry of Outer Space has remained largely
uninvestigated (exceptions include \cite{HM} and \cite{FM}). One
would like to define a metric on Outer Space so that fully irreducible
elements of $\out$ (which are analogous to pseudo-Anosov elements in
$\mcg(S)$) act by hyperbolic isometries with meaningful translation
lengths. But immediately one encounters a problem: it isn't clear
whether to require the metric to be symmetric. To clarify, we follow
the discussion in \cite{HMpara}. Consider the situation of a
pseudo-Anosov map $\psi$ acting on \teich space $\Teich$ with the \teich metric
$d_{\text{T}}$. Associated to $\psi$ is an expansion factor
$\lam_\psi$ and two foliations $\mathcal{F}^s$ and $\mathcal{F}^u$ so
that $\psi$ expands the leaves of $\mathcal{F}^s$ by $\lam_{\psi}$
and contracts the leaves of $\mathcal{F}^u$ by $\lam_\psi^{-1}$.
Incidentally, $\lam_\psi= \lam_{\psi^{-1}}$. Furthermore, by \teich's theorem, the
translation length of $\psi$ is $\log(\lam_\psi)$. Going back to
$\out$, one can associate to a fully irreducible outer automorphism
$\Phi$ a Perron-Frobenius eigenvalue $\lam_\Phi$ which plays
much the same roll as the expansion factor in the pseudo-Anosov case.
 If we did have an honest metric on Outer Space where
$\Phi$ was a hyperbolic isometry then the axis for $\Phi$ would also
be an axis for $\Phi^{-1}$. Thus for a point $x$ on the axis of
$\Phi$, $d(x,\Phi(x)) = \log(\lam_\Phi)$ and $ d(\Phi(x),x) =
\log(\lam_{\Phi^{-1}})$. However, it is not always the case that $\lam_\Phi = \lam_{\Phi^{-1}}$. Therefore one would have to abandon either
the symmetry of the metric or the relationship between the
translation length $\Phi$ and its Perron-Frobenius 
eigenvalue. We choose to do the former in order to preserve the ties
between the action of $\Phi$ on $\os$ and its action on the
conjugacy classes of $F_n$. \\

The (non-symmetric) metric  that we carry over from $\Teich$ to Outer
Space is the Lipschitz metric introduced by Thurston \cite{T}. Given two marked hyperbolic structures $(X,f),(Y,g)$ on a surface $S$ 
define
\[ d_L((X,f), (Y,g)) = \inf \{  \Lip(h)  |  h \text{ is Lipschitz, and 
  homotopic to } g \circ f^{-1}  \} \]
In \cite{ChR} Rafi and Choi  proved that this metric is Lipschitz equivalent to $d_T$
in the thick part of $\Teich$. \\

While $\Teich$ with $d_{\text{T}}$ is not CAT(0) \cite{Ma} or
Gromov hyperbolic \cite{MaW} it does exhibit some features of negative curvature
in the thick part. A geodesic is \emph{strongly contracting} if its 
nearest point projection takes balls disjoint from the geodesic
to sets of bounded diameter, where the bound is independent of the
radius of the ball. That is, the ``shadow" that a ball casts on the
geodesic is bounded. For example, geodesics in a Gromov hyperbolic
space are strongly contracting. In \cite{Mi} Minsky proved that
geodesics contained in the $\eps$-thick part of $\Teich$ are
uniformly strongly contracting, with the bound only depending on $\eps$ and the topology of $S$. Note that any axis of a pseudo-Anosov map is contained in the $\eps$-thick part of $\Teich$ for a sufficiently small $\eps$. We prove

\begin{thm}
An axis of a fully irreducible outer automorphism is strongly contracting.
\end{thm}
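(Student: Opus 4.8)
The plan is to work throughout in $\os$ with the Lipschitz metric $d_L$, realize the axis of $\Phi$ as a periodic folding path coming from a train track representative, and extract contraction from the train track combinatorics together with the fact --- forced by periodicity --- that the axis sits in the thick part; this is the direct analogue of Minsky's \teich result quoted above.

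\textbf{The axis and its laminations.} Since $\Phi$ is fully irreducible it has an absolute train track representative $f\colon G\to G$ whose transition matrix is Perron--Frobenius with eigenvalue $\lam=\lam_\Phi$. Iterating the folding dictated by $f$ produces a $\Phi$-periodic folding path $\gamma\colon\RR\to\os$, which is a $d_L$-geodesic line and satisfies $\gamma(t+\log\lam)=\Phi\cdot\gamma(t)$; this is the axis. As $\gamma(\RR)$ is the $\Phi$-orbit of the compact arc $\gamma([0,\log\lam])$ and the systole function is $\out$-invariant, $\gamma$ lies in the $\eps_0$-thick part of $\os$ for some $\eps_0=\eps_0(\Phi)>0$. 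Let $\Lam^+$ be the attracting lamination of $\Phi$ and $\Lam^-$ that of $\Phi^{-1}$. Because $\gamma$ is $\Phi$-periodic, $t\mapsto\log\ell_{\gamma(t)}(\Lam^\pm)$ is affine up to a bounded periodic error, so after normalizing the parameter $\ell_{\gamma(t)}(\Lam^\pm)\asymp e^{\mp t}$: moving forward along $\gamma$ shrinks $\Lam^+$ and stretches $\Lam^-$, each at unit exponential rate.

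\textbf{A coarse formula for the projection.} Recall $d_L(X,Y)=\log\sup_\alpha\ell_Y(\alpha)/\ell_X(\alpha)$, the supremum realized by a candidate, so that $d_L(A,B)\ge\log\bigl(\ell_B(\alpha)/\ell_A(\alpha)\bigr)$ for every loop $\alpha$. Inserting $\alpha=\Lam^+$ and $\alpha=\Lam^-$ with $A=\gamma(t)$ produces two lower bounds for $t\mapsto d_L(\gamma(t),Z)$ that are affine with slopes $+1$ and $-1$; their maximum is ``$V$-shaped'' with vertex near $t_*(Z):=\tfrac12\log\bigl(\ell_Z(\Lam^-)/\ell_Z(\Lam^+)\bigr)$ and vertex value comparable to $\tfrac12\log\bigl(\ell_Z(\Lam^+)\,\ell_Z(\Lam^-)\bigr)$. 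The matching upper bound --- that one really can travel from $Z$ to $\gamma(t_*(Z))$ at roughly this cost --- is the first substantive step: it is proved by building a near-optimal map, using that $\Lam^\pm$ fill $F_n$ and that the thick-part hypothesis lets the two lamination lengths control all candidate lengths. The upshot is that the nearest point projection $\pi_\gamma$ is coarsely well defined (the asymmetry of $d_L$ causing no trouble, since everything relevant stays in the thick part) and that $\pi_\gamma(Z)$ lies within a uniformly bounded distance of $\gamma(t_*(Z))$.

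\textbf{Contraction, conclusion, and the main obstacle.} By a standard reformulation, strong contraction follows once we find $C$ so that $d_L(x,y)\le d_L(x,\gamma)$ implies $d_L(\pi_\gamma x,\pi_\gamma y)\le C$ --- equivalently, that a ball disjoint from $\gamma$ has bounded shadow. From the formula, $t_*(\cdot)$ is $1$-Lipschitz for the symmetrized metric, which on its own gives only the weaker Morse property; the extra ingredient is that $t_*(x)$ is \emph{robust} once $x$ is far from $\gamma$. To see this, join $x$ to $\gamma$ by a geodesic folding path $\sigma$: folding drives the graph $x$ toward the train track, only finitely many illegal turns (for the eventual train track structure) survive, and each is eliminated after a bounded amount of folding, after which $\sigma$ proceeds legally alongside $\gamma$. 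The crucial input --- and the main obstacle --- is a flaring estimate for folding paths: proved by keeping track of illegal turns along folding paths and invoking the bounded cancellation lemma together with Perron--Frobenius growth, it says that two folding paths which have synchronized their legal structure stay within bounded Hausdorff distance, whereas once they diverge some candidate length grows exponentially in the elapsed time. Consequently $\sigma$ can merge onto $\gamma$ only inside a bounded window around $\pi_\gamma(x)$, so $\pi_\gamma(x)$ does not move when $x$ is perturbed by as much as $d_L(x,\gamma)$. Feeding this into the reformulation produces the constant $C$ and proves the theorem. A secondary technical point is the upper bound in the projection formula above, and one must throughout keep the asymmetry of $d_L$ controlled, which is precisely where the containment of $\gamma$ in the thick part is used.
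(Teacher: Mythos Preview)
Your outline is a plausible sketch of a \emph{different} argument than the one in the paper, but as written it has real gaps at exactly the places you flag as ``substantive'' and ``the main obstacle,'' and one asymmetry issue that is not cosmetic.

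\textbf{What the paper actually does.} The paper's proof does not use flaring of folding paths at all. Its combinatorial core is a Whitehead--graph argument: there exists a rose $F$ at which $Wh_F(\Lam^+,\Lam^-)$ is connected with no cut vertex (Lemma~\ref{no_cut_ver_lemma}), and hence by Whitehead's theorem a basis element (in particular any candidate) cannot contain long pieces of both laminations (Proposition~\ref{basis_prop}). From this one deduces that all candidates of a point $X$ have nearly the same minimum along $\LL_f$ (Corollaries~\ref{close_can_cor}, \ref{min_can_is_proj}), and then the two ``tree--like'' inequalities (Lemmas~\ref{tree_like_prop1}, \ref{tree_like_prop2}) give Corollary~\ref{tree_cor}, from which strong contraction (Theorem~\ref{strongly_cont_cor}) is a two--line computation. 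None of this Whitehead machinery appears in your sketch.

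\textbf{Where your sketch breaks.} First, the direction of the asymmetry matters: the projection minimizes $d(Z,\gamma(t))$, but your $V$--shape comes from lower--bounding $d(\gamma(t),Z)$ via $\log\bigl(\ell_Z(\alpha)/\ell_{\gamma(t)}(\alpha)\bigr)$. For $Z$ far in the thin part these two quantities are not comparable, and you cannot assume $Z$ is thick --- only the axis is. Second, the ``matching upper bound'' you need (that the $V$--shape actually computes $d(Z,\gamma(t))$ up to constants) is exactly the hard direction: it amounts to saying that $\Lam^\pm$ control \emph{all} candidates, which in the paper is the content of Proposition~\ref{basis_prop} and requires the Whitehead argument. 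You assert it but give no mechanism. Third, the ``flaring estimate for folding paths'' you invoke is not proved here and is not a lemma in the paper; establishing it for arbitrary folding paths landing on $\LL_f$ (not just for train--track iterates) is nontrivial --- this is essentially later work of Bestvina--Feighn on the free factor complex. Without it, the claim that $\sigma$ ``merges onto $\gamma$ only inside a bounded window around $\pi_\gamma(x)$'' is exactly the conclusion you are trying to prove.

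In short: your route (lamination lengths $+$ folding--path flaring) is the shape of a valid modern argument, but you have not supplied its two load--bearing lemmas, and the asymmetry slip would need repair. The paper's route is quite different, replacing flaring with Whitehead's theorem applied to $Wh_F(\Lam^+,\Lam^-)$.
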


A geodesic $L$ in a metric space is \emph{Morse} if every
quasi-geodesic segment with endpoints on $L$ stays within a bounded
neighborhood of $L$  which only depends on the
quasi-geodesic constants. As an application of the theorem above we
prove:

\begin{cor1}
In the Cayley graph of $\out$, the axis of a fully irreducible
automorphism is Morse.
\end{cor1}

This paper is organized as follows
\begin{itemize}
 \item In chapter 1 we go over some definitions and background on
 Outer Space. The well informed reader could skip this part.
  \item In Chapter 2 we define the
Lipschitz metric on Outer Space, and deduce a formula which
expresses the relationship between the metric and the lengths of
loops in $X$ and $Y$ (proof due to Tad White and first written in
\cite{FM}).  
\item In Chapter 3 we
describe axes of fully irreducible automorphisms. Given such an axis,
we define a coarse projection of $\os$ onto this axis. Note that the axis 
for $\Phi$ will not necessarily be an axis for $\Phi^{-1}$, however
the projections of a point to both axes are uniformly close.
  \item In Chapter 4 we define the Whitehead graph $Wh_X(\Lam^\pm)$ of the
attracting and repelling laminations of $\Phi$ at the point $X \in
\os $. We prove that there exists a point $F \in \os$ for which
$Wh_F(\Lam^+) \cup Wh_F(\Lam^-)$ is connected and does not contain a
cut vertex.
  \item In Chapter 5 we use the previous result to show
that any loop $\al$ which represents a basis element cannot contain long pieces of both laminations. Next we prove our
main ``negative curvature" property. If the projections of $x$ and $y$
are sufficiently far apart then then $d(x,y)$ is coarsely larger than
$d(x,p(x)) + d(p(x),p(y))$. We show that this is enough to prove that $L$ is a strongly contracting geodesic. We end the Chapter by proving that in the Cayley graph of $\out$, axes of fully irreducible automorphisms are Morse.
  \item In Chapter 6 we have
collected some applications: the asymptotic cone of the Cayley graph of $\out$ contains
many cut points and is in fact tree graded, the divergence function
in $\os$ is at least quadratic. Finally, we show that projections onto two
axes $A,B$ of independent irreducible automorphisms satisfy a dichotomy
similar to the one shown in \cite{Beh} for subsurface projections. 

\end{itemize}

\emph{A note on notation}:
Many of the theorems and propositions in this article contain several
constants which we usually denote s or c within the proposition. When
referring to a constant from a previous proposition, we add its 
number as a subscript.

\subsection*{Acknowledgments} It is a pleasure to thank my advisor
Mladen Bestvina for investing many hours of his time and a few of his 
brilliant ideas in this work. I would also like to thank Mark Feighn
for his support. Finally, I'd like to thank Jason Behrstock and Ken Bromberg for their help with chapter 6.


\section{Preliminary notions}

\subsection*{The Outer Automorphism Group of the Free Group}
\begin{defn}
The group of outer automorphisms of the free group of rank $n$ is
$$ \out = \textup{Aut}(F_n)/ \textup{Inner}(F_n)$$
If $\phi$ is an automorphism we denote its class by $\Phi$. 
\end{defn}

\begin{defn}\label{Wh_move}
Let $\{ x_1, \dots , x_n \}$ be a basis for $F_n$. Let $A \subset \{ x_1^{\pm 1}, \dots , x_n^{\pm 1} \}$ and $a \in A$ so that $a^{-1} \notin A$. Then the {\it Whitehead automorphism} $\phi_{(A,a)}$ associated with $(A,a)$ is defined as follows.  $\phi_{(A,a)}(a) =a$ and for $x \neq a, a^{-1}$: 
\[ \begin{array}{lll}
	x \to a x a^{-1} &	& \text{if } x, x^{-1} \in A \\
	x \to xa^{-1} &		& \text{if } x \in A \text{ and } x^{-1} \notin A \\
	x \to ax	&		& \text{if } x \notin A \text{ and } x^{-1} \in A \\
	x \to x	&		& \text{if } x,x^{-1} \notin A
\end{array}\]
\end{defn}

\begin{thm}[Whitehead Generators] 
The following set generates $\out$ $$\{ [\phi_{(A,a)}] \mid \text{ all possible } a,A \}$$ 
\end{thm}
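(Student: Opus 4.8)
The plan is to reduce the statement to Nielsen's classical theorem that $\mathrm{Aut}(F_n)$ has a small explicit generating set (the result itself is due to Whitehead; this is the short modern route). Fix the basis $\{x_1,\dots,x_n\}$; then $\mathrm{Aut}(F_n)$ is generated by the transvection $\rho\colon x_1\mapsto x_1x_2$ (fixing $x_2,\dots,x_n$), the inversion $\iota\colon x_1\mapsto x_1^{-1}$ (fixing $x_2,\dots,x_n$), and the permutations of $\{x_1,\dots,x_n\}$. Passing to $\out=\mathrm{Aut}(F_n)/\mathrm{Inn}(F_n)$, the images of these generate, so it suffices to check that each is a Whitehead automorphism.

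The transvection is a Whitehead automorphism of the form in Definition~\ref{Wh_move}: taking $a=x_2^{-1}$ and $A=\{x_1,x_2^{-1}\}$ we have $a\in A$ and $a^{-1}=x_2\notin A$, and unwinding the definition shows $\phi_{(A,a)}$ fixes $x_2,\dots,x_n$ and sends $x_1\mapsto x_1a^{-1}=x_1x_2$, so $\rho=\phi_{(\{x_1,x_2^{-1}\},x_2^{-1})}$; conjugating by basis permutations, every transvection $x_i\mapsto x_ix_j$ is obtained likewise. The inversion $\iota$ and every basis permutation are Whitehead automorphisms of the remaining (permutation) kind: each is induced by a permutation of the $2n$ letters $x_1^{\pm1},\dots,x_n^{\pm1}$ that is compatible with inversion. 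These permutation automorphisms must be included among the Whitehead generators, and genuinely so: one checks in a line that each $\phi_{(A,a)}$ of Definition~\ref{Wh_move} acts on $H_1(F_n;\mathbb{Z})\cong\mathbb{Z}^n$ by a matrix of the shape $I+e_jv^{\mathsf{T}}$ with $v_j=0$, hence of determinant $+1$, whereas the induced map $\out\to\mathrm{GL}_n(\mathbb{Z})$ is onto; so the maps of Definition~\ref{Wh_move} by themselves can never reach the orientation-reversing part of $\mathrm{GL}_n(\mathbb{Z})$. Once the permutation-type automorphisms are in hand, all three families of Nielsen generators are accounted for and $\out$ is generated.

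A more intrinsic alternative, in the spirit of the rest of the paper, is to invoke Culler and Vogtmann's result that the spine $K_n$ of Outer Space is connected: the standard rose $R_0$ has stabilizer exactly the finite group of permutation-type Whitehead automorphisms, the elementary moves at a rose are exactly the Whitehead moves, and connectedness lets one travel from $R_0$ to any $\Phi\cdot R_0$ through a chain of roses, reading off a factorization of $\Phi$ into Whitehead automorphisms. Either way, the only substantial input is Nielsen's generation theorem (or, on the second route, the connectivity of $K_n$); I expect the remaining difficulty to be purely organizational, namely pinning down that ``all possible $a,A$'' is understood to encompass Whitehead's automorphisms of both the first (permutation) and second (the $\phi_{(A,a)}$) kinds. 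Everything beyond that is a direct, short verification.
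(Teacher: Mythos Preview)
The paper states this theorem as background and gives no proof, so there is nothing to compare your argument to on that front. Your reduction to Nielsen's generating set and the verification that the transvection $x_1\mapsto x_1x_2$ equals $\phi_{(\{x_1,x_2^{-1}\},\,x_2^{-1})}$ are correct and constitute the standard route.

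More importantly, you have spotted a genuine imprecision in the paper's formulation. As written, Definition~\ref{Wh_move} only introduces Whitehead automorphisms of the second kind, and your homology computation is right: each such $\phi_{(A,a)}$ acts on $H_1(F_n;\ZZ)$ by a matrix $I+e_jv^{\mathsf T}$ with $v_j=0$, hence with determinant $+1$. Since the natural map $\out\to\mathrm{GL}_n(\ZZ)$ is surjective and hits matrices of determinant $-1$, the set $\{[\phi_{(A,a)}]\}$ from Definition~\ref{Wh_move} alone cannot generate $\out$. The theorem is only true once Whitehead's type~I automorphisms (the signed permutations of the basis, which include the inversion $\iota$ and the transpositions) are adjoined, exactly as you say. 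Your caveat that ``all possible $a,A$'' must be read to include both kinds is therefore not a matter of taste but a necessary correction.
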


\subsection*{Bases of $\out$ and Whitehead's Theorem}

\begin{defn}[The Whitehead Graph]
Let $\mathcal{B} = \{ y_1, \dots , y_n \}$ be a basis of $F_n$, let $\mathfrak{a}$ be the conjugacy class, in $F_n$ and $w \in \mathfrak{a}$ a cyclically reduced word written in the basis $\mathcal{B}$. Then the
{\it Whitehead graph} of $\mathfrak{a}$ with respect to $\mathcal{B}$ is denoted $Wh_{\mathcal{B}}(\mathfrak{a})$ and
constructed as follows: The vertex set of this graph is the set
$\mathcal{B} \cup \mathcal{B}^{-1}$. $z_i$ and $z_j$ are connected by an edge if $z_i^{-1} z_j$ or $z_j^{-1} z_i$ appears in the cyclic word $w$, i.e. if $w = \dots z_i^{-1} z_j \dots$ or $w = \dots z_j^{-1} z_i \dots $ or $w = z_j \dots z_i^{-1}$ or $w = z_i \dots z_j^{-1}$. \\
The Whitehead graph $Wh_\mathcal{B}([w_1], \dots , [w_k] )$ of the set $\{ [w_1], \dots , [w_k] \}$ is the superposition of the individual Whitehead graphs $Wh_\mathcal{B}([w_i])$. 
\end{defn}

\begin{defn}
We say that the set of conjugacy classes $\{ \mathfrak{a}_1 , \dots , \mathfrak{a}_k \}$ \emph{can be completed to a basis} if there are $w_i \in \mathfrak{a}_i$ and $w_{k+1}, \dots, w_n$ such that $\{ w_1, \dots , w_n \}$  is a basis for $F_n$.
\end{defn}

A cut vertex in a graph is a vertex that when removed, leaves the graph disconnected.

\begin{theorem}[Whitehead \cite{W}]\label{WhThm}
If $\mathfrak{a}_1, \dots, \mathfrak{a}_k$ can be completed to a basis and   $Wh_\mathcal{B}(\mathfrak{a}_1, \dots, \mathfrak{a}_k)$ is connected, then $Wh_\mathcal{B}( \mathfrak{a}_1, \dots, \mathfrak{a}_k)$ has a cut vertex.
\end{theorem}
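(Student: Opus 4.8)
The plan is to prove Whitehead's theorem (Theorem~\ref{WhThm}) by the classical ``peak reduction'' argument, using the Whitehead automorphisms of Definition~\ref{Wh_move} as the basic moves. First I would set up the combinatorial framework: for a conjugacy class $\mathfrak{a}$ and a basis $\mathcal{B}$, let $|\mathfrak{a}|_{\mathcal{B}}$ denote the cyclically reduced length of (a representative of) $\mathfrak{a}$ in $\mathcal{B}$, and for a tuple $(\mathfrak{a}_1,\dots,\mathfrak{a}_k)$ let $\|(\mathfrak{a}_i)\|_{\mathcal{B}} = \sum_i |\mathfrak{a}_i|_{\mathcal{B}}$. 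Since $(\mathfrak{a}_1,\dots,\mathfrak{a}_k)$ completes to a basis, there is some basis $\mathcal{C}$ and some automorphism $\psi$ carrying the $\mathfrak{a}_i$ to a sub-tuple of the basis elements, so $\|(\psi\mathfrak{a}_i)\|_{\mathcal{C}} = k$, the absolute minimum over all bases. The key input, which I would either cite or prove by the standard argument of analyzing cancellation under a Whitehead move, is \emph{Whitehead's Peak Reduction Lemma}: if $\mathcal{B}$ does not already realize the minimum of $\|(\mathfrak{a}_i)\|$ over all bases, then some Whitehead automorphism $\phi_{(A,a)}$ strictly decreases $\|(\mathfrak{a}_i)\|$.

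Next I would bring in the hypothesis that $Wh_{\mathcal{B}}(\mathfrak{a}_1,\dots,\mathfrak{a}_k)$ is connected. The idea is to show that if this Whitehead graph is connected \emph{and has no cut vertex}, then $\mathcal{B}$ must already be length-minimizing. I would argue the contrapositive via the link between Whitehead moves and the graph: for a Whitehead automorphism $\phi_{(A,a)}$, the change in length $\|(\phi_{(A,a)}\mathfrak{a}_i)\| - \|(\mathfrak{a}_i)\|$ can be computed edge-by-edge from the Whitehead graph as (number of edges of $Wh_{\mathcal{B}}(\mathfrak{a}_i)$ from the vertex set $A$ to its complement) minus (the capacity of the cut determined by whether $a \in A$), summed over $i$; this is the standard ``Whitehead move = cut in the Whitehead graph'' dictionary. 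If the total Whitehead graph is connected with no cut vertex, every such cut (for a proper, nonempty, non-trivial $A$) separates at least two edges more than it can gain, so no Whitehead move decreases length, hence $\mathcal{B}$ is already minimizing — but then $\|(\mathfrak{a}_i)\|_{\mathcal{B}} = k$, which forces each $\mathfrak{a}_i$ to be a single basis letter, and in that degenerate case the Whitehead graph is a disjoint union of $k$ edges, hence disconnected unless $k=1$, and for $k = 1$ it is a single edge whose two vertices are both cut vertices. Either way we get a contradiction with ``connected and no cut vertex'', establishing the theorem.

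In more detail, the steps in order are: (1) define $|\cdot|_{\mathcal{B}}$ and the total length function; (2) observe that completability to a basis means the minimum of the total length over all bases equals $k$; (3) state the Peak Reduction Lemma and the explicit formula for how a single Whitehead move changes the total length in terms of edges of the Whitehead graph crossing the partition $(A, A^c)$; (4) assume for contradiction that $Wh_{\mathcal{B}}(\mathfrak{a}_1,\dots,\mathfrak{a}_k)$ is connected with no cut vertex; (5) use the no-cut-vertex hypothesis plus connectivity to show every Whitehead move is non-decreasing on total length, so by Peak Reduction $\mathcal{B}$ is already minimizing; (6) conclude the total length is $k$, analyze the resulting degenerate Whitehead graph, and derive a contradiction with connectivity-without-cut-vertex. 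I expect the main obstacle to be step~(3): proving the exact edge-counting formula for the effect of $\phi_{(A,a)}$ on cyclically reduced length requires a careful bookkeeping of which cancellations occur when one substitutes the Whitehead-move images into a cyclically reduced word, keeping track of the roles of $a$, $a^{-1}$, and the four cases in Definition~\ref{Wh_move}; this is where all the genuine content lives, and I would likely present it as a lemma with the combinatorial heart spelled out, or else cite \cite{W} for the peak-reduction machinery and only carry out the cut-vertex deduction in detail.
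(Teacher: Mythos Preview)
The paper does not prove this theorem at all: it is stated with a citation to Whitehead~\cite{W} and then used as a black box, so there is no ``paper's own proof'' to compare against. Your outline is the standard classical argument (essentially Whitehead's original, as presented for instance in Lyndon--Schupp), and it is correct in structure: Whitehead's minimality lemma reduces the problem to the cut-vertex lemma, and the length-change formula for a Whitehead move is indeed where all the content lives, as you say.

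Two small corrections. First, in step~(6) your claim that for $k=1$ a single edge has both its endpoints as cut vertices is false (removing an endpoint of an edge leaves a single vertex, which is connected). The fix is simpler than what you wrote: the Whitehead graph has $2n$ vertices, and when each $\mathfrak a_i$ has length~$1$ the graph consists of $k$ disjoint edges together with $2(n-k)$ isolated vertices; for $n\geq 2$ this is disconnected, contradicting the hypothesis directly. Second, what you call ``Peak Reduction'' in step~(3) is really Whitehead's \emph{minimality lemma} (if the length is not minimal, a single Whitehead move decreases it); peak reduction is the stronger statement about rearranging a whole sequence of moves, and you do not need it here. Also be careful in step~(5): the heuristic ``separates at least two edges more than it can gain'' is not the precise mechanism. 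The actual argument is that the length-change formula reads $\Delta=d(a)-|\partial A|$ in the \emph{multi}-Whitehead graph, and one shows that if some $(A,a)$ gives $\Delta<0$ then, after possibly replacing $A$ by a better cut, the vertex $a$ separates $A\setminus\{a\}$ from the rest --- this is the lemma you should isolate and either prove carefully or cite.
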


A related notion is the following

\begin{defn}[Free Factor]
\label{free_factors}Let $A$ be a subgroup of $F_n$, $A$ is a free factor of $F_n$ if there exists a subgroup $B$ such that $F_n = A*B$.
\end{defn}

Let $\BB = \{ x_1, \dots x_n \}$ be a basis for $F_n$, and suppose we want to determine if $[y_1], \dots , [y_k]$ can be completed to a basis, where $y_i$ are cyclically reduced words. We construct $W = Wh_{\BB}([y_1], \dots ,
[y_k])$. 
\begin{itemize}
	\item If $W$ is connected and does not contain a cut vertex then $[y_1], \dots [y_k]$ cannot be completed to a basis by Theorem \ref{WhThm}.
	\item If $W$ is connected and contains a cut vertex $a$ we will construct a new basis $\BB'$ such that 
	\begin{equation}\label{decrease}
	\sum^{k}_{i=1}|y_i|_{\mathcal{B}} > \sum^{k}_{i=1}|y_i|_{\mathcal{B}'}
	\end{equation}
	Let $W'$ be the induced subgraph of $W$ on all of the vertices except $a$.  Let $W''$ be a connected component of $W'$, which does not contain $a^{-1}$. Take $A$ to be the elements of $\BB$ whose vertices are in $W''$ and $a$. Then $\BB' = \phi_{(A,a)}(\BB)$. It is straightforward to check that (\ref{decrease}) is satisfied. 
	\item  If $W$ is not connected, one can continue carrying out the algorithm on a subset of the generators, until either there is some subset of the $y_i$s that do not form a free factor (since their Whitehead graph is connected with no cut vertex) or there is a basis $\BB''$ where $|y_i|_{\BB''} = 1$ for all $i$, which means $\BB''$ contains $y_1, \dots y_k$.
\end{itemize}

Whitehaed's theorem \ref{WhThm} may be reformulated in the following way. 

\begin{definition}
$[y_1], [y_2], \dots , [y_k]$ are compatible with a free decomposition of $F_n$, if there exists a free splitting $A*B$ so that for all $i$, either $[y_i] \in [A]$ or $[y_i] \in [B]$.
\end{definition}

\begin{theorem}[\cite{M}]\label{WhVer2} The following are equivalent:
\begin{enumerate}
    \item $[y_1], \dots , [y_k]$ are compatible with a free decomposition of
      $F_n$.
    \item If $\BB$ is a basis such that $Wh_\BB([y_1], \dots ,[y_k])$
    contains no cut vertex then it is disconnected.
    \end{enumerate}
\end{theorem}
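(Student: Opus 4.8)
The plan is to route both implications through Whitehead's length-minimisation algorithm. Write $\ell(\BB)=\sum_{i=1}^{k}|y_i|_\BB$ and recall two facts from Whitehead's theory. First, if $Wh_\BB([y_1],\dots,[y_k])$ is connected and has no cut vertex then $\BB$ minimises $\ell$, since the length-change formula shows that no second-kind Whitehead automorphism can then decrease $\ell$. Second (peak reduction), any two bases, and in particular any two minimisers of $\ell$, are joined by a chain of Whitehead automorphisms along which $\ell$ has no strict interior maximum. These combine to give the dichotomy I want: if even one minimiser of $\ell$ has a disconnected Whitehead graph (or one with a cut vertex), then no minimiser has a connected cut-vertex-free Whitehead graph — a length-preserving second-kind move out of a connected cut-vertex-free basis would violate the length-change formula, so in the graph of minimisers under length-preserving moves the connected cut-vertex-free ones form a union of components consisting only of relabelings and partial inner automorphisms of one another.

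For $(1)\Rightarrow(2)$ I would argue as follows. Suppose $F_n=A*B$ with every $[y_i]$ conjugate into $A$ or into $B$, and pick an adapted basis $\BB^{*}=\BB_A\sqcup\BB_B$ with $\langle\BB_A\rangle=A$ and $\langle\BB_B\rangle=B$. A conjugacy class contained in a free factor $A$ is represented in an adapted basis by a cyclic word over $\BB_A^{\pm1}$ only, so $Wh_{\BB^{*}}([y_1],\dots,[y_k])$ has no edge between the two halves and is disconnected. Minimising the $A$-part and the $B$-part separately — a free factor being undistorted for minimal length — produces an adapted \emph{minimiser} of $\ell$, still with disconnected Whitehead graph. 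By the dichotomy above, no minimiser of $\ell$ is then connected and cut-vertex-free, and since a connected cut-vertex-free Whitehead graph would force its basis to minimise $\ell$, no basis $\BB$ at all has $Wh_\BB([y_1],\dots,[y_k])$ connected and cut-vertex-free. This is exactly statement $(2)$.

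For $(2)\Rightarrow(1)$ I would run Whitehead's algorithm from an arbitrary basis. As long as the current Whitehead graph of $[y_1],\dots,[y_k]$ is connected it has a cut vertex by hypothesis $(2)$, and the peeling automorphism $\phi_{(A,a)}$ at that cut vertex, as in the algorithm described before the theorem, either strictly decreases $\ell$ or — at a minimiser — strictly simplifies the graph; since $\ell$ is a non-negative integer and the graph cannot keep simplifying, the process terminates at a basis $\BB$ with $Wh_\BB([y_1],\dots,[y_k])$ disconnected. It then remains to convert disconnectedness into a free decomposition: after possibly one further length-non-increasing Whitehead move, the separation of the vertex set $\BB\cup\BB^{-1}$ witnessing the disconnection can be taken invariant under $z\mapsto z^{-1}$, which yields $F_n=\langle V_1\rangle*\langle V_2\rangle$ with each $[y_i]$, whose Whitehead graph lies entirely on one side, conjugate into the corresponding factor. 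When some $|y_i|_\BB=1$ this is transparent; in general it is the inductive step indicated in the last bullet of the algorithm above, decreasing $n$ or the number of classes.

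The main obstacle is precisely this dictionary between a disconnected Whitehead graph and an honest free splitting, which is needed in both directions. The naive attempt — declaring the connected components of $Wh_\BB([y_1],\dots,[y_k])$ to be the two factors — fails because the components need not be inverse-closed: for instance $Wh_{\{a,b\}}([ab])$ has components $\{a^{-1},b\}$ and $\{a,b^{-1}\}$, which inversion interchanges. The content of Whitehead's peak-reduction lemma is exactly that at a minimiser of $\ell$, or after one length-preserving correction, this defect disappears and an inverse-invariant separation becomes available; everything else is the Euclidean-algorithm-style descent already outlined in the bulleted discussion preceding the statement.
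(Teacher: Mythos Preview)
The paper does not prove this theorem; it is quoted from \cite{M} without argument, so there is no in-paper proof to compare your proposal against. Your outline follows the classical Whitehead--Higgins--Lyndon route (length minimisation plus peak reduction), which is indeed how the cited result is established, and the overall architecture of both implications is sound.

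That said, two of the steps you lean on are genuine lemmas that you assert rather than prove. First, your dichotomy---that among $\ell$-minimising bases the property ``connected with no cut vertex'' is all-or-nothing---requires showing that a \emph{length-preserving} second-kind Whitehead move out of such a basis again yields a connected graph with no cut vertex. Your sentence ``would violate the length-change formula'' is not quite right: length-preserving type-II moves from such a basis do exist (e.g.\ when the cut capacity equals the valence of $a$), and one must check explicitly that they cannot create a cut vertex or disconnect the graph. This is true, but it is a computation, not an immediate consequence of the formula. Second, in $(2)\Rightarrow(1)$ your termination argument at a minimiser (``strictly simplifies the graph'') needs a well-ordering: you must specify which quantity decreases under a length-preserving cut-vertex move, since $\ell$ itself no longer does.

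You correctly flag the real obstacle---that a disconnected Whitehead graph need not have inverse-closed components---and correctly locate its resolution in peak reduction at the minimum, but ``after possibly one further length-non-increasing Whitehead move \dots\ the separation \dots\ can be taken invariant under $z\mapsto z^{-1}$'' is again a statement that demands proof. In the cited reference this is handled by an explicit analysis of what length-preserving moves do to the component structure. With these three gaps filled your argument would be complete.
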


\subsection*{Outer Space}
A {\it graph} will always
be a finite cell complex of dimension 1 with all vertices of valence
$>2$. A {\it metric} on a graph $G$ is a function $\ell:E(G)\to [0,1]$
defined on the set of edges of $G$ such that
\begin{itemize}
\item $\sum_{e\in E(G)}\ell(e)=1$. We shall denote the total sum of lengths of edges in the metric graph $G$ by $vol(G)$.
\item $\cup_{\ell(e)=0}e$ is a forest, i.e. it contains no circles.
\end{itemize}
The space $\Sigma_G$ of all such metrics $\ell$ on $G$ is a
``simplex with missing faces''; the missing faces correspond to
degenerate metrics that vanish on a subgraph which is not a forest. If $G'$ is obtained from $G$ by collapsing a forest, then we
have a natural inclusion $\Sigma_{G'}\subset \Sigma_G$. \\
 \indent The rose $R_0$ is the wedge of $n$ circles. A {\it marking} is a
homotopy equivalence $f: R_0 \to G$ from the rose to a graph. A {\it
  marked graph} is a pair $(G,f)$ where $f:R_0 \to G$ is a
marking. 

\begin{defn}[Outer Space - Graph Definition]
Outer Space $\os$ consists of the set of equivalence classes of triples $(G,f,\ell)$ where $G$ is a graph, $\ell$ is a metric, and $f$ is a marking, and so that $(G, f, \ell) \sim (G',f', \ell')$ if there is an isometry $\phi:(G,\ell) \to (G',\ell')$ so that $\phi \circ f$ is homotopic to $f'$.
\end{defn}

\begin{defn}
Throughout the paper we will abuse notation by referring to a point in $\os$ as $G$. 
\end{defn}

An equivalent definition is 
\begin{defn}(Outer Space - Tree Definition)
Outer Space $\os$ is the space of 
equivalence classes of free, simplicial, minimal $F_n$-trees, with the equivalence: $T \sim T'$ if there exists an $F_n$-equivariant homothety $\rho:T \to T'$.
\end{defn}

There is a natural right action of $\text{Aut}(F_n)$, the group of automorphisms of $F_n$, on $\os$. Let $\phi$ be an automorphism and let $g: R_0 \to R_0$ be a map such that $g_*: \pi_1(R_0, \text{ver}) \to \pi_1(R_0, \text{ver})$ equals $\phi$. Then $$\begin{array}{rrcl}
\phi: &\os & \to & \os \\[0.2 cm]
& (X,f,\ell) & \to & (X, f \circ g, \ell)
\end{array}$$ Notice that this action does not depend on the choice of $g$, and that inner automorphisms act trivially. Thus we get an action of $\out$ on $\os$. When we define the Lipschitz metric it will be evident that this is an isometric action.

\subsection*{The axes topology} Consider the set of non-trivial
conjugacy classes $\mathcal{C}$ in $F_n$. Each $F_n$-tree $T$ induces
a length function $\ell_T: F_n \to \RR$ by $\ell_T(x) =\tr(x)$ the
translation length of $x$ as an isometry of $T$. Since the
translation length is a class function, $\ell_T$
descends to a map $\ell_T: \mathcal{C} \to \RR$. Therefore we can
define a map \[
\begin{array}{rcc}
    \ell: \os & \to & \RP^\mathcal{C} \\
     \phantom{.} [ T ] & \to & [ \ell_T ]
\end{array} \] In \cite{CM} Culler and Morgan proved that this map is
injective. Thus $\os$ inherits a topology from $\RP^\mathcal{C}$
known as the axes topology. We remark (although we will not
need this) that there are other ways to define a topology on $\os$:
using the cellular structure of $\os$, and using the Gromov topology
on the space of metric $F_n$-trees. Paulin \cite{P} proved that all
three topologies are equivalent.

\subsection*{The boundary of Outer Space}
In \cite{CM} Culler and Morgan showed that $\overline{\os}$ is
compact. It was later shown in \cite{CohL} and \cite{BF} that
$\overline{\os}$ is the space of homothety classes of very small, minimal $F_n$-trees.

\subsection*{Train-track structures and maps} 
\begin{defn}[Turns and Train-Track Structures]
Let $G$ be a graph. An unordered pair of oriented edges $\{ e_1,e_2 \}$ is a \emph{turn} if $e_1, e_2$ have the same initial endpoint. Let $\bar{e}$ denote the edge $e$ with the opposite orientation. If an edge path $\al = \cdots \overline{e_1} e_2 \cdots $ or $\al = \cdots \overline{e_2} e_1
\cdots $ then we say that $\al$ \emph{crosses} or \emph{contains} the turn $\{ e_1, e_2 \}$. \\
A \emph{train track structure} on $G$ is an equivalence relation on
the set of oriented edges $E(G)$ with the property that if $e_1 \sim
e_2$ then $\{ e_1, e_2 \}$ is a turn.
\end{defn}

\begin{defn}[Legal Turns, and Gates]
A turn $\{ e_1,e_2 \}$ is \emph{legal} with respect to a fixed
train-track structure on $G$ if $e_1 \nsim e_2$. An edge path is
legal if every turn it crosses is legal. The equivalence classes of
the edges are called \emph{gates}. 
\end{defn}

\begin{defn}[A t-t structure induced by a self-map]\label{self-induced tt structures}
Let $g:G \to G$ be map which restricts on each edge to either an immersion or a constant map. The train-track structure induced by $g$ is the following equivalence relation: $e_1 \sim e_2$ if they have the same initial endpoint and there is some $m \geq 1$ such that there are small enough initial subsegments of $g^m(e_1)$ and $g^m(e_2)$ which coincide. 
\end{defn}

\begin{defn}[Train-track maps]
Let $g:G \to G$ be map which restricts on each edge to either an immersion or a constant map. $g$ is \emph{a (weak) train-track map} if for all $e \in E(G)$, the path $g(e)$ is legal (with respect to the t-t structure in \ref{self-induced tt structures}). A weak train-track map is called a \emph{train-track map} if in addition, vertices are mapped to vertices. 
\end{defn}
For us, the distinction between a train-track map and a weak train-track map will not be important so we subsequently drop the adjective ``weak".

\subsection*{Irreducible Outer Automorphisms}

\begin{defn}
Let $\Phi$ be an outer automorphism. $\Phi$ is \emph{reducible} if there exists a free product decomposition    \[ F_n=H_1* \dots * H_m* U \]
where all $H_i$ are nontrivial, $m \geq 1$ and where $\Phi$ permutes the conjugacy classes of $H_1,...,H_m \subseteq F_n$. An outer automorphism $\Phi$   is said to be \emph{irreducible} if it is not reducible. 
\end{defn}

\begin{defn}
$\Phi$ is called \emph{fully irreducible} if all of its powers are irreducible. 
\end{defn}

The main content of the classification theorem of $\out$ is that irreducible outer automorphisms have ``nice" representatives that are called train-track maps. 

\begin{defn}\label{topo_repn}
Let $\Phi \in \out$ \emph{a topological representative} of $\Phi$ is a marked graph $(G,h)$ and a self map $f: G \to G$ such that 
\begin{enumerate}
	\item the restriction of $f$ to each edge to either an immersion or a constant map.
	\item if $k: G \to R_0$ is a homotopy inverse of $h$ then $(h \circ f \circ k)_* \in \Phi$.
\end{enumerate}
\end{defn}

\begin{defn}[irreducible maps]
A \emph{core} graph $H$ is a graph, all of whose vertices have valence $\geq 2$. Let $g:G \to G$ be map that restricts on each edge to either an immersion or a constant map. $g$ is \emph{reducible} if there is a proper, nonempty core subgraph $H$ of $G$ which is invariant under $g$. If $g$ is not reducible it is called \emph{irreducible}.
\end{defn}

\begin{theorem}\cite{BH}
If $\Phi \in \out$ is irreducible then $\Phi$ has an irreducible train-track representative $(G,h,f)$.
\end{theorem}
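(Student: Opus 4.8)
The plan is to follow the strategy of Bestvina and Handel \cite{BH}: start from \emph{any} topological representative of $\Phi$ that is irreducible as a map, and then repeatedly apply a short list of elementary moves that monotonically simplify the representative until it becomes a train-track map, using that $\Phi$ is irreducible to keep the map irreducible throughout.

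For the starting point, take $G=R_0$ the rose and let $f:R_0\to R_0$ send the $i$-th petal to the reduced edge-path spelling $\phi(x_i)$, for any $\phi\in\Phi$; this is a topological representative in the sense of Definition \ref{topo_repn}. The core subgraphs of the rose are exactly its subroses, and an $f$-invariant subrose would realize a proper free factor whose conjugacy class is preserved by $\Phi$, contradicting irreducibility of $\Phi$; hence $f$ is irreducible. To each irreducible topological representative $f:G\to G$ associate its transition matrix $M(f)$, whose $(i,j)$ entry counts the crossings of $e_i$ by the edge-path $f(e_j)$. Irreducibility of the map makes $M(f)$ an irreducible non-negative integer matrix, so it has a Perron--Frobenius eigenvalue $\lambda(f)\ge 1$. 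If $\lambda(f)=1$ then $M(f)$ is a permutation matrix (an irreducible non-negative integer matrix of spectral radius $1$ has all row and column sums equal to $1$), so $f$ permutes the oriented edges and is already a train-track map; so assume $\lambda(f)>1$.

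Now set up the algorithm. Order representatives by the lexicographic pair $\big(\lambda(f),\#E(G)\big)$, restricted to representatives in which each $f(e)$ is an immersed edge-path and $G$ has no valence-$1$ or valence-$2$ vertex --- such a core graph of rank $n$ has at most $3n-3$ edges. If $f$ is not a train-track map then some $f(e)$ crosses an illegal turn, equivalently two edges at a common vertex have $f$-images sharing a nondegenerate initial segment. Apply the appropriate composite of the elementary moves of \cite{BH} --- tighten each $f(e)$, subdivide at preimages of vertices, fold a maximal common initial segment of two $f$-images, then collapse valence-$1$ edges, absorb valence-$2$ vertices, and discard any invariant forest --- each of which is a homotopy through homotopy equivalences and so preserves the property of being a topological representative of $\Phi$; one checks that irreducibility of the map is preserved (a proper invariant core subgraph appearing mid-process would again make $\Phi$ reducible). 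The key lemma is that such a composite move either strictly decreases $\lambda(f)$ or keeps it fixed and strictly decreases $\#E(G)$. Since we only ever apply $\lambda$-nonincreasing moves, $\lambda(f)$ stays an algebraic integer of degree $\le 3n-3$ all of whose conjugates have absolute value $\le\lambda_0$, the initial value, a finite set; so the complexity takes values in a well-ordered set and the algorithm must halt. When it halts, no $f(e)$ crosses an illegal turn, i.e. $f$ is a train-track map, and it is irreducible by construction.

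The main obstacle is the key lemma used above: showing that a fold never increases the Perron--Frobenius eigenvalue, and controlling the outcome when it stays the same. A fold identifies portions of two edge-paths and can alter the transition matrix in an opaque way --- possibly lowering the rank of the underlying graph if one is not careful to fold only through homotopy equivalences --- so one must analyze folds combinatorially and feed the analysis into a Perron--Frobenius comparison, while simultaneously bookkeeping that the pair $(\lambda,\#E)$ genuinely drops and that irreducibility of the map survives. Establishing the discreteness of the set of attainable eigenvalues, which underwrites termination, is the other delicate point.
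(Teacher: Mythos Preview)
The paper does not actually prove this theorem; it is stated with a citation to \cite{BH} and no proof is given. So there is no ``paper's own proof'' to compare against beyond the reference to Bestvina--Handel.

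Your sketch is a faithful outline of the Bestvina--Handel algorithm and is correct in its broad strokes: start from an irreducible topological representative on the rose, track the Perron--Frobenius eigenvalue of the transition matrix, and repeatedly fold/tighten/collapse while showing the complexity pair $(\lambda, \#E)$ decreases lexicographically. Your termination argument via the finiteness of algebraic integers of bounded degree with bounded conjugates is the right one. The two points you flag as obstacles --- that folds never increase $\lambda$, and that irreducibility of the map survives the moves --- are exactly where the work lies in \cite{BH}, and you correctly identify them as the content rather than sweeping them under the rug. One small caution: in the actual BH argument, irreducibility of the map is not literally preserved at every intermediate step; rather, \emph{if} the map becomes reducible after a move, one extracts from the invariant subgraph a contradiction to the irreducibility of $\Phi$, so the algorithm never gets stuck. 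Your parenthetical says essentially this, but the phrasing ``irreducibility of the map is preserved'' slightly overstates what is proved.
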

 
\begin{defn}
Given a train-track representative $f: G \to G$ one can endow $G$ with a metric $\ell:E(G) \to (0,1)$ so that $f$ stretches each edge of $G$ by the same amount $\lam>1$. $\lam$ is called the \emph{expansion factor} of $f$, or the Perron-Frobenius eigenvalue of $f$. Even though a train-track representative of an irreducible outer automorphism is not unique, every such representative has the same stretch factor which we will associate to $\Phi$.
\end{defn}

\subsection*{Laminations of fully irreducible automorphisms}
Let $f:G \to G$ be a train-track representative of an irreducible outer automorphism $\phi$, let $c$ be the expansion factor of $f$. By replacing $f$ with a power if necessary, we may assume that $f$ has a fixed point $p$ in the interior of an edge. Let $I$ be an $\eps$ neighborhood of $p$ so that $f(I) \supset I$. Choose an isometry $\lam:(-\eps, \eps) \to I$ and extend uniquely to a local isometric immersion $\lam: \RR \to G$ so that $\lam(c^mt) = f^m(t)$ for all $t \in \RR$. $\lam$ is \emph{a periodic leaf in the lamination} $\Lam^+_f(G)$ (for a definition of $\Lam^+_f$ see \cite{BFH}). {\it A stable leaf subsegment} is the restriction of $\lam$ to a subinterval of $\RR$. Given a different metric graph $H \in \os$ and a homotopy equivalence $g: G \to H$, $\Lam^+_f(H)$ the attracting lamination in the $H$ coordinates is the collection of immersions $[g \lam]$ pulled tight. This definition does not depend on the train-track representative, so we can denote it by $\Lam^+_\phi$. An important feature of the leaves of $\Lam^+_\phi$

\begin{prop}[\cite{BFH} Proposition 1.8]\label{quasi_periodic}
Every periodic leaf of $\Lam^+_\phi$ is quasi-periodic.
\end{prop}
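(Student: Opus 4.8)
The plan is to prove quasi-periodicity directly for a periodic leaf $\lam$ of $\Lam^+_\phi$, realized as a bi-infinite reduced edge path in the train-track graph $G$; since quasi-periodicity only concerns the finite subpaths of $\lam$, and (by minimality of $\Lam^+_\phi$, established at the end) all leaves of $\Lam^+_\phi$ have the same finite subpaths, it suffices to treat one periodic leaf. After replacing $f$ by a positive power I may further assume that the transition matrix of $f$ is a primitive (Perron--Frobenius) integer matrix and that $f$ fixes a point $p$ in the interior of an edge $e_0$. With these normalizations the periodic leaf $\lam$ through $p$ is $f$-invariant as a reduced edge path, $f(\lam)=\lam$, and $\lam=\bigcup_{j\ge 0}f^j(I)$ is an increasing union of iterates of a short subsegment $I\subset e_0$.

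The heart of the argument is a \emph{filling lemma}: for every finite subpath $\rho$ of $\lam$ there is an integer $M=M(\rho)$ such that, for \emph{every} edge $e$ of $G$, the path $f^M(e)$ contains a copy of $\rho$ (up to reversing orientation). I would prove this in two steps. First, since $f^j(I)$ increases to $\lam$, we have $\rho\subset f^j(I)\subset f^j(e_0)$ for all $j\ge j_0$, for some $j_0=j_0(\rho)$. Second, primitivity of the transition matrix yields an exponent $k_0$ such that $f^{k_0}(e)$ crosses the edge $e_0$ for every edge $e$; since $f^{k_0}(e)$ is a legal path, $f^{m-k_0}$ carries it to a reduced path, so $f^m(e)=f^{m-k_0}\!\bigl(f^{k_0}(e)\bigr)$ contains $f^{m-k_0}(e_0)$ as a subpath for all $m\ge k_0$. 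Taking $M:=k_0+j_0$ then gives $f^M(e)\supseteq f^{j_0}(e_0)\supseteq\rho$ for every $e$. The point requiring care here --- and the one I expect to be the main obstacle --- is that every containment must be an honest inclusion of reduced edge paths; this is exactly where the train-track hypothesis is used, since a legal path is carried by $f$ to a path with no cancellation, so subpaths map to subpaths and iterated images of $e_0$ genuinely sit inside iterated images of the other edges.

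Granting the filling lemma, the conclusion is short. Using $f(\lam)=\lam$ together with legality, $\lam=f^M(\lam)$ as reduced edge paths, so the edge sequence of $\lam$ is partitioned into consecutive blocks, one block $f^M(a_i)$ for each successive edge $a_i$ of $\lam$, each block of combinatorial length between $1$ and $L:=\max_e|f^M(e)|$. Let $w$ be any subpath of $\lam$ of combinatorial length at least $3L$. Then $w$ meets at least three consecutive blocks, hence contains some block $f^M(a_j)$ entirely; by the filling lemma $f^M(a_j)$ contains a copy of $\rho$, so $w$ contains a copy of $\rho$. Thus every subpath of $\lam$ of length $\ge N(\rho):=3L$ contains $\rho$, which is quasi-periodicity. (Conceptually this is the classical fact that a fixed point of a primitive substitution --- here the substitution $e\mapsto f(e)$ on the edge set of $G$ --- is uniformly recurrent; I would nonetheless include the block argument rather than quote symbolic dynamics.)

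Finally, the reduction to a single periodic leaf: every leaf of $\Lam^+_\phi$ is a limit of subpaths of $\lam$, so its finite subpaths are subpaths of $\lam$; conversely, by the block argument any finite subpath $\rho$ of $\lam$ occurs in every subpath of $\lam$ of length $N(\rho)$, hence in every length-$N(\rho)$ subpath of an arbitrary leaf (those being subpaths of $\lam$), hence in that leaf. Thus $\Lam^+_\phi$ is minimal --- all leaves share the same finite subpaths --- and quasi-periodicity of $\lam$ gives it for all periodic leaves. The only genuinely delicate ingredients are (i) arranging primitivity of the transition matrix by passing to a power, which is standard, and (ii) the bookkeeping with cancellation, which is eliminated by the legality of leaves and of the paths $f^k(e)$.
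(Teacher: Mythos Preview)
The paper does not prove this proposition --- it is simply quoted from \cite{BFH}, with only the meaning of ``quasi-periodic'' spelled out afterward --- so there is no proof in the paper to compare against.

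Your argument is correct and is the standard one: it is exactly the proof that the fixed bi-infinite word of a primitive substitution is uniformly recurrent, transported to the train-track setting, and your emphasis on legality (so that subpaths map to subpaths under $f$ without cancellation) is the right point to isolate. One small wrinkle to tidy up: your filling lemma only yields $\rho$ \emph{or} $\bar\rho$ inside $f^M(e)$, so the block argument as written only shows that every long subpath of $\lam$ contains $\rho$ or $\bar\rho$. This is easily upgraded. If the $\bar\rho$ alternative ever occurs, then $\bar\rho$ sits inside some $f^M(a_j)\subset\lam$ and hence is itself a subpath of $\lam$; now pick a single segment $\sigma\subset\lam$ long enough to contain one occurrence of $\rho$ and one of $\bar\rho$, and rerun your argument with $\sigma$ in place of $\rho$: every sufficiently long subpath of $\lam$ then contains $\sigma$ or $\bar\sigma$, and both of these contain $\rho$. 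Alternatively, one can prove the dichotomy that either $f$ preserves an orientation of $G$ (in which case $\bar\rho$ never appears in $\lam$ and the issue is vacuous) or the \emph{oriented} transition matrix is already irreducible, so after enlarging $k_0$ every $f^{k_0}(e)$ crosses $e_0$ with the correct orientation and the filling lemma holds on the nose.
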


This means that for every length $L$ there is a length $L'$ such that
if $\al, \beta \subseteq \lam$ are subleaf segments with $length(\al) =
L$ and $length(\beta)>L'$ then $\beta$ contains an occurrence of
$\al$. One can think of $\lam$ as a necklace made of beads. The
segments of length $L$ that appear in $\lam$ are beads of different
colors. The proposition tells us that in any subchain of $\frac{L'}{L}$ consecutive beads we can find beads of all possible colors.

\subsection*{Geometric and Nongeometric Automorphisms}\label{geom}

\begin{defn}
An outer automorphism $\Phi$ of $F_n$ is \emph{geometric} if there is a surface automorphism $F:S_{g,p} \to S_{g,p}$ with $\pi_1(S_{g,p}) = F_n$ so that $F_* \in \Phi$.
\end{defn}

When $n=2$ all elements of $\out$ are geometric. This is false for $n>2$. In this section we describe when an irreducible automorphism is geometric.

\begin{notation}
Consider a graph $G$ if $\al$ is a loop we denote by $[\al]$ the immersed loop which is freely homotopic to $\al$. If $\al$ is a path then $[\al]$ is the immersed path homotopic to $\al$ relative to its endpoints.
\end{notation}

\begin{defn}[Nielsen Paths]
\label{Nielsen}Let $f:G \to G$ be a map, a {\it Nielsen path} of $f$
is a path $\beta$ such that $[f(\beta)] = \beta$. $\gamma$ is a \emph{pre-Nielsen path} if $[f^i(\gamma)]$ is a Nielsen path for some $i$. Note that if $\al,\beta$ are Nielsen paths then so is $\al\beta$. A Nielsen path is \emph{indivisible} if it cannot be expressed as a concatenation of other Nielsen paths. 
\end{defn} 

\begin{defn}
Let $f:G \to G$ be a train-track map. 
$f$ is \emph{stable} if it has no more than one indivisible Nielsen path.  
\end{defn}

\begin{theorem}\cite{BH}
Every irreducible $\Phi \in \out$ has a stable irreducible train track representative.
\end{theorem}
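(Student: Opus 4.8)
The plan is to invoke the earlier Bestvina--Handel theorem (stated above) to obtain an irreducible train-track representative $f\colon G\to G$ of $\Phi$, and then to run a further folding procedure that drives down the number of indivisible Nielsen paths until at most one remains. Set up the usual bookkeeping: $M(f)$ is the transition matrix (the $(i,j)$ entry counts the crossings of $e_i$ in $f(e_j)$) with Perron--Frobenius eigenvalue $\lambda$, and irreducibility of $M(f)$ is equivalent to irreducibility of $\Phi$, since a proper $f$-invariant core subgraph would produce an $f$-periodic system of proper free factors. Recall that for a train-track map an indivisible Nielsen path has the normal form $\rho=\bar\alpha\beta$, where $\alpha,\beta$ are nontrivial legal paths issuing from a single illegal turn $t$ and $[f(\alpha)]=\alpha\mu$, $[f(\beta)]=\beta\mu$ for a common legal overhang $\mu$. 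Since $G$ has a bounded number of edges it has only finitely many turns, hence only finitely many illegal turns, hence only finitely many candidate indivisible Nielsen paths; this count $N(f)$ will serve as the secondary complexity, with $(\lambda,N)$ ordered lexicographically.

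\textbf{The folding move.} Suppose $N(f)\geq 2$. If two distinct indivisible Nielsen paths cross the same illegal turn $t$, fold the two legal directions at $t$ along their maximal common initial segments; after tightening and the forced valence-two homotopies this yields a new representative $f'$ of $\Phi$ which is still a train-track map, with $\lambda(f')=\lambda$ and $N(f')<N(f)$, since collapsing $t$ destroys one of the two Nielsen paths. If the two indivisible Nielsen paths cross distinct illegal turns, one first performs auxiliary subdivisions and folds to align their images, after which the previous step applies. Either way, iterating this move --- which never raises $\lambda$, is one of the standard operations on representatives, and so in particular preserves the marking and irreducibility of $f$ --- strictly decreases $N$, and since $N\geq 0$ the process terminates. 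The output is an irreducible train-track representative $f\colon G\to G$ with $N(f)\leq 1$, i.e.\ a stable one.

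\textbf{Main difficulty.} The genuine work is in \emph{termination and bookkeeping}: one must verify that the folding move really does keep $f$ a train-track map (no new backtracking appears in any $f(e)$), that it does not raise the Perron--Frobenius eigenvalue --- in particular that the valence-two homotopies triggered by a fold are neutral for $\lambda$ --- and that, after aligning, exactly one Nielsen path disappears rather than new ones being created. Arranging that the complexity $(\lambda,N)$ strictly decreases under the move while remaining bounded below, and controlling the interaction between folds, tightening, and the transition matrix, is exactly what is carried out in \cite{BH}.
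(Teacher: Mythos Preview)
The paper does not give its own proof of this statement; it is quoted as a background result from \cite{BH} and used without argument. So there is nothing in the paper to compare your proposal against directly.

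As an outline of the Bestvina--Handel argument, your sketch has the right flavor but the mechanism is misdescribed. The actual move in \cite{BH} is not organized around whether two indivisible Nielsen paths share an illegal turn. Rather, one first fixes an irreducible train-track representative with \emph{minimal} Perron--Frobenius eigenvalue $\lambda$ among all such representatives; this minimum exists and is the crucial starting hypothesis. Each indivisible Nielsen path $\alpha\bar\beta$ has a single illegal turn, and one folds the maximal common initial segments of $\alpha$ and $\beta$ at that turn. The point is that this fold cannot lower $\lambda$ (by minimality) and one checks directly that it does not raise $\lambda$ either; hence $\lambda$ is preserved, the map remains an irreducible train-track representative, and the number of indivisible Nielsen paths strictly drops. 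Iterating gives $N(f)\leq 1$.

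Your version has two concrete gaps. First, the lexicographic complexity $(\lambda,N)$ does not by itself guarantee termination, since $\lambda$ ranges over real numbers and you have not argued that it can only drop finitely often; the minimality of $\lambda$ is what makes the argument finite. Second, your case split (``same illegal turn'' versus ``distinct illegal turns, then align'') is not how the reduction works, and the claim that a fold always yields $\lambda(f')=\lambda$ is false in general---a fold can strictly decrease $\lambda$, which is exactly why one must start at the minimum. Your final paragraph correctly flags that the bookkeeping is the real content, but the specific fold and the role of minimal $\lambda$ should be stated accurately.
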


\begin{theorem}\cite{BH}
Let $\Phi$ be an irreducible outer automorphism and $f:G \to G$ a stable train-track representative for $\Phi$. $\Phi$ is geometric if and only if $f$ has a Nielsen loop $\beta$ which crosses every edge exactly twice. 
\end{theorem}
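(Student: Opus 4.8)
The plan is to prove both implications by passing back and forth between the combinatorial data of the train-track map $f\colon G\to G$ and the topology of a compact surface obtained from $G$ by \emph{thickening} it to a fatgraph (ribbon graph). The single fact that drives everything is: if $G$ is embedded in a surface $S$ as a spine, so that $S$ is (isotopic to) a regular neighborhood of $G$, then the boundary $\partial S$, pushed back onto $G$, is a loop that runs over each edge exactly twice — once along each of the two sides of the edge. I will use this in one direction to \emph{recognize} a geometric automorphism from a Nielsen loop, and in the other direction to \emph{produce} such a loop from a surface model.

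For the ($\Leftarrow$) direction, suppose $f$ has a Nielsen loop $\beta$ crossing every edge of $G$ exactly twice. The two passages of $\beta$ along each edge tell us how to attach a band to that edge, and at each vertex $\beta$ induces a cyclic order on the incident half-edges; these data make $G$ into a ribbon graph and hence determine a compact surface $S$ having $G$ as spine with $\partial S$ freely homotopic to $\beta$. Since $\beta$ uses up all $2|E(G)|$ edge-passages, $\partial S$ is a single circle, so $S$ is some $S_{g,1}$ (after checking orientability of the ribbon structure — see below), and $\pi_1(S)\cong\pi_1(G)\cong F_n$. Because $[f(\beta)]=\beta$, the homotopy equivalence $f$ preserves the peripheral class of $S$; invoking the injectivity of the natural map from the mapping class group of a surface with boundary into the group of peripheral-structure-preserving outer automorphisms, one promotes $f$ to a homeomorphism $F\colon S_{g,1}\to S_{g,1}$ with $F_*\in\Phi$. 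Hence $\Phi$ is geometric.

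For the ($\Rightarrow$) direction, suppose $\Phi$ is geometric, $F_*\in\Phi$ for a homeomorphism $F$ of $S_{g,p}$. First reduce to $p=1$: if $p\ge 2$ then $\Phi$ permutes the proper, nontrivial system of cyclic peripheral subgroups up to conjugacy, which one checks gives a reduction of $\Phi$, contradicting irreducibility; and with $p=1$, a reduction system for $F$ would likewise produce a $\Phi$-invariant proper free factor system, so $F$ is pseudo-Anosov. Now use the standard passage from a pseudo-Anosov map to a train-track map on a graph: an $F$-invariant train track in $S_{g,1}$ determines a spine $G'\subset S_{g,1}$ and a train-track representative $f'\colon G'\to G'$ of $\Phi$, which may be taken stable; the boundary $\partial S_{g,1}$, pushed onto $G'$, crosses every edge exactly twice (the fatgraph-boundary fact) and is a Nielsen loop since $F$ fixes $\partial S_{g,1}$ up to isotopy. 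Finally, transport this property to the given stable representative $f\colon G\to G$ using the essential uniqueness of stable train-track representatives of a fixed $\Phi$ and the corresponding tracking of the indivisible Nielsen path under the allowed moves (Bestvina--Handel).

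The main obstacle is precisely this interface between the graph and surface pictures in the nonobvious directions. On the backward side one must verify that the ribbon structure read off from $\beta$ is orientable (equivalently, that $\beta$ runs along each edge once in each direction; an Euler-characteristic count forces $n$ even in the orientable case, so the odd-$n$ situation needs separate attention) and that $f$ genuinely upgrades from a homotopy equivalence to a homeomorphism. On the forward side the work is in the reduction to a once-punctured pseudo-Anosov model and then in moving ``crosses every edge exactly twice'' from the surface-built stable representative to the arbitrary given one. Throughout, the routine but indispensable bookkeeping is that a regular neighborhood of an embedded graph in a surface is bounded by a curve traversing each edge twice.
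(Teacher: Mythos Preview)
The paper does not prove this theorem; it is quoted from Bestvina--Handel \cite{BH} as background, followed only by a one-sentence sketch of each direction: if $\Phi$ is geometric, a boundary component of the surface gives the Nielsen loop; conversely, given such a $\beta$, one attaches an annulus to $G$ along $\beta$ to obtain $S_{g,1}$ with an induced surface map. So there is no detailed argument in the paper to compare against.

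Your outline is in the same spirit as that sketch---the ribbon-graph thickening is exactly the ``attach an annulus along $\beta$'' construction, and the boundary-of-a-regular-neighborhood observation is the right mechanism in both directions. But several of the steps you flag as obstacles are genuine gaps, not just bookkeeping. First, the orientability issue: you need that $\beta$ traverses each edge once in each direction, and your parenthetical about an Euler-characteristic parity obstruction is correct (the construction yields $S_{g,1}$ with $\pi_1\cong F_{2g}$), so the odd-rank case really does require a separate argument or a different surface model; this is not addressed. Second, your claim that $p\ge 2$ forces reducibility is not as immediate as stated: boundary curves of a multiply-punctured surface are not free-factor generators, so the ``permuted peripheral conjugacy classes'' do not directly give an invariant free factor system in the sense of the definition; one needs instead to produce invariant arcs or an invariant subsurface, which is more work. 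Third, transporting the ``crosses every edge twice'' property from the surface-built stable representative to an arbitrary given stable $f$ is exactly the content of the Bestvina--Handel analysis of how indivisible Nielsen paths behave under the moves relating stable representatives, and cannot be waved through. In short: your architecture matches the paper's sketch, but the proof as written defers its hardest points to \cite{BH} rather than resolving them.
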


If $F: S_{g,p} \to S_{g,p}$ represents $\Phi$ then one of the boundary components will be a Nielsen loop of the type described in the theorem. If $f: G \to G$ is a train-track map with a Nielsen loop $\beta$ as described in the theorem then one can attach an annulus along one of its boundary components to $G$ along $\beta$ and get a surface $S_{g,1}$ and an induced map $f$ on it which represents $\Phi$. $\beta$ will correspond to the boundary on this surface.



\section{The Lipschitz metric on $\os$}

Let $(G,f,\ell),(G',f',\ell')$ represent two points $x,y$ in
$\X_n$. A {\it difference of markings} is a map
$h:G \to G'$ with $h \circ f$ homotopic to $f'$. We will assume
that $h$ is Lipschitz. By $\Lip(h)$ denote the Lipschitz constant of $\phi$, i.e. the smallest number so that $d_{y}(\phi(p),\phi(p')) \leq \Lip(\phi) \cdot d_x(p,p')$ for all $p,p' \in G$. Define the distance
$$d(x,y)=\min_h \log\Lip(h)$$
where $\min$ is taken over all differences of markings (it is attained
by Arzela-Ascoli). 

We claim that we may restrict our attention to maps $h$ that are linear on edges, since the minimum is realized by a linear map: For any map $h$ one
can construct a map $k \sim h$ which is linear on edges. Define $k(v)=h(v)$ on every vertex $v$ of $G$ and let the image of the edge
$(v,w)$ under $k$ be the immersed path $[h(v),h(w)]$, which is homotopic to $\im(h|_{[v,w]})$ rel endpoints and parameterized at a constant speed. It is clear that $\Lip(k) \leq \Lip(h)$. Therefore, we can usually restrict our attention to linear maps.

\begin{lemma} The Lipschitz distance satisfies the following:
\begin{enumerate}
\item $d(x,y)\geq 0$ with equality only if $x=y$.
\item $d(x,z)\leq d(x,y)+d(y,z)$ for all $x,y,z\in\X_n$.
\item $d$ is a geodesic metric; for any $x,y$ there is a path from $x$
  to $y$ whose length is $d(x,y)$. 
\end{enumerate}
\end{lemma}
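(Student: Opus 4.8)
The plan is to establish the three properties in order, using the linear-representative reduction already noted before the statement. For property (1), nonnegativity is immediate: any difference of markings $h:G\to G'$ is a homotopy equivalence, hence $\pi_1$-surjective, so it cannot strictly decrease the translation length of every conjugacy class; concretely, for a loop $\gamma$ realized by an immersed path of length $\ell_x(\gamma)$ in $G$, the image $h(\gamma)$ has length at most $\Lip(h)\,\ell_x(\gamma)$ and is homotopic to a loop representing the same conjugacy class, so $\ell_y([\gamma])\le \Lip(h)\,\ell_x(\gamma)$, giving $\Lip(h)\ge \ell_y([\gamma])/\ell_x(\gamma)\ge 1$ for a suitably chosen $\gamma$ (e.g. the shortest loop in $y$, pulled back). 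For the equality case, if $d(x,y)=0$ then there is an $h$ with $\Lip(h)=1$, i.e. a $1$-Lipschitz difference of markings; the same inequality applied in both directions — using that $d(y,x)\ge 0$ as well — forces $\ell_x(\gamma)=\ell_y([\gamma])$ for all $\gamma$, so $x$ and $y$ have the same length function on $\mathcal{C}$, and by the Culler–Morgan injectivity of $\ell:\os\to\RP^{\mathcal C}$ cited in the axes-topology subsection, $x=y$. (Alternatively one argues directly that a $1$-Lipschitz homotopy equivalence between volume-$1$ metric graphs is an isometry.)

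For property (2), the triangle inequality, I would simply compose optimal difference of markings: pick $h:G\to G'$ realizing $d(x,y)$ and $h':G'\to G''$ realizing $d(y,z)$; then $h'\circ h:G\to G''$ is a difference of markings from $x$ to $z$, and $\Lip(h'\circ h)\le \Lip(h')\,\Lip(h)$, so $d(x,z)=\min\log\Lip\le \log\Lip(h')+\log\Lip(h)=d(x,y)+d(y,z)$.

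For property (3), that $d$ is a geodesic metric, the idea is to produce an explicit path from $x$ to $y$ of length $d(x,y)$. Fix an optimal linear difference of markings $h:(G,\ell)\to(G',\ell')$ with $\Lip(h)=e^{d(x,y)}=:\Lambda$. The natural candidate is the ``stretched-graph'' path: for $t\in[0,1]$ interpolate by letting the metric on the graph $G$ be (a rescaling of) $(1-s(t))\ell + s(t)(h^\#\ell')$, where $h^\#\ell'$ assigns to each edge $e$ of $G$ the $\ell'$-length of the immersed path $[h(e)]$ divided by the local stretch, rescaled to total volume $1$; at the endpoint one collapses the forest of edges whose interpolated length has gone to $0$ and identifies the result with $(G',\ell')$ via $h$. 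One then checks that along this path the Lipschitz distance from $x$ to the point at parameter $t$ is exactly $\log$ of the appropriate partial stretch, and that these add up, so the path has length $d(x,y)$; combined with the triangle inequality this shows the path is geodesic.

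The main obstacle is property (3): one must be careful that the interpolating object stays inside $\os$ (the degenerating edges must form a forest at each time, not just at the end — this may require first subdividing or choosing $h$ so that the ``maximally stretched'' subgraph $G_\Lambda\subset G$ of edges on which $h$ realizes the full stretch $\Lambda$ is handled correctly, since it is exactly on the complement of $G_\Lambda$ that one has slack to shrink), and that the length of the constructed path is computed correctly rather than merely bounded. A cleaner route, which I would fall back on if the explicit path is messy, is to invoke that $\os$ with the Lipschitz metric is a length space — any two points are joined by paths of length approaching $d(x,y)$, e.g. by concatenating short optimal hops through intermediate marked graphs — together with a local compactness / Arzelà–Ascoli argument (the same one giving that the $\min$ defining $d$ is attained) to upgrade ``length space'' to ``geodesic space.'' Either way, the content is entirely in (3); (1) and (2) are formal.
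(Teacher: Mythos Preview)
Your arguments for (2) and (3) are fine: (2) is exactly what the paper does, and for (3) the paper simply cites \cite{FM}, so your sketch of the folding/interpolation path is more than the paper provides (and is the right idea).

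For (1), however, your primary route via length functions has two genuine gaps, and the paper's approach is different and cleaner. First, nonnegativity: you assert that a difference of markings ``cannot strictly decrease the translation length of every conjugacy class'' because it is $\pi_1$-surjective, but $\pi_1$-surjectivity alone does not give this, and your parenthetical suggestion (``the shortest loop in $y$, pulled back'') does not work---there is no reason that loop is at least as long in $y$ as in $x$. The paper instead uses a one-line volume argument: an optimal linear $h$ is surjective (no free edges in $G'$), so $1=\mathrm{vol}(G')\le \Lip(h)\cdot \mathrm{vol}(G)=\Lip(h)$, hence $d(x,y)\ge 0$. Second, the equality case: from $d(x,y)=0$ you correctly get $\ell_y(\gamma)\le \ell_x(\gamma)$ for all $\gamma$, but your phrase ``applied in both directions --- using that $d(y,x)\ge 0$'' does not yield the reverse inequality, since $d$ is asymmetric and $d(y,x)\ge 0$ only gives $\ell_x(\gamma)\le e^{d(y,x)}\ell_y(\gamma)$ with no control on $d(y,x)$. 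So you cannot conclude $\ell_x=\ell_y$ and invoke Culler--Morgan this way. The paper again argues via volume: if $\Lip(h)=1$ then the surjective $1$-Lipschitz map $h$ between volume-$1$ graphs must be a local isometry on every edge, hence an isometry, so $x=y$. This is precisely the alternative you mention parenthetically at the end; that is the argument you should lead with.
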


\begin{proof}
\begin{enumerate}
\item Let $h$ be the linear map realizing $d(x,y)$. Since $h$ is a difference in marking, it is a homotopy equivalence. If $h$ were not onto, then $h_*$ would not be an isomorphism (since there are no edges in $y$ that can be homotoped away from every loop, recall that there are no free edges). Thus, $vol(y) \leq vol( \text{Im}(h)) \leq \Lip(h) vol(x)$ (because $h$ stretches all edges by at most $\Lip(h)$). Since $vol(y) = vol(x) = 1$ we get $\Lip(h) \geq 1$ hence $d(x,y) \geq 0$. We get equality if only if $\Lip(h) =1$ and $h$ is a local isometry, hence if and only if $h$ is an isometry. This implies $x=y$.
 
\item Suppose $h:x \to y$, $k:y \to z$ realize the distance, we shall also call them {\it optimal maps}, then $j =
    k \circ h$ is a difference in marking from
    $x$ to $z$, thus by the chain rule $\Lip(h) \Lip(k) \geq \Lip(j) 
    \geq \exp(d(x,z))$. Taking $\log$ we get $d(x,y)+d(y,z) \geq d(x,z)$.
\item See e.g. \cite{FM}. 
\end{enumerate}
\end{proof}

\subsection{Candidates and Computing Distances}\label{candidates}

\begin{defn}
Suppose $(G,f,\ell)$ represents $x$ in $\os$. Let $\al$ be a loop in $G$. $\ell(\al)$ is the length of an immersed loop $[\al]$ freely homotopic to $\al$ in $x$.
\end{defn}

We say that a loop $\alpha$ in $G$ is a {\it candidate} if either
\begin{itemize}
\item it is embedded, or
\item (figure eight) there are two embedded circles $u,v$ in $G$
  that intersect in one point and $\alpha$ crosses $u,v$ once and does
  not cross any edges outside of $u$ and $v$, or
\item (barbell) there are two disjoint embedded circles $u,v$ in
  $G$ and an arc $w$ that connects them, and whose interior is
  disjoint from $u$ and $v$, and $\alpha$ crosses $u,v$ once, $w$
  twice, and no edges outside $u\cup v\cup w$.
\end{itemize}

The following is due to Tad White, and first written in \cite{FM}. We give a shorter proof here.

\begin{prop}\label{green graph}
Let $x,y\in \X_n$, $x=(G,f,\ell), y=(G',f',\ell')$ and let
$g:G \to G'$ be a difference of markings. 
$$d(x,y)=\log \inf_{\al} \frac{\ell'(g(\alpha))}{\ell(\alpha)}$$
Where the infimum is taken over all loops $\al$ in $x$. Moreover, there is a candidate loop $\alpha$ in $G$ which realizes the infimum.  
\end{prop}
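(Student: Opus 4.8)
As written, the formula involves an infimum over loops of the ratio $\ell'(g(\alpha))/\ell(\alpha)$. I read this together with the ambient setup: $g$ is a \emph{difference of markings}, hence a homotopy equivalence, so $g(\alpha)$ is never nullhomotopic and the ratio is bounded below by a positive quantity; moreover, since $\Lip(g)\ge \exp(d(x,y))$ already gives the upper bound $\ell'(g(\alpha))/\ell(\alpha)\le \exp(d(x,y))$ with \emph{no} matching lower bound for a generic $g$, the content of the proposition is really that an \emph{optimal} $g$ can be chosen and that for it the extremal ratio (which I will treat as the relevant one-sided extremum the proof extracts) is realized on a candidate loop. I will therefore prove the two inequalities between $d(x,y)$ and $\log$ of the extremal ratio and then reduce the extremum to candidates.

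\textbf{Plan.} The proof has two independent halves. First, the easy direction: for \emph{any} difference of markings $h\colon G\to G'$ that is linear on edges, $\ell'(h(\alpha))\le \Lip(h)\,\ell(\alpha)$ for every loop $\alpha$, because $h$ stretches each edge by at most $\Lip(h)$ and pulling tight only shortens; taking the relevant extremum over $\alpha$ and then the minimum over $h$ (attained, by Arzela--Ascoli, at an optimal map) gives one inequality between $d(x,y)$ and $\log$ of the extremal ratio. Second, the reverse direction: starting from an optimal linear map $h$ realizing $d(x,y)$, one shows the extremal ratio is \emph{exactly} $\Lip(h)$. The key observation is that an optimal $h$ cannot be improved, so the subgraph $\Delta\subseteq G$ of edges that are stretched maximally (by exactly $\Lip(h)$) — the \emph{tension subgraph} — must contain a loop: if $\Delta$ were a forest, one could strictly decrease the stretch on the maximally stretched edges by a small homotopy of $h$ (rescaling those edges and absorbing the slack along the forest), contradicting optimality. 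Any loop $\alpha$ supported in $\Delta$ whose image $h(\alpha)$ is already immersed (or stays immersed after tightening) then satisfies $\ell'(h(\alpha)) = \Lip(h)\,\ell(\alpha)$, pinning down the extremum.

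\textbf{Reduction to candidates.} Given a loop $\alpha$ in $\Delta$ realizing the extremal ratio, I want to replace it by a candidate loop with the same (or extremal) ratio. The standard argument: if $\alpha$ is not embedded, decompose it using the structure of $\Delta$. Consider the core of $\Delta$; if some edge is crossed by $\alpha$ more than the number of times allowed for a figure-eight or barbell, one writes $\alpha$ as a concatenation/product of subloops, and an averaging (mediant) inequality — $\frac{a_1+\cdots+a_k}{b_1+\cdots+b_k}$ lies between $\min_i a_i/b_i$ and $\max_i a_i/b_i$ — lets one pass to a subloop whose ratio is at least as extreme. Iterating, the combinatorial complexity (number of illegal multiplicities, number of edges crossed) strictly drops, and the process terminates at an embedded loop, a figure eight, or a barbell. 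One must check that this decomposition respects the requirement that the subloops still run over maximally-stretched edges; since all of $\Delta$ is maximally stretched this is automatic, and the ratio bookkeeping only needs that $h$ is linear.

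\textbf{Main obstacle.} The subtle point is the tightening: after applying $h$ to a loop $\alpha\subseteq\Delta$, the image $h(\alpha)$ may contain backtracking, and pulling it tight to get $[h(\alpha)]$ can only \emph{decrease} length, so a priori $\ell'(h(\alpha))\le \Lip(h)\ell(\alpha)$ with possible strict inequality even for $\alpha\subseteq\Delta$. The resolution is to show that for an optimal $h$ one can arrange that $h$ restricted to $\Delta$ has no cancellation at vertices along some loop — i.e. there is a \emph{legal} loop in $\Delta$ with respect to the train-track structure induced by $h$ — equivalently that the ``gates'' at vertices of $\Delta$ do not all collapse. This is where one uses that $h$ is optimal and $\Delta$ contains a loop: at each vertex of $\Delta$ there must be at least two gates among the $\Delta$-edges (otherwise a small perturbation reduces the maximal stretch), so a legal — hence locally immersed, hence tight — loop through $\Delta$ exists, and for it the ratio is exactly $\Lip(h)$. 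Making this ``at least two gates'' claim precise, and confirming it survives the combinatorial reduction to a candidate, is the heart of the argument; everything else is bookkeeping with the mediant inequality.
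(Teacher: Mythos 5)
Your central step is the same as the paper's: pass to an optimal map, look at the tension ("green") subgraph $\Delta$ of maximally stretched edges, show every vertex of $\Delta$ has at least two gates by a small perturbation at a one-gate vertex, and conclude that a legal loop in $\Delta$ realizes the ratio $\Lip(h)$ exactly. One inaccuracy there: the perturbation at a one-gate vertex $v$ need \emph{not} reduce the maximal stretch --- if $\Delta$ contains maximally stretched edges not adjacent to $v$, the new map has the same Lipschitz constant and only a strictly smaller tension subgraph. So "otherwise a small perturbation reduces the maximal stretch" is not a proof of the two-gate claim for an arbitrary optimal $h$; you must, as the paper does, choose among optimal maps one whose tension subgraph is minimal (or iterate the perturbation, invoking finiteness of subgraphs). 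This is fixable but needs to be said.

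The genuine gap is in your reduction to candidates. The mediant inequality controls ratios of \emph{path} lengths, which add under concatenation; the numerator in the statement is the length of the \emph{tightened} (cyclically reduced) image. If you split a legal loop $\alpha\subseteq\Delta$ at a self-intersection vertex into $\alpha_1,\alpha_2$, the two newly created closing turns can both be illegal (each can lie inside a single gate even though the original turns of $\alpha$ crossed between gates), so both subloops can suffer cancellation under $g$ and have tightened ratio strictly below $\Lip(h)$; the mediant bound then only says something about the untightened path ratios and gives no subloop with conjugacy-length ratio at least as extreme. Thus "since all of $\Delta$ is maximally stretched this is automatic" and "everything else is bookkeeping" is precisely where the argument can fail --- legality does not automatically survive the surgery, and this is the point you defer. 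The paper avoids surgery altogether: it \emph{constructs} a legal candidate directly by greedily extending a legal path inside $\Delta$, and when the first-return turn is illegal it uses the second gate at that vertex to continue, the case analysis terminating in an embedded loop, a figure eight, or a barbell. To complete your route you would need either that construction or a surgery scheme that explicitly tracks legality of the new turns at the splitting vertex.
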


\begin{defn}[a t-t structure induced by a map]\label{induced t-t structure}
Let $g:G \to H$ be a map whose restriction to each edge is either an immersion or constant. \emph{The train-track structure induced by} $g$ is the following equivalence relation: $e_1 \sim e_2$ if they have the same initial vertex and if there are small enough  initial segments of $g(e_1)$ and $g(e_2)$, which coincide (they define the same germ).
\end{defn}

\begin{remark}
Notice that the train-track structure defined in \ref{induced t-t structure} is smaller than the one in definition \ref{self-induced tt structures}. When we refer to a t-t structure defined by $g$ and $g$ is a self-map we will always mean the structure defined in \ref{self-induced tt structures}.
\end{remark}

\begin{proof}[Proof of  Proposition \ref{green graph}]
Let $\al$ be an immersed loop in $G$. The loop $g(\al)$ might
not be immersed. 
\begin{equation}\label{eq1}
l'(g(\al)) \leq \Lip(g) l(\al)
\end{equation}
This is a strict inequality if one of the edges that $\al$ crosses is stretched less than $\Lip(g)$ or if the loop $g(\al)$ is not immersed. We have 
\begin{equation}\label{eq1a}
d(x,y) \geq \log \inf_{\al} \frac{\ell'(g(\alpha))}{\ell(\alpha)}
\end{equation}

We define the {\it green graph} of $G$ with respect to $g$ as the set of edges on which the slope of $g$ is $\Lip(g)$. We denote it by $G_g$. 
There is a train-track structure on $G_g$ induced by $g$ as in definition \ref{induced t-t structure}. 

Now suppose $g$ is an optimal map, i.e., satisfies $d(x,y) = \log \Lip(g)$, and suppose that $G_g$ is the smallest among all optimal maps. To show equality in \ref{eq1a} we will find a candidate loop $\al$ that is contained in $G_g$ and is legal with respect to the train-track structure of $g$. \\

First, we claim there are at least two 
gates at each vertex. See Figure \ref{optimal_map} 
for an example. Suppose by way of contradiction that there is a vertex $v$
which has only one gate, we will construct a new map $\psi$ homotopic to $g$ so that either $\Lip(\psi)<\Lip(g)$ or $\Lip(\psi) = \Lip(g)$ and $G_\psi \subsetneq G_g$ which gives a contradiction. Define a map $\psi$ by $\psi(u)=g(u)$ for all vertices $u \neq v$. To define $\psi(v)$, recall that all images of edges in $G_g$ adjacent to $v$ start with the same initial subsegment in $G'$ (there is one gate). Take the subsegment to be small enough as to not contain a vertex. Let $\psi(v)$ be the point on this subsegment, a distance $\varepsilon$ (to be chosen later) away from $g(v)$. Define $\psi$ to be homotopic to $g$ and linear on edges. 

 \begin{figure}[ht]
 \begin{center}
 \input{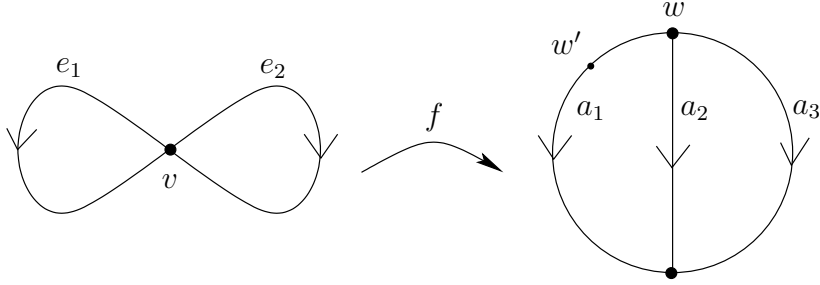}
 \caption{\label{optimal_map} The following is an example of a map $f$
  where $X_f$ has a vertex with one gate. We show that the map is not
  optimal. In the graph on the left both edges have length $\frac{1}{2}$
   and in the graph on the right all three edges have length
   $\frac{1}{3}$. Suppose the map $f$ takes $e_1 \to a_2
   \overline{a_3}$ and $e_2 \to a_1 \overline{a_2}
   a_1 \overline{a_3} a_2 \overline{a_1}$. The stretch of $f$ on $e_1$ is
   $\frac{2/3}{1/2} = \frac{4}{3}$ and the stretch of $f$ on $e_2$ is
   $\frac{6/3}{1/2} = 4$, so $\Lip(f) = 4$ and the green graph of $f$ is $X_f $ which is just $e_2$. Both $e_2, \overline{e_2}$ begin with $a_1$ so $X_f$ contains
   only one gate at $v$. Let $w'$ be a point on $a_1$ which is
   $\ve$ away from $v$ where $8\ve< 4-\frac{4}{3}$. $w'$ divides $a_1$
   into two edges $b_1, b_2$. Consider the map $f_1$ which takes $e_1
   \to \overline{b_1} a_2 \overline{a_3} b_1$ and  $e_2 \to b_2 \overline{a_2}
   b_1 b_2 \overline{a_3} a_2 \overline{b_2}$. $f_1$ is homotopic to
   $f$. $f_1$ stretches $e_1$ by $\frac{2/3+2\ve}{1/2} =
   \frac{4}{3}+4\ve$ and $e_2$ by  $\frac{2-2\ve}{1/2} = 4-4\ve$. Since $\ve$ is small enough 
   $\Lip(f_1) = 4-4\ve$ which is smaller than $\Lip(f)$.
 } 
 \end{center}
 \end{figure}

For $e \in G$ which is not adjacent to $v$ the images of $e$ under $g$ and $\psi$ coincide so the slope of $\psi$ on $e$ is still $\leq \Lip(g)$. If $e \in G_g$ is adjacent to $v$, we have made $g(e)$ shorter by $\ve$ so the slope of $\psi$ on $e$ is strictly smaller than $\Lip(g)$.  Let $S$ be the second largest slope of $g$ in $G$. If $e \in G \setminus G_g$ then $\ell'(\psi(e)) \leq S \cdot \ell(e) + 2\ve$. So if we take 
\[ \ve < \frac{1}{2}( \Lip(g) - S )(\text{length of shortest edge in } G)\] 
then the slope on $e$ will still be strictly smaller than $\Lip(g)$. This proves the claim.

Construct a legal loop in $\al \in G_g$ as follows. Start constructing an embedded legal path $\al$ until it intersects itself. That is, until $\al(t_1) = \al(t_2)$ for some $0 \leq t_1<t_2$ (we now consider $\al$ as a map from $[0,L]$ to $G$). If the turn $\{ D_+\al(t_1), D_-\al(t_2) \}$ is legal, then $\al|_{[t_1,t_2]}$ is a legal loop. If it is illegal, let $\al'(t) = \al(t + t_1)$ and rename $\al, t_1$ to get $\al(0)= \al(t_1)$ and $\{ D_+\al(0), D_-\al(t_1) \}$ is illegal. By the previous paragraph there is another gate at $\al(0)$ in $G_g$. Extend $\al$ to cross this gate and continue until there are $t_2<t_3$ so that $\al(t_2) = \al(t_3)$. If $0 < t_2 \leq t_1$ then either $\al|_{[t_2,t_3]}$ is a legal loop or $\al|_{[0,t_2]} \cup \al^{-1}|_{[t_3,t_1]}$ is a legal loop. If $t_1<t_2<t_3$ then either $\al|_{[t_2,t_3]}$ is a legal loop or $\al|_{[0,t_4]} \cup \al^{-1}|_{[t_1,t_2]}$ is a legal loop (barbell or figure 8 loops). 
\end{proof}

Note that $d(x,y)\geq \log\frac{\ell'(g(\alpha))}{\ell(\alpha)}$
for {\it any} loop $\alpha$ and any difference in marking $g$. The right hand side does not depend on a
particular choice of $g$, so one can effectively compute the
distance by maximizing the ratio over the finitely many candidate
loops. \\

For $x \in \os$ represented by $(G,f,\ell)$, any conjugacy class $\al$ of $F_n$ may be identified with an immersed loop $\al_G$ in $G$. We will use the same notation for both the conjugacy class and the loop representative. If we want to emphasize that the loop $\al$ is in the graph $G$ we will denote it by $\al_G$

\begin{defn}
We say that the conjugacy class $\al$ is a basis element if $\{ \al \}$ can be completed to a basis of $F_n$.
\end{defn}

\begin{prop}\label{can_are_basis}
Let $\al_X,\beta_X$ be different candidates in $(X,f,\ell)$. Then there is a third candidate $\gamma_X$ so that $\{ \al, \gamma \}$ and $\{ \beta, \gamma \}$ can each (separately) be completed to a basis of $F_n$.
\end{prop}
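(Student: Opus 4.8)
The plan is to exploit the three concrete types of candidate loops — embedded, figure-eight, and barbell — and in each case exhibit an explicit third loop $\gamma_X$ which is itself a candidate and whose union with $\al$ (resp.\ $\beta$) sits inside a proper free factor complement. First I would observe that a single embedded loop $\al$ in $X$ always represents a basis element (it is a primitive element: collapsing a maximal tree containing all but one edge of $\al$ exhibits it as a generator), and more generally each of the three candidate types represents an element of a rank-$\le 2$ free factor that is visibly a proper subgraph, hence has a complementary free factor. So the real content is to find one $\gamma$ that works against \emph{both} $\al$ and $\beta$ simultaneously.

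The key step is a counting/topological argument: since $X$ has rank $n\ge 2$ (and the statement is only interesting for $n\ge 2$), and each candidate loop $\al,\beta$ uses only a bounded portion of $G$ (an embedded circle, or two circles joined, etc.), I can choose an embedded circle $c$ in $G$ that is ``disjoint enough'' from the supports of $\al$ and $\beta$ — more precisely, an embedded loop whose conjugacy class lies in a free factor of $F_n$ complementary to one containing $\al$, and likewise for $\beta$. Concretely: pick a maximal tree $T$ of $G$; the $n$ edges of $G\setminus T$ give a basis, and $\al,\beta$, being candidates, each avoid at least one such edge $e_\al, e_\beta$ (a candidate of rank $\le 2$ cannot cross every edge of a rank-$n$ graph when $n\ge 3$; the $n=2$ case is handled separately since there every primitive conjugacy class extends to a basis and any two candidates together with a third can be arranged). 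Set $\gamma$ to be the embedded loop $e\cup(\text{tree path})$ for a suitable edge $e$ avoided by both — if such a common edge exists we are done immediately; if not, I would instead take $\gamma$ to be a figure-eight or barbell built from two edges avoided by $\al$ and by $\beta$ respectively, which still yields an embedded candidate.

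Then, to verify the ``completion to a basis'' conclusion, I would invoke the Whitehead machinery from the preliminaries rather than argue by hand. For each pair, say $\{\al,\gamma\}$, I produce a basis $\BB$ of $F_n$ in which $\gamma$ is (up to conjugacy) a basis element $x_1$ and $\al$ is a word in the remaining $x_2,\dots,x_n$; then $Wh_\BB(\al,\gamma)$ is disconnected (the vertex pair $x_1^{\pm1}$ forms an isolated edge), so by Theorem~\ref{WhVer2} the classes $\{\al,\gamma\}$ are compatible with a free decomposition $A*B$ with $\gamma\in[A]$ of rank $1$ and $\al\in[B]$; being primitive-in-a-free-factor this is enough to complete each to a basis. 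The main obstacle I anticipate is the bookkeeping in the case where $\al$ and $\beta$ have large overlapping support (e.g.\ two barbells sharing circles), where no single avoided edge serves both; there I expect to need the quantitative input that candidates have ``rank at most $2$'' support together with an Euler-characteristic count showing $G$ has enough edges outside $\text{supp}(\al)\cup\text{supp}(\beta)$ to build $\gamma$, or else a direct Whitehead-move reduction decreasing $|\al|+|\beta|$ until their supports separate.
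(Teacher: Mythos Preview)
Your proposal has a genuine gap at the key step. You claim that after fixing a maximal tree $T$, each candidate avoids at least one non-tree edge. This is false. Take $n=3$ with $X$ having vertices $v,w$, loops $a$ at $v$ and $b$ at $w$, and two edges $e_1,e_2$ from $v$ to $w$. With $T=\{e_1\}$ the non-tree edges are $a,b,e_2$, and the barbell $\beta = a\, e_2\, b\, \bar e_2$ crosses all three of them. What is true is that the \emph{image} of a candidate has rank $\le 2$, hence is a proper subgraph when $n\ge 3$; but the missed edge may well be a tree edge for your chosen $T$, and since you fixed $T$ once for both $\al$ and $\beta$ you cannot adjust it after the fact. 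The fallback you sketch --- building $\gamma$ as a figure-eight or barbell from $e_\al$ and $e_\beta$ --- also runs into trouble when those edges are separating (you cannot then exclude them from any maximal tree, and the Whitehead-graph verification you propose becomes circular: producing a basis in which $\gamma$ is a generator and $\al$ lies in the complementary factor is essentially the conclusion you want). The $n=2$ case is also not handled.

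The paper's argument goes in the opposite direction: rather than searching for $\gamma$ \emph{outside} the supports of $\al$ and $\beta$, it takes $\gamma$ to be an embedded circle \emph{inside} the image of one of them. Two elementary observations carry the proof: (i) if $\gamma$ is an embedded loop using an edge not in $\al$, then $\{\al,\gamma\}$ extends to a basis (collapse a maximal forest missing that edge and an edge $\al$ crosses once); and (ii) if $\al$ is a figure-eight or barbell and $\gamma$ is one of its two embedded subcircles, then $\{\al,\gamma\}$ extends to a basis (collapse a forest missing an edge of $\gamma$ and an edge of $\al\setminus\gamma$ that $\al$ crosses once). With these in hand, one simply takes $\gamma$ to be an embedded subcircle of $\al$ (or $\al$ itself if it is already embedded); then $\{\al,\gamma\}$ extends by (ii), and $\{\beta,\gamma\}$ extends by (i) or (ii) according to whether $\gamma$ is contained in $\im\beta$ or not. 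No tree needs to be chosen in advance, and the argument works uniformly in $n$.
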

\begin{proof}
Suppose $\al_X$ is a candidate and $\gamma_X$ is an embedded loop such that $\gamma_X \setminus \al_X \supseteq \{ e_i \}$. 
Let $J$ by a maximal forest in $X$ which doesn't contain 
$e_i$. Collapse $J$ to get $R_X$ a
wedge of circles. Since $e_i$ was not collapsed, $
\gamma_R \setminus  \al_R \supseteq \{ e_i \}$. Let $e_j$ be any edge that $\al$ crosses exactly once then $< \al_R, \gamma_R, e_1, \dots, \hat{e_i}, \dots , \hat{e_j}, \dots e_n
>$ represents a basis for $F_n$ because $< e_j, e_i, e_1, \dots, \hat{e_i}, \dots , \hat{e_j}, \dots e_n>$ is a basis for $F_n$.\\ 
Suppose $\al$ is a figure 8 or barbell candidate and $\gamma$ is an embedded circle so that $\gamma \subseteq \al$. Choose an edge $e_i$ in $\al \setminus\gamma$ which $\al$ crosses only once. Collapse a maximal forest that doesn't contain $e_i$. Now choose $e_j$ in $\gamma_R$. Then $<\al_R, \gamma_R, e_1, \dots, \hat{e_i}, \dots , \hat{e_j}, \dots e_n>$ is a basis for $F_n$. \\
\indent Let $\al_X, \beta_X$ be any two candidates. If one of them is an embedded loop whose image is not contained in the other then $\al,\beta$ can be completed to a basis. If $\al, \beta$ have the same image then find an embedded loop $\gamma$ as in the previous paragraph so that $\al, \gamma$ and $\beta, \gamma$ can be completed to a basis. If they have different images and are not embedded, let $\gamma$ be an embedded loop so that $ \im \gamma \subseteq \im \al$. Then $\al,\gamma$ can be completed to a basis. If $\im \gamma \subseteq \im \beta$ then $\beta, \gamma$ can be completed to a basis. If $\gamma$ is not contained in $\beta$ then again by the previous paragraph $\beta, \gamma$ can be completed to a basis. 
\end{proof}

\subsection{Asymmetry of $d(x,y)$}

In general, the lipschitz distance is not symmetric as is shown in figure \ref{non_symmetric}. 

 \begin{figure}[ht]
 \begin{center}
 \input{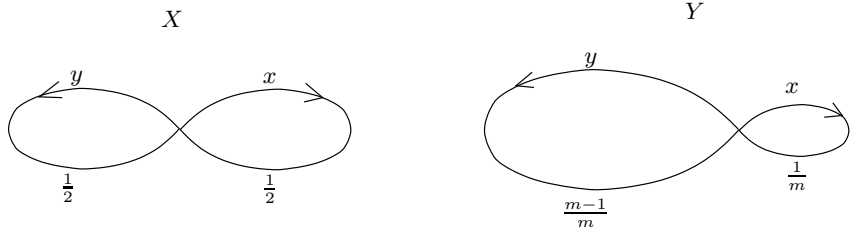}
 \caption{\label{non_symmetric}An example where $d(X,Y)=\log \frac{2m-2}{m} \sim \log 2$ and
   $d(Y,X)= \log \frac{m}{2}$}
 \end{center}
 \end{figure}

We will make use of the following result: 

\begin{definition}
For $\theta>0$ the $\theta$-thick part of $\os$ is
\[ \os(\theta) = \{ (X,f,l) \in \os \mid l(\al) \geq \theta \text{
  for all } \al \} \]
\end{definition}

\begin{theorem}\label{almost_sym}\cite{AKB}
For any $\theta>0$ there is constant $c = c(\theta,n)$ such that:
\[ d(x,y) \leq c \cdot d(y,x)  \] for any
$x,y \in \os(\theta)$.
\end{theorem}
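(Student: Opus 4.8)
I will first establish a bound of the form $d(x,y)\le D(\theta,n)\,d(y,x)+C(\theta,n)$ and then promote it to the purely multiplicative statement. Write $\Lambda=e^{d(y,x)}$ and fix an optimal difference of markings $g\colon y\to x$, so $g$ is $\Lambda$-Lipschitz, linear on edges, and (being a homotopy equivalence of volume-one graphs) surjective. By Proposition~\ref{green graph} and the observation following it, $d(x,y)=\log\sup_{\alpha}\ell_y(\alpha)/\ell_x(\alpha)$, the supremum over candidate loops $\alpha$ in $x$; and since $x\in\os(\theta)$ has at most $3n-3$ edges, each of length at most $1$, and a candidate crosses each edge at most twice, we have $\theta\le\ell_x(\alpha)\le 2$ for every such $\alpha$. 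So it suffices to bound $\ell_y(\alpha)$, for the finitely many candidate loops $\alpha$ in $x$, by a polynomial in $\Lambda$ whose degree and coefficients depend only on $\theta,n$; equivalently, to exhibit one difference of markings $h\colon x\to y$ with $\log\Lip(h)$ of that size.

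\textbf{A controlled quasi-inverse of $g$.} Write a candidate loop $\alpha$ in $x$ as a reduced edge-loop $e_{i_1}\cdots e_{i_k}$ with $k\le 6n-6$. Using surjectivity of $g$, for each edge $e$ occurring pick a sub-arc $J_e$ lying in a single edge of $y$ on which $g$ restricts to a homeomorphism onto a neighbourhood of the midpoint of $e$; together the $J_e$ contribute total $y$-length at most $6n-6$. Splice consecutive arcs $J_{e_{i_j}}$, $J_{e_{i_{j+1}}}$ by a path $\tau_j$ in $y$ whose $g$-image is null-homotopic rel endpoints; such a $\tau_j$ exists and is unique up to homotopy because $g$ is a homotopy equivalence. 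The resulting loop $\sigma\subset y$ satisfies $[g(\sigma)]\simeq\alpha$, so $\ell_y(\alpha)\le\ell_y(\sigma)$, and letting $\alpha$ run over a free basis of $F_n$ this defines $h$. Everything now reduces to bounding the $y$-length of each of the at most $6n-6$ splicing paths $\tau_j$ --- equivalently, to showing that $g$ does not over-fold: \emph{any reduced edge-path $\tau$ in $y$ whose $g$-image tightens to a point satisfies $\ell_y(\tau)\le P(\Lambda)$ for a polynomial $P$ depending only on $\theta,n$.} Granting this, $\ell_y(\alpha)\le\ell_y(\sigma)\le(6n-6)\bigl(1+P(\Lambda)\bigr)$, and since $\ell_x(\alpha)\ge\theta$ we conclude $d(x,y)\le D(\theta,n)\,d(y,x)+C(\theta,n)$.

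\textbf{The over-folding estimate and the upgrade to a multiplicative bound.} The over-folding statement is the technical heart, and it is here that both hypotheses $x,y\in\os(\theta)$ are genuinely needed. Passing to universal covers, a path $\tau$ folded to a point by $g$ exhibits two points at $\tilde y$-distance $\ell_y(\tau)$ that the lift $\tilde g$ identifies; the Bounded Cancellation Lemma bounds in terms of $\Lip(g)=\Lambda$ how much of $[g(e)]$ can cancel at each junction of $\tau$, while $\theta$-thickness of $x$ forces the image of any immersed loop taking part in the folding to have length at least $\theta$ --- together these pin $\ell_y(\tau)$ down polynomially in $\Lambda$, with the remaining constants coming from $\theta$ and $n$. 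Finally, $d(x,y)\le D\,d(y,x)+C$ already gives $d(x,y)\le(D+C/\varepsilon_0)\,d(y,x)$ whenever $d(y,x)\ge\varepsilon_0$; for $d(y,x)<\varepsilon_0$ one argues locally, using that near a point the forward and backward first-order behaviour of $d$ is described by two polytopal asymmetric norms on the volume-preserving tangent space, each with $0$ in its interior, whose relative eccentricity varies continuously with the point and hence --- by cocompactness of the action of $\out$ on $\os(\theta)$ --- is uniformly bounded. Combining the two regimes yields $d(x,y)\le c(\theta,n)\,d(y,x)$.

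\textbf{Expected main obstacle.} The crux is the quantitative over-folding (bounded-cancellation) estimate for an optimal map between $\theta$-thick graphs: controlling it by a polynomial in $e^{d(y,x)}$, using only the systole lower bound $\theta$ and not any lower bound on individual edge lengths (short bridges are unavoidable in $\os(\theta)$). An alternative that trades this for a different difficulty: $\out$-cocompactness of $\os(\theta)$ reduces the theorem, after comparing $x$ and $y$ each to a nearby point in the orbit of a fixed rose, to the group-theoretic inequality $\|\gamma^{-1}\|\le C(n)\,\|\gamma\|^{c(n)}$ for $\gamma\in\mathrm{Aut}(F_n)$ (with $\|\cdot\|$ the maximal word length of the image of a free basis); then that inequality is the obstacle.
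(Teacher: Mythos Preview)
The paper does not prove this theorem; it is quoted from \cite{AKB} and used as a black box, so there is no in-paper argument to compare against.

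Your proposal is an honest outline, and you correctly isolate the difficulty, but the central step is not proved and the sketch you offer for it does not work. You want: for a reduced path $\tau$ in $y$ with $[g(\tau)]$ short, $\ell_y(\tau)\le P(\Lambda)$. Bounded cancellation controls the \emph{image}: it tells you $\tilde g(\tilde\tau)$ lies in a ball of radius $O(\Lambda)$ in $\tilde x$. It says nothing about the length of $\tilde\tau$ itself, because a $\Lambda$-Lipschitz map may collapse arbitrarily long segments. What would bound $\ell_y(\tau)$ is a co-Lipschitz inequality $d_{\tilde x}(\tilde g(p),\tilde g(q))\ge K^{-1}d_{\tilde y}(p,q)-C$; but the best such $K$ is exactly $e^{d(x,y)}$, the quantity you are trying to control, so the argument is circular. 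Invoking $\theta$-thickness of $x$ does not help here: it lower-bounds lengths of immersed \emph{loops} in $x$, not preimages of short paths under $g$. The difficulty is real --- it is precisely why the theorem needs a genuine proof rather than a formal manipulation of the two directions of the metric.

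Two smaller remarks. First, your infinitesimal upgrade from an affine to a multiplicative bound via ``two polytopal asymmetric norms'' and cocompactness is plausible in spirit, but you have not said what the backward infinitesimal norm is or why it exists and varies continuously; this is not automatic for an asymmetric length metric and needs its own argument. Second, your alternative reduction to $\|\gamma^{-1}\|\le C(n)\|\gamma\|^{c(n)}$ is a legitimate reformulation (after taking logs it gives exactly the affine bound with multiplicative constant $c(n)$), but, as you note, it merely relocates the obstacle rather than removing it.
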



\section{The axis of an irreducible automorphism and its projection}

It is straightforward to check that the right action of $\out$
on $\os$ is an isometric action.

\begin{notation}
Let $x = (G,g,\ell)$ be a point in $\os$. In this section and in later chapters, we will repress $g$ and $\ell$ and denote $x$ with $G$. 
\end{notation}

Let $\phi$ be an irreducible outer automorphism. Observe that if $f: G  \to G$ is a train-track representative for $\phi$ then so is $f: G \cdot \phi  \to G \cdot \phi$. We get a sequence of points $G \cdot \phi^m$ so that $f:G \cdot \phi^m \to G \cdot \phi^m$ is a train-track representative for $\phi$. We would like to define a line $\LL_f  = \{ G_t \}_{t \in \RR} = \{ G(t) \}_{t \in \RR}$ so that: 
\begin{enumerate}
	\item $G_t$ is a directed geodesic parameterized according to arc-length in the Lipschitz metric. That is, for $t < t' $ we have $d(G_t, G_{t'}) = t' - t$. In particular, for $t< t' < t''$: $d(G_t, G_{t''}) = d(G_t, G_{t'}) + d(G_{t'}, G_{t''})$
	\item $G_0 = G$
	\item $\LL_f$ is invariant under $\phi$. 
	\item\label{ttmaps} For each $t$, there are maps $f_t: G_t \to G_t$ that are irreducible train-track representatives of $\phi$.
\end{enumerate}

The way to achieve this is to start ``folding" $G$ onto itself by identifying appropriate segments in edges which form an illegal turn until we reach $G \cdot \phi$. This defines a path $[G , G \cdot \phi] := \{ G_t \}_{ 0 \leq t \leq \log \lam}$ where $\log \lam = d(G, G \cdot \phi)$. Then we translate this path using $\phi$ to construct a line $\displaystyle \LL_f = \bigcup_{m = - \infty} ^{\infty} [G, G \cdot \phi] \cdot \phi^m$. This line is automatically invariant under $\phi$. It is not hard to see that it is a directed geodesic. For more details see \cite{AKThesis}, \cite{HM} or \cite{FM}.

\subsection{The projection to an Axis}

\begin{notation}
In this section we denote the point $x=(G,h,\ell) \in \os$ by $G$. The length of the (immersed) loop $\al$ in $G$ will now be denoted by $l(\al, G)$.
\end{notation}

Let $\phi$ be an outer automorphism, and suppose $f:G \to G$ is a 
train-track representative for $\phi$ and $g:H \to H$ is a train-track
representative for $\phi^{-1}$. We may suppose $f$ and $g$ are stable. Let $\LL_f  = \{ G(t) \}_{t \in \RR}$ be an axis for $\phi$, and $\LL_g = \{H(t)\}_{t \in \RR}$ an axis for $\phi^{-1}$. Let $\lam, \nu$ be the expansion factors of $\phi, \phi^{-1}$. We will show that for a conjugacy class of basis elements $\al$ there is a $k_0$ such that $l(\phi^{k-k_0}(\al),G) \sim \lam^{k-k_0} l_0 + \mu^{k_0-k} l_0$. This $k_0$ will allow us to define the projection to $\LL_f$.\\

\begin{notation}
Fix a graph $G$ if $\al$ is a loop we denote by $[\al]$ the immersed loop which is freely homotopic to $\al$. If $\al$ is a path then $[\al]$ is the immersed path homotopic to $\al$ relative to its endpoints.
\end{notation}

\begin{defn}[The bounded cancellation constant]
Let $f: T \to T'$ be an equivariant map of free metric $\RR$-trees. There is a constant denoted $BCC(f)$ so that for all $p,q,r \in T$ such that $p \in [q,r]$ we have $d(f(p), [f(q), f(r)]) < BCC(f)$. This was first observed  by Cooper \cite{C}. $BCC(f)$ is called \emph{the bounded cancellation constant} of $f$.  
\end{defn}

We need the following lemmas and notions due to Bestvina, Feighn,
and Handel \cite{BFH}. Consider a path $\delta = \al \cdot \beta \cdot \gamma$ where $\al,\beta,\gamma$ are legal paths with respect to $f$ but the turn denoted by the dot might be illegal. $f(\delta) =  f(\al)\cdot f(\beta) \cdot f(\gamma)$ and the cancellation only occurs at the dot.  $[f(\delta)]$ will contain $\theta$, the subpath of  $f(\beta)$ obtained from $f(\beta)$ by truncating paths of length $BCC(f)$ at both ends. If $len(\beta)> \frac{4 BCC(f)}{\lambda-1}$ then $len(\theta)> len(f(\beta)) - 2 BCC(f) > \frac{\lam+1}{2} len(\beta)$. 

\begin{definition}\label{def_legality}
We call the constant $\kappa = \frac{4 BCC(f)}{\lam-1}$ the legality threshold (note the slight difference from the ``critical constant" of \cite{BFH}).  The
legality of $\delta$ with respect to the train-track structure of $f$ is
\[ LEG_f(\delta,G) =\frac{\text{Total length of all legal pieces of length
} > \kappa}{l(\al,G)} \]
If $LEG_f(\delta,G)>\eps$ we say that $\delta$ is $\eps$-legal.
\end{definition}

\begin{lemma}\label{lemma_exp_growth_1}
For every $\eps>0 $ there is a constant $C=C(\eps)$ so that if $\delta$ is $\eps$-legal then $l(f^n(\delta),G) > C \lam^{n} l(\delta,G)$. 
\end{lemma}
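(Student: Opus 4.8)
The plan is to track what happens to the legal pieces of $\delta$ that are longer than the legality threshold $\kappa = \frac{4\,BCC(f)}{\lambda-1}$ under a single application of $f$, and then iterate. Write $\delta$ as a concatenation of maximal legal subpaths $\delta = \sigma_1 \tau_1 \sigma_2 \tau_2 \cdots$, where each $\sigma_j$ is a legal piece and the $\tau$'s record the illegal turns between them (so the $\tau_j$ are the illegal turns, not paths of positive length). Applying $f$ once, each $f(\sigma_j)$ is legal (since $f$ is a train-track map) of length $\lambda\, l(\sigma_j, G)$, and cancellation in $[f(\delta)]$ occurs only at the finitely many illegal turns. By the bounded cancellation discussion preceding the lemma, for a legal piece $\sigma_j$ with $l(\sigma_j,G) > \kappa$, after passing to $[f(\delta)]$ what survives of $f(\sigma_j)$ is a legal subpath $\theta_j$ of length at least $\lambda\, l(\sigma_j,G) - 2\,BCC(f) > \frac{\lambda+1}{2}\, l(\sigma_j,G) > \kappa$, so $\theta_j$ is again a legal piece of $[f(\delta)]$ of length exceeding $\kappa$. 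Hence the total length of long legal pieces does not shrink by more than a factor $\frac{\lambda+1}{2}$ relative to $\lambda$; more precisely, after one step the total length of legal pieces of length $>\kappa$ in $[f(\delta)]$ is at least $\frac{\lambda+1}{2}$ times the total length of such pieces in $\delta$, which is $\geq \frac{\lambda+1}{2}\,\eps\, l(\delta,G)$.

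Next I would iterate. The key point is that once $\delta$ is $\eps$-legal, after applying $f$ the long legal pieces still carry a definite proportion of the length — in fact the legality can only improve, since the denominator $l(\cdot, G)$ grows by at most $\lambda$ per step while the long legal pieces grow by at least $\frac{\lambda+1}{2}$, and no new illegal turns are created inside the surviving pieces. So by induction $[f^n(\delta)]$ contains legal subpaths of total length at least $\left(\frac{\lambda+1}{2}\right)^n \eps\, l(\delta,G)$. That already gives exponential growth, but with base $\frac{\lambda+1}{2}$ rather than $\lambda$; to get the $\lambda^n$ in the statement one does the truncation only \emph{once}. That is: apply $f$ once to get legal pieces $\theta_j$ of total length $\geq \frac{\lambda+1}{2}\eps\, l(\delta,G) > \eps'\, l(f(\delta),G)$ for a suitable $\eps' = \eps'(\eps)$ (since $l(f(\delta),G) \le \lambda\, l(\delta,G)$); then for the remaining $n-1$ steps observe that each $\theta_j$ is already a legal path, so $l(f^{n-1}(\theta_j),G) = \lambda^{n-1} l(\theta_j, G)$ exactly, and these images are disjoint pieces of $[f^n(\delta)]$ (cancellation between images of disjoint legal subpaths of a path is again controlled, but here we only need a lower bound, so we may even absorb any loss into the constant). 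Summing, $l(f^n(\delta),G) \geq \sum_j \lambda^{n-1} l(\theta_j,G) \geq \lambda^{n-1}\cdot \frac{\lambda+1}{2}\eps\, l(\delta,G) \geq C(\eps)\,\lambda^n\, l(\delta,G)$ with $C(\eps) = \frac{\lambda+1}{2\lambda}\,\eps$ (or a similar explicit constant), which is exactly the claim.

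The main obstacle I anticipate is the bookkeeping at the illegal turns: I must be sure that the surviving legal subpaths $\theta_j$ of $[f(\delta)]$ really are pairwise disjoint and really are legal as pieces of $[f(\delta)]$ (not accidentally merged across a turn that became legal, nor broken by an illegal turn interior to $f(\sigma_j)$ — the latter cannot happen since $f$ is a train-track map, but this should be stated), and that when I then apply $f^{n-1}$ to each $\theta_j$ separately the images do not cancel each other away below the bound. This is where I would invoke bounded cancellation a second time, or simply note that since $f^{n-1}$ restricted to each $\theta_j$ is an immersion onto a legal path and the $\theta_j$ sit in $\delta'=[f(\delta)]$ separated by (at least) the cancellation amount I threw away, the concatenation $[f^{n-1}(\delta')]$ contains each $[f^{n-1}(\theta_j)]$ with at most $BCC(f)$ trimmed from each end — a loss that is linear in the number of pieces and hence absorbed once we also know, from Proposition \ref{quasi_periodic}-type quasi-periodicity (or simply from $l(\theta_j,G) > \kappa$), that each piece is long enough that trimming costs only a fixed fraction. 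I would phrase the final constant to make this absorption transparent.
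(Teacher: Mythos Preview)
Your approach is correct and matches the paper's: decompose $\delta$ into its long legal pieces $\beta_i$, observe that after one application of $f$ and tightening each survives as a legal subpath $\theta_i$ of length $> \frac{\lambda+1}{2}\,l(\beta_i)$, and then use that legal paths grow by exactly $\lambda$ under $f$ to reach the constant $C(\eps) = \frac{\lambda+1}{2\lambda}\,\eps$. Your final-paragraph worry about repeated trimming under the remaining $n-1$ iterates is legitimate --- the paper's proof is terse and does not spell this out --- but the fix you sketch (iterate the one-step bound, paying at most $2\,BCC(f)$ per surviving piece per step, so that the total loss on the piece descending from $\beta_i$ is at most $2\,BCC(f)\frac{\lambda^n-1}{\lambda-1} < \lambda^n \cdot \tfrac{\kappa}{2} < \tfrac{1}{2}\lambda^n\, l(\beta_i)$) is exactly what is needed, and no appeal to quasi-periodicity is required.
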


\begin{proof}
Let $\beta_1, \dots, \beta_k$ be legal subsegments of $\delta$ of length $> \kappa$. Then $[f(\delta)]$ contains $\theta_1, \dots, \theta_n$ the middle subsegments of $f(\beta_1), \dots , f(\beta_n)$ after truncating $BCC(f)$ from either side. $l(\theta_i,G) > \frac{\lam+1}{2}l(\beta_i,G)$ Therefore, \[ \begin{array}{lllll}
l(f^n(\delta),G) & > & \sum_{i=1}^k l(\theta_i, G) & > & \frac{\lam+1}{2} \sum_{i=1}^k l(f^{n-1}(\beta_i),G)) \\[0.2 cm]
&&& =& \frac{\lam+1}{2} \lam^{n-1} \sum_{i=1}^k l(\beta_i,G) \\[0.2 cm]
&&& > &\frac{\lam+1}{2\lam} \lam^n \eps \phantom{a} l(\delta,G)
\end{array} \]
Therefore $C(\eps) = \frac{\lam+1}{2\lam} \eps$.
\end{proof}

\begin{lemma}[Lemma 5.6 in \cite {BFH}]
If $\phi$ is non-geometric there is a constant $\eps_0>0$ and an
integer $N$ such that for any conjugacy $\al$:
\begin{center} 
$LEG_f(\phi^N(\al),G)> \eps_0$ or $LEG_g(\phi^{-N}(\al),H)>
\eps_0$
\end{center}
\label{BFH_lemma1}\end{lemma}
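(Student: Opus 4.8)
The plan is to prove the statement by contradiction, combining the internal structure of the two train-track maps $f\colon G\to G$ and $g\colon H\to H$ with the Bestvina--Handel characterization of geometric automorphisms quoted above. So suppose no such $\eps_0,N$ exist. Then for each $m\in\NN$ there is a conjugacy class $\al_m$ with $LEG_f(\phi^m(\al_m),G)\le 1/m$ and $LEG_g(\phi^{-m}(\al_m),H)\le 1/m$. Put $\beta_m=\phi^m(\al_m)$, normalized to unit length in $G$, and note that $\phi^{-2m}(\beta_m)=\phi^{-m}(\al_m)$.

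The first step is to understand what a very small value of $LEG_f$ forces. I would first observe that the number of $f$-illegal turns along $[f^k(\mu)]$ is non-increasing in $k$ for any loop $\mu$: $f$ carries maximal legal segments to legal segments, and the bounded cancellation at an illegal turn produces no more illegal turns than it destroys. Hence this count stabilizes, and once it has, every maximal legal segment longer than the threshold $\kappa$ of Definition~\ref{def_legality} maps --- after truncating $BCC(f)$ from each end, as in the discussion preceding that definition --- to a maximal legal segment longer by a factor $>\tfrac{\lam+1}{2}$; so the ``long'' legal length grows like $\lam^k$, while each short legal segment becomes long within boundedly many steps unless the two illegal turns bounding it persist forever with only boundedly much legal material between them. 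Therefore $LEG_f(\phi^m(\al_m),G)\to 0$ forces the normalized loops $\beta_m$ to converge, along a subsequence (in the space of geodesic currents on $G$), to a lamination $\mu^+$ every leaf of which is supported on the finitely many persistently-illegal local configurations of $f$; full irreducibility (no invariant proper subgraph) then forces $\mu^+$ to be carried by the Nielsen paths of $f$, and stability forces it to be carried by the single indivisible Nielsen path $\rho_f$. Running the identical argument for $g$ on the loops $\phi^{-2m}(\beta_m)$ produces a limit lamination $\mu^-$ on $H$ carried by the indivisible Nielsen path $\rho_g$ of $g$.

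Finally I would transport both conclusions to a common marked graph: $\mu^+$ and $\mu^-$ describe the same $\out$-orbit of laminations, being limits of $\phi^{\pm 2m}$-translates of the same $\al_m$, yet one is carried by the Nielsen path of $\phi$ and the other by that of $\phi^{-1}$. A lamination carried by Nielsen paths of both $\phi$ and $\phi^{-1}$ must be a closed leaf, i.e.\ a Nielsen loop, which by full irreducibility crosses every edge of $G$, and comparing with the crossing pattern of a loop assembled from a single indivisible Nielsen path and its inverse forces it to cross every edge exactly twice; by the Bestvina--Handel criterion quoted above this makes $\phi$ geometric, a contradiction. I expect the hardest part to be exactly this last step, together with the claim inside the first step that the limiting lamination is genuinely carried by Nielsen paths --- ruling out infinite, non-closing ``totally illegal'' limits --- and that the estimates can be made uniform in $\al$; this is where the classification of geometric automorphisms does the real work.
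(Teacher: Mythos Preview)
The paper does not prove this lemma: it is quoted verbatim as Lemma~5.6 of \cite{BFH}, and the only information the paper supplies about its proof is the remark immediately following it, that ``the assumption that $\phi$ is nongeometric is used only to bound the number of consecutive Nielsen paths appearing in $\al_G$.'' So there is nothing to compare against here beyond that hint. From the hint one sees that the argument in \cite{BFH} is direct and combinatorial: one shows that for a single loop $\al$, if both $LEG_f(\phi^N(\al),G)$ and $LEG_g(\phi^{-N}(\al),H)$ stay small as $N$ grows, then most of $\al$ must be covered by concatenations of (pre-)Nielsen paths of $f$, and in the nongeometric case the number of indivisible Nielsen paths that can be concatenated is bounded (this is exactly Proposition~\ref{NP_in_can} of the present paper in the nongeometric case), which gives a uniform lower bound on legality.

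Your approach is in the same spirit but routes through limits of currents, and that detour introduces a genuine gap. You take a sequence $\al_m$, set $\beta_m=\phi^m(\al_m)$, and extract a limit $\mu^+$ of $\beta_m$ and a limit $\mu^-$ of $\phi^{-2m}(\beta_m)$. You then assert that $\mu^+$ and $\mu^-$ ``describe the same $\out$-orbit of laminations, being limits of $\phi^{\pm 2m}$-translates of the same $\al_m$.'' This is not justified: the classes $\al_m$ vary with $m$, the powers $\phi^{\pm 2m}$ go to infinity, and there is no a~priori relation between a subsequential limit of $\beta_m$ and one of $\phi^{-2m}(\beta_m)$. Without that link, you have produced a lamination carried by Nielsen paths of $f$ and a \emph{possibly unrelated} one carried by Nielsen paths of $g$, and the final contradiction does not fire. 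A second soft spot is the passage from ``low legality'' to ``limit carried by Nielsen paths'': you need that every sufficiently short maximal legal segment bounded by two illegal turns that persist under all powers of $f$ is in fact a subpath of a (pre-)Nielsen path; this is true, but it is exactly the content of the BFH analysis, and once you have it you can finish directly for a single $\al$ without any limiting procedure, as the paper's hint suggests.
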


We want some version of this lemma for geometric
automorphisms. In the proof of
\ref{BFH_lemma1} the assumption that 
$\phi$ is nongeometric is used only to bound the number of
consecutive Nielsen paths appearing in $\al_G$. \\

Recall the definitions of  Nielsen and pre-Nielsen paths from Definition \ref{Nielsen}. Recall that if $\phi$ is 
geometric and fully irreducible and $f$ is stable, then
the unique indivisible Nielsen path $\beta$ is a loop which crosses every edge exactly twice. Therefore, in this case there is no bound for the number of consecutive Nielsen paths that appear in $\al_G$. Indeed the statement in Lemma \ref{BFH_lemma1} is false for $\beta$ if $\kappa$ is big enough. We shall restrict our attention henceforth to the case where $\al$ is the conjugacy class of a basis element. 

\begin{prop}\label{NP_in_can}
There is a bound $K$ that depends only on $\phi$, such that if $\al$ is a conjugacy class of a basis element, then $\al_G$ cannot cross more than $K$ consecutive pre-Nielsen paths.
\end{prop}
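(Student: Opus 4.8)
The plan is to argue by contradiction: suppose $\al_G$ contains a subsegment that is a concatenation of $K$ consecutive indivisible pre-Nielsen paths, for $K$ arbitrarily large. Since there are only finitely many indivisible Nielsen paths for $f$ (by stability, at most one up to the obvious symmetries and the choice of where it starts), and a pre-Nielsen path maps under some bounded power of $f$ to a Nielsen path, a long concatenation of pre-Nielsen paths in $\al_G$ maps under a bounded power $f^i$ to a long concatenation of honest Nielsen paths in $[f^i(\al)]_G$, and $f^i(\al)$ is again a basis element. So it suffices to produce the bound $K$ under the assumption that $\al_G$ itself contains $\beta^K$ as a subpath, where $\beta$ is the indivisible Nielsen loop.

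\textbf{Key steps.} First I would reduce to the geometric case: if $\phi$ is non-geometric the number of consecutive Nielsen subpaths in any $\al_G$ is already bounded by Lemma \ref{BFH_lemma1}'s input (Lemma 5.6 of \cite{BFH}), so there is nothing to prove; hence assume $\phi$ is geometric and fully irreducible, so that the unique indivisible Nielsen path $\beta$ is a loop crossing every edge exactly twice, and $\beta$ corresponds to a boundary curve of a surface $S_{g,1}$ with $\pi_1(S_{g,1}) = F_n$ carrying a pseudo-Anosov representative of $\phi$. Second, I would observe that a long power $\beta^K$ appearing as a subword of the cyclically reduced word for a basis element $\al$ forces $\al$ (or rather the set $\{\al\}$) to interact with the conjugacy class of $\beta$ in a controlled way: $\beta$ is a boundary curve, so $\beta$ together with any basis element can be analyzed via the Whitehead graph of $\{\al,\beta\}$ in a suitable basis, or more directly, via the fact that $\al$ can be completed to a basis. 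Third — and this is the crux — I would use Whitehead's algorithm / Theorem \ref{WhVer2}: if $\al_G$ contains $\beta^K$ then in the basis adapted to $G$ (collapse a maximal tree in $G$ to a rose $R$), the word representing $\al$ reads $\dots w \beta^K w' \dots$ with $\beta$ itself spelled out; for $K$ large this creates a Whitehead graph $Wh_R(\al)$ with a very specific local structure near the letters of $\beta$, and one shows that no single Whitehead automorphism (nor a bounded sequence of them) can reduce the length of $\al$ — which, since $\al$ is a basis element and thus Whitehead-reducible to length $1$, gives a contradiction once $K$ exceeds a bound depending only on the combinatorial complexity of $\beta$, i.e. only on $\phi$.

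\textbf{An alternative, cleaner route} that I would probably prefer: work on the surface $S = S_{g,1}$ whose boundary $\partial S$ represents $\beta$. The conjugacy class $\al$, being a basis element of $F_n = \pi_1(S)$, corresponds to a simple or at least ``small-complexity'' object relative to $\partial S$ — concretely, a basis element of $\pi_1(S)$ is represented by an embedded non-separating arc or simple closed curve in $S$ (this is the surface incarnation of ``$\al$ extends to a basis''). An embedded arc or simple closed curve in $S$ has bounded intersection number with $\partial S$, and on $G$ the number of times $\al_G$ runs alongside copies of $\beta$ before ``turning away'' is controlled by exactly this intersection number. Feeding this geometric bound back into the graph $G$ via the correspondence between $G$ and $S$ gives the bound $K$ depending only on $\phi$ (through $g$, the genus).

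\textbf{Main obstacle.} The hard part will be making precise the claim that a basis element of $F_n$, when realized on the surface $S_{g,1}$, has bounded ``winding'' around the boundary Nielsen loop $\beta$ — i.e. converting ``$\{\al\}$ completes to a basis of $\pi_1(S)$'' into an effective bound on how many consecutive copies of $\beta$ can appear in $\al_G$. One must rule out that $\al$ spirals many times around $\partial S$; the point is that such spiraling could be undone by a (partial) Dehn twist about $\partial S$, reducing complexity, contradicting minimality of $\al$ within its mapping-class-orbit of representatives that complete to a basis — but $\partial S = \beta$ is itself $\phi$-invariant, so the twist about it need not be realizable inside $\out$; instead one uses that $\al_G$ is (immersed and) a candidate-type loop and invokes the explicit structure of $\beta$ crossing every edge exactly twice to get the combinatorial bound directly. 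Closing this gap carefully — either via Whitehead's theorem as in the first route, or via the surface topology of $S_{g,1}$ as in the second — is where the real work lies; everything else (the reduction to the geometric case, pre-Nielsen to Nielsen, finiteness of indivisible Nielsen paths) is routine given the results already quoted.
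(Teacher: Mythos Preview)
Your reductions are correct and match the paper: the nongeometric case is delegated to \cite{BFH}, and in the geometric case one passes from pre-Nielsen to Nielsen by applying a power of $f$. But both of your proposed routes through the geometric case are more laborious than necessary, and you miss the paper's one-line key observation.

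The paper collapses the spine $G$ to a rose $R$ embedded in the surface $S_{g,1}$ and observes that $Wh_R(\beta)$ is literally $\text{Link}(*, S_{g,1})$, the link of the single vertex $*$ in the surface --- which is a circle. A circle is connected and has no cut vertex. Hence if $\al_G$ contains two consecutive copies of the Nielsen loop $\beta$ (so that every turn of $\beta$ appears in $\al$), then $Wh_R(\al) \supseteq Wh_R(\beta)$ is already connected with no cut vertex, and Theorem~\ref{WhThm} immediately says $\al$ is not a basis element. That is the entire argument, with $K$ effectively equal to $2$.

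Your first route circles this idea --- you mention Whitehead graphs and obstruction to length-reduction --- but you frame it as ``for $K$ large, no bounded sequence of Whitehead moves reduces length,'' which is both weaker and harder to verify than simply noting that $Wh_R(\beta)$ itself already has no cut vertex. Your second (surface-topology) route is a genuinely different strategy; the claim that a basis element of $\pi_1(S_{g,1})$ is represented by a simple closed curve is correct and classical, and would eventually yield a bound. But, as you yourself flag, converting ``simple in $S$'' into ``bounded $\beta$-winding in the spine $G$'' is exactly the nontrivial step, and the paper's link observation bypasses it entirely. The moral: the surface is used not to realize $\al$ nicely, but to compute $Wh_R(\beta)$ instantly.
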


\begin{proof}
The case where $\phi$ is nongeometric is handled in \cite{BFH}.\\
If $\phi$ is geometric then $f:G \to G$ may be extended to an automorphism of a surface of genus $g$ and one boundary component $F:S_{g,1} \to S_{g,1}$ where $G$ is a spine for $S_{g,1}$ (See \cite{BH}). In this case, the indivisible Nielsen path $\beta$ can be represented by the boundary circle. Collapse $G$ to a rose $R$ with vertex $*$ (also embedded in the surface) and notice the Whitehead graph of $\beta$ with respect to the basis represented by the edges of $R$ is equal to $\text{Link}(*, S_g)$ which is a circle. Thus, $Wh_R(\beta)$ is connected with no cut vertex. Now if $\al_G$ crosses two pre-Nielsen loops consecutively then 
for some $m$: $f^m(\al)$ crosses $\beta$ twice consecutively. Therefore, $Wh_R(f^m(\al)) \supseteq Wh_R(\beta)$ so $Wh_R(f^m(\al))$ is connected with no cut-vertices. By Whitehead's theorem $f^m(\al)$ is not a basis element, which is a contradiction. 
\end{proof}

\begin{lemma}\label{pos_legality}
For any irreducible outer automorphism $\phi$ there is a constant
$\eps_0>0$ and an integer $N$ such that for any basis element $\al$,  
\[ \begin{array}{lll}
LEG_f(\phi^N(\al),G)> \eps_0 & \mbox{ or } &
LEG_g(\phi^{-N}(\al),H)> \eps_0 
\end{array} \]
if $\phi$ is nongeometric this holds
for all $\al$. In particular, the above holds for a conjugacy class $\al$ that is represented by a candidate loop in some $x \in \os$.\label{BFH_lemma1b}\end{lemma}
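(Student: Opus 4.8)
The plan is to split into the nongeometric and geometric cases. If $\phi$ is nongeometric there is nothing new to prove: this is precisely Lemma \ref{BFH_lemma1}, and it even holds for every conjugacy class $\al$. So assume $\phi$ is geometric, fix stable train-track representatives $f\colon G\to G$ for $\phi$ and $g\colon H\to H$ for $\phi^{-1}$, and let $\al$ be the conjugacy class of a basis element. I would go back to the proof of Lemma 5.6 in \cite{BFH} and isolate the one place where nongeometricity is invoked.

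Recall the structure of that argument. One realizes $\al$ as cyclically reduced loops $\al_G\subseteq G$ and $\al_H\subseteq H$ and writes each as a concatenation of maximal legal segments joined at illegal turns, say $\al_G=L_1*L_2*\cdots*L_m$ (cyclically). If the legal segments of length $>\kappa$ already account for a definite fraction of $l(\al,G)$, then $LEG_f(\al,G)>\eps_0$ and we may take $N=0$; moreover, since at each illegal turn at most $2\,BCC(f)$ is cancelled while every legal piece is stretched by $\lam$ (the estimate preceding Definition \ref{def_legality}, quantified by Lemma \ref{lemma_exp_growth_1}), this fraction can only improve after applying powers of $f$, so a fixed $N$ works. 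If instead the long legal segments of $\al_G$ carry only a negligible fraction of its length, then $\al_G$ is, up to an error bounded in terms of $BCC(f)$ and $\kappa$, a concatenation of pre-Nielsen subpaths; at this point \cite{BFH} uses that $\phi$ is nongeometric to obtain a uniform bound on the number of consecutive pre-Nielsen subpaths of $\al_G$, and this bound is exactly what forces $LEG_g(\phi^{-N}(\al),H)>\eps_0$ for a suitable $N$.

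The required replacement is now supplied by Proposition \ref{NP_in_can}: since $\al$ is the conjugacy class of a basis element, there is a constant $K=K(\phi)$ bounding the number of consecutive pre-Nielsen subpaths of $\al_G$, with no hypothesis on geometricity. Inserting $K$ into the estimate of \cite{BFH} in place of the nongeometric bound --- and noting that all the other constants occurring there ($BCC(f)$, $BCC(g)$, $\kappa$, $\lam$, $\nu$, and the bounded-cancellation estimates) depend only on $\phi$ --- reproduces the same dichotomy, and hence gives $\eps_0>0$ and $N$ that work for every basis element $\al$. The only real obstacle is this bookkeeping: one must check that nongeometricity is used nowhere else in the proof of Lemma 5.6 of \cite{BFH} and that the dependence of the final constants on $K$ is harmless; the linear structure of that argument makes both routine.

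It remains to deduce the last assertion, that the conclusion holds when $\al$ is represented by a candidate loop in some $x\in\os$. As being a basis element is a property of the conjugacy class alone, it is enough to see that every candidate loop represents a basis element. If $\al_x$ is embedded, choose an edge $e$ that $\al_x$ crosses and extend the tree $\al_x\setminus\{e\}$ to a maximal forest $F$ of $x$; since $\al_x$ is a circle and $\al_x\setminus\{e\}\subseteq F$, necessarily $e\notin F$, so collapsing $F$ sends $\al$ to a single petal of the resulting rose, whence $\al$ is a basis element. If $\al_x$ is a figure eight on circles $u,v$, pick edges $e_u\subseteq u$ and $e_v\subseteq v$ crossed once each by $\al_x$ and extend $(u\setminus\{e_u\})\cup(v\setminus\{e_v\})$ to a maximal forest; collapsing it sends $\al$ to $e_u\,e_v^{\pm 1}$, which together with $e_v$ is part of a basis. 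The barbell case is identical, collapsing in addition the connecting arc $w$. (These are the same collapsing arguments used in the proof of Proposition \ref{can_are_basis}.) Hence a conjugacy class represented by a candidate is a basis element, and the ``in particular'' clause follows.
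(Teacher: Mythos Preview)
Your proof is correct and follows essentially the same approach as the paper: invoke Proposition~\ref{NP_in_can} to obtain the uniform bound $K$ on consecutive (pre-)Nielsen paths for basis elements, feed this $K$ into the proof of Lemma~5.6 of \cite{BFH} in place of the nongeometric hypothesis, and then observe that candidates are basis elements. The only cosmetic difference is that the paper cites Proposition~\ref{can_are_basis} directly for the last step, whereas you spell out the collapsing argument for each candidate type (while acknowledging it is the same argument); either is fine.
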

\begin{proof}
There is an integer $K$ so that for any basis element $\al$, $\al$ does not contain $K$ consecutive Nielsen paths. Apply the proof of Lemma \ref{BFH_lemma1} as it appears in \cite{BFH} to get the first part of the statement. By Proposition \ref{can_are_basis} all candidates are basis elements so the statement applies to them. 
\end{proof}

Fix a conjugacy class of a basis element $\al$. Notice that if $LEG_f(\phi^N(\al),G) \geq \eps$ then
$LEG_f(\phi^m(\al),G) \geq \eps$ for all $m>N$. Define
\[ \begin{array}{l}
k_0  = k_0(\al) = \max \{k | LEG_f(\phi^k(\al),G)<\eps_0 \}\\[0.1 cm]
 k'_0 = k'_0(\al) = \min \{k | LEG_g(\phi^k(\al),H)<\eps_0 \}
 \end{array}
 \] 
To see the existence of $k_0$ recall that for all basis elements $\al$, the weak limit $\lim_{n \to \infty}\phi^n(\al)$ is the attracting lamination and the weak limit $\lim_{n \to -\infty}\phi^n(\al)$ is the repelling lamination (e.g., \cite{BFH}). Therefore there is some $k$ such that $\phi^k(\al)$ is $\eps_0$-legal and $\phi^{-k}(\al)$ is not $\eps_0$-legal in $G$. Since the roles of $f,g$ are symmetric, applying the same argument to $g$ shows the existence of $k_0'$. 

\begin{lemma}\label{k_k'}
There is an $N$ so that for all $\al$, $|k_0(\al) - k'_0(\al)|<N$
\end{lemma}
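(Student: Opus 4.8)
The plan is to bound $k_0(\al)-k_0'(\al)$ and $k_0'(\al)-k_0(\al)$ from above separately; the first bound comes quickly from Lemma \ref{pos_legality}, and the second — the substantive step — from the exponential growth estimate Lemma \ref{lemma_exp_growth_1} applied both to $f$ and to $g$, together with the finiteness of $d(G,H)$ and $d(H,G)$. Throughout I use the persistence property recorded in the text just before the definitions of $k_0$ and $k_0'$: $LEG_f(\phi^k(\al),G)<\eps_0$ holds precisely for $k\le k_0$, and $LEG_g(\phi^k(\al),H)<\eps_0$ holds precisely for $k\ge k_0'$. Let $N_0$ denote the integer in Lemma \ref{pos_legality}.

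For the first bound, apply Lemma \ref{pos_legality} to the basis element $\phi^{k_0-N_0}(\al)$: either $LEG_f(\phi^{k_0}(\al),G)>\eps_0$ or $LEG_g(\phi^{k_0-2N_0}(\al),H)>\eps_0$. The former is impossible, because $LEG_f(\phi^{k_0}(\al),G)<\eps_0$ by the definition of $k_0$. Hence $LEG_g(\phi^{k_0-2N_0}(\al),H)>\eps_0$, which by the persistence property forces $k_0-2N_0<k_0'$; that is, $k_0-k_0'<2N_0$.

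For the second bound we may assume $k_0'\ge k_0+2$, and we set $m=k_0'-k_0-2\ge 0$. By the choice of $k_0,k_0'$ the class $\phi^{k_0+1}(\al)$ is $\eps_0$-legal with respect to $f$ and the class $\phi^{k_0'-1}(\al)$ is $\eps_0$-legal with respect to $g$ (if the strict inequality required in Lemma \ref{lemma_exp_growth_1} is wanted, use $\eps_0/2$ there). Applying Lemma \ref{lemma_exp_growth_1} to $f$, with expansion factor $\lam$, to $\delta=\phi^{k_0+1}(\al)$ and $n=m$, and using $f^m(\phi^{k_0+1}(\al))=\phi^{k_0'-1}(\al)$, gives a constant $C>0$ with $l(\phi^{k_0'-1}(\al),G)>C\lam^m\,l(\phi^{k_0+1}(\al),G)$. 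Applying the same lemma to $g$, which represents $\phi^{-1}$ and has expansion factor $\nu$, to $\delta=\phi^{k_0'-1}(\al)$ and $n=m$, and using $g^m(\phi^{k_0'-1}(\al))=\phi^{k_0+1}(\al)$, gives a constant $C'>0$ with $l(\phi^{k_0+1}(\al),H)>C'\nu^m\,l(\phi^{k_0'-1}(\al),H)$. Since $d(G,H)$ and $d(H,G)$ are finite, a difference of markings realizing them is Lipschitz, so there is $D\ge1$ with $D^{-1}l(\beta,H)\le l(\beta,G)\le D\,l(\beta,H)$ for every conjugacy class $\beta$; rewriting the $g$-inequality in the $G$-metric and substituting the $f$-inequality into it, the positive number $l(\phi^{k_0+1}(\al),G)$ cancels and we are left with $D^2>CC'(\lam\nu)^m$. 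As $\lam\nu>1$ this bounds $m$, hence $k_0'-k_0$, by a constant depending only on $\phi$.

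Combining, the lemma holds with $N$ equal to the larger of the two bounds just produced. The step I expect to require care is the second one: one must check that Lemma \ref{lemma_exp_growth_1} genuinely applies to $g$ (it does, since the roles of $f$ and $g$ are symmetric), track whether each length is measured in $G$ or in $H$, and use the bi-Lipschitz comparison $D$ to link the $f$- and $g$-growth estimates; by contrast the first bound is a one-line application of Lemma \ref{pos_legality}.
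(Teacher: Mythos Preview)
Your proof is correct. The first bound — applying Lemma~\ref{pos_legality} at $\phi^{k_0-N_0}(\al)$ to force $k_0-k_0'<2N_0$ — is exactly the paper's argument. For the reverse inequality the paper simply writes ``by symmetry,'' intending the swap $(f,G,\phi)\leftrightarrow(g,H,\phi^{-1})$; but if one actually carries this out, the symmetric application of Lemma~\ref{pos_legality} (at $\phi^{k_0'+N}(\al)$, say) again yields $k_0-k_0'<2N$ rather than the opposite inequality, so the paper's one-word gesture is at best elliptical here. Your route for the second direction is genuinely different: you observe that on the interval $[k_0+1,k_0'-1]$ the class is $\eps_0$-legal for \emph{both} $f$ and $g$, apply the exponential growth estimate Lemma~\ref{lemma_exp_growth_1} once in each direction, and compare lengths in $G$ and $H$ via the bi-Lipschitz constant $D$ coming from $d(G,H),d(H,G)<\infty$ to get $(\lam\nu)^m<D^2/(CC')$. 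This growth-versus-growth comparison is exactly the mechanism that drives the next result in the paper (Lemma~\ref{exp_growth2}), and it gives a fully explicit bound depending only on $\lam,\nu,\eps_0$ and the distances between $G$ and $H$, at the cost of a few more lines than the paper spends.
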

\begin{proof}
At $k_0$,  $LEG_f(\phi^{k_0}(\al),G)
\ngtr \eps_0$, thus by Lemma \ref{pos_legality}, $LEG_g(\phi^{k_0-2N}(\al),H)> \eps_0$. This implies that $k_0' > k_0 -2N$. By symmetry we get the result.
\end{proof}

For each conjugacy class $\al$ define 
\[ l_\al(t) = l(\al,G(t))\] 
We now show that if $\al$ is a basis element, then there is a bounded set on which $l_{\al}(t)$ achieves its minimum, 
the bound is uniform over all conjugacy classes $\al$. \\

\begin{definition}
Let $t_0(\al) = k_0 \log \lam$ and $t'_0(\al) = k'_0 \log \lam$.   
\end{definition}

\begin{lemma}\label{exp_growth2} There exists a $C$ such that for every basis element $\al$ let $n(t) = \lfloor \frac{|t|}{\log \lam} \rfloor$ then for $t>0$ 
\begin{equation}\label{eq11}
\frac{1}{C} \cdot \lam^{ n(t)} \cdot l_{\al}(t_0) < l_\al(t_0 + t) \leq C \cdot \lam^{ n(t)} \cdot l_{\al}(t_0) 
\end{equation}
and for $t<0$
\begin{equation}\label{eq2}
\frac{1}{C} \cdot \mu^{ n(t)} \cdot l_{\al}(t_0) < l_\al(t_0 + t) \leq C \cdot \mu^{ n(t)} \cdot l_{\al}(t_0) 
\end{equation}
\end{lemma}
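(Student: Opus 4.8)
\emph{Proof proposal.} The plan is to reduce both inequalities to repeated applications of Lemma \ref{lemma_exp_growth_1} — once for $f$ on $G$ when $t>0$, and once for the analogue of Lemma \ref{lemma_exp_growth_1} for $g$ on $H$ when $t<0$ — with all the slack absorbed into bounded multiplicative errors supplied by two standing observations. \emph{Observation 1:} the axis $\LL_f$ (and likewise $\LL_g$) lies in a thick part $\os(\theta_0)$ with $\theta_0=\theta_0(\phi)>0$; indeed the compact segment $[G,G\cdot\phi]$ lies in some $\os(\theta_0)$, $\os(\theta_0)$ is $\out$-invariant since $l(\al,x\cdot\psi)=l(\psi(\al),x)$, and $\LL_f$ is the union of the $\phi$-translates of that segment. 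Hence by Theorem \ref{almost_sym}, moving a bounded distance $D$ along $\LL_f$ in \emph{either} direction changes every $l_\al(\cdot)$ by at most the factor $e^{cD}$, $c=c(\theta_0,n)$ (write $a\asymp b$ for comparisons of this bounded type). \emph{Observation 2:} for any fixed $P,Q\in\os$ and any conjugacy class $\al$ one has $l(\al,P)\asymp l(\al,Q)$ with constant $\max(e^{d(P,Q)},e^{d(Q,P)})$; in particular $l(\al,G)\asymp l(\al,H)$ and $l(\al,H\cdot\phi^{j})\asymp l(\al,H\cdot\phi^{j'})$ whenever $|j-j'|$ is bounded. Using Observation 1 it suffices to prove (\ref{eq11}) and (\ref{eq2}) when $t=\pm n\log\lam$ is an exact multiple of $\log\lam$ (a general $t$ differs from $t_0\pm n(t)\log\lam$ by less than $\log\lam$, costing only a bounded factor), and using Lemma \ref{k_k'} together with Observation 1 it suffices to prove them with $l_\al(t_0)$ replaced by $l_\al(t'_0)$, since $|t_0-t'_0|<N\log\lam$. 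Throughout we use $l_\al(t_0+j\log\lam)=l(\phi^{k_0+j}(\al),G)$ for $j\in\ZZ$ (because $\phi$ translates $\LL_f$ by $\log\lam$ and $l(\al,x\cdot\psi)=l(\psi(\al),x)$), and similarly $l_\al(t'_0+j\log\lam)=l(\phi^{k'_0+j}(\al),G)$.

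\emph{Case $t=n\log\lam>0$, $n\ge 1$.} Here $l_\al(t_0+n\log\lam)=l(\phi^{k_0+n}(\al),G)=l\bigl([f^{\,n}(\phi^{k_0}(\al)_G)],G\bigr)$, so the upper bound $l_\al(t_0+n\log\lam)\le\lam^{n}l_\al(t_0)$ is immediate since $f^{n}$ is $\lam^{n}$-Lipschitz and tightening does not increase length. For the lower bound write $\phi^{k_0+n}(\al)_G=[f^{\,n-1}(\phi^{k_0+1}(\al)_G)]$; by the very definition of $k_0$ the loop $\phi^{k_0+1}(\al)$ is $\eps_0$-legal with respect to $f$, so Lemma \ref{lemma_exp_growth_1} gives $l_\al(t_0+n\log\lam)>C(\eps_0)\lam^{n-1}\,l(\phi^{k_0+1}(\al),G)=C(\eps_0)\lam^{n-1}\,l_\al(t_0+\log\lam)$, and $l_\al(t_0+\log\lam)\gtrsim l_\al(t_0)$ by Observation 1. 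This yields (\ref{eq11}).

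\emph{Case $t=-n\log\lam<0$, $n\ge 1$} (working from $t'_0$). By Observation 2, $l_\al(t'_0-n\log\lam)=l(\phi^{k'_0-n}(\al),G)\asymp l(\phi^{k'_0-n}(\al),H)$. Since $g$ represents $\phi^{-1}$, for all $n\ge1$ we may write $\phi^{k'_0-n}(\al)_H=[g^{\,n-1}(\phi^{k'_0-1}(\al)_H)]=[g^{\,n}(\phi^{k'_0}(\al)_H)]$, the first form using that $\phi^{k'_0-1}(\al)$ is $\eps_0$-legal with respect to $g$, by the definition of $k'_0$. The analogue of Lemma \ref{lemma_exp_growth_1} for $g$ (expansion factor $\mu$), together with the $\mu^{n}$-Lipschitz bound for $g^{n}$, then sandwiches $l(\phi^{k'_0-n}(\al),H)$ between $C(\eps_0)\mu^{n-1}\,l(\phi^{k'_0-1}(\al),H)$ and $\mu^{n}\,l(\phi^{k'_0}(\al),H)$; finally $l(\phi^{k'_0-1}(\al),H)\asymp l(\phi^{k'_0}(\al),H)\asymp l(\phi^{k'_0}(\al),G)=l_\al(t'_0)$ by Observation 2. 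Transferring back from $t'_0$ to $t_0$ via Lemma \ref{k_k'} (a bounded shift along $\LL_f$) gives (\ref{eq2}).

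The main obstacle here is not any single estimate but the bookkeeping needed to make the argument honest: one must keep straight the translation between $l_\al$ evaluated at a shifted point of $\LL_f$, the length $l(\phi^{j}(\al),\cdot)$ of a shifted conjugacy class measured at the fixed graphs $G$ or $H$, and the $\eps_0$-legality thresholds that \emph{define} $k_0$ and $k'_0$; and in the negative case one must justify the jump from $\LL_f$ to the axis $\LL_g$ of $\phi^{-1}$, which is exactly the role of Observation 2 ($l(\al,G)\asymp l(\al,H)$) and Lemma \ref{k_k'} ($|k_0-k'_0|<N$). A minor point to watch is that the identities of the form $\phi^{k'_0-n}(\al)_H=[g^{\,m}(\cdots)]$ only ever apply $g$ to already-tightened loops and only with $m\ge 0$, so no \emph{positive} power of $\phi$ on $H$ ever has to be expressed through $g$.
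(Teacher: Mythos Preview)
Your proof is correct and follows essentially the same approach as the paper: for $t>0$ you use the Lipschitz bound for the upper estimate and Lemma \ref{lemma_exp_growth_1} applied at the first $\eps_0$-legal step $k_0+1$ for the lower estimate; for $t<0$ you transfer to $H$ via the bounded distance between $G(t_0)$ and $H(t'_0)$ (using Lemma \ref{k_k'} and periodicity), then apply the analogue of Lemma \ref{lemma_exp_growth_1} for $g$. The only cosmetic differences are that you reduce to exact multiples of $\log\lam$ up front and make the appeal to Theorem \ref{almost_sym} explicit in Observation 1, whereas the paper handles general $t$ directly and packages the two-sided comparison between $G(t_0)$ and $H(t'_0)$ into a single constant $D$.
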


\begin{proof}
The right-hand inequality of \ref{eq11} is obvious. To get the left-hand side, first notice:
\[ \frac{l(\al,G(t_0 + t)}{l(\al, G(t_0 + n(t) \log \lam))} \leq e^{d(G(t_0 + n(t) \log \lam), G(t_0 + t))}  = e^{t - n(t) \log \lam} \leq \lam\]
Apply Lemma \ref{lemma_exp_growth_1} for $\eps = \eps_0$ and notice that $\al$ is $\eps_0$-legal at $t_0+\log \lam$ thus
\[ \begin{array}{lll}
 l_\al(t_0+t) & = & l(\al, G(t_0+t)) > \lam^{-1} l(\al,G(t_0+ n(t)\log\lam)) \\[0.1 cm]
 &  =  &\lam^{-1} l(\phi^{n(t)}(\al),G(t_0))  = \lam^{-1} C_{\ref{lemma_exp_growth_1}} \lam^{n(t)} l(\al,G(t_0)) \\[0.1 cm]
 & = & \frac{C_{\ref{lemma_exp_growth_1}}}{\lam} \lam^{n(t)} l_\al(t_0)
 \end{array} \] 
To prove equation \ref{eq2}, first note  
\[ l_\al(t_0 + t) = l(\al, G(t_0+t)) \leq \lam \cdot l(\al, G(t_0- (n(t)+1) \log \lam ) = \lam \cdot l(\phi^{-(n(t)+1)}(\al), G(t_0)) \]
Let $D = \max\{ d(G(t_0), H(t'_0)), d(H(t'_0), G(t_0)) \}$  this is bounded independently of $\al$ since $|k_0 - k'_0|<N$ and because $d(G(t),H(t))$ is bounded independently of $t$ (by periodicity). Thus,
\[l(\phi^{-(n(t)+1)}(\al), H(t_0')) \geq \frac{1}{e^D} l(\phi^{-(n(t)+1)}(\al), G(t_0)) \geq  \frac{1}{\lam e^D} l_\al(t_0+t)\]
Now 
\[ l(\phi^{-(n(t)+1)}(\al), H(t_0')) \leq \mu^{n(t)+1}l(\al,H(t_0') \leq  \mu^{n(t)+1}e^D l(\al, G(t_0)) \]
We get the right inequality in \ref{eq2}. The left inequality is proven similarly. 
%
\end{proof}

\begin{definition}[min set]
For a conjugacy class $\al$ of a basis element: let $L = \min \{ l_\al(t) \mid t \in \RR
\}$ and denote by $T_\al$ the set of $t_\al$ such that $l_\al(t_\al)
= L$. The min set of $\al $ is $\pi_f(\al) = \{ G(t_\al) \mid t_\al \in
T_\al \}$.
\end{definition}

It follows from Lemma \ref{exp_growth2} that 

\begin{cor}\label{min_close_to_legal}
There exists an $s>0$ so that for any basis element $\al$ and for all $t_\al \in T_\al$, $|t_\al-t_0|<s$. 
\end{cor}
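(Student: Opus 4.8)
The plan is to deduce Corollary~\ref{min_close_to_legal} directly from the two-sided exponential growth estimate in Lemma~\ref{exp_growth2}. The key observation is that $l_\al(t)$ behaves, up to the multiplicative constant $C = C_{\ref{exp_growth2}}$, like a ``tent function'' centered at $t_0 = t_0(\al)$: it decays geometrically with rate $\mu^{-1}$ to the left of $t_0$ and grows geometrically with rate $\lam$ to the right (with $n(t) = \lfloor |t|/\log\lam \rfloor$ keeping track of how many full periods we have travelled). Since $\lam, \mu > 1$, once $|t - t_0|$ is large enough that $n(|t-t_0|) \geq 2$, say, the estimates \eqref{eq11} and \eqref{eq2} give $l_\al(t_0 + t) > \tfrac{1}{C}\lam^{n(t)} l_\al(t_0) \geq \tfrac{1}{C}\lam^2 l_\al(t_0)$ for $t > 0$ and similarly with $\mu$ for $t < 0$. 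So we just need $\lam^2/C > 1$ and $\mu^2/C > 1$; if not, replace $2$ by any integer $m$ with $\min(\lam,\mu)^m > C$, which exists since $\lam,\mu>1$ and $C$ is a fixed constant independent of $\al$.

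Concretely, I would set $m = \lceil \log C / \log(\min(\lam,\mu)) \rceil + 1$ and $s = (m+1)\log\lam$, and argue as follows. For any $t$ with $|t - t_0| \geq s$ we have $n(|t-t_0|) = \lfloor |t-t_0|/\log\lam\rfloor \geq m$, hence by Lemma~\ref{exp_growth2},
\[
l_\al(t) \;>\; \frac{1}{C}\,\big(\min(\lam,\mu)\big)^{m}\, l_\al(t_0) \;>\; l_\al(t_0) \;\geq\; L.
\]
Therefore no such $t$ can lie in $T_\al$, i.e. every $t_\al \in T_\al$ satisfies $|t_\al - t_0| < s$. Crucially, $s$ depends only on $\lam$, $\mu$, and $C = C_{\ref{exp_growth2}}$, all of which are fixed by $\phi$ and independent of the basis element $\al$; this gives the claimed uniformity.

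The only mild subtlety — and the step I would treat most carefully — is the bookkeeping with the floor function $n(t)$ near the ``switch'' at $t_0$: the estimates of Lemma~\ref{exp_growth2} are stated for $t_0 + t$ with $t > 0$ and $t < 0$ separately, and $n(0) = 0$ gives only the trivial bound $l_\al(t_0) \asymp l_\al(t_0)$ there. But this is harmless: we only use the estimates in the regime $|t - t_0|$ large, where the exponent $n$ is already $\geq m$, so the floor rounding costs at most one factor of $\lam$ (already absorbed into the choice $s = (m+1)\log\lam$ rather than $m\log\lam$). There is no real obstacle here; the corollary is a routine consequence of Lemma~\ref{exp_growth2}, and the proof is essentially the three displayed lines above together with the specification of $m$ and $s$ in terms of the previously-established constants.
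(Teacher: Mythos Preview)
Your proposal is correct and is exactly the argument the paper has in mind: the paper does not even write out a proof, stating only that the corollary ``follows from Lemma~\ref{exp_growth2}'', and what you have written is precisely the routine unpacking of that claim. Your bookkeeping with $m$, $s$, and the floor function is fine and your observation that $l_\al(t_0)\ge L$ (so beating $l_\al(t_0)$ suffices to exclude $t$ from $T_\al$) is the right way to close the argument.
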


Which implies 

\begin{cor}\label{coarse_minimum}
There is an $s>0$ such that for every basis element $\al$, 
$\diam\{ T_\al \}<s$ hence $\diam\{ \pi_f(\al)\}$ is bounded independently of $\al$.
\end{cor}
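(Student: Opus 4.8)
The plan is to read the statement off Corollary \ref{min_close_to_legal} using only the geodesic structure of the axis and the fact that $\phi$ acts on $\os$ by isometries; in particular no appeal to the thick part or to Theorem \ref{almost_sym} will be needed. Let $s_0>0$ be the constant produced by Corollary \ref{min_close_to_legal}, so that for every basis element $\al$ and every $t_\al\in T_\al$ we have $|t_\al-t_0(\al)|<s_0$. Then $T_\al\subseteq (t_0(\al)-s_0,\,t_0(\al)+s_0)$, an interval of length $2s_0$, so already $\diam\{T_\al\}<2s_0$ independently of $\al$.

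It remains to bound $\diam\{\pi_f(\al)\}=\diam\{G(t_\al)\mid t_\al\in T_\al\}$ in the Lipschitz metric. The key point is that $t_0(\al)=k_0(\al)\log\lam$ is an \emph{integer} multiple of $\log\lam$, so by the $\phi$-invariance of $\LL_f$ together with its arc-length parameterization one has $G(t+\log\lam)=G(t)\cdot\phi$ for all $t$, hence $G(t_0(\al)+a)=G(a)\cdot\phi^{k_0(\al)}$ for all $a\in\RR$. Since $\phi^{k_0(\al)}$ acts by an isometry, writing $t_\al=t_0(\al)+a$ and $t'_\al=t_0(\al)+b$ with $a,b\in(-s_0,s_0)$ yields
\[ d\bigl(G(t_\al),\,G(t'_\al)\bigr)=d\bigl(G(a),\,G(b)\bigr), \]
so the $\al$-dependent shift disappears and it is enough to bound $d(G(a),G(b))$ uniformly for $a,b\in(-s_0,s_0)$.

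For this I would apply the triangle inequality along $G(a)\to G(s_0)\to G(-s_0)\to G(b)$. Since $\LL_f$ is a directed geodesic parameterized by arc length and $a<s_0$, $-s_0<b$, the first and last legs contribute $s_0-a<2s_0$ and $b+s_0<2s_0$ respectively, while the middle leg $d(G(s_0),G(-s_0))=:D_0$ is a single finite constant, because the axis points $G(s_0)$ and $G(-s_0)$ do not depend on $\al$. Hence $\diam\{\pi_f(\al)\}\le 4s_0+D_0$ for every basis element $\al$, proving the corollary.

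I expect no real obstacle. The only subtlety is that the chain must be routed through \emph{both} endpoints $G(s_0)$ and $G(-s_0)$: because the Lipschitz metric is not symmetric, $d(G(t'),G(t))$ for $t<t'$ is not controlled by $t'-t$, so the one leg traversed against the orientation of the geodesic must have $\al$-independent endpoints in order to contribute the fixed constant $D_0$ rather than an uncontrolled backward distance.
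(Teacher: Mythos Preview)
Your proof is correct and follows exactly the route the paper indicates: the paper simply writes ``Which implies'' after Corollary~\ref{min_close_to_legal}, and you have carefully filled in that implication, in particular handling the asymmetry of the Lipschitz metric via the $\phi$-periodicity of $\LL_f$ rather than by invoking Theorem~\ref{almost_sym}. The only remark is that your route through $G(s_0)$ and $G(-s_0)$ is slightly more work than necessary---since $\LL_f$ lies in some $\os(\theta)$, one could alternatively bound the single backward distance by Theorem~\ref{almost_sym}---but your argument is self-contained and entirely sound.
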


From now on $t_\al$ denotes any choice of element in $T_\al$, for example the smallest one. 
The following proposition follows from corollary \ref{min_close_to_legal} and Lemma \ref{k_k'}.

\begin{cor}\label{close_minima}
There is an $s>0$ such that for every primitive $\al$: 
\[ \forall G' \in \pi_f(\al) \phantom{bl} \forall H' \in
\pi_g(\al): \phantom{blah} d(G',H')<s \]
I.e. the min sets of $\al$
with respect to $\LL_f$ and $\LL_g$ are uniformly close.
\end{cor}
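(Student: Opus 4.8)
The plan is to prove the two min sets are uniformly close by routing them through the two ``legality points'' $G(t_0(\al))$ on $\LL_f$ and $H(t'_0(\al))$ on $\LL_g$: Corollary \ref{min_close_to_legal} already says $\pi_f(\al)$ hugs $G(t_0(\al))$ and its mirror image says $\pi_g(\al)$ hugs $H(t'_0(\al))$, while the proof of Lemma \ref{exp_growth2} already produced a uniform bound on the distance between these two points. So the corollary should follow from the triangle inequality once we convert the parameter estimates into honest distance bounds along the (directed, hence asymmetric) axes. Concretely, fix a conjugacy class $\al$ of a basis element (a primitive element), let $G'\in\pi_f(\al)$ and $H'\in\pi_g(\al)$, and write $G'=G(t_\al)$ with $t_\al$ in the min set $T_\al$ for $l(\al,G(\cdot))$, and $H'=H(s_\al)$ with $s_\al$ in the corresponding min set for $l(\al,H(\cdot))$.

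First I would invoke Corollary \ref{min_close_to_legal} to get a uniform $s_1$ with $|t_\al-t_0(\al)|<s_1$, and, since the roles of $f$ and $g$ are symmetric, the mirror statement gives a uniform $s_1'$ with $|s_\al-t'_0(\al)|<s_1'$. Second, recall that in the proof of Lemma \ref{exp_growth2} the quantity $D=\max\{d(G(t_0(\al)),H(t'_0(\al))),\,d(H(t'_0(\al)),G(t_0(\al)))\}$ was shown to be bounded independently of $\al$ — this is precisely where Lemma \ref{k_k'} ($|k_0-k'_0|<N$) and the periodicity of $t\mapsto d(G(t),H(t))$ are used. Then the endgame is simply
$$ d(G',H') \;\le\; d\big(G(t_\al),G(t_0(\al))\big) + d\big(G(t_0(\al)),H(t'_0(\al))\big) + d\big(H(t'_0(\al)),H(s_\al)\big) \;\le\; A + D + A' =: s, $$
provided we establish uniform bounds $A$ on $d(G(t_\al),G(t_0(\al)))$ and $A'$ on $d(H(t'_0(\al)),H(s_\al))$.

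The one genuine subtlety — and the step I expect to be the main (if mild) obstacle — is the asymmetry of the Lipschitz metric: along the directed geodesic $\LL_f$ one has $d(G(t),G(t'))=t'-t$ only for $t\le t'$, so when $t_\al>t_0(\al)$ the distance $d(G(t_\al),G(t_0(\al)))$ runs ``backwards'' and is not controlled by $|t_\al-t_0(\al)|$ alone. I would handle this by periodicity: the function $t\mapsto d\big(G(t),G(t-\log\lam)\big)=d\big(G(t),G(t)\cdot\phi^{-1}\big)$ is continuous and $\log\lam$-periodic, hence bounded by some $B_f$; taking $k=\lceil s_1/\log\lam\rceil$ we get $d(G(t_\al),G(t_\al-k\log\lam))\le kB_f$, and since $t_\al-k\log\lam\le t_0(\al)$ the remaining step $d(G(t_\al-k\log\lam),G(t_0(\al)))=t_0(\al)-(t_\al-k\log\lam)\le k\log\lam$ is a forward step, giving a uniform $A$ in all cases. (Alternatively, if one has recorded that the axis of a fully irreducible automorphism lies in some thick part $\os(\theta)$, one may instead quote Theorem \ref{almost_sym} to bound the backward distance by $c(\theta)$ times the forward distance $c(\theta)|t_\al-t_0(\al)|<c(\theta)s_1$.) Running the same argument on $\LL_g$ produces $A'$, and substituting into the displayed inequality yields the uniform constant $s$; the last sentence of the statement is then just a rephrasing, and the same estimate with the roles reversed (or Corollary \ref{coarse_minimum}) bounds $d(H',G')$ as well.
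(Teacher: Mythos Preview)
Your proposal is correct and follows exactly the route the paper intends: the paper's entire proof is the single sentence ``follows from Corollary \ref{min_close_to_legal} and Lemma \ref{k_k'},'' and you have faithfully unpacked what that means, including the intermediate bound $D$ from the proof of Lemma \ref{exp_growth2}. Your careful treatment of the asymmetry (converting parameter bounds to distance bounds in the ``backward'' direction via periodicity or Theorem \ref{almost_sym}) is a detail the paper leaves implicit, so if anything your argument is more complete than the original.
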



\begin{cor}\label{pos_leg}
There is an $s>0$ such that for every basis element $\al$, if $t>t_\al+s$ then $LEG(\al,G(t))>\eps_0$ (the legality is computed with respect to the train track structure induced by $f_t: G(t) \to G(t)$ (see Definition \ref{self-induced tt structures}) from item \ref{ttmaps} in the list of properties of $\LL_f$).  
\end{cor}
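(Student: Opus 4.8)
The plan is to reduce the estimate to the ``integer'' points $G(t_0+k\log\lam)=G\cdot\phi^{\,k_0+k}$ of the axis, where it can be read off from the definition of $k_0$, and then to transport it to the remaining points of the folding line $\LL_f$.

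First I would fix the dictionary along $\LL_f$. Because $f$ is simultaneously a train-track representative of $\phi$ on every $G\cdot\phi^{m}$, the structure that item~\ref{ttmaps} attaches to $G(t_0+k\log\lam)=G\cdot\phi^{\,k_0+k}$ is identified with the one on $G$ by the identity map of the underlying graph; hence $LEG(\al,G(t_0+k\log\lam))=LEG_f(\phi^{\,k_0+k}(\al),G)$ and $l_\al(t_0+k\log\lam)=l(\phi^{\,k_0+k}(\al),G)$. By the definition of $k_0$ we have $LEG_f(\phi^{\,k_0+1}(\al),G)\ge\eps_0$, so there are legal pieces $\beta_1,\dots,\beta_r$ of $\phi^{\,k_0+1}(\al)$ in $G$, each of length $>\kappa$, whose lengths sum to at least $\eps_0\,l_\al(t_0+\log\lam)$. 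Applying $f^{j}$ and using that the choice $\kappa=\tfrac{4\,BCC(f)}{\lam-1}$ makes the recursion $\ell\mapsto\lam\ell-2BCC(f)$ multiply $\ell-\tfrac{2BCC(f)}{\lam-1}$ by $\lam$ at each step, the image of each $\beta_i$ is a legal subpath of $\phi^{\,k_0+1+j}(\al)$ of length at least $\tfrac12\lam^{j}$ times that of $\beta_i$. Translating back, at $G(t_0+(1+j)\log\lam)$ the loop $\al$ carries legal pieces of length $\ge\tfrac12\lam^{j}\kappa$ and of total length $\ge\tfrac12\eps_0\lam^{j}\,l_\al(t_0+\log\lam)$, while $l_\al(t_0+(1+j)\log\lam)\le\lam^{j}\,l_\al(t_0+\log\lam)$ since $\LL_f$ is a directed geodesic.

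Next I would move to an arbitrary $t>t_0+(1+j)\log\lam$, writing $t=t_0+(1+j)\log\lam+\tau$ with $0\le\tau<\log\lam$. The point $G(t)$ sits on the fundamental folding segment based at $G(t_0+(1+j)\log\lam)$, and the folding map $\iota$ from $G(t_0+(1+j)\log\lam)$ to $G(t)$ has $\Lip(\iota)\le\lam$ and, by the construction of $\LL_f$, carries $f_{t_0+(1+j)\log\lam}$-legal paths to $f_t$-legal paths (a new illegal turn downstream can only be the image of an illegal turn upstream). As $\LL_f$ is $\phi$-periodic it lies in some thick part $\os(\theta)$, so over one fundamental segment Theorem~\ref{almost_sym} bounds reverse distortion by a factor $\lam^{c}$ with $c=c(\theta,n)$; combined with the bounded cancellation of the maps involved this shows that each $\iota(\beta_i)$, after tightening, is an $f_t$-legal subpath of $\al$ in $G(t)$ whose length differs from $\tfrac12\lam^{j}$ times that of $\beta_i$ by at most a bounded multiplicative and additive error. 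Fixing $j$ once and for all --- it depends only on $\phi$, $\theta$ and $n$ --- large enough that these pieces still have length $>\kappa$, they count towards $LEG(\al,G(t))$; comparing their total length with $l_\al(t)\le\lam^{\,j+1}\,l_\al(t_0+\log\lam)$ gives $LEG(\al,G(t))\ge\eps_1$ for a fixed $\eps_1>0$. Taking $s=s_1+(2+j)\log\lam$ with $s_1$ the constant of Corollary~\ref{min_close_to_legal} (so that $t_0<t_\al+s_1$), every $t>t_\al+s$ satisfies $t>t_0+(1+j)\log\lam$, whence $LEG(\al,G(t))\ge\eps_1$.

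The one genuinely delicate point is that this yields a positive lower bound $\eps_1$ which is a priori smaller than the $\eps_0$ in the statement. I would dispose of it as the paper does elsewhere: Lemma~\ref{pos_legality} holds with any sufficiently small positive constant in place of $\eps_0$ (shrinking $\eps_0$ only enlarges the sets $\{LEG<\eps_0\}$, so it changes $k_0$ and $t_0$ by bounded amounts), so one fixes $\eps_0$ small enough at the outset that $\eps_1\ge\eps_0$. Thus the real work is the bookkeeping of legal pieces through the folds --- making sure a fold neither creates an illegal turn inside a previously legal piece nor trims such a piece below the threshold $\kappa$ --- and it is precisely the exponential lower bound $\tfrac12\lam^{j}\kappa$ on the surviving pieces, itself a consequence of the choice $\kappa=\tfrac{4BCC(f)}{\lam-1}$, that makes the threshold harmless.
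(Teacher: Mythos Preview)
The paper states this corollary without proof; the intended one-line argument is presumably: by Corollary~\ref{min_close_to_legal} one has $|t_\al-t_0|<s_1$, and by the very definition of $k_0$ one has $LEG(\al,G(t_0+k\log\lam))=LEG_f(\phi^{\,k_0+k}(\al),G)\ge\eps_0$ for every integer $k\ge 1$, so take $s=s_1+\log\lam$. The paper does not address the passage to non-integer $t$, which you correctly isolate as the only point requiring work.

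Your argument is sound and is exactly how one should fill this gap: first push to an integer point far enough along that the surviving legal pieces have length $\gg\kappa$ (the exponential lower bound $\tfrac12\lam^{j}\kappa$ coming from the choice of $\kappa$), then transport through the fold map to the desired $G(t)$, using that folds along $\LL_f$ send $f$-legal paths to $f_t$-legal paths and that the cancellation incurred is uniformly bounded over one fundamental segment. This is more careful than what the paper writes. Your observation that the output constant $\eps_1$ may be strictly smaller than $\eps_0$ is correct and is a genuine loose end in the paper's statement; your resolution (shrinking $\eps_0$ at the outset, which only shifts $k_0$, $t_0$ by bounded amounts) is valid. An alternative fix, perhaps slightly cleaner, is simply to enlarge $s$ further: since $\phi^{k}(\al)$ converges to the attracting lamination, $LEG(\al,G(t))\to 1$ as $t\to\infty$, uniformly once $t-t_0$ is controlled, so for $s$ large enough the legality exceeds any fixed $\eps_0<1$.
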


The following observation states that if $\al$ is almost legal in $G(t)$, then it almost realizes the distance $d(G(t), G(t+t'))$. Denote $\ST_\al(X,Y) = \frac{l(\al, Y)}{l(\al, X)}$.

\begin{prop}\label{alm_legal_almost_max_st}
There is a $C$ so that if $\al$ is $\eps_0$-legal in $G(t)$ with respect to $g_t$ then for all $t'>0$, $$\log \ST_\al(G(t),G(t+t')) - C \geq d(G(t), G(t+t')) = t'$$
\end{prop}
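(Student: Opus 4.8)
The plan is to exploit the exponential growth estimate of Lemma~\ref{lemma_exp_growth_1} together with the fact that, along the axis $\LL_g$ for $\phi^{-1}$, the map $g_t : G(t) \to G(t)$ stretches every edge uniformly by $\lambda$ (the expansion factor of $\phi$, using the naming for the $f$-axis; on $\LL_g$ the relevant stretch factor is the one attached to the train-track structure $g_t$, call it $\lambda$). First I would set up notation: write $n = \lfloor t'/\log\lambda\rfloor$, so that $G(t + n\log\lambda)$ is obtained from $G(t)$ by iterating $g$ exactly $n$ times (up to rescaling volume to $1$), and the residual amount $t' - n\log\lambda$ is at most $\log\lambda$, a fixed constant. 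Then
\[
\log\ST_\al\big(G(t), G(t+t')\big) \;\geq\; \log\ST_\al\big(G(t), G(t + n\log\lambda)\big) - \log\lambda,
\]
since moving from $G(t+n\log\lambda)$ to $G(t+t')$ can decrease the length of $\al$ by at most a multiplicative factor $e^{t' - n\log\lambda} \le \lambda$ (by the Lipschitz property / Proposition~\ref{green graph}). This reduces the problem to the case where $t' = n\log\lambda$ is an integer multiple of $\log\lambda$.

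Next I would apply Lemma~\ref{lemma_exp_growth_1} with $\eps = \eps_0$: since $\al$ is $\eps_0$-legal in $G(t)$ with respect to $g_t$, we get
\[
l\big(\al, G(t + n\log\lambda)\big) \;=\; l\big(g^n(\al), G(t)\big) \;>\; C_{\ref{lemma_exp_growth_1}}(\eps_0)\,\lambda^{n}\, l(\al, G(t)),
\]
where I use that the metric on $G(t)$ is the one in which $g_t$ multiplies all edge lengths by $\lambda$, so $l(g^n(\al), G(t)) = \lambda^n \cdot(\text{legal-core contributions})$ exactly as in the proof of Lemma~\ref{lemma_exp_growth_1}. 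Hence
\[
\log\ST_\al\big(G(t), G(t+n\log\lambda)\big) \;>\; n\log\lambda + \log C_{\ref{lemma_exp_growth_1}}(\eps_0).
\]
Combining with the reduction of the previous paragraph,
\[
\log\ST_\al\big(G(t), G(t+t')\big) \;>\; n\log\lambda + \log C_{\ref{lemma_exp_growth_1}}(\eps_0) - \log\lambda \;\geq\; t' - 2\log\lambda + \log C_{\ref{lemma_exp_growth_1}}(\eps_0),
\]
using $n\log\lambda \geq t' - \log\lambda$. Setting $C = 2\log\lambda - \log C_{\ref{lemma_exp_growth_1}}(\eps_0)$ (a constant depending only on $\phi$ and $\eps_0$) gives $\log\ST_\al(G(t),G(t+t')) - C \geq t' = d(G(t),G(t+t'))$, which is the claim; note the equality $d(G(t),G(t+t')) = t'$ is just property (1) of the axis $\LL_g$.

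The step I expect to be the main obstacle is the bookkeeping around which graph carries which metric and which stretch factor: one must be careful that $g^n(\al)$ pulled tight in $G(t)$ really does have length $\lambda^n$ times the total length of the legal pieces of $\al$ surviving the $BCC$-truncation, i.e.\ that the self-map $g_t$ on $G(t)$ is genuinely the metric train-track map with uniform stretch $\lambda$ and that the bounded-cancellation estimate preceding Definition~\ref{def_legality} applies verbatim along the axis. Once that is pinned down (it follows from property~\ref{ttmaps} of $\LL_g$ and the construction of the axis by folding), the argument is the chain of inequalities above; everything else is a fixed additive loss absorbed into $C$.
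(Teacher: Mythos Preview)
Your overall approach is essentially the paper's: exploit the exponential growth of an $\eps_0$-legal loop along the axis and absorb the residual fractional step into an additive constant. The paper packages this through Lemma~\ref{exp_growth2} (comparing $l_\al(t+t')$ and $l_\al(t)$ to $l_\al(t_0)$), whereas you go straight to Lemma~\ref{lemma_exp_growth_1}; the difference is cosmetic.

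There is, however, a genuine slip in your reduction step. You write
\[
\log\ST_\al\big(G(t), G(t+t')\big) \;\geq\; \log\ST_\al\big(G(t), G(t + n\log\lambda)\big) - \log\lambda
\]
and justify it by saying that moving from $G(t+n\log\lambda)$ to $G(t+t')$ can \emph{decrease} $l(\al,\cdot)$ by at most a factor $\lambda$. But $d\big(G(t+n\log\lambda),\,G(t+t')\big)=t'-n\log\lambda$ is a \emph{forward} distance; by Proposition~\ref{green graph} it bounds how much $l(\al,\cdot)$ can \emph{increase}, not decrease. What you actually need is a bound on $d\big(G(t+t'),\,G(t+n\log\lambda)\big)$, the reverse direction, and the metric is asymmetric. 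The cleanest fix is to compare with $G(t+(n+1)\log\lambda)$ instead: since $t+t'\le t+(n+1)\log\lambda$, the forward distance $d\big(G(t+t'),\,G(t+(n+1)\log\lambda)\big)=(n+1)\log\lambda-t'\le\log\lambda$ gives $l(\al,G(t+t'))\ge \lambda^{-1}\,l(\al,G(t+(n+1)\log\lambda))$, and then Lemma~\ref{lemma_exp_growth_1} with $n+1$ iterates yields the same conclusion. Alternatively one may invoke Theorem~\ref{almost_sym} (the axis lies in the thick part) to bound the reverse distance, at the cost of a worse constant.

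Two minor remarks. First, the symbol $g_t$ in the statement refers to the self-map $f_t:G(t)\to G(t)$ on the axis $\LL_f$ (item~(\ref{ttmaps})), not to the axis $\LL_g$ of $\phi^{-1}$; your discussion of ``the axis $\LL_g$'' is a misreading, though it does not affect your computation. Second, watch the sign at the end: from $\log\ST_\al > t' - 2\log\lambda + \log C_{\ref{lemma_exp_growth_1}}(\eps_0)$ you get $\log\ST_\al \ge t' - C$ with $C = 2\log\lambda - \log C_{\ref{lemma_exp_growth_1}}(\eps_0) > 0$, i.e.\ $\log\ST_\al + C \ge t'$, not $\log\ST_\al - C \ge t'$. (The displayed inequality in the paper's statement has the same sign quirk; the content in both cases is $\log\ST_\al \ge t' - C$.)
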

\begin{proof}
Since $\al$ is $\eps$-legal $t>t_0$. Let $K=C_{\ref{exp_growth2}}$ from  Lemma \ref{exp_growth2}. 
\[ \begin{array}{l}
	l_\al(t+t') \geq \frac{1}{K} \lam^{n(t+t')} l_\al(t_0)  \\[0.2 cm]
	l_\al(t) \leq K \lam^{n(t)} l_\al(t_0)
\end{array}\]
Then 
\[\ST_\al(G(t),G(t+t')) = \frac{l_\al(t+t')}{l_\al(t) }\geq \frac{1}{K^2} \lam^{n(t+t')-n(t)} \]
\end{proof}

Now we are in a position to define a coarse projection $\pi_f: \os
\to \LL_f$. Let $X \in \os$ and $T_X = \{ t \mid d(X,G(t))= d(X,\LL_f)
\}$. Define the projection of $X$ to $\LL_f$ by $\pi_f(X) = \{ G(t) \mid t \in T_X \}$.

\begin{prop}\label{coarse_proj}
There is an $s>0$ such that for every point $X \in \os$: $$\diam(\pi(X))<s$$
\end{prop}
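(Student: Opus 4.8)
## Proof proposal for Proposition 3.xx (bounded diameter of $\pi_f(X)$)

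The plan is to show that the set of parameters $T_X = \{ t \mid d(X, G(t)) = d(X, \LL_f) \}$ has diameter bounded independently of $X$, and since $\LL_f$ is parameterized by arc length this immediately bounds $\diam(\pi_f(X))$. The basic mechanism should be the same ``exponential growth/decay forces a sharp minimum'' phenomenon already used for min sets of basis elements in Corollaries \ref{min_close_to_legal} and \ref{coarse_minimum}: the function $t \mapsto d(X, G(t))$ cannot stay near its minimum for a long parameter interval, because moving along $\LL_f$ in either direction eventually increases the distance at a definite rate.

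First I would fix $X \in \os$ and a point $G(t_1) \in \pi_f(X)$ realizing $d(X, \LL_f)$; write $D = d(X, \LL_f)$. Pick an optimal difference of markings from $X$ to $G(t_1)$ and let $\al$ be a candidate loop in $X$ realizing the ratio in Proposition \ref{green graph}, so $\log \ST_\al(X, G(t_1)) = D$. By Proposition \ref{can_are_basis}, $\al$ is a basis element, so all the machinery of Chapter 3 applies to it: it has a min set $\pi_f(\al)$ of uniformly bounded diameter (Corollary \ref{coarse_minimum}), and by Corollary \ref{pos_leg} there is a uniform $s$ so that $\al$ is $\eps_0$-legal in $G(t)$ for all $t > t_\al + s$, and symmetrically $\eps_0$-legal with respect to $g_t$ for $t < t_\al - s$. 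The key point is that $t_1$ cannot be far from $t_\al$: if it were, say $t_1 > t_\al + s$, then by Proposition \ref{alm_legal_almost_max_st} (applied with the $f$-side legality) we would have $\log \ST_\al(G(t_\al + s), G(t_1)) \geq (t_1 - t_\al - s) - C$, hence $l_\al(G(t_1))$ is much bigger than $l_\al(G(t_\al))$, so replacing $G(t_1)$ by a point near $\pi_f(\al)$ would strictly decrease $\log\ST_\al(X, \cdot)$, hence decrease the distance from $X$ — contradicting that $G(t_1)$ realizes $d(X, \LL_f)$. (One has to be slightly careful that decreasing the ratio for this particular $\al$ decreases the distance: use that $d(X, G(t)) = \log \sup_\beta \ST_\beta(X, G(t))$ and that for the competing point we still control $\ST_\al$, combined with the triangle inequality $d(X, G(t_\al)) \leq d(X, G(t_1)) + d(G(t_1), G(t_\al))$ and a lower bound on $d(G(t_1), G(t_\al))$ coming from the length growth of $\al$.) The symmetric argument using the $\phi^{-1}$ train-track structure rules out $t_1 < t_\al - s$. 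So $|t_1 - t_\al| < s'$ for a uniform $s'$.

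Now the same reasoning applies to any other $G(t_2) \in \pi_f(X)$ with the same candidate — or rather, with whatever candidate realizes the optimal ratio from $X$ to $G(t_2)$; call it $\al'$. Both $t_1$ and $t_2$ then lie within $s'$ of $t_{\al}$ and $t_{\al'}$ respectively, so it remains to bound $|t_\al - t_{\al'}|$. For this I would use the triangle-inequality/geodesic structure of $\LL_f$ directly: since $G(t_1), G(t_2)$ both realize $d(X, \LL_f) = D$ and $\LL_f$ is a directed geodesic, compute $d(G(t_1), X) + d(X, G(t_2)) = 2D$ while $d(G(t_1), G(t_2)) = |t_1 - t_2|$ (up to the asymmetry constant from Theorem \ref{almost_sym}, since the axis lies in a thick part), and observe that if $|t_1 - t_2|$ were large then $\al$ (legal at $G(t_1 + s)$) would be stretched by at least $\lam^{\lfloor |t_1-t_2|/\log\lam \rfloor}/C$ going from near $G(t_1)$ to $G(t_2)$, forcing $d(X, G(t_2))$ to exceed $D$. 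This is essentially the argument of the previous paragraph packaged as a single estimate, and it caps $|t_1 - t_2|$.

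The main obstacle I expect is the bookkeeping around \emph{which} candidate to use: the optimal candidate from $X$ can change as the target point moves along $\LL_f$, so one cannot fix a single $\al$ throughout. The clean way around this is to not track the minimizing candidate at all, but instead to prove the one-sided statement ``$d(X, G(t))$ is eventually strictly increasing in $t$ at a definite rate once $t$ is past a point near $\pi_f(X)$,'' using Corollary \ref{pos_leg} and Proposition \ref{alm_legal_almost_max_st} applied to \emph{every} basis element simultaneously (so in particular to whichever candidate is optimal at the far point), together with the symmetric statement for $t \to -\infty$ via $\LL_g$ and Corollary \ref{close_minima}. That turns the diameter bound into: $T_X$ is contained in the interval where $d(X, G(t))$ has not yet started its definite-rate ascent in either direction, which has uniformly bounded length. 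The remaining care is purely in chasing the constants $C_{\ref{exp_growth2}}, C_{\ref{alm_legal_almost_max_st}}, s_{\ref{pos_leg}}, c_{\ref{almost_sym}}$ through the inequalities, which I would not grind through here.
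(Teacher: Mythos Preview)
Your Step 2 --- that the realizing point $t_1 \in T_X$ must lie within a uniform distance of $t_\al$, where $\al$ is the candidate optimal at $G(t_1)$ --- is the heart of the difficulty, and the argument you give for it does not close. Knowing that $\ST_\al(X,G(t_\al))$ is much smaller than $\ST_\al(X,G(t_1)) = e^D$ tells you nothing about $d(X,G(t_\al))$, because a different candidate may dominate there. Your triangle-inequality patch only gives $d(X,G(t_\al)) \leq D + d(G(t_1),G(t_\al))$, an upper bound that \emph{grows} with $|t_1 - t_\al|$, not a contradiction. And in fact the claim itself need not hold with Chapter~3 tools alone: if $X$ had two candidates $\al,\beta$ with $t_\al$ and $t_\beta$ far apart, the minimum of $\max\{\ST_\al,\ST_\beta\}$ would sit near the crossing of the two graphs, potentially far from both $t_\al$ and $t_\beta$. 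Ruling this out is exactly Corollary~\ref{close_can_cor}, which lives in Chapter~5 and rests on the Whitehead-graph work of Chapter~4. Your ``clean way'' runs into the same wall: to know that $d(X,G(t))$ starts increasing at a definite rate past a point \emph{near $\pi_f(X)$}, you need all candidates to become $\eps_0$-legal past a uniformly located threshold, which again is the content of Corollary~\ref{close_can_cor}.

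The paper's proof sidesteps this entirely by never trying to locate $t_X$ relative to any $t_\al$. Instead it proves a purely order-theoretic fact: the pointwise maximum of two functions $u,v$ satisfying the growth estimates of Lemma~\ref{exp_growth2} has a min-set of diameter bounded only by the exponential constants, \emph{independent of where $t_u,t_v$ sit}. The nontrivial case is when $u$ dominates at $t_v$ and $v$ dominates at $t_u$; then the intermediate value theorem produces a crossing point $t_1$ where $u(t_1)=v(t_1)=h(t_1)$, and the coarse monotonicity of $u$ to the right of $t_1$ (since $t_1>t_u$) together with that of $v$ to the left (since $t_1<t_v$) traps the min-set of $h$ in $[t_1 - s(v),\, t_1 + s(u)]$. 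Since the number of candidates is bounded in terms of $n$, iterating gives the uniform bound on $\diam T_X$. This is the idea your sketch is missing.
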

\begin{proof}
Let $u(t)$ be a coarsely exponential function, i.e. a function that satisfies Lemma \ref{exp_growth2}. Let $C = C_{\ref{exp_growth2}}$, $s(u) = \frac{2\log(C)}{\log \lam}+1$. Let $t_0<t$ and $t'>t+s(u)$ then $\lam^{n(t') - n(t)}> C^2$. Thus by Lemma \ref{exp_growth2} applied to $u$ we have: 
\[ u(t') - u(t) > \frac{1}{C} \lam^{n(t')}u(t_0) - C \lam^{n(t)}u(t_0)>0 \]  
Similarly for $t'<t-s(u)$ and $t<t_0$ for some appropriate $s(u)$. To sum up:
\[\begin{array}{ll}
\text{If } t_0<t<t+s(u)<t' \text{ then } & u(t)<u(t') \\[0.2 cm]
\text{If } t_0>t>t-s(u)>t' \text{ then } & u(t)<u(t')
\end{array}
\]
Let $\al,\beta$ be two candidates in $X$. The function $u(t) = \ST(\al_t) = \frac{l_\al(t)}{l(\al,X)}$ differs from $l_\al(t)$ by the multiplicative constant $\frac{1}{l(\al,X)}$. So $u(t)$ and $v(t) = \ST(\beta_t)$ are also coarsely exponential. \\

We claim that $h(t) = \max \{ u(t), v(t) \}$ also has a coarse minimum. Let $t_u = t_0(u) = t_0(\al)$ and $t_v = t_0(\beta)$ and for concreteness, assume $t_u<t_v$. Note that $\lim_{t \to \pm \infty} h(t) = \infty$ so $h$ obtains a minimum at some $t_h$. \\ 
If $h(t_u) = u(t_u)$ let $t>t_u+s(u)$ then we have 
\[ \begin{array}{l}
h(t_u-t) \geq u(t_u-t)> u(t_u) = h(t_u)\\[0.2 cm] 
h(t_u+t) \geq u(t_u+t)> u(t_u) = h(t_u)
\end{array}\] 
hence the diameter of the min-set of $h$ is at most $2s(u)$. Similarly, if $h(t_v) = v(t_v)$ we are done. So we can assume $h(t_u) = v(t_u)>u(t_u)$ and $h(t_v) = u(t_v)>v(t_v)$. By continuity, there is a point $t_1$ so that $u(t_1) = v(t_1)=h(t_1)$. $h(t_1+t) \geq u(t_1 + t) > u(t_1)$ for $t>s(u)$ (since $u$ is increasing in this domain) and $h(t_1-t) \geq v(t_1-t) > v(t_1)$ for $t>s(v)$. Therefore the min-set of $h$ is bounded by $2(s(u)+s(v))$ see Figure \ref{coarse_proj_fig}. \\
Since there is only a finite number of candidates (depending only on $n$) then the diameter of $\pi(X)$ is uniformly bounded. 
\end{proof}

\begin{figure}[ht]
\begin{center}
\includegraphics{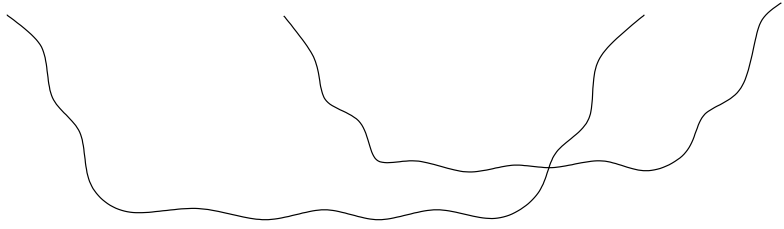}
\caption{\label{coarse_proj_fig}If two functions have a coarse minimum then their max has a
  coarse minimum.}
\end{center}
\end{figure}

Using the fact that the length map $l_\al(t)$ is coarsely exponential one could show that $\pi_f: \os \to \LL_f$ is ``coarsely Lipschitz". However we will get a better result in Corollary \ref{proj_Lip}.


\section{The Whitehead graph of the attracting and repelling laminations}

\subsection{The Length of a Lamination}

Let $\phi$ be a fully irreducible automorphism, and $f:G_0 \to G_0$ a
stable train-track representative of $\phi$. Let $T_0$ be the universal cover of
$G_0$, and $\wt{f}:T_0 \to T_0$ a lift of $f$. Suppose $\Lam^+_\phi(G_0)$
is the attracting lamination of $\phi$ realized as bi-infinite lines in
$G_0$. 

\begin{defn}[length of $\Lam^+_\phi$ in $T$ scaled with respect to $T_0$]\label{length of lam def} 
Given a metric tree $T$ in unnormalized Outer
Space (i.e., where we do not identify homothetic trees) one can define the length of the lamination $\Lam^+$ in $T$,
scaled with respect to $T_0$, as follows. Let $h:T_0 \to T$ be an equivariant Lipschitz map and let $\sigma$ be a
subsegment of the leaf $\lambda \in \Lam^+_\phi(G_0)$ and $\wt{\sigma}$ be
a lift of $\sigma$ to $T_0$. Let $[h(\wt{\sigma})]$ the tightened image of
$\wt{\sigma}$ in $T$ then
\begin{equation}
l_{T_0}(\Lam^+,T) = \displaystyle \lim_{\sigma \to \lambda}
\frac{l([h(\wt{\sigma})],T)}{l(\wt{\sigma},T_0)} \label{lam_len_eq}
\end{equation} 
\end{defn}

Note that Definition \ref{length of lam def} does not depend on the choice of leaf $\lam$ of $\Lam^+_\phi$ since all leaves of $\Lam_\phi^+$ have the same leaf segments. In Lemma \ref{lam_len_well_defined} we will prove that this definition is mathematically sound. But first, let us observe that the limit in equation (\ref{lam_len_eq}) is not invariant when resealing the metric of $T$. Therefore, we modify the definition for $[T]$
\begin{defn}[length of $\Lam^+_\phi$ in the projective class of $T$ scaled with respect to $T_0$]\label{len of lam in proj}
Let $[w_1],
\dots , [w_J]$ be a set of conjugacy classes in $F_n$ that cannot be
simultaneously elliptic in any very small $F_n$-tree (for example the set of words of length at most $2$ in some basis). Let $\tr(w_1,T), \dots ,\tr(w_J,T)$ be their
translation lengths in $T$ and $d(T) = \sum_{i=1}^{J}tr(w_i,T)$ then
\begin{equation}
pl_{T_0}(\Lam^+, [T]) = \frac{l_{T_0}(\Lam^+,T)}{d(T)} \label{mod_lam_len_eq}
\end{equation}
\end{defn}

\begin{lemma}\label{lam_len_well_defined}
The limit in equation \ref{lam_len_eq} exists, and it is independent
of the choice of $h$. Moreover, the map $pl_{T_0}(\Lam^+,\cdot):
\overline{\os} \to \RR$ is continuous.
\end{lemma}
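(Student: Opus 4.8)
The plan is to establish the three claims of Lemma~\ref{lam_len_well_defined} in order: existence of the limit, independence of $h$, and continuity of $pl_{T_0}(\Lam^+,\cdot)$ on $\overline{\os}$. For existence, I would use the quasi-periodicity of leaves of $\Lam^+_\phi$ (Proposition~\ref{quasi_periodic}) together with bounded cancellation. Fix an equivariant Lipschitz $h\colon T_0\to T$ with $BCC(h)$ its bounded cancellation constant. For a leaf segment $\wt\sigma$ of length $L$, the tightened image $[h(\wt\sigma)]$ has length that is \emph{sub-additive up to a bounded error}: if $\wt\sigma=\wt\sigma_1\wt\sigma_2$ then $\bigl|\,l([h(\wt\sigma)],T)-l([h(\wt\sigma_1)],T)-l([h(\wt\sigma_2)],T)\,\bigr|\le 2\,BCC(h)$, because the only cancellation when concatenating the two tightened images happens near the splitting point and is controlled by $BCC(h)$. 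Meanwhile $l(\wt\sigma,T_0)$ is exactly additive. So the function $F(L)=\sup\{\,l([h(\wt\sigma)],T)\mid \wt\sigma\text{ a leaf segment of }T_0\text{-length }L\,\}$ is sub-additive up to an additive constant, and Fekete's lemma (in its approximate form) gives that $F(L)/L$ converges. The key point still to nail down is that the limit does not depend on \emph{which} sequence of leaf segments $\sigma\to\lambda$ we take: here quasi-periodicity is exactly what is needed — any sufficiently long leaf segment contains, up to bounded overlap error, a controlled proportion of every ``color'' (short sub-pattern), so two long leaf segments have comparable tightened-image-to-length ratios up to lower-order terms. I would phrase this as: for any $\eta>0$ there is $L_0$ so that for all leaf segments $\wt\sigma,\wt\tau$ of $T_0$-length $\ge L_0$ we have $\bigl|\,l([h(\wt\sigma)],T)/l(\wt\sigma,T_0)-l([h(\wt\tau)],T)/l(\wt\tau,T_0)\,\bigr|<\eta$.

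For independence of $h$: if $h,h'$ are two equivariant Lipschitz maps $T_0\to T$, then by equivariance $d_T(h(x),h'(x))$ is bounded (it is continuous and equivariant, hence factors through the compact quotient). Therefore for any leaf segment $\wt\sigma$ the tightened images $[h(\wt\sigma)]$ and $[h'(\wt\sigma)]$ differ in length by at most $2\sup_x d_T(h(x),h'(x))$, a constant independent of $\wt\sigma$. Dividing by $l(\wt\sigma,T_0)\to\infty$ kills this error, so the two limits agree. This also shows $l_{T_0}(\Lam^+,T)$ is well-defined for $T$ in unnormalized Outer Space, and scaling $T$'s metric by $c$ scales $l_{T_0}(\Lam^+,T)$ by $c$ and each $\tr(w_i,T)$ by $c$, so $pl_{T_0}(\Lam^+,[T])$ is genuinely a function on projective classes.

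For continuity on $\overline{\os}$: the denominator $d(T)=\sum_i \tr(w_i,T)$ is continuous in the axes topology by definition of that topology, and it is bounded away from $0$ on $\overline{\os}$ precisely because $[w_1],\dots,[w_J]$ were chosen so they cannot all be simultaneously elliptic in any very small tree (using compactness of $\overline{\os}$ from \cite{CM} together with the description of $\overline{\os}$ as very small minimal trees from \cite{CohL},\cite{BF}). So it suffices to show $T\mapsto l_{T_0}(\Lam^+,T)$ is continuous on $\overline{\os}$. I would do this by showing the convergence in \ref{lam_len_eq} is \emph{uniform} in $T$ on compact sets: choose a single leaf segment $\wt\sigma_L$ of length $L$; then $l([h_T(\wt\sigma_L)],T)/L$ approximates $l_{T_0}(\Lam^+,T)$ to within an error $O(1/L)$ that is uniform once we bound the relevant Lipschitz and bounded-cancellation constants locally, and for fixed $L$ the map $T\mapsto l([h_T(\wt\sigma_L)],T)$ is continuous (a tightened image of a fixed finite path — essentially a translation-length-type computation, continuous in the axes topology). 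A uniform limit of continuous functions is continuous. The main obstacle I anticipate is the uniformity of the approximation rate as $T$ ranges over $\overline{\os}$, including degenerate boundary trees: one must argue that bounded cancellation constants and the ``color-frequency'' estimates from quasi-periodicity can be made uniform, which I expect requires a compactness argument (cover $\overline{\os}$ by finitely many neighborhoods on each of which the constants are controlled) rather than a single clean estimate. Handling boundary points of $\overline{\os}$, where $h_T$ may collapse subtrees, is the delicate case and is where I would spend the most care.
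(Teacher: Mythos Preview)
Your existence argument via approximate subadditivity and Fekete's lemma is a legitimate alternative to the paper's route, which instead uses the explicit Perron--Frobenius tiling: long leaf segments decompose into $k$-tiles $\tau_i^k=f^k(e_i)$ appearing with fixed frequencies $r_i$, and the ratio is sandwiched in intervals shrinking around $a_k=\bigl(\sum r_i\,l([h(\wt\tau_i^k)],T)\bigr)/\bigl(\sum r_i\,l(\wt\tau_i^k,T_0)\bigr)$. The paper's approach is heavier but produces a concrete formula for the limit that is then reused in the continuity step; your approach is cleaner conceptually but does not give that formula, so your continuity argument has to start from scratch. Your independence-of-$h$ argument is the same as the paper's.

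For continuity there is one point you treat too casually. You assert that for a fixed leaf segment $\wt\sigma_L$ the map $T\mapsto l([h_T(\wt\sigma_L)],T)$ is continuous, calling it ``essentially a translation-length-type computation.'' But $\wt\sigma_L$ is a \emph{path}, not a loop, and $h_T$ is only determined up to bounded distance, so this quantity is not even a well-defined function of $T$ without further choices. The fix is easy: close $\sigma_L$ up to a loop in $G_0$ by appending a bounded-length arc; then the tightened length is a genuine translation length, continuous in the axes topology by definition, and the closing error is $O(1)$ and dies after dividing by $L$. The paper's version of this step asserts continuity of $l_i^k(T)/\tr(w_1,T)$ for each tile, which rests on the same idea. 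As for the obstacle you correctly flag --- uniformity of the approximation rate over $\overline{\os}$, especially at boundary trees --- note that the paper's proof is equally brief here: it shows each approximant $a_k(\cdot)/d(\cdot)$ is continuous and does not spell out why the limit is. Completing either argument requires controlling $BCC(h_T)/d(T)$ uniformly on the compact space $\overline{\os}$, which is where the real work lies.
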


\begin{proof}
We begin by showing that the limit exists. This boils down to the
fact that $\lam$ is quasi-periodic. We first give the idea of the proof: If $\sigma \subseteq \lam$ is
long enough then $\sigma$ is a concatenation of a list of words $\tau^k_1, \dots , \tau^k_m$ called $k$-tiles, which are $f^k$ images of edges. The $k$-tiles appear with fixed frequencies $r_1, \dots , r_m$. We can choose the tiles long enough so that the cancellation in $h_\#(\wt{\tau^k_i}) h_\#(\wt{\tau^k_j})$ is negligible with respect to the length of
$\tau^k_i$. Thus $h_\#(\wt{\sigma})$ (up to small cancellation) is a concatenation of the 
tiles $h_\#(\wt{\tau^k_i})$, which appear with frequency $r_i$. Thus 
$\displaystyle \frac{l_{T_0}(h(\wt{\sigma})),T)}{l_{T_0}(\wt{\sigma},T_0)} \sim
\frac{\sum_{i=1}^{m}r_i l(h(\wt{\tau^k_i}),T)}{\sum_{i=1}^{m}r_i
l(\wt{\tau^k_i},T_0)}$. This expression can easily be shown to converge
as $k \to \infty$. \\

Let $L = \lip(h)$ and $C = BCC(h)$. Denote the edges of $G_0$ by
$e_1, \dots , e_m$. For each $k$ the $i$-th $k$-tile is $\tau^k_i =
f^k(e_i)$ where $1 \leq i \leq m$. 
We use $l^k_i(T_0) =
l(\wt{\tau^k_i},T_0)$, and $l^k_i(T) =
l([h(\wt{\tau^k_i})],T)$ for shorthand. Let $A = \max\{ l^k_i(T_0) | 1 \leq i \leq m \}$ and $B = \min \{ l^k_i(T_0)
| 1 \leq i \leq m \}$. Suppose $k$ is large enough so that
$\frac{2C}{B}< \eps$. 

Each leaf
$\lam$ of $\Lam^+_0$ has a natural $1$-tiling by edges in $G_0$. The
standard $j$-tiling of $\lam$ is the $f^j$ image of the $1$-tiling of
$f^{-j}(\lam)$. Let $\sigma$ be a subsegment of $\lam$, $\sigma$ itself might not be "nicely" tiled because it might begin and end in the middle of a tile, but we can sandwich it $\sig_1 \subseteq \sig \subseteq \sig_2$ with leaf segments which are tiled. Let $\sigma_1$ be the longest subsegment of $\sigma$ which is tiled by $\{ \tau^k_i \}_{i=1}^m$ and $\sig_2$ the shortest subsegment of $\lam$ which contains $\sig$ and is tiled by $\{ \tau^k_i \}_{i=1}^m$. 
\[ l(\wt{\sigma_1},T_0) \leq l(\wt{\sig},T_0) \leq l(\wt{\sigma_2},T_0) \leq
l(\wt{\sigma_1},T_0) + 2A\]

Let $N^k_i = \# \text{occurrences of the tile } \tau^k_i \text{ in the
tiling of } \sigma_1$, and $N^k = \sum_{i=1}^{m}N_i$. By
Perron-Frobenius theory there are $r_1, \dots , r_m$ (independent of $k$) such that $\displaystyle \frac{N^k_i}{N^k} \to r_i$ as $\sig_1 \to \lam$. Let $a_k = \displaystyle \frac{\sum_{i=1}^{m}r_il^k_i(T)}{\sum_{i=1}^{m}r_i l^k_i(T_0)}$. We show that for large enough $\sigma$, $\displaystyle \frac{l(h[\wt{\sigma}],T)}{l(\sigma,T_0)}$ is in $[a_k-\eps, a_k+\eps]$.\\

We have:
 \[ \frac{l([h(\ws_1)],T)}{l(\ws_2,T_0)} \leq
    \frac{l([h(\ws)],T)}{l(\ws,T_0)} \leq
    \frac{l([h(\ws_2)],T)}{l(\ws_1,T_0)}\] 
The right hand side:
\[
\begin{array}{l}
\displaystyle \frac{l([h(\ws_2)],T)}{l(\ws_1,T_0)} \leq
\frac{\sum_{i=1}^{m} N^k_i l^k_i(T) + 2AL }{\sum_{i=1}^{m}N^k_il^k_i(T_0)} = 
\displaystyle \frac{\sum_{i=1}^{m} \frac{N^k_i}{N^k}l^k_i(T) + \frac{2AL}{N^k}}
{\sum_{i=1}^{m}\frac{N^k_i}{N^k}l^k_i(T_0)} \xrightarrow[N^k \to
\infty]{\phantom{blah}} \frac{\sum_{i=1}^{m}r_i
  l^k_i(T)}{\sum_{i=1}^{m}r_i l^k_i(T_0)} = a_k \end{array}
 \] 
  The left hand side limits to:
\[\begin{array}{lcl}
\displaystyle 
\frac{l(h[\ws_1],T)}{l(\ws_2,T_0)} & \geq & 
\displaystyle 
\frac{\sum_{i=1}^{m} N^k_i \left[l^k_i(T) -2C \right]}{\sum_{i=1}^{m}N^k_i l^k_i(T_0) + 2A}  
 = \frac{\sum_{i=1}^{m}N^k_i l^k_i(T)}{\sum_{i=1}^{m}N^k_i
  l^k_i(T_0) + 2A}
  -\frac{\sum_{i=1}^{m}N^k_i2C}{\sum_{i=1}^{m}N^k_i l^k_i(T_0)
  + 2A} \\[0.5 cm] 
 \displaystyle 
  & \geq & 
  \displaystyle \frac{\sum_{i=1}^{m}\frac{N^k_i}{N^k}
    l^k_i(T)}{\sum_{i=1}^{m}\frac{N^k_i}{N^k}
  l^k_i(T_0) + 2A} - \frac{N^k \cdot 2C}{N^kB+2A}  
  \xrightarrow[N^k \to \infty]{\phantom{blah}} 
  \frac{\sum_{i=1}^{m}r_i
  l^k_i(T)}{\sum_{i=1}^{m}r_i l^k_i(T_0)} - \frac{2C}{B} \\[0.7 cm]
  & = & \displaystyle 
  \frac{\sum_{i=1}^{m} r_i l^k_i(T)}{\sum_{i=1}^{m}r_i l^k_i(T_0)} - \eps = a_k -\eps
  \end{array}\]
Thus, for all $\eps$, and
for large enough $\sigma$:
\begin{equation}\label{sandwich_eq}
a_k - \eps \leq
\frac{l([h(\ws)],T)}{l(\ws,T_0)} \leq a_k + \eps \end{equation}

Let $J(L)$ be the smallest closed interval containing $\left\{ \left. \displaystyle \frac{l([h(\wt\sig)], T)}{l(\wt\sig, T_0)} ~ \right| ~ \sig \subseteq \lam , l(\wt\sig, T) \geq L \right\}$. $J(L+1) \subseteq J(L)$. By equation \ref{sandwich_eq} the diameter of $J(L)$ is bounded by $2\eps$. By Cantor's nested intervals lemma $a_k$ converges to a limit $c$. Thus  
\begin{equation} \displaystyle \lim_{\sig \to \lam}
\frac{l(h[\ws],T)}{l(\ws,T_0)} = c  \label{limit_eq} \end{equation}\\

Next, we show that this limit does not depend on the choice of $h$. We
claim that if $h':T_0 \to T$ is another equivariant Lipschitz map,
then $|l([h(\sig)],T)-l([h'(\sig)],T)|<2D$ for some $D$. Thus the limit in equation
\ref{limit_eq} is the same for both $h$ and $h'$. Indeed let $p$ be
some point in $T_0$. Then for all $x \in T_0$ there is a $g \in F_n$ such
that $d_{T_0}(x,g \cdot p) \leq 1$. Hence $d(h(x),h'(x)) \leq
d(h(x),h(gp)) + d(h(gp),h'(gp)) + d(h'(gp),h'(x)) \leq \text{Lip}(h)
+ d(h(p),h'(p)) + \text{Lip}(h')$. Denote this constant by $D$. Thus,
for any path $\sigma \subseteq T_0$ the initial and terminal
endpoints of $h(\sig), h'(\sig)$ are $D$-close, so
$|l([h(\sig)],T)-l([h'(\sig)],T)|<2D$. \\

Finally we want to show that $pl_{[T_0]}(\Lam^+,[T])$ depends
continuously on $[T]$.  \[pl_{[T_0]}(\Lam^+,[T]) = \lim_{k \to \infty}
 \frac{1}{\sum_{i=1}^{m}r_il^k_i(T_0)} \frac{\sum_{i=1}^{m}r_i
  l^k_i(T)}{d(T)}\] Without loss of generality  suppose $\tr(w_1,T) \neq 0$.
If $[T_j]  \xrightarrow[j \to \infty]{\phantom{blah}} [T]$ then
$\displaystyle \frac{l^k_i(T_j)}{\tr(w_1,T_j)} \to \frac{l^k_i(T)}{\tr(w_1,T)}$ for all $1 \leq i \leq m$ so:
\[ \frac{\sum_{i=1}^{m} r_i l^k_i(T_j)}{d(T_j)} = \frac{\sum_{i=1}^{m} r_i
l^k_i(T_j)/\tr(w_1,T_j)}{d(T_j)/\tr(w_1,T_j)}  \xrightarrow[j \to
\infty]{\phantom{blah}}  \frac{\sum_{i=1}^{m}
r_i l^k_i(T)/\tr(w_1,T)}{d(T)/\tr(w_1,T)} = 
 \frac{\sum_{i=1}^{m}r_il^k_i(T)}{d(T)} \]
 \end{proof}
 
 \subsection{The Whitehead Graph of the Attracting 	
 and Repelling Laminations}

We've defined the Whitehead graph of a conjugacy class $\alpha$ in the basis $\mathcal{B}$. If $R \in \os$ is a rose, i.e. a wedge of $n$ circles, then a marking inverse identifies it's edges with a basis $\BB(R)$ of $F_n$. The Whitehead graph of $\al$ in $R$ is $Wh_R(\al) = Wh_{\BB(R)}(\al)$.\\ 
\indent Let $\eta$ be a  quasi-periodic bi-infinite edge path in $R$. Since it is quasi-periodic, there is some $M$ so that $\{ \text{the turns taken by } \eta\} \subseteq \{ \text{the turns taken by } \eta' \}$ where $\eta'$ is any subpath of $\eta$ whose length is at least $M$. We may assume that $\eta'$ is a closed path. Let $Wh_R^*(\eta')$ be the Whitehead graph of $\eta'$ taking into account all turns except the one connecting the end of $\eta'$ and the beginning of $\eta'$. Define $Wh_R(\eta) = Wh_R^*(\eta')$ (We exclude the "last" turn of $\eta'$ since it need not be taken in $\eta$).\\

\begin{lemma}\label{no_cut_ver_lemma}
There is a point $F  \in \os$ such that for any leaves $\lam \in
\Lam^+_\phi(F)$ and $\nu \in \Lam^-_\phi(F)$, the whitehead graph
$Wh_F(\lam,\nu)$ is connected and contains no cut vertex.
\end{lemma}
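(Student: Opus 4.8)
The plan is to produce the point $F$ as a point far out along the axis $\LL_f$ (equivalently, to use a point of the form $G_0 \cdot \phi^m$ for large $m$), and to identify $F$ with a rose so that we can talk about its Whitehead graph. The guiding principle is that $\Lam^+_\phi$ is the attracting lamination, so a generic long leaf segment of $\Lam^+$ pushed forward by $\phi^m$ looks more and more like an $f^m$-image of a long legal path; since $f$ is an expanding train-track map, these $f^m$-images exhaust all legal turns of $G$. Concretely, I would first record that for a train-track map $f\colon G \to G$ of a fully irreducible $\phi$, there is a power $f^m$ such that $f^m(e)$ crosses every (legal) turn of $G$ for every edge $e$; this is a standard primitivity/mixing consequence of irreducibility (the transition matrix of $f$ is Perron–Frobenius, and one also uses that $f$ has no periodic legal turn that fails to spread out — equivalently one can phrase it via the ``doubly Perron–Frobenius'' behaviour of $f$ on the turn level). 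Collapsing a maximal tree of $G$ to get a rose $R = F$ with basis $\BB(R)$, this says $Wh_R(\lam)$ contains the full Whitehead graph of a long $f^m$-image of an edge, and by quasi-periodicity of $\lam$ (Proposition \ref{quasi_periodic}) the graph $Wh_R(\lam)$ stabilizes to this fixed finite graph, independent of the leaf $\lam$. The same applies to $\Lam^-_\phi$ using a train-track representative $g$ of $\phi^{-1}$.

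Next I would leverage Whitehead's theorem in the reformulated version, Theorem \ref{WhVer2}. The key point: $\Lam^+_\phi$ and $\Lam^-_\phi$ are \emph{not} carried by any proper free factor — this is exactly where full irreducibility of $\phi$ enters (if the attracting lamination were carried by a proper free factor, iterating $\phi$ would give an invariant free factor system up to conjugacy, contradicting irreducibility of all powers). Hence the pair of laminations, realized as (limits of) conjugacy classes, is not compatible with any free decomposition of $F_n$. Then by Theorem \ref{WhVer2}, in \emph{every} basis $\BB$ the graph $Wh_\BB(\Lam^+,\Lam^-)$ is either disconnected or connected-with-no-cut-vertex. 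So it suffices to find \emph{one} point $F$ where $Wh_F(\Lam^+)\cup Wh_F(\Lam^-)$ is connected; no-cut-vertex then comes for free.

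To get connectedness I would again use that $F$ is far along the axis: as $m\to\infty$, $Wh_{G\cdot\phi^m}(\Lam^+)$ converges (stabilizes) to the Whitehead graph of $\Lam^+$ computed from $f^m$-images of edges, and a short argument shows this stabilized graph is connected — it is covered by the connected sets $Wh(f^m(e))$ for the finitely many edges $e$, and these overlap because $G$ is connected (each vertex of $G$ contributes overlapping turn-data). So $Wh_F(\Lam^+)$ alone is already connected for $F$ far enough out, and a fortiori $Wh_F(\Lam^+)\cup Wh_F(\Lam^-)\supseteq Wh_F(\Lam^+)$ is connected. Combining with the previous paragraph finishes the proof.

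The main obstacle I expect is twofold and both parts are essentially bookkeeping-with-teeth rather than deep: (i) making precise the claim that for $F$ far out along the axis the Whitehead graph of a leaf stabilizes and equals the ``$f^m$-image'' graph uniformly over all leaves $\lam\in\Lam^+_\phi(F)$ — this needs quasi-periodicity together with control on the marking/collapse used to view $F$ as a rose, and one must check the bounded-cancellation from tightening $g\lam$ in $F$-coordinates does not destroy turns; and (ii) the input that neither lamination is carried by a proper free factor, which should be cited from \cite{BFH} (the support of $\Lam^\pm_\phi$ is all of $F_n$ precisely when $\phi$ is fully irreducible). If one prefers to avoid the free-factor detour, an alternative is to prove no-cut-vertex directly by showing $Wh_F(\Lam^+)\cup Wh_F(\Lam^-)$ is $2$-connected: any candidate cut vertex would be a letter $a$, and cutting at $a$ would let one reduce $\bigl(\sum|\cdot|_{\BB}\bigr)$ simultaneously for the tiles of both laminations via a Whitehead move, contradicting that these lengths are already minimized along the axis (by the min-set/Corollary \ref{coarse_minimum} circle of ideas). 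I would go with the Whitehead-theorem route as the cleaner argument.
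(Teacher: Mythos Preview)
Your argument has two genuine gaps. First, you misread Theorem \ref{WhVer2}. Negating both sides, it says: if $[y_1],\dots,[y_k]$ are \emph{not} compatible with a free decomposition, then there \emph{exists} a basis in which the Whitehead graph is connected without cut vertex. It does \emph{not} say that in every basis where the graph is connected it has no cut vertex, which is what you invoke. (And even the existence statement is for finitely many conjugacy classes, not for bi-infinite leaves; extending Whitehead's theorem to laminations is precisely the content of the lemma.) So the step ``connectedness at $F$ $\Rightarrow$ no cut vertex at $F$'' is unjustified. Second, moving far along the axis cannot help: by Remark \ref{no_cut_ver_rmk} one has $Wh_{F}(\lam,\nu)=Wh_{F\cdot\phi}(\lam,\nu)$, so the Whitehead graph of the two laminations is $\phi$-invariant. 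Replacing $F$ by $F\cdot\phi^m$ changes nothing, and your ``stabilization as $m\to\infty$'' picture is based on a confusion --- the graph is already stable at $m=0$.

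The paper's proof takes a completely different route and does not try to locate $F$ on the axis at all. One starts at an arbitrary rose $X_0$ (where $Wh_{X_0}(\lam)$ and $Wh_{X_0}(\nu)$ are each connected, by \cite{BFH}) and runs Whitehead's algorithm: whenever $Wh_{X_i}(\lam,\nu)$ has a cut vertex, the associated Whitehead move produces $X_{i+1}$ with \emph{strictly smaller} normalized lamination lengths $pl_{T_0}(\Lam^+,\cdot)$ and $pl_{T_0}(\Lam^-,\cdot)$ (Observation \ref{main_obs}). Termination is then a compactness argument: if the process ran forever, the $X_i$ would accumulate on some $[T]\in\partial\os$ with $pl_{T_0}(\Lam^+,[T])=pl_{T_0}(\Lam^-,[T])=0$, contradicting the Levitt--Lustig input (Proposition \ref{LLprop}) that both lengths cannot vanish on the same tree. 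Your alternative sketch (``a cut vertex would allow a Whitehead move decreasing both lamination lengths, contradicting minimality along the axis'') is in the right spirit, but there is no reason the axis minimizes these lengths --- the paper has to manufacture the contradiction at the boundary via Levitt--Lustig, not on the axis.
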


To prove this lemma we will need the following proposition proven by
Levitt and Lustig \cite{LL}.

\begin{prop}\label{LLprop}
If $pl_{T_0}(\Lam^+,[T]) = 0$ then $pl_{T_0}(\Lam^-, [T]) \neq 0$
\end{prop}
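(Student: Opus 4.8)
The plan is to pin down the zero sets of the two lamination-length functions inside the compactified space $\overline{\os}$, and then use the North--South dynamics of $\phi$ (due to Levitt--Lustig \cite{LL}) to show these two sets are disjoint. Recall that $\phi$ is fully irreducible, so besides $\Lam^{+}=\Lam^{+}_\phi$ and $\Lam^{-}=\Lam^{-}_\phi$ it has an attracting tree $T_{+}$ and a repelling tree $T_{-}$ in $\overline{\os}$, which may be taken to be $[T_{\pm}]=\lim_{n\to\infty}[T_0\cdot\phi^{\pm n}]$; by \cite{LL}, $[T_{+}]$ and $[T_{-}]$ are the only fixed points of $\phi$ acting on $\overline{\os}$, $[S\cdot\phi^{n}]\to[T_{+}]$ for every $[S]\ne[T_{-}]$, and $[S\cdot\phi^{-n}]\to[T_{-}]$ for every $[S]\ne[T_{+}]$. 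Set $Z^{+}=\{[T]\mid pl_{T_0}(\Lam^{+},[T])=0\}$ and $Z^{-}=\{[T]\mid pl_{T_0}(\Lam^{-},[T])=0\}$. I will establish: (i) $Z^{+}$ and $Z^{-}$ are closed and $\phi$-invariant; (ii) $[T_{+}]\notin Z^{+}$ while $[T_{-}]\in Z^{+}$, and symmetrically $[T_{-}]\notin Z^{-}$ while $[T_{+}]\in Z^{-}$. Granting these, the proposition follows at once: if some $[T]\in Z^{+}\cap Z^{-}$ then $[T]$ cannot equal $[T_{+}]$ (since $[T_{+}]\notin Z^{+}$) nor $[T_{-}]$ (since $[T_{-}]\notin Z^{-}$), so $[T\cdot\phi^{n}]\to[T_{+}]$ by the dynamics; but then $\phi$-invariance and closedness of $Z^{+}$ force $[T_{+}]\in Z^{+}$, a contradiction. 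Hence $Z^{+}\cap Z^{-}=\varnothing$, which is exactly the assertion.

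For (i), the key point is a scaling rule: $l_{T_0}(\Lam^{+},T\cdot\phi)=\lam\, l_{T_0}(\Lam^{+},T)$ for every $[T]$, where $\lam$ is the expansion factor of $\phi$. One sees this by computing $l_{T_0}(\Lam^{+},T\cdot\phi)$ with the equivariant map $h\circ\wt f$ (for $h:T_0\to T$ equivariant Lipschitz and $\wt f$ a lift of the train-track map $f$), using that $f$ carries each leaf segment of $\Lam^{+}$ to a leaf segment of $\Lam^{+}$ and, in the Perron--Frobenius metric on $T_0$, multiplies its length by exactly $\lam$ — independence of the choice of map being part of Lemma~\ref{lam_len_well_defined}. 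Iterating gives $l_{T_0}(\Lam^{+},T\cdot\phi^{n})=\lam^{n}l_{T_0}(\Lam^{+},T)$, so, since the normalizer $d(\cdot)$ of Definition~\ref{len of lam in proj} is strictly positive throughout $\overline{\os}$, the condition $pl_{T_0}(\Lam^{+},[T])=0$ is unaffected by applying $\phi$; thus $Z^{+}$ is $\phi$-invariant. It is closed because $pl_{T_0}(\Lam^{+},\cdot)$ is continuous on $\overline{\os}$, again by Lemma~\ref{lam_len_well_defined}. The same reasoning applied to a stable train-track representative of $\phi^{-1}$ handles $Z^{-}$.

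For (ii), I evaluate $pl_{T_0}(\Lam^{+},\cdot)$ along the two half-orbits of $T_0$ and pass to the limit by continuity. Taking $h=\id$ gives $l_{T_0}(\Lam^{+},T_0)=1$, hence $pl_{T_0}(\Lam^{+},[T_0\cdot\phi^{n}])=\lam^{n}/d(T_0\cdot\phi^{n})$. Since $f^{n}$ stretches every edge of $G_0$ by $\lam^{n}$ one has $d(T_0\cdot\phi^{n})\le C\lam^{n}$ for a fixed constant $C$, so this ratio stays $\ge 1/C>0$; continuity at $[T_{+}]=\lim_n[T_0\cdot\phi^{n}]$ then gives $pl_{T_0}(\Lam^{+},[T_{+}])\ge 1/C>0$, i.e.\ $[T_{+}]\notin Z^{+}$. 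Conversely $pl_{T_0}(\Lam^{+},[T_0\cdot\phi^{-n}])=\lam^{-n}/d(T_0\cdot\phi^{-n})$, and because $d$ is continuous and strictly positive on the compact space $\overline{\os}$ it is bounded below by some $\delta>0$, so this ratio is $\le\lam^{-n}/\delta\to0$; continuity at $[T_{-}]=\lim_n[T_0\cdot\phi^{-n}]$ gives $pl_{T_0}(\Lam^{+},[T_{-}])=0$, i.e.\ $[T_{-}]\in Z^{+}$. (In particular $[T_{+}]\ne[T_{-}]$.) Interchanging $\phi$ and $\phi^{-1}$ yields the symmetric statements for $Z^{-}$, completing (ii) and hence the proof.

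The argument is soft once two external facts are available — the continuity of $pl_{T_0}(\Lam^{\pm},\cdot)$ (Lemma~\ref{lam_len_well_defined}) and the North--South dynamics of \cite{LL} — so the only genuine obstacle is supplying those inputs; within the present framework the one nontrivial step is the scaling rule $l_{T_0}(\Lam^{+},T\cdot\phi)=\lam\, l_{T_0}(\Lam^{+},T)$, which rests on the Perron--Frobenius metric making $f$ stretch the leaves of $\Lam^{+}$ by exactly $\lam$.
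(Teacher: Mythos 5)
Your argument is correct, but it takes a genuinely different route from the paper's: the paper's entire proof is a citation (Proposition 5.1 of \cite{LL} for trees with dense orbits, Section 6 of \cite{LL} in general), whereas you \emph{derive} the statement from the North--South dynamics theorem of \cite{LL} together with the continuity of $pl_{T_0}(\Lam^{\pm},\cdot)$ on $\overline{\os}$ (Lemma \ref{lam_len_well_defined}) and the scaling identity $l_{T_0}(\Lam^+,T\cdot\phi)=\lam\, l_{T_0}(\Lam^+,T)$, which you correctly justify by computing with the twisted-equivariant map $h\circ\wt f$ and using that $f$ stretches legal (hence leaf) segments by exactly $\lam$. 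The resulting soft argument --- the zero sets $Z^{\pm}$ are closed and $\phi$-invariant, the forward orbit of any point of $Z^+\cap Z^-$ must converge to $[T_+]$, but an explicit computation along the orbit of $[T_0]$ shows $[T_+]\notin Z^+$ --- is clean and self-contained given those inputs. What this buys is a transparent proof whose only black boxes are results the paper already invokes elsewhere; what it costs is that North--South dynamics is the main theorem of \cite{LL}, and its proof there runs through statements of precisely the type you are proving, so at the level of the literature your derivation is heavier and one should check it is not circular inside \cite{LL} (within the present paper both are legitimate external citations). One small slip to repair: $d(\cdot)$ is homogeneous of degree one, so it is not a function on the projective space $\overline{\os}$ and cannot be ``bounded below on the compact space $\overline{\os}$''; what you actually need, and what is true, is that the specific representatives satisfy $d(T_0\cdot\phi^{-n})=\sum_{i=1}^{J} l(\phi^{-n}(w_i),G_0)\geq \delta>0$ with $\delta$ the length of the shortest loop in $G_0$, which gives $pl_{T_0}(\Lam^+,[T_0\cdot\phi^{-n}])\leq \lam^{-n}/\delta\to 0$ as desired.
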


\begin{proof}
Proposition 5.1 in \cite{LL} shows this for a tree $T$ with dense
orbits. For a general tree the proof can be found in Section 6 of
\cite{LL}.
\end{proof}

\begin{proof}[Proof of Lemma \ref{no_cut_ver_lemma}]
First recall that if $\lam_1, \lam_2 \in \Lam^+(X)$ are leaves of the
attracting lamination then they share the same leaf segments so for
any $X \in \os$,
$Wh_X(\lam_1)= Wh_X(\lam_2)$. Since the choice of the leaves does not
affect the whitehead graph, fix leaves $\lam \in \Lam^+$ and $\nu \in
\Lam^-$ once and for all.

Pick a point $X_0 \in \os$ whose underlying graph is a rose where all
edges have length $\frac{1}{n}$. It was
proven in \cite{BFH} that $Wh_{X_0}(\nu), Wh_{X_0}(\lam)$ are both
connected. If $Wh_{X_0}(\nu, \lam)$ contains a cut
vertex $a$ then let $X_1 = X_0 \cdot \phi_{(A,a)}$ the automorphism described in Whitehead's algorithm (see description in the paragraph following Definition \ref{free_factors}). Continue this way to
get a sequence $X_0, X_1, X_2 , \dots $ We will show that this
process terminates in a finite number of steps with a graph $F = X_N$ such that $Wh_{F}(\nu, \lam)$ does not contain a cut vertex. \\

A priori, two other cases are possible: $X_k = X_j$ for some $j>k$, and
the process never terminates producing an infinite sequence $\{ X_i
\}_{i=1}^{\infty}$.

\begin{obs}\label{main_obs}
For all $i$ we have $pl_{T_0}(\Lam^+,\wt{X_{i}}) >
pl_{T_0}(\Lam^+,\wt{X_{i+1}})$ and  $pl_{T_0}(\Lam^-,\wt{X_{i}}) >
pl_{T_0}(\Lam^-,\wt{X_{i+1}})$
\end{obs}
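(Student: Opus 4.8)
The plan is to trace the single Whitehead move that passes from $X_i$ to $X_{i+1}$, show that it strictly shortens the leaves of both $\Lam^+$ and $\Lam^-$ by a definite multiplicative factor, and then check that this survives the limit defining $l_{T_0}$ and the normalization defining $pl_{T_0}$.

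First I would unwind the construction. The point $X_{i+1}$ has underlying graph a rose with all edges of length $1/n$, and its basis $\BB_{i+1}$ is obtained from $\BB_i=\BB(X_i)$ by the Whitehead automorphism of Whitehead's algorithm (the paragraph after Definition \ref{free_factors}): $a_i$ is a cut vertex of $Wh_{X_i}(\nu,\lam)=Wh_{X_i}(\nu)\cup Wh_{X_i}(\lam)$, and the defining partition is $A_i$ versus its complement, where $A_i$ consists of $\{a_i\}$ together with the vertex set of a connected component of $Wh_{X_i}(\nu,\lam)\setminus\{a_i\}$ not containing $a_i^{-1}$. Because each $X_j$ is a rose, a leaf segment $\sigma$ of $\Lam^\pm$ is literally a reduced word in $\BB_j$, with $l([\sigma],X_j)=|\sigma|_{\BB_j}/n$ up to an additive $O(1/n)$ from its two endpoints, while $l(\widetilde\sigma,T_0)$ is the fixed $G_0$-length of $\sigma$ and does not change with $j$.

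The heart of the matter is that this move shortens \emph{each} of $\lam$ and $\nu$, not merely their union — inequality (\ref{decrease}) gives only the latter. By the standard link-counting formula for the change of basis from $\BB_i$ to $\BB_{i+1}$, for a long segment $\sigma\subseteq\lam$ the quantity $|\sigma|_{\BB_{i+1}}-|\sigma|_{\BB_i}$ equals the number of turns of $\sigma$ crossing the partition $(A_i\setminus\{a_i\},A_i^c)$, minus the number of turns of $\sigma$ incident to $a_i^{\pm1}$, up to an error $O(1)$ localized at the ends of $\sigma$. The first count is $O(1)$ because $A_i\setminus\{a_i\}$ is a union of components of $Wh_{X_i}(\nu,\lam)\setminus\{a_i\}$ while every turn of $\lam$ is an edge of $Wh_{X_i}(\nu,\lam)$; the second count grows linearly in $l(\sigma)$ because $a_i$ is a non-isolated vertex of $Wh_{X_i}(\lam)$ — here I use that the laminations of a fully irreducible automorphism are filling, so $Wh_{X_i}(\lam)$ and $Wh_{X_i}(\nu)$ are connected on the full vertex set $\BB_i\cup\BB_i^{-1}$ for every $i$ (true at $X_0$ by \cite{BFH}, and preserved by the algorithm), whence every letter occurs in $\lam$ and in $\nu$. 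The same applies to $\nu$, so the move strictly shortens long leaf segments of both laminations and lengthens neither. By Proposition \ref{quasi_periodic} the turns at $a_i^{\pm1}$ occur in $\lam$ with a definite positive frequency, so $|\sigma|_{\BB_{i+1}}\le(1-c_i)|\sigma|_{\BB_i}+O(1)$ for all long $\sigma\subseteq\lam$ with $c_i>0$; dividing by $l(\widetilde\sigma,T_0)$ and letting $\sigma\to\lam$ as in (\ref{lam_len_eq}) — legitimate by Lemma \ref{lam_len_well_defined}, the limit being computed along the standard tilings — gives $l_{T_0}(\Lam^+,\wt{X_{i+1}})\le(1-c_i)\,l_{T_0}(\Lam^+,\wt{X_i})$, a strict inequality because $l_{T_0}(\Lam^+,\wt{X_i})>0$ (leaves of $\Lam^+$ grow at a definite positive rate in any tree of $\os$); and likewise $l_{T_0}(\Lam^-,\wt{X_{i+1}})<l_{T_0}(\Lam^-,\wt{X_i})$.

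It remains to pass from $l_{T_0}$ to $pl_{T_0}=l_{T_0}/d$, and this is where I expect the real difficulty to lie. The normalizing factor $d(\wt{X_j})=\sum_{l=1}^{J}\tr(w_l,\wt{X_j})$ changes from $j=i$ to $j=i+1$ by the same change of basis, and a priori it can move in the unfavorable direction, so the strict drop in the numerator need not by itself force a strict drop in the quotient. My plan is to bound below, by a factor strictly nearer to $1$ than $1-c_i$, the proportion by which $d$ can shrink in one step: the test classes $w_1,\dots,w_J$ contain a generating set of $F_n$, hence are not supported on a proper free factor system, so for no $l$ is $A_i\setminus\{a_i\}$ a union of components of $Wh_{X_i}(w_l)\setminus\{a_i\}$, and the same link-counting formula then controls how fast the change of basis can collapse $\sum_l|w_l|_{\BB_i}$; combined with a uniform lower bound on the $c_i$ extracted from the quasi-periodicity constants of $\Lam^\pm$, this should yield $pl_{T_0}(\Lam^\pm,\wt{X_{i+1}})<pl_{T_0}(\Lam^\pm,\wt{X_i})$. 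Making these two rates comparable uniformly in $i$ — or, alternatively, checking outright that $d$ cannot decrease along precisely these length-reducing moves — is the step I expect to require the most care.
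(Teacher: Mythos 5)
Your core argument is the same as the paper's. The paper also proves the inequality by showing that long leaf segments of $\lam$ (the tightened images of the $k$-tiles, in its notation) lose a definite number of letters under $\phi_{(A,a)}$, the cancellations occurring exactly at the turns $\{\bar x, a\}$ with $\bar x\in A$, which appear with positive frequency by quasi-periodicity (Proposition \ref{quasi_periodic}); it then passes to the limit through the ratios $a_k$. Two of your refinements are worth keeping: you say explicitly why nothing lengthens (no turn of $\lam$ or $\nu$ crosses the partition, since $A\setminus\{a\}$ is a union of components of $Wh_{X_i}(\nu,\lam)\setminus\{a\}$ and every turn crossed by a leaf is an edge of that graph), and why \emph{each} of $\lam,\nu$ --- not just their union --- is strictly shortened. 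For the latter, the cleanest justification is not that $a$ is a non-isolated vertex of $Wh_{X_i}(\lam)$, but that $Wh_{X_i}(\lam)$ is connected on the full vertex set (the lamination is filling, so every letter occurs), hence some edge of $Wh_{X_i}(\lam)$ leaves the nonempty set $A\setminus\{a\}$, and by the no-crossing property that edge must end at $a$, i.e.\ it is a cancellation turn. (Also, connectivity at every $X_i$ holds because the connectivity result of \cite{BFH} applies in every basis, not because Whitehead moves preserve connectivity --- in general they do not.)

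The step you leave open --- controlling the normalizing sum $d(\wt{X_j})$ --- is not in the paper either, and you should not try to close it: a single Whitehead move can perfectly well shorten the test classes $w_1,\dots,w_J$ faster than it shortens the laminations, so the quotient $pl_{T_0}=l_{T_0}/d$ need not be monotone for any reason your rate comparison could supply. What the paper's proof actually establishes is the unnormalized inequality $l_{T_0}(\Lam^\pm,\wt{X_{i+1}})<l_{T_0}(\Lam^\pm,\wt{X_{i}})$, and that is all that is used in the proof of Lemma \ref{no_cut_ver_lemma}: to rule out $X_k=X_j$ the normalization is irrelevant (equal points of $\os$ give equal normalized trees, hence equal $l_{T_0}$), and the limiting argument only needs the upper bound $l_{T_0}(\Lam^+,\wt{X_i})<L$ together with $d(\wt{X_i})\geq 1$. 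So the statement should be read as an assertion about $l_{T_0}$; your proof of that version is complete and matches the paper's.
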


We delay the proof of this observation to finish the proof of Lemma \ref{no_cut_ver_lemma}. $X_k = X_j$ for $k<j$  is impossible since
the lengths get strictly smaller. If the process doesn't terminate then we get an infinite sequence $\{ X_i \}_{i=1}^{\infty}$ which has a subsequence
converging to $[T] \in \overline{\os}$. Since the $X_i$s are part of an orbit, and $\out$ acts discretely on $\os$, the limit $\lim_{i \to \infty} X_i$ must lie in $\partial \os$. We will argue that $pl_{T_0}(\Lam^+,[T]) = pl_{T_0}(\Lam^-,[T]) =0$  and get a contradiction to Proposition \ref{LLprop}. \\

Let $L =
l_{T_0}(\Lam^+,\wt{X_0})$
then $l_{T_0}(\Lam^+, \wt{X_i}) < L$, and together with $d(\wt{X_i}) \geq 1$ we get $pl_{T_0}(\Lam^+,[\wt{X_i}]) < L$. Therefore,
$pl_{T_0}(\Lam^+,[T]) < L$. Now assume by way of contradiction that
$pl_{T_0}(\Lam^+,[T]) = L' >0$. There exists some conjugacy class $[w]$ such that $\tr(w,T)<
\frac{L'}{2nL}$ (if $T$ is simplicial then there is a
conjugacy class $[w]$ which
is elliptic and if $T$ is not simplicial, it has a quotient tree with 
dense orbits. In either case we can find conjugacy classes with arbitrarily
small translation length). Since $\wt{X_k}$ converges projectively to
$[T]$,
\[ \frac{tr(w,\wt{X_k})}{d(\wt{X_k})} \to \frac{tr(w,T)}{d(T)} <
\frac{1}{d(T)} \frac{L'}{2nL} < \frac{L'}{2nL}\]
Thus, for a large enough $k$,
$\displaystyle \frac{\tr(w,\wt{X_k})}{d(\wt{X_k})} <
\frac{L'}{nL}$ which implies
\[ \frac{\tr(w,\wt{X_k})}{l_{T_0}(\Lam^+,\wt{X_k})} = \frac{\tr(w,\wt{X_k})}{d(\wt{X_k})} \frac{d(\wt{X_k})}{l_{T_0}(\Lam^+,\wt X_k)}< \frac{L'}{nL}\cdot \frac{1}{L'} = \frac{1}{nL} \]
But $l_{T_0}(\Lam^+, \wt{X_k})<L$, and
$\displaystyle \tr(w,\wt{X_k}) \geq \frac{1}{n}$ so $\displaystyle \frac{\tr(w,\wt{X_k})}{l(\Lam^+, X_k)} > \frac{1}{nL}$.
So we get a contradiction to $pl_{T_0}(\Lam^+,[T]) \neq 0$. A similar
argument shows $pl_{[T_0]}(\Lam^-,[T]) = 0$ and we get a contradiction. Therefore,
the process must end in a finite number of steps with a graph $F$ such
that $Wh_F(\lam,\nu)$ is connected without a cut vertex.
\end{proof}

\begin{remark}
Experimental evidence suggests that one can actually choose $F$ to lie
on an axis of $\phi$, but we were not able to show that.
\end{remark}

\begin{proof}[Proof of Observation \ref{main_obs}]
Let $a_k(T) = \frac{\sum_{i=1}^m r_i l_i(T)}{\sum_{i=1}^m r_i
l_i(T_0)}$ where the notation is established in the proof of Proposition \ref{lam_len_well_defined}. We must estimate $\displaystyle \lim_{k \to \infty}a_k(T)$ for $T = \wt{X}_i$ and
$T = \wt{X}_{i+1}$. We will show that  $l_i(\wt X_k) > l_i(\wt X_{k+1}) $. \\

Recall that $Wh_{X_k}(\lam)$ is connected and contains a cut vertex $a$ and that $X_{k+1} = X_k \cdot \phi_{(A,a)}$ (here we do not distinguish between the vertices of the Whitehead graph and the directed edges of the rose $X_k$). Let $h:G_0 \to X_i$ be a difference in marking Lipschitz map. $C=BCC(h)$ and $M = \max \{ nC , 1 \}$. Let $\tau_i$ be $k$-tiles in $G_0$, with $k$ large enough so that $[h(\tau_i)]$ contains a closed subpath $\gamma$ of $\lam(X_i)$ which contains at least $5M$ turns of the form $\bar{x}a$ and $\bar{a}x$ with $\bar{x} \in A$. Notice that by Definition \ref{Wh_move} these are precisely the turns where cancellation occurs after applying the Whitehead automorphism. Thus \[ l(\phi_{(A,a)}(\gamma), X_{j+1}) < l(\gamma, X_j) - 5M \cdot \frac{1}{n} < l(\gamma,X_j) - 5C \]
Since $[h(\tau_i)]$ is contained in $\lam(X_i)$ except for subsegments of length at most $C$ at both ends. These segments contain at most $Cn$ edges. They might become longer under $\phi_{(A,a)}$. But we can estimate:
\[ l(\tau_i, X_{j+1}) < l(\tau_i, X_j) -  5C + 2 \cdot Cn \cdot \frac{2}{n} < l(\tau_i, X_j) - C\] 
Thus there is a $C'$ such that $a_k(\wt X_j) > a_k(\wt X_{j+1}) + C'$ and this holds in the limit as well.
\end{proof}

\begin{remark}\label{no_cut_ver_rmk}
If $Wh_F(\lam,\nu)$ is connected and does not contain a cut vertex, then $Wh_{F
\cdot \phi}(\lam,\nu)$ and $Wh_{F \cdot \phi^{-1}}(\lam,\nu)$ satisfy 
the same property. In fact $Wh_F(\lam,\nu)= Wh_{F \cdot
  \phi}(\lam,\nu) = Wh_{F \cdot \phi^{-1}}(\lam,\nu)$. Indeed let $k:F
\to F$ and $k':F \to F$ be topological
representatives of $\phi, \phi^{-1}$ (see Definition \ref{topo_repn}). Then $Wh_F(\lam) = Wh_{F \cdot \phi }(k_\#(\lam))$ this is
because $\lam \in \Lam^+(F)$ implies $k_\#(\lam) \in \Lam^+(F \cdot
\phi)$. $Wh_{F \cdot \phi }(k_\#(\lam)) = Wh_{F \cdot \phi}(\lam)$
because $\Lam^+(F \cdot \phi)$ is $\phi, \phi^{-1}$-invariant
. Thus $Wh_F(\lam) = Wh_{F\phi}(\lam)$. Similarly, $Wh_{F}(\nu) = Wh_{F \cdot
\phi}(k_\#(\nu)) = Wh_{F \cdot \phi}(\nu)$. Thus, $Wh_F(\lam,\nu)= Wh_{F \cdot
\phi}(\lam,\nu)$. The argument for $Wh_{F \phi^{-1}}(\lam,\nu)$ is
identical.
\end{remark}


\section{Axes are strongly contracting}

\subsection{Projections of Horoballs are Finite}

\begin{definition}
Let $\eta$ a leaf of $\Lam^+$ or $\Lam^-$ in $X$. Let $\gamma$ be an edge path contained in $\eta$. We say that $\gamma$ is an \emph{$r$-piece} of $\eta$ if the $l(\gamma,X) \geq r$.
\end{definition}

The next proposition states that basis elements cannot contain long
pieces of both $\Lam^+$ and $\Lam^-$.

\begin{prop}\label{basis_prop}
There exists a constant $\jmath>0$ so that for all $G_t \in \LL_f$:
\begin{enumerate}
    \item Let $\beta$ conjugacy class of $F_n$. $\beta$ is represented by an immersed loop which we shall also denote by $\beta$ in $G_t$. Suppose there exist leaves $\lam \in \Lam^+_f(G_t)$
    and $\nu \in \Lam^-_f(G_t)$ such that $\beta$ contains a $\jmath$-piece
    of $\lam$ or the inverse of a $\jmath$-piece of $\lam$ and a $\jmath$-piece of $\nu$. Then $\beta$ is not a basis element.
    \item Let $\al, \beta$ be tight loops in $G_t$ (also thought of as conjugacy classes). Suppose $\al,\beta$ are compatible with a free
    decomposition of $F_n$. If
    $\al$ contains a $\jmath$-piece of $\lam$ or the inverse of a $\jmath$-piece of $\lam$ (a $\jmath$-piece of $\nu$ or the inverse of a $\jmath$-piece of $\nu$) then $\beta$ doesn't
    contain a $\jmath$-piece of $\nu$ (a $\jmath$-piece of $\lam$ or the inverse of a $\jmath$-piece of $\lam$).
\end{enumerate}
\end{prop}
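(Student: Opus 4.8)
The plan is to reduce both parts to Whitehead's theorem (Theorem~\ref{WhThm}) and its reformulation (Theorem~\ref{WhVer2}), applied at the good rose $F$ of Lemma~\ref{no_cut_ver_lemma} — or rather at the $\phi$-translate of $F$ that is ``synchronized'' with $G_t$. Write $m=\lfloor t/\log\lambda\rfloor$ and $s=t-m\log\lambda\in[0,\log\lambda)$, so that $G_t=G_s\cdot\phi^m$ with $G_s\in[G_0,G_0\phi]$, and set $Y_m:=F\cdot\phi^m$. Since the $\out$-action is by isometries, $d(G_t,Y_m)=d(G_s,F)$ and $d(Y_m,G_t)=d(F,G_s)$, and both are $\le D_0$ for a constant $D_0$ depending only on $\phi$, because $d(\cdot,F)$ and $d(F,\cdot)$ are continuous on the compact arc $[G_0,G_0\phi]$. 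The point $Y_m$ has the same underlying metric graph as $F$ (a rose, as produced in the proof of Lemma~\ref{no_cut_ver_lemma}), $\Lam^{\pm}_\phi$ is realized in $Y_m$ by the same leaves as in $F$, and by the discussion in Remark~\ref{no_cut_ver_rmk} $Wh_{Y_m}(\lam,\nu)=Wh_F(\lam,\nu)$ is connected with no cut vertex; the quasi-periodicity constant of $\lam$ (and of $\nu$) in $Y_m$ is therefore the one in $F$, which I call $M_F$.

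Next I would set up the transfer of lamination pieces from $G_t$ to $Y_m$. Fix an optimal difference of markings $k\colon G_t\to Y_m$; then $\Lip(k)\le e^{D_0}$, and since $G_t$ has the same combinatorial type as $G_s$ and hence bounded combinatorial complexity, Cooper's bounded cancellation gives $BCC(k)\le C_1$ with $C_1$ depending only on $\phi$. By multiplicativity and $\phi$-equivariance of the scaled lamination length, $l_{G_t}(\Lam^{+},Y_m)=l_{G_s}(\Lam^{+},F)$, which is bounded above and below by positive constants for $s\in[0,\log\lambda]$, and likewise for $\Lam^-$. Combining this with the sandwich estimate from the proof of Lemma~\ref{lam_len_well_defined}, there are uniform constants $c'>0$, $c''$ such that any leaf segment of $\Lam^{\pm}(G_t)$ of $G_t$-length $\ge\jmath$ is carried by $k$, after tightening, to a path in $Y_m$ containing a genuine leaf segment of $\Lam^{\pm}(Y_m)$ of $Y_m$-length at least $c'\jmath-c''-2C_1$; moreover, after cyclically tightening inside the whole loop, the realization of the ambient conjugacy class in $Y_m$ still contains such a leaf segment of length at least $c'\jmath-c''-4C_1$. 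Now choose $\jmath$ (uniformly) so large that $c'\jmath-c''-4C_1\ge M_F$.

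With this in hand, part (1) goes as follows. If $\beta$ contains a $\jmath$-piece of $\lam$ or the inverse of one in $G_t$ (the reversed segment is again a leaf segment, and Whitehead graphs ignore orientation), the transfer produces inside the tightened loop $\beta_{Y_m}$ a leaf segment of $\Lam^{+}(Y_m)$ of length $\ge M_F$, which by quasi-periodicity takes every turn of $\lam$ at $Y_m$, so $Wh_{Y_m}(\beta)\supseteq Wh_{Y_m}(\lam)$; likewise the $\jmath$-piece of $\nu$ gives $Wh_{Y_m}(\beta)\supseteq Wh_{Y_m}(\nu)$. Hence $Wh_{Y_m}(\beta)\supseteq Wh_{Y_m}(\lam,\nu)$, and a graph on the same $2n\ge 4$ vertices containing all edges of a connected graph with no cut vertex is again connected with no cut vertex; by the contrapositive of Theorem~\ref{WhThm}, $\beta$ cannot be completed to a basis. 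For part (2), suppose for contradiction that $\al$ contains a $\jmath$-piece of $\lam$ (or its inverse) and $\beta$ a $\jmath$-piece of $\nu$ in $G_t$; the transfer gives $Wh_{Y_m}(\al,\beta)=Wh_{Y_m}(\al)\cup Wh_{Y_m}(\beta)\supseteq Wh_{Y_m}(\lam,\nu)$, again connected with no cut vertex. But $\al,\beta$ are compatible with a free decomposition of $F_n$, so Theorem~\ref{WhVer2} forces $Wh_{\BB(Y_m)}(\al,\beta)$ to be disconnected as soon as it has no cut vertex — a contradiction. The second half of (2) is symmetric under $\Lam^{+}\leftrightarrow\Lam^{-}$; taking the larger of the $\jmath$'s from (1) and (2) finishes the proof.

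The main obstacle — and the only subtle point — is producing a \emph{single} $\jmath$ that works all along $\LL_f$. Transferring a $\jmath$-piece of $\Lam^+$ directly to the fixed graph $F$ fails: as $t\to+\infty$ the graphs $G_t$ converge to the attracting tree, $d(G_t,F)\to\infty$, the change-of-marking constants blow up, and a fixed-length piece of $\Lam^+$ in $G_t$ shrinks to nothing when pushed to $F$. The fix is exactly the synchronization $G_t\leftrightarrow Y_m=F\cdot\phi^m$ used above, and the work lies in checking that after this synchronization the change-of-marking Lipschitz constant, the bounded cancellation constant, the scaled lamination length, and the quasi-periodicity constant are all uniformly bounded — which in each case reduces, via $\phi$-equivariance, to the compact fundamental domain $[G_0,G_0\phi]$ together with the fact that $F$ and all of its $\phi$-translates share one underlying metric graph.
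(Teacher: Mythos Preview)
Your proof is correct and follows essentially the same route as the paper: transfer the lamination pieces from $G_t$ to a $\phi$-translate of the good rose $F$ from Lemma~\ref{no_cut_ver_lemma}, conclude that the Whitehead graph of $\beta$ (resp.\ of $\{\al,\beta\}$) at that rose contains $Wh_F(\lam,\nu)$ and hence is connected without cut vertex, and then apply Theorem~\ref{WhThm} (resp.\ Theorem~\ref{WhVer2}); uniformity of $\jmath$ comes from the compact fundamental domain together with Remark~\ref{no_cut_ver_rmk}.

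The only differences are organizational. The paper first establishes the statement at $G_0$, then lets $\jmath$ vary continuously over a fundamental domain and takes a supremum, and finally invokes Remark~\ref{no_cut_ver_rmk} to translate; you build the synchronization $G_t\leftrightarrow Y_m=F\cdot\phi^m$ in from the start, which is a bit cleaner. For the transfer step the paper uses the reverse Lipschitz bound $l(\cdot,F)\ge e^{-d(F,G_0)}l(\cdot,G_0)$ on loops (after truncating the leaf segment to a closed path) together with $BCC(h_0)$; you instead invoke the positivity of $l_{G_t}(\Lam^\pm,Y_m)$ and the sandwich estimate from Lemma~\ref{lam_len_well_defined}. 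Both arguments deliver the same conclusion---a long $\Lam^\pm$-piece in $G_t$ yields, inside the tightened image loop in $Y_m$, an $M_F$-long $\Lam^\pm$-piece---and your version makes the dependence of the constants on $t$ more transparent.
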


\begin{proof}
\begin{enumerate}
\item \label{A}
We first prove this for $G_0$. By lemma 
\ref{no_cut_ver_lemma}, there is an $F \in CV_n$ such that
$Wh_F(\lam,\nu)$ is connected and contains no cut point. Suppose $d =
d(F,G_0)$ and $k = \exp(d)$ so 
for all loops $\al$: $\frac{l(\al,G_0)}{l(\al,F)} \leq k$. Hence $l(\al, F) \geq 
\frac{1}{k} l(\al,G_0)$. Let $h_0: G_0 \to F$ be an optimal Lipschitz
 difference in marking, and let $C= BCC(h_0)$. Since $\lam_F, \nu_F$ are
quasi-periodic there is a length $r$ such that if $\gamma_F$ is an
$r$-piece of $\lam_F$ then $\gamma_F$ contains all of the $2$-edge leaf
segments in $\lam_F$ hence $Wh_F(\lam_F) = Wh_F^*(\gamma_F)$. Similarly, if $\delta_F$ is an $r$-piece of $\nu_F$ then $\delta_F$ contains all of  the $2$-edge leaf segments in $\nu_F$ hence  $Wh_F(\nu_F) =
Wh_F^*(\delta_F)$. Since $\lam,\nu$ are quasi-periodic there is a length $B$ so that all edges appear in any $B$-piece of $\lam$ or $\nu$. \\
$\phantom{blah}$ Let $\jmath = k(r+2C)+B$. If $\beta_{G_0}$ contains $\beta_1'$ or $\beta_1^{'-1}$ where $\beta_1'$ is a $\jmath$-piece of  $\lam_0 \in \Lam^+(G_0)$. By truncating a piece of length at most $B$ from $\beta_1'$ we can find $\beta_1 \subseteq \beta'_1$ a loop such that $l(\beta_1,G_0) > k(r+2C)$. Thus $l([h_0(\beta_1)],F)>r+2C$ and $[h_0(\beta_1)]$ is contained in $\lam_F$ apart from some initial and terminal segments of length at most $C$. Hence $[h_0(\beta_{G_0})]$ contains an
$r$-piece of $\lam_F$. Similarly, if $\beta_{G_0}$ contains an
$\jmath$-piece of $\nu_0 \in \Lam^-(G_0)$ then there is a loop $\beta_2 \in \nu$ so that $l(\beta_2,G_0)>k(r+2C)$. Hence 
$l([h_0(\beta_2)],F)>r+2C$ and $[h_0(\beta_{G_0})]$ contains an
$r$-piece of $\nu_F$. Therefore, if $\beta_{G_0}$ contains such $\beta'_1, \beta'_2$ (see Figure \ref{long_pieces_fig}) 
then $Wh_F(\beta) \supseteq Wh_F(\lam, \nu)$. By the definition of the Whitehead graph $Wh_{F}(\gamma_F) = Wh_{F}(\gamma_F^{-1})$ so if $\beta$ contains $\beta_1^{'-1}, \beta_2'$ then again,  $Wh_F(\beta) \supseteq Wh_F(\lam, \nu)$. Thus $Wh_F(\beta)$ is connected and does not contain a cut vertex. By Whitehead's theorem \ref{WhThm} $\beta$ is not a basis element.

\begin{figure}[ht]
\begin{center}
\input{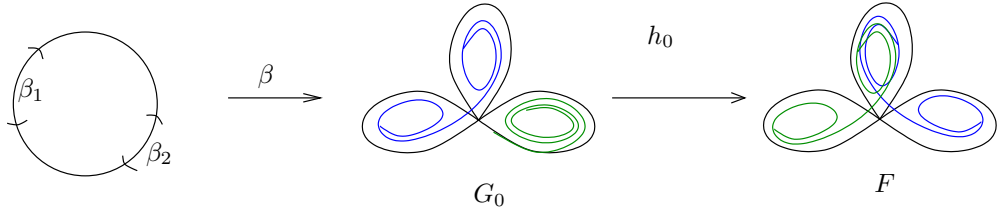}
\caption{\label{long_pieces_fig}A basis element cannot contain long pieces of both laminations}
\end{center}
\end{figure}

$\phantom{blah}$ We can do the same for all graphs $G_t \in \LL_f$ and $\jmath$ depends
on $d(F,G_t)$, which varies continuously 
with $t$. Therefore if we
vary $t$ across a fundamental domain of the $\phi$ action on $\LL_f$, then there is an upper bound for $\jmath$ (which we still denote $\jmath$). Now
by Remark \ref{no_cut_ver_rmk} the same is true (with the same
$\jmath$) for any translate of the fundamental domain (we translate
$F$ as well so the distance and the optimal map remain the same).

\item The proof of the second claim is similar to \ref{A}. If $\al_G$ contains a long enough piece of $\lam_G$ and $\beta_G$ contains a long enough piece of $\nu_G$ then $Wh_{F}(\al, \beta) \supseteq  Wh_F(\lam_F, \nu_F)$ but by Theorem \ref{WhVer2} $\al,\beta$ are then not compatible with a free decomposition.
\end{enumerate}
\end{proof}

We now turn to prove some applications: 

\begin{lemma}\label{close_min_for_factors}
There is an $s>0$ such that: if $\al, \beta$ are conjugacy classes
which are compatible with a free decomposition of $F_n$ then
$|t_{\al}-t_{\beta}|<s$
\end{lemma}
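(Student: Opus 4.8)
The plan is to argue by contradiction using the exponential behaviour of the length functions established in Lemma~\ref{exp_growth2} together with the incompatibility statement in Proposition~\ref{basis_prop}(2). Recall that for a basis element $\gamma$ the quantity $t_\gamma$ is, up to the uniform error $s$ of Corollary~\ref{min_close_to_legal}, the place where $\gamma_{G(t)}$ transitions from being dominated by $\Lam^-$-behaviour to being $\eps_0$-legal (hence dominated by $\Lam^+$-behaviour); more precisely, for $t > t_\gamma + s$ the loop $\gamma_{G(t)}$ is $\eps_0$-legal, so it contains a $\jmath$-piece of $\lam$ (a long legal subsegment is forced to contain an $r$-piece of a leaf by quasi-periodicity), and symmetrically for $t < t_\gamma - s'$ (working on $\LL_g$ via Corollary~\ref{close_minima}) the loop $\gamma_{G(t)}$ contains a $\jmath$-piece of $\nu$. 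So the statement to prove is really: two conjugacy classes compatible with a free decomposition cannot have their ``legality transition windows'' far apart.

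First I would fix the constant $\jmath = \jmath_{\ref{basis_prop}}$ and let $s_1$ be the maximum of the constants appearing in Corollaries~\ref{min_close_to_legal}, \ref{pos_leg}, \ref{close_minima}, and the length $r$ (rescaled by the relevant Lipschitz constant along a fundamental domain of $\LL_f$) needed to guarantee that an $\eps_0$-legal loop contains a $\jmath$-piece of $\lam$. Suppose for contradiction that $\al,\beta$ are compatible with a free decomposition but $t_\beta > t_\al + s$ for a constant $s$ to be chosen larger than all of these. Pick $t$ with $t_\al + s_1 < t < t_\beta - s_1$. On one hand, since $t > t_\al + s_1$, Corollary~\ref{pos_leg} gives $LEG_f(\al, G(t)) > \eps_0$, so $\al_{G(t)}$ contains a long legal subsegment, hence (by quasi-periodicity of $\lam$ and the bounded-cancellation argument used in the proof of Proposition~\ref{basis_prop}) a $\jmath$-piece of $\lam$. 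On the other hand, since $t < t_\beta - s_1$, the analogous statement on the $\phi^{-1}$ side (using that $G(t)$ is within bounded distance of a point of $\LL_g$ with parameter $< t'_\beta$, via Corollaries~\ref{close_minima} and \ref{k_k'}, and that $\phi^{-m}(\beta)$ limits on $\Lam^-$) forces $\beta_{G(t)}$ to contain a $\jmath$-piece of $\nu$. But then Proposition~\ref{basis_prop}(2) says $\al$ and $\beta$ are not compatible with a free decomposition, a contradiction. By symmetry (swapping the roles of $\al$ and $\beta$, and of $\lam$ and $\nu$) we also rule out $t_\al > t_\beta + s$, giving $|t_\al - t_\beta| < s$.

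The main obstacle is the bookkeeping that translates ``$t$ lies below $t_\beta$'' into ``$\beta_{G(t)}$ contains a genuine $\jmath$-piece of $\nu$ in the $G(t)$ metric.'' The cleanest route is to transport the question to the axis $\LL_g$ of $\phi^{-1}$: by Corollary~\ref{close_minima} the min sets $\pi_f(\beta)$ and $\pi_g(\beta)$ are uniformly close, and $d(G(t),H(t))$ is bounded by periodicity, so a point $G(t)$ with $t < t_\beta - s_1$ is within uniformly bounded Lipschitz distance of a point $H(t^*)$ with $t^* < t'_\beta - s_1'$; on $\LL_g$ the loop $\beta$ is $\eps_0$-legal with respect to $g_{t^*}$ there (the analogue of Corollary~\ref{pos_leg} for $\phi^{-1}$), hence contains an $r$-piece of $\nu_{H(t^*)}$, and pushing forward by the bounded-distance difference-of-markings map back to $G(t)$ — absorbing the bounded cancellation exactly as in the proof of Proposition~\ref{basis_prop}(1) — yields a $\jmath$-piece of $\nu$ inside $\beta_{G(t)}$. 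Once both $\jmath$-pieces are in hand at the \emph{same} graph $G(t)$, Proposition~\ref{basis_prop}(2) finishes the argument immediately; the only care needed is that all the constants ($s_1$, the Lipschitz and bounded-cancellation constants along a $\phi$-fundamental domain) are uniform, which they are by periodicity and Remark~\ref{no_cut_ver_rmk}.
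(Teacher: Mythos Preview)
Your proposal is correct and follows essentially the same route as the paper: pick a parameter $t$ strictly between $t_\al$ and $t_\beta$ (the paper takes $t = t_\al + t_0$), use Corollary~\ref{pos_leg} plus a further iterate of $f$ to force a $\jmath$-piece of $\lam$ in $\al_{G(t)}$, use the symmetric argument on $\LL_g$ transported back via Corollary~\ref{close_minima} to force a $\jmath$-piece of $\nu$ in $\beta_{G(t)}$, and contradict Proposition~\ref{basis_prop}(2). Your write-up is in fact more explicit than the paper's on the $\nu$ side (the paper simply says ``Similarly for $g$''), and you correctly flag that mere $\eps_0$-legality is not quite enough---one needs a further additive shift along the axis so that the $\kappa$-long legal segment grows to length~$\jmath$.
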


\begin{proof}
Suppose ${t_\beta}>{t_\al}$. Let
$\al_t$ represent $\al$ in $G_t$, and $\beta_t$ represent $\beta$ in
$G_t$. We claim that there is a $t_0$ such that if $t<{t_\beta}-t_0$ then
$\beta_t$ contains a $\jmath$-piece of $\nu_{G(t)}$, and if
$t>{t_\al}+t_0$ then $\al_t$ contains a $\jmath$-piece of $\lam_{G(t)}$.
Thus, if $|{t_\beta}-{t_\al}|>2t_0$ let $r={t_\al}+t_0$ then $\al_r$ contains an
$\jmath$-piece of $\lam_{G(r)}$ and $\beta_r$ contains a $\jmath$
piece of $\nu_{G(r)}$ which contradicts proposition
\ref{basis_prop}. 

To find $t_0$: by proposition \ref{pos_leg}, there is an
$s_1=s_{\ref{pos_leg}}$ such that if $t>{t_\al}+s_1$ then $LEG_f(\al_t,
G(t))> \eps_0$. Let $\al'_t \subseteq \al_t$ be a legal segment of length
$>\kappa$ (the legality threshold). There is an $N$ such that $f^N(\al'_t)$ is longer 
than $\frac{2\jmath}{\text{PF}(f)+1}$ here $\text{PF}(f)$ is the Perron-Frobenius eigenvalue of $f$ (which we denoted earlier by $\lam$,  however, now we use $\lam$ to denote a periodic leaf in $\Lam^+$). By the argument in the paragraph just before definition \ref{def_legality}, $[f_t^N(\al_t)]$ will contain a $\jmath$-piece of the lamination contributed from $f_t^N(\al'_t)$. Let $s_2=s_1+N \log(\text{PF}(f))$ then at
$t_0={t_\al}+s_2$, $\al$ contains a $\jmath$-piece of $\lam$, contributed
by $\al'_t$. Similarly for
$g$, the result follows from the fact that $\LL_f$ and $\LL_g$ are
close, and from the fact that $t_{\al}$ and $t'_{\al}$ are close (by
corollary \ref{close_minima}).
\end{proof}

\begin{cor}\label{close_can_cor}
There exists a constant $s>0$ such that if $\alpha$ and $\beta$ are
candidates in $X$ then
$|t_\alpha-t_\beta|<s$
\end{cor}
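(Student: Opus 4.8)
The plan is to deduce Corollary~\ref{close_can_cor} from Lemma~\ref{close_min_for_factors} by showing that any two candidates in $X$ are, up to introducing an auxiliary third candidate, compatible with a free decomposition of $F_n$ in a way that lets us chain the estimates. The key input is Proposition~\ref{can_are_basis}: given candidates $\al_X$ and $\beta_X$ in $X$, there is a third candidate $\gamma_X$ such that $\{\al,\gamma\}$ can be completed to a basis and $\{\beta,\gamma\}$ can be completed to a basis. A set of conjugacy classes that can be completed to a basis is in particular compatible with a free decomposition of $F_n$ (take $A$ to be the free factor generated by one of the basis elements and $B$ its complement, so each class lies in $[A]$ or $[B]$; more simply, a pair that sits in a common basis is visibly compatible with the splitting carried by that basis). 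Hence both pairs $\{\al,\gamma\}$ and $\{\beta,\gamma\}$ satisfy the hypothesis of Lemma~\ref{close_min_for_factors}.

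First I would invoke Proposition~\ref{can_are_basis} to produce $\gamma_X$, then apply Lemma~\ref{close_min_for_factors} twice: once to the pair $\al,\gamma$ to get $|t_\al - t_\gamma| < s_{\ref{close_min_for_factors}}$, and once to the pair $\beta,\gamma$ to get $|t_\beta - t_\gamma| < s_{\ref{close_min_for_factors}}$. By the triangle inequality $|t_\al - t_\beta| < 2 s_{\ref{close_min_for_factors}}$, so setting $s = 2 s_{\ref{close_min_for_factors}}$ finishes the proof. One should note that $\gamma$ itself is a candidate (this is exactly what Proposition~\ref{can_are_basis} delivers), so $t_\gamma$ is well-defined via the min-set machinery of Section~3, and Lemma~\ref{close_min_for_factors} applies to it without extra work; the constant $s$ depends only on $n$ since both $s_{\ref{close_min_for_factors}}$ and the number of candidates do.

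The only point that needs a word of care — and the place I would expect a referee to push back — is the claim that a pair of conjugacy classes that can be completed to a basis is compatible with a free decomposition. This is immediate: if $\{w_1,\dots,w_n\}$ is a basis with $w_1 \in \al$ and $w_2 \in \beta$ (or both in a common basis element), take $A = \langle w_1\rangle$ and $B = \langle w_2,\dots,w_n\rangle$, so $F_n = A * B$ and $\al \in [A]$; for $\beta$ one uses instead the splitting $\langle w_2,\dots,w_n\rangle * \langle w_1\rangle$ — but to apply Lemma~\ref{close_min_for_factors} to the \emph{pair} simultaneously we want a single splitting containing both, namely $A' = \langle w_1\rangle$, $B' = \langle w_2 \rangle * \langle w_3,\dots,w_n \rangle$ rearranged so that $\al \in [\langle w_1\rangle]$ and $\beta \in [\langle w_2, w_3, \dots, w_n\rangle]$, which is the free decomposition $\langle w_1\rangle * \langle w_2,\dots,w_n\rangle$ with $\al$ in the first factor and $\beta$ in the second. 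So the pair is compatible with the free decomposition $\langle w_1 \rangle * \langle w_2, \ldots, w_n\rangle$, as required. With this observation in hand the corollary is a two-line formal consequence of the two cited results, so there is no serious obstacle; the substance was already done in Lemma~\ref{close_min_for_factors} and Proposition~\ref{can_are_basis}.

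\begin{proof}
Let $\al, \beta$ be candidates in $X$. By Proposition~\ref{can_are_basis} there is a candidate $\gamma$ in $X$ such that $\{\al,\gamma\}$ and $\{\beta,\gamma\}$ can each be completed to a basis of $F_n$. If $\{w_1,\dots,w_n\}$ is a basis with $w_1 \in \al$ and $w_2 \in \gamma$, then with respect to the free decomposition $F_n = \langle w_1\rangle * \langle w_2,\dots,w_n\rangle$ we have $\al$ carried by the first factor and $\gamma$ by the second, so $\al,\gamma$ are compatible with a free decomposition of $F_n$; likewise $\beta,\gamma$ are compatible with a free decomposition. By Lemma~\ref{close_min_for_factors} there is an $s_{\ref{close_min_for_factors}}>0$ with $|t_\al - t_\gamma| < s_{\ref{close_min_for_factors}}$ and $|t_\beta - t_\gamma| < s_{\ref{close_min_for_factors}}$. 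Hence $|t_\al - t_\beta| < 2 s_{\ref{close_min_for_factors}}$, and setting $s = 2 s_{\ref{close_min_for_factors}}$ proves the corollary.
\end{proof}
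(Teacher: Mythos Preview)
Your proof is correct and follows essentially the same approach as the paper: invoke Proposition~\ref{can_are_basis} to produce an auxiliary candidate $\gamma$, apply Lemma~\ref{close_min_for_factors} to each of the pairs $\{\al,\gamma\}$ and $\{\beta,\gamma\}$, and conclude by the triangle inequality with $s = 2s_{\ref{close_min_for_factors}}$. The only difference is that you spell out explicitly why ``can be completed to a basis'' implies ``compatible with a free decomposition,'' which the paper leaves implicit.
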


\begin{proof}
By Proposition \ref{can_are_basis} there is a candidate $\gamma$ so that $\al, \gamma$ and $\gamma, \beta$ can be completed to a basis of $F_n$. Therefore by lemma \ref{close_min_for_factors} there is an $s=s_{\ref{close_min_for_factors}}$ such that $|t_\al - t_\gamma|< s$ and $|t_\gamma - t_\beta|<s$. Thus $|t_\al-t_\beta|<2s$
\end{proof}

\begin{cor}\label{min_can_is_proj}
There exists a constant $s>0$ so that if $\al$ is a candidate in $X$ then $|t_X - t_\al|<s$
\end{cor}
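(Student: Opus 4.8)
The plan is to identify $T_X$ with the minimum set of the function $h(t)=\max_{\al}\ST_\al(X,G(t))$, where the maximum runs over the finitely many candidate loops $\al$ of $X$, and then to pin this minimum set down using the clustering of the $t_\al$'s (Corollary \ref{close_can_cor}) together with the coarse ``$V$-shape'' of the individual functions $\ST_\al(X,\cdot)$ already extracted in the proof of Proposition \ref{coarse_proj}.

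First I would record that, by Proposition \ref{green graph} and the remark following it, for every $t$ one has $d(X,G(t))=\log\max_{\al}\frac{l(\al,G(t))}{l(\al,X)}=\log h(t)$, the maximum taken over candidates $\al$ of $X$; since $\log$ is increasing, $T_X=\{t\mid d(X,G(t))=d(X,\LL_f)\}$ is precisely the set where $h$ attains its minimum. Write $u_\al(t)=\ST_\al(X,G(t))=l_\al(t)/l(\al,X)$, so $h=\max_\al u_\al$. Each candidate is a basis element (Proposition \ref{can_are_basis}), so $u_\al$ differs from $l_\al$ by a positive multiplicative constant and hence is coarsely exponential in the sense of Lemma \ref{exp_growth2} with transition time $t_0(\al)$; by Corollary \ref{min_close_to_legal} there is a uniform $s_3$ with $|t_\al-t_0(\al)|<s_3$. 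From the proof of Proposition \ref{coarse_proj} there is a uniform $s_1$ such that, for every candidate $\al$, $u_\al(t)<u_\al(t')$ whenever $t_0(\al)<t$ and $t'>t+s_1$, and likewise $u_\al(t)<u_\al(t')$ whenever $t'<t<t_0(\al)-s_1$; that is, $u_\al$ is coarsely increasing to the right of $t_0(\al)$ and coarsely decreasing to the left of it.

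Now fix the candidate $\al$ of the statement. By Corollary \ref{close_can_cor} there is a uniform $s_2$ with $|t_\beta-t_\al|<s_2$, hence $|t_0(\beta)-t_\al|<s_2+s_3$, for every candidate $\beta$ of $X$. Put $s=s_2+2s_1+s_3+2$ and suppose $t^\ast\geq t_\al+s$; set $t^{\ast\ast}=t_\al+s_2+s_3+1$. Choose a candidate $\beta$ with $h(t^{\ast\ast})=u_\beta(t^{\ast\ast})$. Then $t_0(\beta)<t_\al+s_2+s_3<t^{\ast\ast}$ and $t^\ast>t^{\ast\ast}+s_1$, so the monotonicity recorded above gives $u_\beta(t^\ast)>u_\beta(t^{\ast\ast})$, whence $h(t^\ast)\geq u_\beta(t^\ast)>u_\beta(t^{\ast\ast})=h(t^{\ast\ast})$ and $t^\ast\notin T_X$. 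The symmetric argument, taking $t^{\ast\ast}=t_\al-s_2-s_3-1$ and using that $u_\beta$ is coarsely decreasing to the left of $t_0(\beta)$, rules out $t^\ast\leq t_\al-s$. Hence $T_X\subseteq(t_\al-s,\,t_\al+s)$, and since $t_X\in T_X$ we obtain $|t_X-t_\al|<s$, with $s$ depending only on $n$ and $\phi$.

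The only genuine work is bookkeeping: verifying that $s_1,s_2,s_3$ are uniform over all $X$ and all candidates — which is built into Lemma \ref{exp_growth2}, Proposition \ref{coarse_proj}, and Corollaries \ref{min_close_to_legal} and \ref{close_can_cor} — and padding the thresholds by additive constants so as to avoid the degenerate cases where $t_0(\beta)$ lands on an endpoint of the relevant interval. There is no new geometric input: the mechanism, that the nearest-point projection of $X$ to $\LL_f$ is computed by a maximum of coarsely exponential length-ratio functions all of whose minima are clustered together by Corollary \ref{close_can_cor}, is exactly the one underlying Proposition \ref{coarse_proj}.
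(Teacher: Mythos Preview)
Your proof is correct and follows essentially the same approach as the paper: identify $T_X$ with the minimum set of $h(t)=\max_\al \ST_\al(X,G(t))$ via Proposition~\ref{green graph}, invoke the clustering of the $t_\al$'s from Corollary~\ref{close_can_cor}, and use the coarse $V$-shape of each $u_\al$ extracted in the proof of Proposition~\ref{coarse_proj}. The only difference is stylistic: the paper argues by induction on the number of candidates that the minimum of $h$ lies in (a neighborhood of) the interval $[\min_i t_{\al_i},\max_i t_{\al_i}]$, whereas you give a direct comparison $h(t^\ast)>h(t^{\ast\ast})$ using the coarse monotonicity of a single $u_\beta$; both routes use the same ingredients and yield the same conclusion.
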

\begin{proof}
Let $\al_1, \dots , \al_N$ be the candidates of $X$. Then for each $i$, $\displaystyle \min_{t \in \RR} \ST_{\al_i}(X,G(t)) = \ST_{\al_i}(X, G(t_{\al_i}))$. By the proof of Proposition \ref{can_are_basis} the minimum of \[ h(t) = \max\{ \ST_{\al_i}(X,G(t)), \ST_{\al_j}(X,G(t)) \}\] is realized by a point in $[ \min\{ t_{\al_i}, t_{\al_j} \}, \max\{ t_{\al_i}, t_{\al_j} \} ]$. Thus (by induction) the minimum of $d(X, G(t)) = \max \{\ST_{\al_i}(X,G(t)) \mid 1 \leq i \leq N \}$ is realized at $t = t_X$ in $[ \min \{ t_{\al_i}\mid 1 \leq i \leq N \}, \max \{ t_{\al_i}\mid 1 \leq i \leq N \} ]$. By Corollary \ref{close_can_cor} the length of this interval is $s = s_{\ref{close_can_cor}}$. Thus $|t_X - t_\al|<s$.
\end{proof}

\begin{cor}\label{short_loops}
There exists an $s>0$ such that if the translation length of $\al \in
F_n$ in both $X$ and $Y$ is smaller than $1$ then
$|\pi(X)-\pi(Y)|<s$.
\end{cor}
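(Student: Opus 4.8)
Assume throughout that $\al$ is nontrivial and that $l(\al,X)<1$ and $l(\al,Y)<1$; here $\pi=\pi_f$ denotes the projection to $\LL_f$. The plan is to reduce the statement to the fact, already established, that the min set of a candidate controls the projection (Corollary \ref{min_can_is_proj}), by showing that a loop which is short in a graph is ``carried'' by a proper free factor visible in that graph.

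The first and main step is to analyze a short loop. Realize $\al$ as an immersed loop in $X$ and let $\Gamma_X\subseteq X$ be its image. Since an immersed loop has no backtracking, every vertex of $\Gamma_X$ has valence $\geq 2$, so $\Gamma_X$ is a core graph of rank $\geq 1$; since $\al$ crosses every edge of $\Gamma_X$ at least once, $vol(\Gamma_X)\leq l(\al,X)<1=vol(X)$, so $\Gamma_X$ is a \emph{proper} subgraph. Choosing a spanning tree of $\Gamma_X$, extending it to a spanning tree of $X$, and collapsing identifies $\pi_1(\Gamma_X)$ with the fundamental group of a sub-rose of $X/(\text{spanning tree})$, i.e. with a free factor $H_X$ of $F_n\cong\pi_1(X)$; since $X$ has all vertices of valence $\geq 3$, a proper subgraph has rank $<n$, so $H_X$ is a \emph{proper} free factor. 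By construction $\al$ is conjugate into $H_X$. Now pick an embedded loop $\gamma_X$ in $\Gamma_X$ (for instance a shortest one): it is an embedded loop in $X$, hence a candidate of $X$, with $l(\gamma_X,X)\leq vol(\Gamma_X)<1$, and it too is conjugate into $H_X$. Writing $F_n=H_X*K$ then shows that $\al$ and $\gamma_X$ are compatible with a free decomposition of $F_n$.

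Next I would chain together the earlier results. Applying Lemma \ref{close_min_for_factors} to the compatible pair $\{\al,\gamma_X\}$ gives $|t_\al-t_{\gamma_X}|<s_{\ref{close_min_for_factors}}$ for every $t_\al\in T_\al$; since $\gamma_X$ is a candidate, $\diam(T_{\gamma_X})<s_{\ref{coarse_minimum}}$ by Corollary \ref{coarse_minimum}, so $\diam(T_\al)<s_{\ref{coarse_minimum}}+2s_{\ref{close_min_for_factors}}$, a bound independent of $\al$. Since $\gamma_X$ is a candidate \emph{of $X$}, Corollary \ref{min_can_is_proj} gives $|t_X-t_{\gamma_X}|<s_{\ref{min_can_is_proj}}$, hence $|t_X-t_\al|<s_{\ref{min_can_is_proj}}+s_{\ref{close_min_for_factors}}$ for any $t_\al\in T_\al$. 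Running the identical argument inside $Y$ produces a candidate $\gamma_Y$ of $Y$ with the same conclusion $|t_Y-t'_\al|<s_{\ref{min_can_is_proj}}+s_{\ref{close_min_for_factors}}$ for any $t'_\al\in T_\al$. Because $\diam(T_\al)$ is uniformly bounded, $|t_X-t_Y|$ is bounded by a constant $s$ depending only on $n$ and $\phi$, and since $\LL_f$ is a directed geodesic lying in a thick part (so Theorem \ref{almost_sym} applies) and $\pi(X),\pi(Y)$ have uniformly bounded diameter by Proposition \ref{coarse_proj}, this yields $|\pi(X)-\pi(Y)|<s$.

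I expect the only delicate point to be the reduction in the second paragraph: checking that the immersed image of a loop of length $<1$ is a proper core subgraph and hence carries a proper free factor containing the class of the loop, and observing that the conclusion of Lemma \ref{close_min_for_factors} is independent of the chosen points of the a priori possibly large min set $T_\al$ (this independence is what upgrades the lemma to the uniform bound on $\diam(T_\al)$ that makes the comparison of $X$ and $Y$ work). Everything after that is bookkeeping through Corollaries \ref{coarse_minimum} and \ref{min_can_is_proj}.
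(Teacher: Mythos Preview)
Your proof is correct and follows essentially the same route as the paper's: both observe that a loop of length $<1$ has image a proper core subgraph of $X$, hence is carried by a proper free factor, and then invoke Lemma~\ref{close_min_for_factors} together with Corollary~\ref{min_can_is_proj} to bound $|t_X-t_Y|$. The only cosmetic difference is that you compare $\al$ to a candidate $\gamma_X$ lying \emph{inside} the carrying free factor, whereas the paper compares $\al$ to a basis element $x_n$ in a \emph{complementary} factor; your explicit remark that this forces $\diam(T_\al)$ to be uniformly bounded is a point the paper leaves implicit.
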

\begin{proof}
Let $<x_1, x_2, \dots, x_n>$ be a short basis for $\pi_1(X)$, and $<y_1, y_2, \dots, y_n>$ a short basis for $\pi_1(Y)$ (all loops are
smaller than $1$). Since $vol(X) = 1$, $\al$ is carried by a free factor: $<x_1, \dots , x_k>$.
So $|t_{\al}-t_{[x_n]}|<s_{\ref{close_min_for_factors}}$. Similarly,
for $Y$,$|t_{\al}-t_{[y_n]}|<s_{\ref{close_min_for_factors}}$. So
$t_{[x_n]}$ and $t_{[y_n]}$ are uniformly close. By corollary
\ref{min_can_is_proj}, we have that $t_X$ and $t_Y$ are uniformly
close.
\end{proof}

A horoball based at the conjugacy class $\al$ is the (unbounded) subset $H(\al,r) = \{x \in \os \mid l(\al,x) < r \}$ of $\os$. 
Corollary \ref{short_loops} shows that the $\pi_f(H(\al,1))$ is a bounded interval of $\LL_f$. 

\subsection{Projections to Axes are Like Projections in Trees} Consider a geodesic $\LL$ in a tree $T$ , and let $\pi:T \to \LL$ be the
closest point projection. The next lemma is motivated by the
following observation (see Figure \ref{tree_prop1_fig}): If $X$ is a point on $\LL$ then $d(Y,X) = d(Y,
\pi(Y)) + d(\pi(Y),X)$.

\begin{figure}[b]
\begin{center}
\input{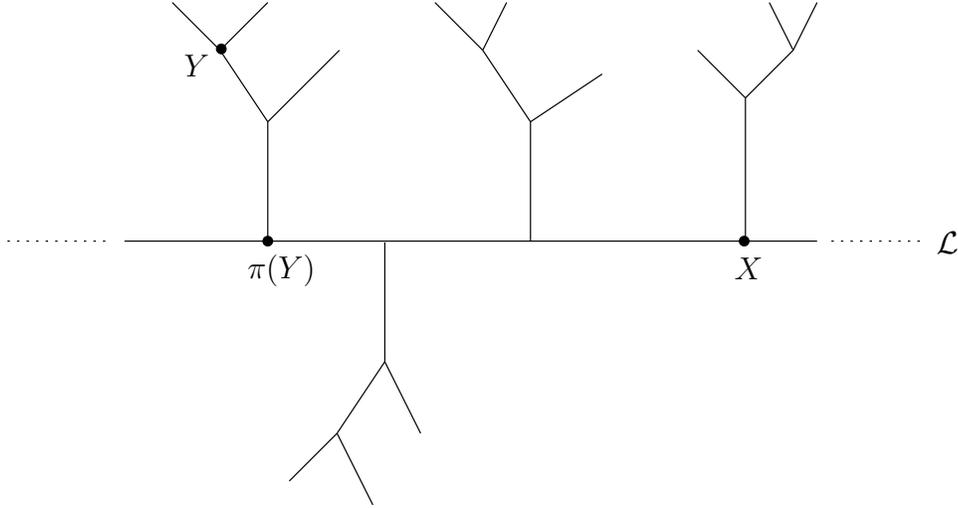}
\caption{\label{tree_prop1_fig}In a tree, the geodesic from $Y$ to a point on a geodesic
  visits $\pi(Y)$}
\end{center}
\end{figure}

\begin{lemma}\label{tree_like_prop1}
There exist constants $s,c>0$ such that for any $Y$, if $|t - t_Y|>s$
then $d(Y,G(t)) \geq d(Y,\pi(Y))+d(\pi(Y),G(t))-c$
\end{lemma}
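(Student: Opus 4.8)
\emph{Proof sketch.} The plan is to evaluate both sides of the desired inequality using a single loop. Let $\beta$ be a candidate loop in $Y$ realizing the distance to $\pi(Y)=G(t_Y)$, so by Proposition \ref{green graph} we have $d(Y,\pi(Y))=\log\ST_\beta(Y,G(t_Y))$, where $\ST_\al(X,Z)=l(\al,Z)/l(\al,X)$. Since $\ST$ is multiplicative along a chain of graphs, for every $t$
\[ d(Y,G(t))\ \ge\ \log\ST_\beta(Y,G(t))\ =\ \log\ST_\beta(Y,G(t_Y))+\log\ST_\beta(G(t_Y),G(t))\ =\ d(Y,\pi(Y))+\log\ST_\beta(G(t_Y),G(t)). \]
Hence it suffices to produce $s,c$ so that $\log\ST_\beta(G(t_Y),G(t))\ge d(\pi(Y),G(t))-c$ whenever $|t-t_Y|>s$; that is, $\beta$ must stretch almost as much as a distance-realizing candidate from $G(t_Y)$ to $G(t)$.

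First I would pin down where $\beta$ is shortest on $\LL_f$. Since $\beta$ is a candidate in $Y$, Corollary \ref{min_can_is_proj} gives $|t_Y-t_\beta|<s_{\ref{min_can_is_proj}}$ and Corollary \ref{min_close_to_legal} gives $|t_\beta-t_0(\beta)|<s_{\ref{min_close_to_legal}}$; set $\delta=s_{\ref{min_can_is_proj}}+s_{\ref{min_close_to_legal}}$, so $|t_Y-t_0(\beta)|<\delta$. For any candidate $\gamma$ in $G(t_Y)$ the same two corollaries apply, since the projection parameter of the axis point $G(t_Y)$ is $t_Y$ itself, and give $|t_Y-t_0(\gamma)|<\delta$. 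Now take $s:=\delta$ and suppose $|t-t_Y|>s$, so that $t$ lies strictly to one side of both $t_0(\beta)$ and $t_0(\gamma)$. The heart of the matter is the estimate
\[ \ST_\gamma(G(t_Y),G(t))\ \le\ \Lambda\cdot \ST_\beta(G(t_Y),G(t)) \]
for a constant $\Lambda=\Lambda(\phi)$ and every candidate $\gamma$ in $G(t_Y)$. This is where Lemma \ref{exp_growth2} comes in: $\beta$ and $\gamma$ are basis elements (Proposition \ref{can_are_basis}), and Lemma \ref{exp_growth2} pinches $l_\al(t_0(\al)+\tau)$ between $C^{-1}$ and $C$ times $\lambda^{n(|\tau|)}l_\al(t_0(\al))$ for $\tau>0$ (and the $\mu$-analogue for $\tau<0$), with the \emph{same} function $n$ and the \emph{same} $C$ for all basis elements $\al$. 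Writing both ratios as $l(t)/l(t_Y)$ and substituting these bounds (and using $n(|\tau|)\ge 0$, $\lambda,\mu>1$ to control the values at $t_Y$, where $|\tau|<\delta$), the a priori unknown minimal lengths $l_\beta(t_0(\beta))$ and $l_\gamma(t_0(\gamma))$ cancel, leaving a factor $C^{O(1)}\,\lambda^{\,n(t-t_0(\gamma))-n(t-t_0(\beta))}$ (or its $\mu$-analogue when $t<t_Y$); since $|t_0(\gamma)-t_0(\beta)|<2\delta$, the exponent difference is bounded by a constant, so $\Lambda$ is a constant.

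Granting the estimate, Proposition \ref{green graph} expresses $d(\pi(Y),G(t))=d(G(t_Y),G(t))$ as $\log$ of the maximum of $\ST_\gamma(G(t_Y),G(t))$ over candidates $\gamma$ in $G(t_Y)$, so $e^{d(\pi(Y),G(t))}\le\Lambda\cdot\ST_\beta(G(t_Y),G(t))$, i.e. $\log\ST_\beta(G(t_Y),G(t))\ge d(\pi(Y),G(t))-\log\Lambda$. Feeding this into the displayed inequality of the first paragraph gives the lemma with $c=\log\Lambda$. (For the forward case $t>t_Y+s$ one may instead argue closer to the preceding machinery: enlarging $s$ so that Corollary \ref{pos_leg} makes $\beta$ $\eps_0$-legal in $G(t^*)$ with $t^*=t_Y+s$, Proposition \ref{alm_legal_almost_max_st} directly yields $\log\ST_\beta(G(t^*),G(t))\ge (t-t^*)-C$, and Lemma \ref{exp_growth2} on the bounded interval $[t_Y,t^*]$ bounds $\log\ST_\beta(G(t_Y),G(t^*))$ below; the case $t<t_Y-s$ then follows from the symmetric argument on $\LL_g$ via Corollary \ref{close_minima}.) I expect the one delicate point to be the bookkeeping in the key estimate — verifying that the constant $C$ and the counting function $n$ of Lemma \ref{exp_growth2} really are uniform over all basis elements, so that the unknown minimal lengths genuinely cancel — together with the $t<t_Y$ direction, where $d(\pi(Y),G(t))$ is \emph{not} simply $|t-t_Y|$ because $\LL_f$ is only a directed geodesic.
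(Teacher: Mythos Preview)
Your proof is correct, and your parenthetical alternative is essentially the paper's argument. The paper works entirely through the legality route: it sets $Z=G(t_Y+s_1+s_2)$ where $s_1=s_{\ref{min_can_is_proj}}$ and $s_2=s_{\ref{pos_leg}}$, so that every candidate of $Y$ is $\eps_0$-legal in $Z$; it then chooses $\beta$ to realize $d(Y,Z)$ (not $d(Y,\pi(Y))$), applies Proposition~\ref{alm_legal_almost_max_st} to get $\ST_\beta(Z,X)\ge C e^{d(Z,X)}$, multiplies, and finishes with $d(Y,Z)\ge d(Y,\pi(Y))$ and $d(Z,X)=d(\pi(Y),X)-(s_1+s_2)$. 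The $t<t_Y$ case is then reduced to the same argument on $\LL_g$ via Corollary~\ref{close_minima}, exactly as you suggest.

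The one genuine difference is your \emph{main} approach: rather than passing through legality, you compare $\ST_\beta(G(t_Y),G(t))$ directly against $\ST_\gamma(G(t_Y),G(t))$ for an arbitrary candidate $\gamma$ in $G(t_Y)$, using the uniform two-sided bounds of Lemma~\ref{exp_growth2} to see that the unknown minimal lengths $l_\beta(t_0(\beta))$, $l_\gamma(t_0(\gamma))$ cancel and the exponent discrepancy is bounded by $|t_0(\beta)-t_0(\gamma)|<2\delta$. This is a legitimate and in some ways more symmetric route---it handles both directions $t\gtrless t_Y$ at once without switching to $\LL_g$---at the cost of slightly heavier bookkeeping. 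The paper's choice of $\beta$ realizing $d(Y,Z)$ rather than $d(Y,\pi(Y))$ is a small streamlining: it makes the term $\log\ST_\beta(Y,Z)=d(Y,Z)\ge d(Y,\pi(Y))$ come for free, so no separate lower bound on $\ST_\beta$ over the short interval $[t_Y,t^*]$ is needed.
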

\begin{proof}
Denote $X=G(t)$. Let us first prove it for $t>t_Y$. There is an $s_1
= s_{\ref{min_can_is_proj}}$ such that  for all
candidates $\al$ of $Y$: $|t_\al - t_Y|<s_1$. There is an $s_2=
s_{\ref{pos_leg}}$ such 
that if $t>t_\al+s_2$ then $LEG_f(\al_t, G(t))>\eps_0$. Let $Z =
G(t_Y+s_1+s_2)$ then for any candidate $\beta$ of $Y$,
$LEG_f(\beta, Z)> \eps_0$. Now suppose $\beta_Y$ in $Y$ is the loop
that realizes $d(Y,Z)$, i.e. $\ST_\beta(Y,Z) = e^{d(Y,Z)}$. Then,
since $\beta$ is $\eps_0$-legal in $Z$ then by Corollary \ref{alm_legal_almost_max_st} there is a $C = C_{\ref{alm_legal_almost_max_st}}$ so that 
 $\ST_\beta(Z,X) \geq C e^{d(X,Z)}$ so $\ST_\beta(Y,X) = \ST_\beta(Y,Z) \ST_\beta(Z,X)
\geq C e^{d(Y,Z)}e^{d(Z,X)} = C e^{d(Y,Z)+d(Z,X)}$. We have 
$\ST(Y,X) \geq \ST_\beta(Y,X) \geq C e^{d(Y,Z)+d(Z,X)}$. Thus
$d(Y,X) \geq \log(C) + d(Y,Z) + d(Z,X)$. Now recall that $Z =
G(t_Y+ s_1+s_2)$ so $d(\pi(Y), Z) = s_1+s_2$. We have,
\[ \begin{array}{l}
d(Y, Z) > d(Y,\pi(Y)) \\
d(Z, X) > d(\pi(Y),X) - (s_1 + s_2)
\end{array}\] thus 
$d(Y,X) \geq d(Y,\pi(Y)) + d(\pi(Y),X) - (s_1+s_2) + \log(C)$
let $c = s_1+s_2 - \log(C)$ and we get $d(Y,X) \geq d(Y,\pi(Y))+
d(\pi(Y), X) - c$.

If $t<t_Y$: there is an $s'$ such that the above holds for $g$. The
claim now follows form the fact that $\pi_f, \pi_g$ are uniformly
close (see lemma \ref{close_minima}).

\end{proof}

Getting back to the tree $T$, if $X,Y$ are any two points such that
$\pi(Y) \neq \pi(X)$ then the geodesic from $Y$ to $X$ passes through
$\pi(X)$, see Figure \ref{tree_prop2_fig}. In particular $d(Y,X)> d(Y,\pi(X))$. In $\os$:

\begin{lemma}\label{tree_like_prop2}
There exist constants $s,c >0$ such that for $X,Y \in \os$ if $|t_Y
- t_X|>s$, then $d(Y,X) \geq d(Y,\pi(X)) - c$
\end{lemma}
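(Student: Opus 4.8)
The plan is to reduce this to Lemma \ref{tree_like_prop1} together with the near-symmetry of the Lipschitz metric on the thick part (Theorem \ref{almost_sym}), using the fact that an axis lies in the $\theta$-thick part for some $\theta>0$ depending only on $\phi$. The guiding picture in a tree is that when $\pi(X)\ne\pi(Y)$ the geodesic $[Y,X]$ passes through $\pi(X)$, so $d(Y,X)\ge d(Y,\pi(X))$; we want a coarse analogue. First I would set $X=G(t_X)$, $Y$ arbitrary, and note that by Corollary \ref{min_can_is_proj} (or rather the definition of $\pi_f$), $\pi(X)$ is within a uniformly bounded distance of $G(t_X)$, so up to an additive error we may treat $X$ as sitting essentially at its own projection point on $\LL_f$.

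The main step is to apply Lemma \ref{tree_like_prop1} with the roles arranged so that the parameter that is ``far'' is $t_X$ relative to $t_Y$. Concretely, assume first $t_X>t_Y$. By Corollary \ref{pos_leg} there is an $s'>0$ so that for every candidate $\al$ of $Y$, once $t>t_\al+s'$ the loop $\al$ is $\eps_0$-legal in $G(t)$; combined with $|t_\al-t_Y|<s_{\ref{min_can_is_proj}}$, all candidates of $Y$ become $\eps_0$-legal at $G(t_Y+s_1+s_2)$ as in the proof of Lemma \ref{tree_like_prop1}. Then exactly the computation there gives $d(Y,X)\ge d(Y,\pi(Y))+d(\pi(Y),X)-c$. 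Since $d(\pi(Y),X)\ge d(\pi(X),X)$ would be the wrong direction, instead I would use the triangle inequality in the form $d(\pi(Y),X)\ge d(Y,\pi(X))-d(Y,\pi(Y))$ is also wrong; the cleaner route is: once $|t_X-t_Y|>s$, the point $\pi(X)\approx G(t_X)$ lies on the far side of $\pi(Y)\approx G(t_Y)$ from $Y$ along the geodesic line $\LL_f$, so $d(\pi(Y),X)= |t_X-t_Y| + O(1) \ge d(\pi(Y),\pi(X)) - O(1)$, and hence
\[ d(Y,X)\ge d(Y,\pi(Y)) + d(\pi(Y),\pi(X)) - c' \ge d(Y,\pi(X)) - c'' \]
by one more application of the triangle inequality $d(Y,\pi(Y))+d(\pi(Y),\pi(X))\ge d(Y,\pi(X))$. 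For the case $t_X<t_Y$, I would instead run the same argument on $\LL_g$ using $g$-legality, and then transfer back to $\LL_f$ using Corollary \ref{close_minima}, which says $\pi_f$ and $\pi_g$ are uniformly close, absorbing the discrepancy into the constant.

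The step I expect to be the main obstacle is bookkeeping the directionality correctly: Lemma \ref{tree_like_prop1} only gives the additive lower bound when the target point $G(t)$ is on the \emph{correct} side of $\pi(Y)$ (and far enough away), so I need to know that $\pi(X)$ really is on the far side — i.e. that $t_X$ and $t_Y$ being $s$-separated forces $G(t_X)$ to be past $\pi(Y)$ in the appropriate direction, not merely far from it. This is where one must be careful that the ``$|t_Y-t_X|>s$'' hypothesis is genuinely a separation of \emph{parameters} on the line, and then the segment of $\LL_f$ between $\pi(Y)$ and $\pi(X)$ has length $|t_X-t_Y|$ up to the bounded errors from Corollary \ref{coarse_proj} and Corollary \ref{min_can_is_proj}. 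Once that is pinned down, the inequality follows by assembling the triangle inequalities as above; no new ideas beyond Lemma \ref{tree_like_prop1}, Theorem \ref{almost_sym}, and the coarse-projection estimates are needed.
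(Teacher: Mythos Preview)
Your approach has a fundamental gap: you are conflating $X$ with its projection $\pi(X)=G(t_X)$. When you write ``up to an additive error we may treat $X$ as sitting essentially at its own projection point on $\LL_f$,'' this is false. The distance $d(X,\pi(X))$ is \emph{not} uniformly bounded; $X$ is an arbitrary point of $\os$ and can be as far from the axis as you like. Lemma~\ref{tree_like_prop1} only controls $d(Y,G(t))$ for $G(t)$ \emph{on} the axis, so invoking it with $t=t_X$ yields $d(Y,\pi(X))\geq d(Y,\pi(Y))+d(\pi(Y),\pi(X))-c$, which is information about $d(Y,\pi(X))$, not about $d(Y,X)$. Your chain of inequalities never produces a lower bound on $d(Y,X)$ at all; the target quantity simply does not appear on the left of anything you derive.

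The entire content of Lemma~\ref{tree_like_prop2} is precisely to compare $d(Y,X)$ with $d(Y,\pi(X))$ when $X$ is off the axis, and this requires a genuinely new idea beyond Lemma~\ref{tree_like_prop1}. The paper's argument runs as follows. Let $\beta$ be a candidate in $Y$ realizing $d(Y,\pi(X))$. At an intermediate parameter $r$ (past $t_Y$ but short of $t_X$), $\beta_{G(r)}$ contains roughly $k\sim \eps_0 \, l(\beta,G(r))/\jmath$ disjoint $\jmath$-pieces of $\Lam^+$. On the other hand, the short basis loops $\al_i$ of $X$ (after reducing to the case where $X$ is minimal), when mapped into $G(r)$, each contain a $\jmath$-piece of $\Lam^-$ since $r<t_X-s_4$; Proposition~\ref{souped_basis_prop} then forbids them from containing $2\jmath$-pieces of $\Lam^+$. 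A combinatorial claim shows that if $[h(\beta_X)]$ contains $k$ disjoint $\jmath$-pieces of $\Lam^+$, then $\beta_X$ must traverse each $\al_q$ at least $k$ times, forcing $l(\beta,X)\geq \frac{k}{2(n+1)}$. This gives $l(\beta,X)\geq C\cdot l(\beta,\pi(X))$ for a uniform $C$, hence $d(Y,X)\geq d(Y,\pi(X))-c$. None of this can be shortcut by the triangle inequality or Theorem~\ref{almost_sym}.
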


\begin{figure}[b]
\begin{center}
\input{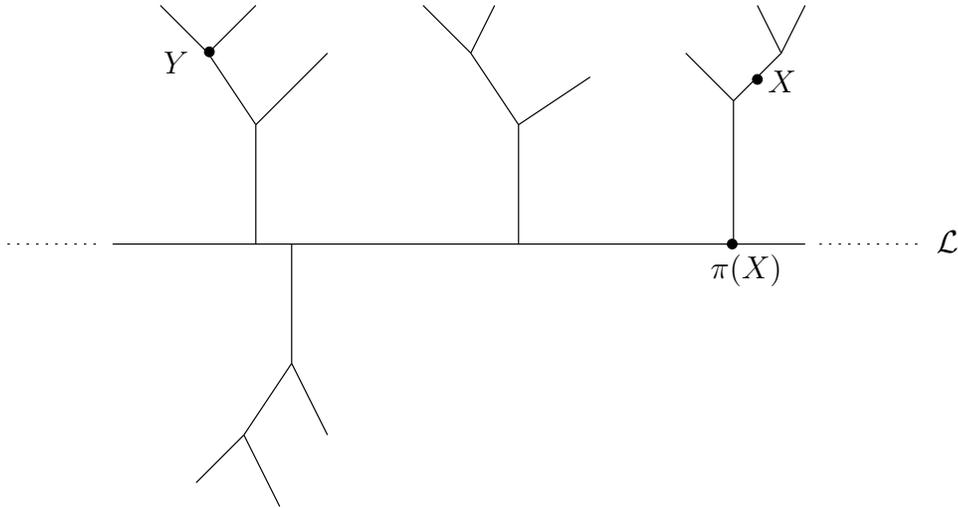}
\caption{\label{tree_prop2_fig}In a tree, if $X,Y$ project to different points then the
  geodesic between them visits both of the projections.}
\end{center}
\end{figure}

To prove this we recall from lemma \ref{basis_prop}, that if $\al$
and $\beta$ are loops in $G(t)$ representing candidates of $X$ then
they cannot contain long pieces of both laminations $\Lam^+, \Lam^-$. We
will need a slightly souped up version of this.

\begin{definition}
We call a point $X$ in $\os$ minimal if the underlying topological graph of $X$ is either a bouquet of circles or a graph with two vertices, one edge between them which we will refer to as a bar, and all other edges are loops.
\end{definition}

\begin{prop}\label{souped_basis_prop}
Suppose $X$ is minimal. Let $v$ be one of its vertices and the basepoint for $\pi_1(X,v)$ and let $e$ denote the bar of $X$ initiating from $v$ (if $X$ is a rose then $e$ is empty). Let $\al_X, \beta_X$  be either one edge  loops based at $v$ or loops of the form $e \gamma \bar e$ where $\gamma$ is a one edge loop based at the other vertex. Fix $Z \in \LL_f$ and let $h: X \to Z$ be a map homotopic to the difference in marking so that $h(\al_X)$ is an immersed loop and $h(\beta_X)$ is an immersed path. If
$h(\al_X), [h(\beta_X)] $ both contain a $\jmath$-piece of $\nu \in
\Lam^-$ then $h(\beta_X)$ does not contain a $2\jmath$-piece of $\lam \in
\Lam^+$. 
\end{prop}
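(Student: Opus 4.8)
The statement is a refinement of Proposition \ref{basis_prop}(2), and the plan is to reduce to it. First I would set up the combinatorics: since $X$ is minimal and $\al_X,\beta_X$ have one of the two allowed forms, collapsing the bar $e$ of $X$ carries the edges of $X$ to a basis of $F_n$ in which $\al$ and $\beta$ are distinct basis elements; hence $\al,\beta$ are compatible with a free decomposition $F_n=\langle\al\rangle*B$ with $\beta\in B$. Note also that $h(\al_X)$ is a tight loop representing the conjugacy class of $\al$ in $Z$ and that $[h(\beta_X)]$ is a tight loop representing the conjugacy class of $\beta$ in $Z$. The degenerate case $\al_X=\beta_X$ follows directly from Proposition \ref{basis_prop}(1): if $h(\al_X)$, which is cyclically reduced, contained both a $\jmath$-piece of $\nu$ and a $2\jmath$-piece of $\lam$, then $\al$ would fail to be a basis element. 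So I would assume from now on that $\al_X\ne\beta_X$.

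Next I would exploit the fact that $h(\beta_X)$ is only a tight \emph{path}, not a tight loop. Write the reduced path $h(\beta_X)$ as $\bar u\cdot\tau\cdot u$, where $\tau$ is a cyclically reduced loop freely homotopic to $\beta$, so that $[\tau]=[h(\beta_X)]$, and $u$ is the \emph{spur}; by construction $\bar u$ is the reverse of $u$, and, tracing the reductions, $u$ is up to reversal a subpath of the tightened bar image $[h(e)]$ (or of the image of the single loop edge of $\beta_X$ when $\beta_X$ is a loop at $v$), together with at most the spur of $[h(\gamma)]$ --- in short, $u$ records only how $h$ treats one edge of $X$. Now suppose, for contradiction, that $h(\beta_X)$ contains a $2\jmath$-piece $P$ of $\lam$. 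If $P$ meets the core $\tau$ in a subsegment of length $\ge\jmath$, then $[h(\beta_X)]=[\tau]$ contains a $\jmath$-piece of $\lam$; since $h(\al_X)$ contains a $\jmath$-piece of $\nu$ by hypothesis and $\al,\beta$ are compatible with a free decomposition, this contradicts Proposition \ref{basis_prop}(2) (applied with the roles of $\al$ and $\beta$ interchanged). So one is left with the case where $P$ meets $\tau$ in a subsegment of length $<\jmath$.

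In that case $P$ has more than $\jmath$ of its length outside $\tau$, hence inside $\bar u\cup u$, so (using that $\bar u$ is the reverse of $u$) the spur $u$ contains, up to reversal, a leaf segment of $\lam$ of definite length; essentially $[h(e)]$ contains a long piece of $\lam$. The crux is to turn this into a statement about a genuine basis element. Using that $\al_X$ is also a generator sitting at the vertex $v$ and that $h(\al_X)$ is an immersed loop, I would splice the bad segment into the cyclically reduced core of an explicitly chosen basis element $\zeta$ --- such as $\al_X\beta_X$, or $\gamma\,e\,\al_X\,\bar e$ when $\al_X$ is a loop at $v$ --- so that the tight loop representing $\zeta$ in $Z$ contains a $\jmath$-piece of $\lam$; since $[h(\beta_X)]$ contains a $\jmath$-piece of $\nu$ and $\{\zeta,\beta\}$ can be completed to a basis, Proposition \ref{basis_prop}(2) is again contradicted. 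I expect this last step to be the main obstacle: one must control the bounded cancellation that occurs when the segment $Q\subseteq u$ is moved out of the spur and into the core of $\zeta$, and check that the freedom in choosing $h$ so that $h(\al_X)$ is immersed does not destroy the visibility of $Q$ in $h(\zeta_X)$. Everything else is the routine bookkeeping reduction to Proposition \ref{basis_prop}.
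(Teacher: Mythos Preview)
Your overall plan matches the paper's: reduce to Proposition \ref{basis_prop} by splitting into the case where the $2\jmath$-piece lies in the cyclically reduced core of $h(\beta_X)$ (immediate from \ref{basis_prop}(1), since $[h(\beta_X)]$ already carries a $\jmath$-piece of $\nu$) and the case where it lies in the spur. The difficulty is entirely in the spur case, and there your proposal has a genuine gap.

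Your candidate $\zeta=\al\beta$ does not work in general. Write $h(\beta_X)=w\,y\,w^{-1}$ with $y$ cyclically reduced and $x=h(\al_X)$; there is no reason for $|w|\le |x|$, so when you tighten $x\cdot w y w^{-1}$ the entire copy of $x$ may be swallowed by cancellation with $w^{-1}$ (cyclically) or with $w$, and you have no control over which part of $w$ survives. The paper's device is to use the whole family $\al^m\beta$ (still basis elements) and take $m$ large. First one argues that $w$ cannot be an initial segment of the infinite word $x^{-\infty}$: if it were, the $2\jmath$-piece of $\lam$ inside $w$ would force a $\jmath$-piece of $\lam$ (or its inverse) inside $x$ itself, contradicting \ref{basis_prop}(1) for $\al$. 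Hence for $m$ large the reduction of $x^m\cdot w y w^{-1}\cdot x^m$ stabilises: $w=w_1w_2$ with $w_1$ cancelled against a proper prefix of $x^m$ and $w_2\ne\emptyset$. One then splits again: if $w_2$ carries a $\jmath$-piece of $\lam$, the reduced word $[x^m w y w^{-1} x^m]$ is a basis element containing both a $\jmath$-piece of $\lam$ (from $w_2$) and a $\jmath$-piece of $\nu$ (from a surviving copy of $x$); if instead $w_1$ carries it, then $x^m$ contains the inverse of that piece together with its $\nu$-piece, and one looks at $[x^{2m} w y w^{-1} x^m]$, where a full block $x^m$ survives intact. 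This two-step use of powers is the missing idea in your sketch; the single product $\al\beta$ (or $\gamma e\al\bar e$) does not give you this control.
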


\begin{proof}
We emphasize that by proposition \ref{basis_prop}, $[h(\beta)]$
does not cross a $\jmath$-piece of $\lam$ but we want it not to contain
any such pieces in the part that gets cancelled when we tighten the
loop. \\
We represent $h(\al)$ by the edge path $x$ in $G(t)$ and
$\beta$ by $u = wyw^{-1}$, with $y$ cyclically reduced. Notice that since $\al^{m}\beta$ represents a basis element for all $m \geq 0$, then $x^{m}u$ represents a basis element for $m \geq 0$. We proceed to prove the proposition by way of contradiction. If $w$ crosses an $2\jmath$-piece of $\lam$ then $w \nsubseteq x^m$, for some $m \geq 1$. For otherwise $x$ would contain a $\jmath$-piece of $\lam$ whence we contradict Proposition  \ref{basis_prop}. 

So there is a large enough $m$ such that when we reduce the path $x^m \cdot wyw^{-1} \cdot x^{m}$ the cancellation happens only at the dots. Write $w= w_1 w_2$ where $w_1$ is the part that is cancelled and $w_2 \neq \emptyset$.\\ 
If $w_2$ contains a $\jmath$-piece of $\lam$. But then $z = [x^m \cdot w_1 w_2 y w^{-1} \cdot x^m]$ represents a basis element and $w_2$ survives after the cancellation. So $z$ will contain a $\jmath$-piece of $\lam$. If $m$ is large enough, $z$ will also contain a copy of $x$. We get a basis element containing $\jmath$-pieces of $\lam$ and $\nu$ thereby contradicting Proposition \ref{basis_prop}. 

Thus $w_1$ contains a $\jmath$-piece of $\lam$. Then $x^m$ contains the inverse of a $\jmath$-piece of $\lam$ and also a $\jmath$-piece of $\nu$ (Here we cannot get an easy contradiction as before since $x$ might not contain a $\jmath$-piece of $\lam^{-1}$). Consider the basis element $u = [x^mx^{m} \cdot w y w^{-1} \cdot x^m ]$.  The first $x^m$ survives after the cancellation and contributes a $\jmath$-piece of $\lam$ and a $\jmath$-piece of $\nu$ to $u$ thereby contradicting Proposition \ref{basis_prop}. 
\end{proof}

\begin{proof}[Proof of Lemma \ref{tree_like_prop2}]
We prove the claim for $X,Y$ such that $t_Y<t_X$, the case where $t_Y>t_X$ follows by applying the same argument to $g$. We also make the assumption that $X$ is minimal.

\begin{figure}[b]
\begin{center}
\input{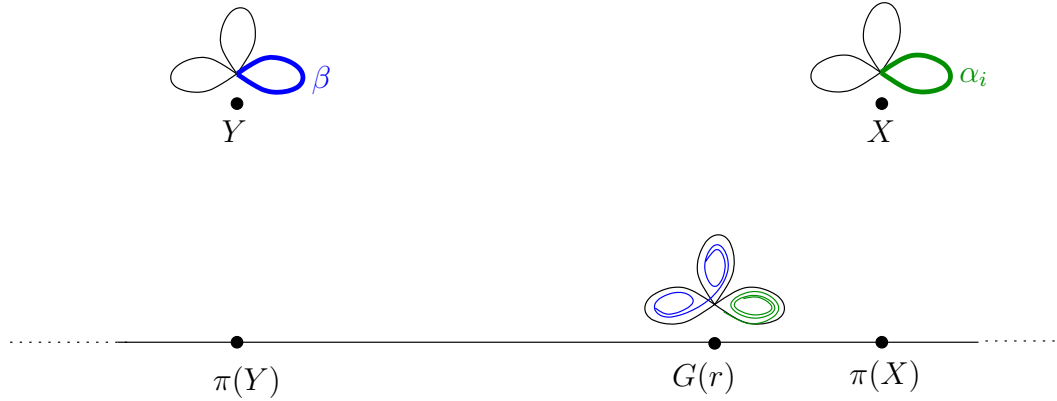}
\caption{\label{tree_prop_fig}In $G(r)$, $\beta$ contains many $l_1$-pieces of $\lam$ and
  $\al_i$ contain $l_1$-pieces of $\nu$}
\end{center}
\end{figure}

Let $\lam$ be a periodic leaf of $\Lam^+_f$ and $\nu$ a periodic leaf of $\Lam^-_f$. Let $\jmath = \jmath_{\ref{souped_basis_prop}}$. The idea of the proof is as follows. If $t_Y << t_X$, then for $r$ in the middle of $[t_Y,t_X]$, any loop which is short in $Y$, would contain
many $\jmath$-pieces of $\lam$ in $G(r)$. And any loop which is short in
$X$ would contain many $\jmath$-pieces of $\nu$  in $G(r)$, see Figure
\ref{tree_prop_fig}. 
If a candidate in $Y$
was short in $X$, then it would contain pieces of both $\lam$
and $\nu$ in $G(r)$ contradicting the fact that it is a basis element. To make
the argument precise we need to argue that for a candidate $\beta$ in
$Y$ , $l(\beta,X)$ is longer than a definite fraction of
$l(\beta,\pi(X))$. This is done by bounding $l(\beta,\pi(X))$ with the number of
disjoint $\jmath$-pieces of $\lam$ that appear in $\beta_{\pi(X)}$.  

Let $s_1 = s_{\ref{min_can_is_proj}}$ i.e. for
any candidate $\beta$ in $Y$, $|t_Y - t_\beta|<s_1$. Let
$s_2=s_{\ref{pos_leg}}$ i.e. for any primitive conjugacy class $\beta$ if
$t>t_\beta+s_2$ then $LEG_f(\beta, G(t))>\eps_0$.
Let $s_3$ be such that if $t>t_\beta+s_2+s_3$ then $\beta$ crosses
a $\jmath$-piece of $\lam$ in $G(t)$ (contributed by one of the
$\kappa$ long legal segments). Let $s_4$ be such that for
any primitive conjugacy class $\beta$ if $t< t_\beta-s_4$ then $\beta$ contains
a $\jmath$-piece 
of $\nu$ in $G(t)$. Let $s = 2s_1 +s_2 + s_3 + s_4$ and suppose that $t_X - t_Y >s$ we
will show that there exists a $c$ as in the statement of the Lemma.

Let $\beta$ be a loop in $Y$ such that $d(Y, \pi(X)) =
\log(\ST_\beta(Y,\pi(X)))$. Then by proposition \ref{close_can_cor}
$t_\beta< t_Y + s_1$. Let $r = t_X - s_1 - s_4$ then $r> t_Y+s_1 +
s_2+s_3$ (see figure \ref{tree_prop_fig}). Let $k(r)$ be the number of 
$\jmath$-pieces of $\lam$ in $\beta_r \subseteq G(r)$ with disjoint interiors, then
\[ k(r) \cdot \jmath > \eps_0 \cdot l(\beta,G(r)) \]

Recall that $X$ is minimal. Let $\alpha_1, \dots , \alpha_n$ denote the loops representing the short basis in $X$. $\al_i$ is either a one edge loop or is $e \al' \bar e$ where $\al'$ is a one edge loop based at the other vertex and $e$ is the bar of $X$. Let $\al_1$ be the longest one-edge-loop.\\ Choose a map $h:X \to G(r)$, homotopic to the difference in marking, so that $h(\alpha_1)$ is an immersed loop and $h(\al_i)$ are immersed as paths. Each $h(\alpha_i)$ in $G(r)$
contains a $\jmath$-piece of $\nu$. By proposition \ref{souped_basis_prop}  for $1 \leq i \leq n$, $h(\al_i)$ does not contain any $\jmath$-pieces of $\lam$.

\begin{claim}
Let $\gamma$ be a conjugacy class in $F_n$ and write it as a cyclically reduced word in $\al_1, \dots, \al_n$ the basis of $\pi_1(X,v)$. If $[h(\gamma_X)]$ contains $k$ occurrences of  $\jmath$-pieces of $\Lam^+$ in $G(r)$ (with disjoint interiors) then $\gamma_X$ traverses each $\al_q$ at least $k$ times. 
\end{claim}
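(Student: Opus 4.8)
The plan is to translate the $\jmath$-pieces, which live in $G_t$, into combinatorial information about $\gamma$ using the block decomposition of $[h(\gamma_X)]$ dictated by the word $\gamma=\al_{i_1}^{\eps_1}\cdots\al_{i_L}^{\eps_L}$. Realizing $\gamma_X$ as this reduced edge loop in $X$, the loop $[h(\gamma_X)]$ is obtained from the concatenation $h(\al_{i_1})^{\eps_1}\cdots h(\al_{i_L})^{\eps_L}$ by cancellation at the $L$ seams (the images of the letter-endpoints). Hence every edge of $[h(\gamma_X)]$ lies on a uniquely determined occurrence $h(\al_{i_\ell})^{\eps_\ell}$, and a maximal run of consecutive edges on one occurrence is a subpath of $h(\al_{i_\ell})^{\eps_\ell}$. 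As was already observed just before the claim (via Proposition \ref{souped_basis_prop}), no $h(\al_i)$ contains a $\jmath$-piece of $\Lam^+$, while each $h(\al_i)$ does contain a $\jmath$-piece of $\Lam^-$; consequently no $\jmath$-piece of $\Lam^+$ inside $[h(\gamma_X)]$ can lie on a single occurrence $h(\al_{i_\ell})^{\eps_\ell}$.

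Fix $q$. The key step is: \emph{every $\jmath$-piece $P$ of $\Lam^+$ inside $[h(\gamma_X)]$ meets a junction of $[h(\gamma_X)]$ coming from an occurrence of $\al_q^{\pm1}$ in $\gamma$.} The occurrences of $\al_q^{\pm1}$ in $\gamma$ cut $[h(\gamma_X)]$ into finitely many arcs, the generic one being the portion coming from a maximal $\al_q$-free subword $u$ of $\gamma$. If $P$ were contained in such an arc, then the conjugacy class $[u]$, which is carried by the proper free factor $A_q=\langle\,\al_i : i\neq q\,\rangle$ complementary to $\langle\al_q\rangle$, would, realized in $G_t$, contain a $\jmath$-piece of $\Lam^+$; since $[u]$ and $[\al_q]$ are compatible with the free decomposition $F_n=A_q*\langle\al_q\rangle$ and $[\al_q]$ realized in $G_t$ contains a $\jmath$-piece of $\Lam^-$, this contradicts Proposition \ref{basis_prop}(2). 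So $P$ must straddle an $\al_q^{\pm1}$-junction.

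To conclude, note that the $k$ given $\jmath$-pieces $P_1,\dots,P_k$ have pairwise disjoint interiors, so no point of $[h(\gamma_X)]$ lies in two of them; since each $P_m$ straddles an $\al_q^{\pm1}$-junction, these junctions are pairwise distinct. Hence $\gamma$ contains at least $k$ occurrences of $\al_q^{\pm1}$, i.e. $\gamma_X$ traverses $\al_q$ at least $k$ times, and as $q$ was arbitrary the claim follows.

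The one delicate point — and the main obstacle — is the passage, in the middle step, from ``$P$ is a $\jmath$-piece inside the part of $[h(\gamma_X)]$ coming from $u$'' to ``the cyclically reduced loop representing $[u]$ in $G_t$ contains a $\jmath$-piece of $\Lam^+$'' (and similarly for $[\al_q]$), so that Proposition \ref{basis_prop}(2) genuinely applies with the \emph{uniform} constant $\jmath=\jmath_{\ref{souped_basis_prop}}$; note that $BCC(h)$ is not bounded independently of $X$, so one cannot simply truncate. This is handled exactly as in the proof of Proposition \ref{souped_basis_prop}: one absorbs a sufficiently high power of the anchor loop $\al_1$ (for which $h(\al_1)$ is already an immersed loop — the reason it was singled out) into the words being compared, pushing all cancellation to controlled seams away from $P$, so that the $\jmath$-piece survives tightening into the loop fed to Proposition \ref{basis_prop}. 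Beyond this bookkeeping the entire geometric content is supplied by Propositions \ref{basis_prop} and \ref{souped_basis_prop} (hence ultimately by Whitehead's theorem through Lemma \ref{no_cut_ver_lemma}).
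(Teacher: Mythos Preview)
Your argument follows the same route as the paper's: decompose $[h(\gamma_X)]$ into blocks $\sig_{i_j}\subseteq h(\al_{i_j})$, use that no single $h(\al_i)$ contains a $\jmath$-piece of $\Lam^+$, and then argue that a $\jmath$-piece confined to an $\al_q$-free run $u$ contradicts Proposition~\ref{basis_prop}(2) applied to the pair $[\al_q]$, $[u]$. You also correctly flag the path-versus-loop issue (the piece lives in the tightened \emph{path} $[h(u)]$, while Proposition~\ref{basis_prop}(2) speaks of the tightened \emph{loop}); the paper passes over this silently, and your gesture toward the mechanism of Proposition~\ref{souped_basis_prop} is the right instinct, though note that for $q=1$ you cannot use $\al_1$ itself as the anchor, so the bookkeeping needs a small adjustment there.

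One concrete slip in your final counting: from ``each $P_m$ straddles an $\al_q^{\pm1}$-junction'' and ``disjoint interiors'' you conclude that the corresponding $\al_q$-occurrences are pairwise distinct. But each occurrence of $\al_q$ contributes \emph{two} boundary seams to the decomposition into $\al_q$-free arcs, so two interior-disjoint pieces can straddle the same $\al_q$-occurrence from opposite sides. As written your argument yields only $\geq k/2$ traversals of $\al_q$, not $k$. (The paper's pigeonhole is phrased as finding a piece entirely inside an $\al_q$-free segment and is equally loose on this factor.) This is harmless for the ambient Lemma~\ref{tree_like_prop2}, where the claim is used only to bound $l(\beta,X)$ below by a fixed multiple of $k$, but the claim as literally stated does not quite follow from your argument.
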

\begin{proof}[Proof of Claim]
First note that if $\gamma_X$ is a loop that does not traverse
$\al_q$ at all then it is carried by the free factor $<\al_1, \dots , \widehat{\al_q}, \dots, 
\al_n >$. Using proposition \ref{basis_prop} applied to $h(\al_q), 
[h(\gamma)]$ in $G(r)$, we get that $[h(\gamma)]$ does not contain any $l$-pieces of $\Lam^+$ in $G(r)$. \\ 
\indent Now suppose that $\gamma_X = \al_{i_1} \dots
\al_{i_N}$ so that $\al_{i_j}=\al_q$ for at most $k-1$ choices of $j$s.
$[h(\gamma_X)]$ is the result of reducing $h(\al_{i_1}) \cdot
h(\al_{i_2}) \cdots h(\al_{i_N})$ to get $\sig_{i_1} \sig_{i_2} \cdots
\sig_{i_N}$ where $\sig_{i_j}$ are the subpaths of $h(\al_{i_j})$ that survive after the cancellation (some $\sig_{i_j}$ might be
trivial). $\jmath$-pieces of $\lam$ can appear only if they are split between
different $\sigma_i$s. If there are $k$ disjoint $\jmath$-pieces
of $\lam$ in $\gamma_{G(r)}$ then there is a $\jmath$-piece of $\lam$
appearing in $\sig_{i_m} 
\cdots \sig_{i_l} \subseteq [h(\al_{i_m}) \dots h(\al_{i_l})]$ where none of
the $\al_{i_j}$ are equal to $\al_q$. This is a contradiction to the first paragraph. 
\end{proof}

By the claim above $\beta_X$ in $X$ must traverse $\al_1$ at least $k=k(r)$ times. If $l(\al_1,X)>\frac{1}{n+1}$ then $l(\beta,X) > \frac{k}{n+1}$. Otherwise, $X$ has a separating edge  $e$ and $l(e,X)>\frac{1}{n+1}$. Let $\delta$ be a one-edge-loop so that $\al_1$ and $\delta$ are loops on opposite sides of $e$. By the claim above $\beta_X$ traverses $\al_1$ and $\delta$ alternately at least $\frac{k}{2}$ times therefore it must cross $e$ at least $\frac{k}{2}$ times. Again we get $l(\beta,X)> \frac{k}{2(n+1)}$. Therefore, 
\begin{equation}\label{eq_mid}
l(\beta,X) > \frac{\eps_0
}{ 2(n+1) \jmath}l(\beta,G(r))
\end{equation}
$\LL_f$ is contained in the $\theta$-thick part of $\os$ for some $\theta$. Let $\mu = \exp ( D_{\ref{almost_sym}}(\theta) (s_1+s_4))$ then $l(\beta, G(r)) \geq l(\beta, G(t_X)) e^{d(G(t_X), G(r))} \geq \mu l(\beta, G(t_X)) $. By equation \ref{eq_mid} we get  $l(\beta,X)
> \frac{\eps_0 \mu}{2(n+1) \jmath} l(\beta,\pi(X))$. Thus, we get 
$\frac{l(\beta,X)}{l(\beta,Y)} > \frac{\eps_0 \mu}{2(n+1) \jmath}
\frac{l(\beta,\pi(X))}{l(\beta,Y)}$ i.e. \[ d(Y,X) > d(Y,\pi(X)) -
\log \left( \frac{2(n+1) \jmath}{\eps_0 \mu} \right) \] Which proves the statement in the case that $X$ is minimal. Now we deal with the case that $X$ is not minimal.  We claim that there is a constant $b$ such that for any $X \in \os$ there is a minimal $K$ so that $d(X,K)<b$. Moreover, there exists a short loop in $X$ that is still short in $K$. Therefore, by corollary \ref{short_loops}
 $d(\pi(K), \pi(X))<s_{\ref{short_loops}}$. So $d(Y,X) \geq d(Y,K) - d(X,K)  \geq
 d(Y,K) - b > d(Y, \pi(K)) - c -b \geq d(Y, \pi(K)) - d(\pi(K), \pi(X)) -c -b 
> d(Y,\pi(X)) -c -b -s_{\ref{short_loops}}$. \\ To prove that each point in $\os$ lies a uniform distance away from a
minimal $K$:  
Let $e$ be the longest edge in $X$. Note that $l(e,X) \geq \frac{1}{3n-3}$. If $e$ is nonseparating let $J$ be a maximal tree in $X$ that does not contain $e$, otherwise let $J$ be the forest obtained from this maximal tree by deleting $e$. Collapse $J$ to get a new unnormalized graph $X'$ with volume $> \frac{1}{3n-3}$. Notice that $X'$ is a minimal graph.  Normalize $X'$ to get $K$. Then $d(X,K) \leq \log \left( \frac{1}{1/(3n-3)} \right) = \log (3n-3)$. The short basis in
$X$ is also short in $K$. 
\end{proof}

\begin{cor}\label{tree_cor}
There are constants $s,c>0$ such that: If $d(\pi(Y), \pi(X))>s$ then
\[d(Y,X) > d(Y, \pi(Y))+d(\pi(Y),\pi(X)) -c\]
\end{cor}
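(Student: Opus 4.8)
The plan is to chain Lemma \ref{tree_like_prop1} and Lemma \ref{tree_like_prop2}, mimicking the situation in a tree: the geodesic from $Y$ to $X$ first runs from $Y$ to $\pi(Y)$, then travels along $\LL_f$ from $\pi(Y)$ to $\pi(X)$, and finally goes from $\pi(X)$ to $X$; the two preceding lemmas capture the first two legs coarsely.

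The first step is to convert the hypothesis $d(\pi(Y),\pi(X)) > s$ into a bound on the parameters $t_Y, t_X$, where $\pi(Y) = G(t_Y)$ and $\pi(X) = G(t_X)$. Since $\LL_f$ is a directed geodesic parameterized by arc length, $d(G(t_Y), G(t_X)) = t_X - t_Y$ whenever $t_Y \le t_X$. Since $\LL_f$ is contained in the $\theta$-thick part of $\os$ for some $\theta$, Theorem \ref{almost_sym} supplies a constant $D_{\ref{almost_sym}}(\theta)$ with $d(a,b) \le D_{\ref{almost_sym}}(\theta)\, d(b,a)$ for all $a,b \in \LL_f$; so in all cases $|t_X - t_Y| \ge \frac{1}{D_{\ref{almost_sym}}(\theta)} d(\pi(Y),\pi(X))$. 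Hence if we set $s := D_{\ref{almost_sym}}(\theta)\cdot \max\{ s_{\ref{tree_like_prop1}},\, s_{\ref{tree_like_prop2}} \}$, the assumption $d(\pi(Y),\pi(X)) > s$ forces $|t_X - t_Y| > \max\{ s_{\ref{tree_like_prop1}},\, s_{\ref{tree_like_prop2}} \}$, so both lemmas apply to the pair $X,Y$.

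Next I would apply Lemma \ref{tree_like_prop2} to get $d(Y,X) \ge d(Y,\pi(X)) - c_{\ref{tree_like_prop2}}$, and then apply Lemma \ref{tree_like_prop1} with the point $G(t_X) = \pi(X)$ on $\LL_f$ (legitimate since $|t_X - t_Y| > s_{\ref{tree_like_prop1}}$) to get $d(Y,\pi(X)) \ge d(Y,\pi(Y)) + d(\pi(Y),\pi(X)) - c_{\ref{tree_like_prop1}}$. Combining the two inequalities yields
\[ d(Y,X) \ge d(Y,\pi(Y)) + d(\pi(Y),\pi(X)) - \bigl(c_{\ref{tree_like_prop1}} + c_{\ref{tree_like_prop2}}\bigr), \]
so the corollary holds with $c = c_{\ref{tree_like_prop1}} + c_{\ref{tree_like_prop2}}$ (for a strict inequality as in the statement, note that the proof of Lemma \ref{tree_like_prop2} already gives a strict bound, or simply enlarge $c$ slightly). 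There is no real obstacle here beyond this bookkeeping — the substance lies entirely in the two preceding lemmas (and, through Lemma \ref{tree_like_prop2}, in Propositions \ref{basis_prop} and \ref{souped_basis_prop}); the only mild point of care is the passage, via the asymmetry estimate, from the distance hypothesis on projections to a hypothesis on the parameter difference $|t_X - t_Y|$.
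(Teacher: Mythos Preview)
Your proof is correct and follows essentially the same route as the paper: chain Lemma~\ref{tree_like_prop2} and Lemma~\ref{tree_like_prop1}, then set $s=\max\{s_{\ref{tree_like_prop1}},s_{\ref{tree_like_prop2}}\}$ (up to the almost-symmetry factor) and $c=c_{\ref{tree_like_prop1}}+c_{\ref{tree_like_prop2}}$. In fact your treatment is slightly more careful than the paper's, which silently identifies the hypothesis $d(\pi(Y),\pi(X))>s$ with the parameter condition $|t_X-t_Y|>s$ without invoking Theorem~\ref{almost_sym}.
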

\begin{proof}
By proposition \ref{tree_like_prop1} if $d(\pi(Y), \pi(X))
> s_{\ref{tree_like_prop1}}$ then \[ d(Y, \pi(X))> d(Y, \pi(Y)) +
d(\pi(Y),\pi(X)) - c_{\ref{tree_like_prop1}} \] By proposition
\ref{tree_like_prop2} if $d(\pi(Y),\pi(X))>
s_{\ref{tree_like_prop2}}$ then \[ d(Y,X)> d(Y, \pi(X)) -
c_{\ref{tree_like_prop2}}\] So let $s = \max \{
s_{\ref{tree_like_prop1}}, s_{\ref{tree_like_prop2}} \}$ and $c =
c_{\ref{tree_like_prop1}} + c_{\ref{tree_like_prop2}}$ then $d(Y,X)>
s$ implies \[ \begin{array}{ll}
d(Y,X) > & d(Y, \pi(X)) - c_{\ref{tree_like_prop2}} >
d(Y,\pi(Y)) + d(\pi(Y),\pi(X)) - c_{\ref{tree_like_prop1}} -
c_{\ref{tree_like_prop2}} = \\
& d(Y,\pi(Y)) + d(\pi(Y),\pi(X)) - c
\end{array} \]
\end{proof}

As a corollary we get that the projection is coarsely Lipschitz.

\begin{cor}\label{proj_Lip}
There is a constant $c$ such that for all $X,Y \in \os$: \[d(X,Y) \geq d(\pi(X),\pi(Y)) + c\]
\end{cor}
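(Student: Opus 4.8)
The plan is to read this off directly from Corollary~\ref{tree_cor}. First I would observe that Corollary~\ref{tree_cor} is valid for \emph{every} ordered pair of points, so it may be applied with the roles of $X$ and $Y$ reversed: provided $d(\pi(X),\pi(Y)) > s_{\ref{tree_cor}}$, one has
\[ d(X,Y) \;>\; d(X,\pi(X)) + d(\pi(X),\pi(Y)) - c_{\ref{tree_cor}}. \]

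Next I would argue by cases on the size of $d(\pi(X),\pi(Y))$. If $d(\pi(X),\pi(Y)) \le s_{\ref{tree_cor}}$, then since $d(X,Y)\ge 0$ the inequality $d(X,Y) \ge d(\pi(X),\pi(Y)) - s_{\ref{tree_cor}}$ holds trivially. If $d(\pi(X),\pi(Y)) > s_{\ref{tree_cor}}$, then the displayed inequality applies and, discarding the nonnegative term $d(X,\pi(X))$, gives $d(X,Y) > d(\pi(X),\pi(Y)) - c_{\ref{tree_cor}}$. Setting $c = -\max\{ s_{\ref{tree_cor}}, c_{\ref{tree_cor}} \}$ then yields $d(X,Y) \ge d(\pi(X),\pi(Y)) + c$ in all cases; concretely this asserts that $\pi_f$ is coarsely $1$-Lipschitz in the (asymmetric) Lipschitz metric, the constant $c$ being necessarily $\le 0$ (as one sees by taking $X=Y$).

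There is no real obstacle here, since this is a formal consequence of Corollary~\ref{tree_cor}; the only subtlety worth flagging is the asymmetry of $d$. The four quantities $d(X,Y)$, $d(Y,X)$, $d(\pi(X),\pi(Y))$ and $d(\pi(Y),\pi(X))$ are in general distinct, so one must be careful to feed Corollary~\ref{tree_cor} the correctly ordered pair — bounding $d(X,Y)$ in terms of $d(\pi(X),\pi(Y))$, not in terms of $d(\pi(Y),\pi(X))$ — which is exactly what the reversal of roles in the first step accomplishes; no appeal to Theorem~\ref{almost_sym} is needed.
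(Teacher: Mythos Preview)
Your argument is correct and is exactly the intended one: the paper states Corollary~\ref{proj_Lip} immediately after Corollary~\ref{tree_cor} without proof, and the deduction you give---swapping the roles of $X$ and $Y$ in Corollary~\ref{tree_cor}, dropping the nonnegative $d(X,\pi(X))$ term, and handling the case $d(\pi(X),\pi(Y))\le s_{\ref{tree_cor}}$ trivially---is precisely how it follows. Your remark about the sign of $c$ and the need to order the pair correctly in view of the asymmetry is also on point.
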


\subsection{Strongly contracting geodesics}

\begin{defn}
Let $A$ be a subset in $\U$ an asymmetric metric space. The outgoing neighborhood and the incoming neighborhood of $A$ are respectively:
\[ \begin{array}{l}
	N_\delta(A_{\rightarrow}) = \{ x \in \U \mid d(a,x)<\delta \text{ for some } a \in A \} \\
	N_\delta(A_{\leftarrow}) = \{ x \in \U \mid d(x,a)<\delta \text{ for some } a \in A \} 
	\end{array}\]
\end{defn}

\begin{definition}
Let $r>0$. The ball of outward radius $r$ centered at $x$ is \[ \rb{x}{r} = \{ y \in \os \mid d(x,y)<r \}\]
The ball of inner radius $r$ centered at $x$ is \[ \lb{x}{r} = \{ y \in \os \mid d(y,x)<r \}\]
\end{definition}

We will use the ball of outer radius to define the notion of a strongly contracting geodesic in this case. 

\begin{definition}[Strongly contracting geodesics in an asymmetric space]
Let $L$ be a directed geodesic in $\U$, and let 
$\pi_L:\U \to L$ be the closest point projection. $L$ is
$D$-\emph{strongly contracting} if for any ball $\rb{x}{r} \subseteq \U$ disjoint from $L$: $\diam(\pi_L(\rb{x}{r}))<D$.
\end{definition}

\begin{theorem}\label{strongly_cont_cor}
If $f:G \to G$ is a train-track representative of a fully irreducible
outer automorphism $\phi$, then $\LL_f$ is $D$-strongly contracting.
\end{theorem}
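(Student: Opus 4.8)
The plan is to deduce the theorem directly from Corollary \ref{tree_cor}, which already contains all of the ``negative curvature'' content; what is left is a short argument whose only subtle point is the asymmetry of $d$.

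\medskip

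First I would unwind the hypothesis. Suppose $\rb{x}{r}$ is disjoint from $\LL_f$. By definition of $\rb{x}{r}$ this means $d(x,G(t)) \geq r$ for every $t \in \RR$, and hence $d\big(x,\pi_f(x)\big) = d(x,\LL_f) \geq r$. Now fix any $y \in \rb{x}{r}$, so that $d(x,y) < r$, and pick representatives $p_x = G(t_x) \in \pi_f(x)$ and $p_y = G(t_y) \in \pi_f(y)$. The claim is that
\[ d(p_x,p_y) \;\leq\; \max\{\, s_{\ref{tree_cor}},\; c_{\ref{tree_cor}} \,\} \;=:\; A . \]
Indeed, if $d(p_x,p_y) > s_{\ref{tree_cor}}$ then Corollary \ref{tree_cor}, applied with $Y=x$ and $X=y$, gives
\[ d(x,y) \;\geq\; d\big(x,\pi_f(x)\big) + d(p_x,p_y) - c_{\ref{tree_cor}} \;\geq\; r + d(p_x,p_y) - c_{\ref{tree_cor}} , \]
and since $d(x,y) < r$ this forces $d(p_x,p_y) < c_{\ref{tree_cor}}$; either way the claim holds. (The ambiguity in choosing the representatives $p_x,p_y$ only changes things by the uniform amount $s_{\ref{coarse_proj}}$ of Proposition \ref{coarse_proj}, which we absorb into the constants.) Thus every point of $\rb{x}{r}$ projects to within forward distance $A$ of $\pi_f(x)$.

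\medskip

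To finish I would convert this into a genuine bound on $\diam\big(\pi_f(\rb{x}{r})\big)$. As observed in the proof of Lemma \ref{tree_like_prop2}, $\LL_f$ lies in $\os(\theta)$ for some $\theta = \theta(\phi)$, since it is a $\phi$-periodic directed geodesic and hence cocompact under $\langle\phi\rangle$; therefore Theorem \ref{almost_sym} applies along $\LL_f$. Writing $c$ for the constant $c(\theta,n)$ of Theorem \ref{almost_sym} and using that $\LL_f$ is parameterized by arclength, one gets $d(G(t),G(t')) \geq \tfrac{1}{c}\,|t-t'|$ for all $t,t'$: for $t<t'$ this is the equality $d(G(t),G(t'))=t'-t$, and for $t>t'$ it follows from $d(G(t'),G(t)) = t-t'$ combined with Theorem \ref{almost_sym}. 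Hence the claim above yields $|t_x-t_y|\leq cA$ for every $y\in\rb{x}{r}$, so $|t_y-t_z|\leq 2cA$ for any $y,z\in\rb{x}{r}$, and applying the same estimate once more $d(p_y,p_z)\leq 2c^2A$ for any $p_y\in\pi_f(y)$, $p_z\in\pi_f(z)$. Combining with the uniform bound $\diam(\pi_f(w))<s_{\ref{coarse_proj}}$ of Proposition \ref{coarse_proj} gives $\diam\big(\pi_f(\rb{x}{r})\big)<D$ for a constant $D = D(\phi)$ independent of $x$ and $r$, which is exactly the assertion that $\LL_f$ is $D$-strongly contracting.

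\medskip

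I do not expect a serious obstacle: all of the geometry has already been isolated in Corollary \ref{tree_cor} and, upstream of it, in Proposition \ref{basis_prop}, Proposition \ref{souped_basis_prop} and Lemma \ref{no_cut_ver_lemma}. The only point that genuinely requires care here is that $d$ is not symmetric, so the statement ``$\pi_f(x)$ is coarsely between $\pi_f(y)$ and $\pi_f(z)$ along a directed geodesic'' has to be upgraded to a two-sided diameter bound on $\pi_f(\rb{x}{r})$; this is precisely where the thick-part containment of $\LL_f$ and Theorem \ref{almost_sym} are used in the previous paragraph.
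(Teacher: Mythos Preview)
Your proof is correct and follows essentially the same route as the paper: the paper also deduces the result directly from Corollary \ref{tree_cor} by exactly the dichotomy you wrote (either $d(\pi(Y),\pi(X))\le s_{\ref{tree_cor}}$, or the inequality from Corollary \ref{tree_cor} combined with $d(Y,X)<r\le d(Y,\pi(Y))$ forces $d(\pi(Y),\pi(X))<c_{\ref{tree_cor}}$), and sets $D=\max\{s_{\ref{tree_cor}},c_{\ref{tree_cor}}\}$. The only difference is that you add a second paragraph converting the one-sided bound $d(p_x,p_y)\le A$ into a genuine two-sided diameter estimate via the thick-part containment of $\LL_f$ and Theorem \ref{almost_sym}; the paper omits this step and simply declares the diameter bounded, so your version is in fact more careful on this point.
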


\begin{proof}
It is enough to show that there exists a $D>0$ such that $\diam \{ \pi (\rb{Y}{r}) \}
< D$ for $r = d(Y, \pi(Y))$. We'll show that if $X
\in \rb{Y}{r}$ then $d(\pi(Y),\pi(X))<D$ where $D = \max \{
s_{\ref{tree_cor}}, c_{\ref{tree_cor}} \}$. If $X
\in \rb{Y}{r}$ then $d(Y, X)<r$ and by proposition \ref{tree_cor} either
$d(\pi(Y),\pi(X)) \leq s_{\ref{tree_cor}}$ or $d(Y,X)> d(Y,\pi(Y)) +
d(\pi(Y),\pi(X)) - c_{\ref{tree_cor}}$. If the latter occurs then
\[ r > r+d(\pi(Y),\pi(X))-c_{\ref{tree_cor}} \] Thus $d(\pi(Y),\pi(X))< c_{\ref{tree_cor}}$.
\end{proof}

\subsection{The Morse Lemma}

\begin{defn}
The map $\al:[0,l] \to \U$ is a directed $(k,c)$-\emph{quasi-geodesic} if for all $0 \leq t'<t \leq l$ we have 
\[ \frac{1}{k}(t-t') - c \leq d(\al(t), \al(t')) \leq k(t-t') +c\]
\end{defn}

\begin{defn}
A quasi-geodesic $\al:[0,l] \to \U$ is (m,p)-\emph{tame} if for all $0 \leq t' < t \leq l$ we have 
\[ len(\al|_{[t,t']}) \leq m(t-t') + p\]
\end{defn}

\begin{lemma}\label{taming}
For every $(k,c)$-quasi-geodesic $\al:[0,l] \to \U$ there is an $(m,p)$-tame (k',c')-quasi-geodesic $\beta:[0,l] \to \U$ with 
\begin{enumerate}
\item $\beta(0)=\al(0), \beta(l) = \al(l)$
\item $k' = k$, $c' = 2(k+c)$
\item $m = k(k+c)$, and $p = (k+c)(2k^2+2kc +3)$
\item $N_{k+c}(\im \al ) \supseteq \im \beta$ and $N_{k+c}(\im \beta) \supseteq \im \al$
\end{enumerate}
\end{lemma}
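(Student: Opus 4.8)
The plan is the standard ``discretize and interpolate'' construction, adapted to the asymmetric setting. First I would fix a subdivision $0 = t_0 < t_1 < \cdots < t_N = l$ of the domain whose consecutive gaps $t_{i+1} - t_i$ are at most $1$ and bounded away from $0$ (for instance $N = \lceil l \rceil$ and $t_i = il/N$; the case $l < 1$ is trivial, with $\beta$ a single directed geodesic from $\alpha(0)$ to $\alpha(l)$). Since $\alpha$ is a $(k,c)$-quasi-geodesic, consecutive sample points satisfy $d(\alpha(t_i),\alpha(t_{i+1})) \le k(t_{i+1}-t_i)+c \le k+c$. Let $\sigma_i\colon[t_i,t_{i+1}]\to\U$ be a directed geodesic from $\alpha(t_i)$ to $\alpha(t_{i+1})$, parametrized at constant speed (here I use that $\U$, like $\os$, is a directed geodesic space). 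Define $\beta$ by $\beta|_{[t_i,t_{i+1}]}=\sigma_i$; this is well defined because $\sigma_i(t_{i+1})=\alpha(t_{i+1})=\sigma_{i+1}(t_{i+1})$, and it immediately gives $\beta(0)=\alpha(0)$, $\beta(l)=\alpha(l)$, which is item (1).

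For the neighborhood estimate (item (4)), any point $\beta(t)$ with $t\in[t_i,t_{i+1}]$ lies on $\sigma_i$, so $d(\alpha(t_i),\beta(t))\le d(\alpha(t_i),\alpha(t_{i+1}))\le k+c$ and likewise $d(\beta(t),\alpha(t_{i+1}))\le k+c$; hence $\im\beta$ sits inside the $(k+c)$-neighborhood of $\im\alpha$. Conversely, for $t\in[t_i,t_{i+1}]$ we have $d(\beta(t_i),\alpha(t))=d(\alpha(t_i),\alpha(t))\le k(t-t_i)+c\le k+c$, so $\im\alpha$ sits inside the $(k+c)$-neighborhood of $\im\beta$. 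Because I never invoke symmetry of $d$, this survives the asymmetry; the reverse-distance versions are obtained the same way by reading the triangle inequality in the other order.

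Next I would establish tameness and the quasi-geodesic bound together, both by comparing $\beta$ to $\alpha$ through the sample points. Each $\sigma_i$ has length $d(\alpha(t_i),\alpha(t_{i+1}))\le k+c$ and is traversed over a parameter interval of length $\ge\delta_0:=\min_i(t_{i+1}-t_i)>0$, so $\beta$ has bounded speed on each piece; a subinterval $[t',t]$ meets at most $(t-t')/\delta_0+2$ of the pieces, which yields $\len(\beta|_{[t',t]})\le m(t-t')+p$ of the asserted affine form, the additive term $p$ absorbing the two possibly-partial terminal pieces. For the quasi-geodesic inequalities, given $t'\le t$ with $t'\in[t_i,t_{i+1}]$, $t\in[t_j,t_{j+1}]$ ($i<j$; the case $i=j$ is trivial since then $t-t'\le 1$), I would bracket $d(\beta(t'),\beta(t))$ above by $d(\beta(t'),\alpha(t_{i+1}))+d(\alpha(t_{i+1}),\alpha(t_j))+d(\alpha(t_j),\beta(t))$ and below by $d(\alpha(t_i),\alpha(t_{j+1}))-d(\alpha(t_i),\beta(t'))-d(\beta(t),\alpha(t_{j+1}))$, plug in the $(k,c)$-inequality for $\alpha$ at the sample points together with the $\le k+c$ bounds from the previous paragraph, and relate $t_{j+1}-t_i$ and $t_j-t_{i+1}$ to $t-t'$ (they differ by at most $2$ since the gaps are $\le 1$). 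Carried out carefully this gives multiplicative constant $k$ on both sides and additive constant $c'=2(k+c)$.

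The only genuinely delicate point is the bookkeeping forced by the asymmetry of $d$ and by the fact that a discretization with $t_0=0$ and $t_N=l$ cannot in general have all its gaps equal: every application of the triangle inequality must chain the three distances in a direction consistent with the orientation of $\beta$, and one must control the speed of $\beta$ on the short terminal piece, which is precisely what inflates $m$ and $p$ to the stated values. No Morse-type hypothesis on $\U$ is used — the construction is purely metric and works in any directed geodesic space.
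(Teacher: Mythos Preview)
Your proposal is correct and follows exactly the standard ``discretize and interpolate'' argument of Bridson--Haefliger that the paper itself invokes; indeed the paper omits the proof entirely, referring to \cite{BrHa} and remarking that the asymmetric case goes through unchanged. One small point of care: the paper's definition of a directed $(k,c)$-quasi-geodesic bounds $d(\alpha(t),\alpha(t'))$ for $t'<t$ (i.e.\ distance from the later to the earlier point), so when you chain triangle inequalities you should make sure the directions match that convention---but this is purely cosmetic and does not affect the argument.
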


The proof of this Lemma \ref{taming} for a symmetric metric space can be found in \cite{BrHa}. The proof for a nonsymmetric space is the same hence we omit it. 

\begin{defn}
A point $x \in \U$ is \emph{high} if there exists a constant $A$ such that $d(x,y) \leq Ad(y,x)$ for all $y \in \U$. A set $S \in \U$ is high if there are constants $A$ so that for all $x\in S$ and $y \in \U$ such that $d(x,y) \leq Ad(y,x)$
\end{defn}

We recall the definition of Hausdorff distance 
\begin{defn}
Let $S,T \subset \U$ be closed. Define the Hausdorff distance 
\[ d_{\text{Haus}}(S,T) = \inf \{ \eps \mid S \subseteq N_\eps(T_\ra) \text{ and }T \subseteq N_\eps(S_\ra) \}\]
\end{defn}

\begin{ML}\label{morse_lemma} If $L$ is a directed, $A$-high, $D$-strongly contracting geodesic in $\U$ and 
  and $\al$ is an $(a,b)$-quasi-geodesic with endpoints on $L$ then
  there exists a constant $C$, depending only on $A,D,a$ and $b$, such that
  $d_{\text{Haus}}(\im L,\im \al)<C$.
\end{ML}

\begin{remark}\label{alm_qg_rmk}
In fact, we only need $\mathcal{\al}$ to satisfy: $len(\al|_{[t',t]})
< a[ d(\al(t'),\al(t)) ]+b$ for the corollary above to hold true. 
\end{remark}

\begin{proof}[Proof of the Morse Lemma]
We may assume that $\al$ is an $(a,b)$-tame quasi-geodesic. 
We fix the following notation. Let $c = \max \{a,b,1\}$, $R = \max \{ d(\al(t), \im L) | t \in [0,l] \}$ and suppose $R>cD$.
Let $[s_1,s_2]$ be a maximum subinterval
such that for every $s \in [s_1, s_2]$: $d(\al(s), \im L) \geq cD$.
Subdivide $[s_1,s_2]$ into: $s_1=r_1, \dots, r_m, r_{m+1}=s_2$ where
$d(\al(r_{i}),\al(r_{i+1}))=2cD$ for $i \leq m$ and
$d(\al(r_{m}),\al(r_{m+1})) \leq 2cD$. Thus:
\begin{equation}\label{qg_eq1}
\text{len}(\al|_{[s_1,s_2]}) \geq \sum_{i=1}^{m+1}
d(\al(r_i),\al(r_{i+1})) \geq 2cDm
\end{equation} 
On the other hand, let $p_i =
\pi_L(\al(r_i))$. Since $d(\al(r_i),p_i) \geq cD$, and since $L$ is $D$-strongly contracting we get
\[ d(p_1,p_{m+1}) \leq D (m+1)\]
So $d(\al(r_1),\al(r_{m+1})) \leq cD + (m+1)D + AcD$ where $L$ is $A$-high. Therefore,
\begin{equation}\label{qg_eq2} 
\text{len}( \al|_{[s_1,s_2]}) \leq c d(\al(s_1),\al(s_2))+c \leq c \bigl( cD + (m+1)D  + A cD\bigr) +c 
\end{equation}
Combining the inequalities \ref{qg_eq1} and \ref{qg_eq2} we get:
\[ 2mcD \leq   c^2 D + (m+1)cD  + A c^2 D + c \]
After some manipulation we get: $m \leq c +c \cdot A+ \frac{1}{D}+1 = K$. \\

Hence $len(\al|_{[s_1,s_2]}) \leq m 2cD < 2KcD$. Thus for each $s \in [s_1,s_2]$: 
\[ 
d(\al(s),\im L)  <  d(\al(s),\al(s_2)) + d(\al(s_2),\im L) \leq 
len(\al|_{[s_1,s_2]})  + cD <  2KcD + cD 
\]
Since $\im L$ is $A$-high we get that there is a constant $C$ so that $d_{\text{Haus}}(\im L, \im \al) < C$
\end{proof}

Since $\LL_f$ is periodic there is an $\eps$ so that $\im \LL_f \subseteq \os^{\geq \eps}$. By Theorem \ref{almost_sym} the set $\LL_f$ is $A$-high. Thus applying the Morse Lemma we get

\begin{theorem}\label{morse_axis}
$\LL_f$ is a Morse geodesic: 
For any $(a,b)$-quasi-geodesic $\Q$ with endpoints on $\LL_f$ there is a $C$ that depends only on $a,b,\eps$ and $D_{\ref{strongly_cont_cor}}$ so that $d_{\text{Haus}}(\im \LL_f, \im \Q) < C$.
\end{theorem}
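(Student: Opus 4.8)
The plan is to recognize Theorem \ref{morse_axis} as essentially a corollary of two things already in hand: the strong contraction of $\LL_f$ (Theorem \ref{strongly_cont_cor}) and the asymmetric Morse Lemma \ref{morse_lemma}. So the whole proof amounts to checking that $\LL_f$, viewed as a directed geodesic in $\U = \os$, satisfies the three hypotheses of the Morse Lemma — being a directed geodesic, being $A$-high, and being $D$-strongly contracting — and then invoking that lemma, keeping track of what the output constant $C$ depends on.

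Concretely, I would proceed as follows. First, $\LL_f$ is a directed geodesic parametrized by arc length and invariant under $\phi$ by its construction in Section 3; in particular $\phi$ acts cocompactly on $\LL_f$, so $\LL_f$ is periodic. Second, Theorem \ref{strongly_cont_cor} says $\LL_f$ is $D$-strongly contracting with $D = D_{\ref{strongly_cont_cor}}$, a constant depending only on $\phi$. Third, to see that $\LL_f$ is $A$-high: by periodicity a fundamental domain for the $\phi$-action is compact, so the systole of $G(t)$ is bounded below uniformly in $t$, giving an $\eps = \eps(\phi) > 0$ with $\im \LL_f \subseteq \os(\eps)$ (the $\eps$-thick part). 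Then for $x \in \LL_f$ and $y \in \os$ I want $d(x,y) \leq A\, d(y,x)$: when $y \in \os(\eps)$ this is exactly Theorem \ref{almost_sym} with $A = c(\eps,n)$, and the remaining case of $\eps$-thin $y$ is handled by the one-sided form of the estimates of \cite{AKB} (bounding $d(x,y)$ in terms of $d(y,x)$ when only the source $x$ is thick). This makes $\LL_f$ $A$-high with $A$ depending only on $\eps$ and $n$. With all three hypotheses verified, I would apply the Morse Lemma \ref{morse_lemma} to conclude that any $(a,b)$-quasi-geodesic $\Q$ with endpoints on $\LL_f$ satisfies $d_{\text{Haus}}(\im \LL_f, \im \Q) < C$ for a constant $C = C(A,D,a,b)$; tracing the dependencies, $C$ depends only on $a, b, \eps$ and $D_{\ref{strongly_cont_cor}}$ ($n$ being fixed throughout).

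The main — and really the only — obstacle is the $A$-high step, specifically making sure the asymmetry comparison applies when the target point $y$ is $\eps$-thin rather than in the thick part; once that is dispatched (it is exactly the content of the two-sentence preamble to the statement), the rest is bookkeeping together with a direct citation of the Morse Lemma, which has already been proven for asymmetric metric spaces in the preceding subsection. Everything else — that $\LL_f$ is a directed geodesic, that it is strongly contracting, and the tracking of constants — is immediate from the results cited above.
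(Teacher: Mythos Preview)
Your proposal is correct and follows exactly the paper's approach: the paper's proof is just the two sentences preceding the theorem, noting that periodicity of $\LL_f$ puts it in some $\os(\eps)$, invoking Theorem \ref{almost_sym} to get $A$-highness, and then applying the Morse Lemma together with Theorem \ref{strongly_cont_cor}. If anything, you are more careful than the paper on the one point you flag --- that Theorem \ref{almost_sym} as stated requires \emph{both} points to be thick, whereas $A$-high demands the inequality for all $y$ --- which the paper glosses over but which (as you say) is handled by the one-sided version of the estimate in \cite{AKB}.
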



\section{Applications}

\subsection{Morse Geodesics in the Cayley Graph of $\out$}\label{MorseSection}

Let $\cayout$ be the Cayley graph of $\out$ with the generating set of Whitehead automorphisms  $\{ \phi_i \}_{i=1}^{N}$, i.e. it's vertices $V(\cayout)$ are the elements of  $\out$ and $\psi_1,\psi_2 \in \out$ are connected by an edge if there is a Whitehead generator $\phi_i$ so that $\psi_1 = \phi_i \circ \psi_2$ (we want $\out$ to act on the right). Let $\phi$ be a fully
irreducible outer automorphism. Let $f:G \to G$ be a stable 
train-track representative for $\phi$. Choose an embedding $\iota: \cayout \into
\os$ as follows. Let $L$ be the axis for $\phi$ in $\cayout$. Choose some
vertex $\psi \in L$ and define $\iota(\psi)=G$. Extend $\iota$ to the
vertices of $\cayout$ equivariantly and to the edges of $\cayout$ by mapping them onto some
geodesic between the images of their endpoints. \\

Let $M = \max \{
d_{\os}(\iota(\text{id}), \iota(\phi_i)) \mid \phi_i \text{ is a
generator}\}$ then for the vertices of $\cayout$ we have: 
\[ d_{\os}(\iota(\psi_1), \iota(\psi_2)) \leq M \cdot d_{\cayout}(\psi_1,
   \psi_2) \]
For other points in $\cayout$ a similar inequality holds (by adding $2M$). The reverse inequality is false.

\begin{example}
  $\psi_1 = \textup{id}$ and $\psi_2 =
  \left\{ \begin{array}{l}
      x \to x \\
      y \to xy^m
    \end{array} \right.$
Suppose $\iota(\psi_1) = R$ is a bouquet of $2$ circles each of length $\frac{1}{2}$ with the identity marking. Then  $d_{\os}(\iota(\psi_1), \iota(\psi_2)) = d_{\os}(R, R \cdot \psi_2) = \log( \frac{(m+1)/2}{1/2}) = \log(m+1)$, while $d_{\cayout}(\psi_1,\psi_2) = m$.
\end{example}

However, for points on $L$ (the axis for $\phi$) distances in $\os$
coarsely correspond to distances in $\cayout$. 

 Even though distances in $\cayout$ are larger than their images in $\os$ they cannot be arbitrarily larger, as the next lemma shows. 

\begin{lemma}\label{bdd_dist_in_cayley}
For every $a>0$ there is a $b>0$ such that: If $d_{\os}(\iota(\psi),
\iota(\chi))<a$ then $d_{\cayout}(\psi,\chi)<b$. 
\end{lemma}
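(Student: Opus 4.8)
The plan is to reduce the statement to the (essentially well-known, but I will reprove it) fact that the orbit map of the right $\out$-action on $\os$ is proper, in the explicit form:
\[ F_a := \{\, \eta \in \out \mid d_{\os}(G, G\cdot\eta) < a \,\} \text{ is finite for every } a > 0. \]
Granting this, the rest is bookkeeping with the two (left and right) actions. Let $\psi_0$ be the vertex of $\cayout$ chosen with $\iota(\psi_0) = G$. It suffices to prove the estimate when $\psi,\chi$ are \emph{vertices} of $\cayout$: a general point lies at $\cayout$-distance $\le 1$ from a vertex, and replacing the endpoints by nearby vertices changes $d_{\os}(\iota(\psi),\iota(\chi))$ by at most $2M$ and $d_{\cayout}(\psi,\chi)$ by at most $2$, exactly as in the paragraph preceding the statement. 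So let $\psi,\chi$ be vertices with $d_{\os}(\iota(\psi),\iota(\chi)) < a$. Equivariance of $\iota$ gives $\iota(\psi) = G\cdot(\psi_0^{-1}\psi)$ and $\iota(\chi) = G\cdot(\psi_0^{-1}\chi)$; writing $u = \psi_0^{-1}\psi$ and $v = \psi_0^{-1}\chi$, and using that $\out$ acts on $\os$ by isometries,
\[ d_{\os}\bigl(G,\; G\cdot (v u^{-1})\bigr) = d_{\os}\bigl(G\cdot u,\; G\cdot v\bigr) = d_{\os}(\iota(\psi),\iota(\chi)) < a, \]
so $vu^{-1} \in F_a$. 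Since $vu^{-1} = \psi_0^{-1}(\chi\psi^{-1})\psi_0$, this says $\chi\psi^{-1} \in \psi_0 F_a \psi_0^{-1}$, a finite set. As the word metric on $\cayout$ is right-invariant, $d_{\cayout}(\psi,\chi) = d_{\cayout}(\textup{id},\, \chi\psi^{-1}) \le \max\{\, d_{\cayout}(\textup{id}, w) \mid w \in \psi_0 F_a \psi_0^{-1}\,\}$, a finite quantity depending only on $a$ (and on $\phi$, through $G$ and $\psi_0$). This is the desired $b$ (add $2$ to absorb the vertex reduction).

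It remains to see that $F_a$ is finite, and here the point is that the $\out$-action does not alter the underlying metric graph: if $G = (X, f, \ell)$ then $G\cdot\eta = (X, f\circ g_\eta, \ell)$, where $g_\eta$ is a self-map of the rose inducing $\eta$. Pass to the representative of $G$ in which every edge has positive length, and let $\delta_0 > 0$ be its shortest edge length. Let $\eta \in F_a$. By definition of $d_{\os}$ there is a difference of markings $h \colon X \to X$ --- that is, $h\circ f \simeq f\circ g_\eta$ --- with $\log \Lip(h) < a$, and, as observed just after the definition of $d$, we may take $h$ linear on edges without increasing $\Lip(h)$. Then for each edge $e$ of $X$ the tightened image $[h(e)]$ is a reduced edge path in $X$ of length at most $\Lip(h)\,\ell(e) < e^{a}$, hence traversing fewer than $e^{a}/\delta_0$ edges. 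There are only finitely many reduced edge paths in $X$ of bounded combinatorial length, and a map linear on edges is determined up to homotopy by the finite collection $\{[h(e)]\}_{e}$ (together with the vertex images, which are pinned down as endpoints of these paths, since $h$, being a homotopy equivalence, cannot collapse enough of $X$ to unpin them). Hence $h$ ranges over finitely many homotopy classes as $\eta$ ranges over $F_a$. Finally $\eta \mapsto [h]$ is injective on $F_a$: if $h \simeq h'$ then $f\circ g_\eta \simeq h\circ f \simeq h'\circ f \simeq f \circ g_{\eta'}$, and cancelling the homotopy equivalence $f$ gives $g_\eta \simeq g_{\eta'}$, i.e. $\eta = \eta'$ in $\out$. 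So $F_a$ is finite.

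The only real content is the finiteness of $F_a$ --- equivalently, the properness of the orbit map $\eta \mapsto G\cdot\eta$ --- and the mild subtlety there is that it does \emph{not} follow formally from discreteness of the orbit, because closed balls for the Lipschitz metric are not compact ($\os$ is not locally compact). What rescues the argument is precisely that all the points $G\cdot\eta$ share one fixed underlying finite metric graph, on which a self-map of bounded Lipschitz constant has only finitely many homotopy types. Everything else is routine bookkeeping with the left/right conventions.
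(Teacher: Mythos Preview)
Your proof is correct and follows the same overall strategy as the paper's: use equivariance of $\iota$ and right-invariance of the word metric to reduce to showing that only finitely many orbit points lie within $d_{\os}$-distance $a$ of the basepoint, then take $b$ to be the maximum word length over that finite set. The paper's version is much terser---it simply asserts that $\iota(V(\cayout))$ is discrete and concludes that $\{\psi \mid d_{\os}(\iota(\id),\iota(\psi)) < a\}$ is finite, without further comment.

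You correctly flag the subtlety the paper glosses over: discreteness alone does not give finiteness in a bounded set, because outgoing Lipschitz balls $\rb{x}{a}$ need not have compact closure. Your direct combinatorial argument (only finitely many homotopy classes of self-maps of a fixed finite metric graph with Lipschitz constant below $e^a$) is a clean and self-contained way to close this gap. An alternative closer to the paper's spirit is to observe that every orbit point $G\cdot\eta$ has the same underlying metric graph as $G$, hence lies in a fixed thick part $\os(\theta)$; Theorem~\ref{almost_sym} then converts the outgoing bound into an incoming one, placing all such points in a ball $\lb{\iota(\id)}{ca}$, whose closure \emph{is} compact (as the paper itself notes later in Section~6.4), and finiteness follows from proper discontinuity of the action. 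Either route works; yours is more elementary in that it avoids invoking the almost-symmetry theorem.
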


\begin{proof}
By equivariance, $\iota(V(\cayout))$ is discrete. Thus the set $\mathcal{A} = \{ \psi \mid d_{\os}( \iota(\id) , \iota(\psi) ) < a \}$ is finite. Let $b = \max \{
d_{\cayout}( \id, \psi ) \mid \psi \in \mathcal{A} \}$. Suppose 
$d_{\os}( \iota(\psi), \iota(\chi) )<a$ then $d_{\os}( \iota(\id),
\iota(\chi \psi^{-1}) )<a$, so $d_{\cayout}(\id, \chi \psi^{-1})<b$ and
$d_{\cayout}(\psi, \chi)<b$
\end{proof}

\begin{theorem}\label{Morse geod in Cayley}
If $L$ is the axis in $\cayout$ of a fully irreducible automorphism then $L$  is a Morse geodesic in $\cayout$.
\end{theorem}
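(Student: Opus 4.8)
The plan is to transport the Morse property from $\os$ to $\cayout$ using the quasi-isometric embedding $\iota$ restricted to $L$, together with Theorem \ref{morse_axis}. First I would record that $\iota|_L: L \to \LL_f$ is a quasi-isometric embedding: the upper bound $d_{\os}(\iota(\psi_1),\iota(\psi_2)) \leq M d_{\cayout}(\psi_1,\psi_2)$ holds for all vertices, and for vertices on $L$ the reverse coarse inequality holds because the $\phi$-orbit points $G \cdot \phi^m = \iota(\phi^m \psi)$ lie on the directed geodesic $\LL_f$ with $d_{\os}(G,G\cdot\phi) = \log\lam$, so $d_{\os}(\iota(\phi^j\psi),\iota(\phi^k\psi)) = (k-j)\log\lam$ while $d_{\cayout}$ between the same vertices is linear in $k-j$ (the translation length of $\phi$ in $\cayout$ is positive and finite). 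Hence $\iota|_L$ is a $(\kappa_0,\kappa_0)$-quasi-isometry onto $\LL_f$ for some constant $\kappa_0$ depending on $\phi$ and the generating set.

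Next, let $\Q: [0,l] \to \cayout$ be an $(a,b)$-quasi-geodesic with endpoints on $L$, say $\Q(0) = \psi_0$, $\Q(l) = \psi_1 \in L$. Compose with $\iota$ to get $\iota \circ \Q$. Because $\iota$ is $M$-Lipschitz on vertices and changes lengths of edges by at most $2M$, the image $\iota \circ \Q$ is (the image of) an $(a',b')$-quasi-geodesic in $\os$ — or, more precisely, an ``almost quasi-geodesic'' in the sense of Remark \ref{alm_qg_rmk}, which is all the Morse Lemma needs: $\mathrm{len}((\iota\Q)|_{[t',t]}) \leq a'\,d(\iota\Q(t'),\iota\Q(t)) + b'$ with $a',b'$ depending only on $a,b,M$. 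Its endpoints $\iota(\psi_0) = G$ and $\iota(\psi_1)$ lie on $\LL_f$. By Theorem \ref{morse_axis}, $d_{\mathrm{Haus}}(\im \LL_f, \im(\iota\circ\Q)) < C$ for a constant $C = C(a',b',\eps,D)$.

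Now I would pull this back. Given any point $q = \Q(t)$ on the quasi-geodesic, $\iota(q)$ lies within $C$ of $\LL_f$, and since $\LL_f = \iota(L)$ coarsely (every point of $\LL_f$ is within $\log\lam$ of some $\iota(\psi)$ with $\psi \in L$), there is a vertex $\psi \in L$ with $d_{\os}(\iota(q),\iota(\psi))$ and $d_{\os}(\iota(\psi),\iota(q))$ both bounded by $C + \log\lam + \text{(asymmetry constant)}$; here I use that $\LL_f$ lies in the $\eps$-thick part so Theorem \ref{almost_sym} controls the asymmetry. By Lemma \ref{bdd_dist_in_cayley} applied to this bound, $d_{\cayout}(q,\psi)$ is bounded by some $b_0$. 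Conversely, for any vertex $\psi$ on $L$ between the endpoints, $\iota(\psi) \in \LL_f$ lies within $C$ of $\im(\iota\circ\Q)$, so $\iota(\psi)$ is within a bounded $\os$-distance of some $\iota(\Q(t))$ (using that $\iota\circ\Q$ only stretches edges boundedly), and Lemma \ref{bdd_dist_in_cayley} again gives a vertex $\Q(t')$ of $\cayout$ within bounded $\cayout$-distance. This shows $d_{\mathrm{Haus}}(\im L, \im \Q)$ is bounded in $\cayout$ by a constant depending only on $a,b$ and $\phi$, which is exactly the Morse property.

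The main obstacle I anticipate is bookkeeping the asymmetry: $\os$ and $\cayout$ are both non-symmetric, so every time I invoke Lemma \ref{bdd_dist_in_cayley} (which is stated as $d_{\os}(\iota(\psi),\iota(\chi)) < a \Rightarrow d_{\cayout}(\psi,\chi) < b$, in that direction) I must make sure I have an $\os$-distance bound in the correct orientation, and the Hausdorff distance $d_{\mathrm{Haus}}$ is itself defined via outgoing neighborhoods. The clean way around this is to note that all the relevant points — $\LL_f$, $\iota(L)$, and (after taming) $\im(\iota\circ\Q)$, which stays in a bounded neighborhood of $\LL_f$ — lie in a fixed thick part $\os^{\geq \eps'}$, so Theorem \ref{almost_sym} converts any distance bound into a bound in the opposite direction at the cost of a multiplicative constant $c(\eps')$. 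Once one fixes such a thick part at the outset, the two directions become interchangeable up to constants and the argument above goes through.
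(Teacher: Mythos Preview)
Your proposal is correct and follows essentially the same route as the paper: push the quasi-geodesic into $\os$ via $\iota$, invoke the Morse property of $\LL_f$ through Remark \ref{alm_qg_rmk}, then pull the Hausdorff bound back to $\cayout$ using Lemma \ref{bdd_dist_in_cayley}. One small correction: $\cayout$ carries the word metric, which is symmetric, so your asymmetry bookkeeping is only needed on the $\os$ side (where, as you say, staying in the thick part and Theorem \ref{almost_sym} handle it).
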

\begin{proof}
Let $\al$ be an $(a,b)$-quasi geodesic in $\cayout$ with endpoints on $L$. We may assume that $\al$ is tame. Consider $\Q = \iota \circ \al$, then $\text{len}_{\os}\Q|_{[t,t']} \leq M \cdot \text{len}_{\cayout}\al|_{[t,t']} \leq  
M a (t-t')+M b$. By remark \ref{alm_qg_rmk}, there exists a $d$, depending only on $a,b,M,D$ and $\eps$ (where $\LL_f$ is in the $\eps$ thick part of $\os$) such that $d_{\text{Haus}}(\im \Q, \im \LL_f) < d$. By lemma
\ref{bdd_dist_in_cayley} we have $d_{Haus}(\im L, \im \al)< D$ for some $D$ depending only on $d$. 
\end{proof}

\subsection{The Asymptotic Cone of $\cayout$}

\begin{defn}
A nonprincipal maximal ultrafilter $\omega$ on the integers is a nonempty
collection of subsets of $\ZZ$ so that:
\begin{itemize}
	\item $\omega$ is closed under inclusion 
	\item $\omega$ is closed under finite intersection
	\item $\omega$ does not contain any finite sets 
	\item $A \subset \ZZ$, if $A \notin \omega$ then $\ZZ \setminus
A \in \omega$
\end{itemize}
\end{defn}

\begin{defn}
Let $\omega$ be a nonprinciple maximal ultrafilter on the integers. Let $(X_i, x_i, d_i)$ be a sequence of based metric spaces. Define the following pseudo-distance on $\prod_{i \in \NN} X_i$:
\[ d_\omega( \{ a_i \},  \{b_i \}) =  \displaystyle \lim_\omega d_{X_i}(a_i, b_i)\]
The ultralimit of $(X_i,x_i)$ is then 
\[ \displaystyle \lim_\omega (X_i, x_i, d_i) =  \{ y \in \prod_{i \in \NN}X_i \mid d_\omega (y, \{x_i\}) < \infty \}/ \sim \]
where $y \sim y'$ if $d_\omega(y,y') = 0$. 
\end{defn}

Consider a space $X$, a point $x \in X$ and a sequence of  integers $k_i$ such that $\displaystyle \lim_{i \to \infty} k_i = \infty$. 
\begin{defn}
The asymptotic cone of $(X, x, \{ k_i \})$ relative to the ultrafilter $\omega$ is:
\[ \Cone_\omega(X, x, k_i) = \displaystyle \lim_\omega \left( X,x,\frac{1}{k_i}d_X(\cdot, \cdot ) \right) \]
\end{defn}

The asymptotic cone of a geodesic metric space is a geodesic metric space. 

We recall the following definition made in \cite{DS}.

\begin{definition}
Let $W$ be a complete metric space and let $\mathcal{P}$ be a collection of  closed geodesic subsets (called pieces). The space $W$ is said to be tree-graded with respect to $\mathcal{P}$ if the following properties are satisfied:
\begin{enumerate}
\item The intersection of two pieces is either empty of a single point.
\item Every simple geodesic triangle in $X$ is contained in one piece.
\end{enumerate}
The arcs starting in a given point $w \in W$ intersecting each piece in at most one point compose a real tree called \emph{a transversal tree}.
\end{definition}

In particular, if $p$ is in a transversal tree then $p$ is a cut point of $W$. 

\begin{theorem}[Proposition 3.24 in \cite{DMS}]
Let $X$ be a metric space and let $\mathfrak{q}$ be a quasi-geodesic. The following are equivalent:
\begin{itemize}
\item The image of $\mathfrak{q}$ in every asymptotic cone of $X$ is either empty or contained in a transversal tree of $X$ for some tree graded structure. 
\item $\mathfrak{q}$ is a Morse quasi-geodesic.
\end{itemize}
\end{theorem}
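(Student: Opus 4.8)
The plan is to prove the two implications separately, using the characterization of tree-graded spaces from \cite{DS} together with a rescaling argument, and the convenient reformulation of the Morse condition noted in Remark \ref{alm_qg_rmk}: a quasi-geodesic $\mathfrak{q}$ in $X$ is Morse if and only if for every $a,b$ there is a $C$ so that any path $\sigma$ in $X$ with $len(\sigma|_{[t',t]}) \le a\, d(\sigma(t'),\sigma(t)) + b$ and endpoints on $\mathfrak{q}$ satisfies $d_{\text{Haus}}(\im\sigma, \im\mathfrak{q}) < C$ near those endpoints. This weak form is what survives passage to an ultralimit, since a rectifiable path in an asymptotic cone $\Cone_\omega(X)$ is, after rescaling, an ultralimit of broken geodesics in $X$ whose total length is controlled by the cone-distance between the endpoints; i.e.\ precisely a family of such ``length-tame'' paths.

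\textbf{Morse $\Rightarrow$ transversal tree.} Suppose $\mathfrak{q}$ is Morse and let $q_\omega$ be its image in an asymptotic cone $\mathcal{C}=\Cone_\omega(X)$; if $\mathfrak{q}$ is bounded on the relevant scale its image is a point and the statement is trivial, so assume $q_\omega$ is a bi-infinite geodesic line. I would first show that every point $p$ of $q_\omega$ that is not an endpoint is a global cut point of $\mathcal{C}$ separating the two sides of $q_\omega$: otherwise there is a path in $\mathcal{C}$ joining a point of $q_\omega$ on one side of $p$ to one on the other side while avoiding a ball of definite radius around $p$, and realizing this path as an ultralimit of length-tame paths in $X$ contradicts the weak Morse property at a definite scale. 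Then I would invoke the Drutu--Sapir structure theorem from \cite{DS}: a complete geodesic metric space is tree-graded with respect to the collection of its maximal connected subsets without cut points, and in such a structure the cut points are exactly the points lying on transversal trees (outside the pieces). Since every interior point of $q_\omega$ is a cut point of $\mathcal{C}$ and $q_\omega$ is connected, $q_\omega$ lies in the union of the transversal trees; being a geodesic it can meet at most one of them, so $q_\omega$ is contained in a single transversal tree.

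\textbf{Not Morse $\Rightarrow$ not in a transversal tree.} If $\mathfrak{q}$ fails to be Morse, then for some $a,b$ there are length-tame paths $\sigma_i$ with endpoints on $\mathfrak{q}$ whose Hausdorff distance to the subpath of $\mathfrak{q}$ between their endpoints tends to infinity. Rescaling by these distances and taking an ultralimit produces an asymptotic cone $\mathcal{C}$, two points $x,y$ on $q_\omega$, and a path in $\mathcal{C}$ from $x$ to $y$ that leaves the arc of $q_\omega$ between them; hence some interior point of that arc fails to be a cut point of $\mathcal{C}$. But every interior point of an arc inside a transversal tree is a cut point of the ambient tree-graded space, so $q_\omega$ cannot be contained in any transversal tree of any tree-graded structure on $\mathcal{C}$. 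This contradicts the hypothesis of the first bullet, completing the equivalence.

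\textbf{Main obstacle.} The delicate part is making precise the correspondence between rectifiable paths in $\Cone_\omega(X)$ and length-tame sequences of paths in $X$, with uniform control of constants in both directions, and then verifying that the resulting cut-point decomposition actually satisfies the axioms of a tree-graded structure (empty-or-point intersections of pieces, simple triangles inside a single piece). This last verification is where the completeness of the asymptotic cone and the elementary structural lemmas of \cite{DS} are essential, and it is carried out in \cite{DMS}; everything else is a fairly routine ultralimit argument once the weak Morse reformulation is in hand.
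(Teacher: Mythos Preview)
The paper does not prove this statement at all: it is quoted as Proposition~3.24 of \cite{DMS} and used as a black box to deduce the corollary about $\Cone_\omega\cayout$. So there is no ``paper's own proof'' to compare your attempt against.

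That said, your sketch is a reasonable outline of how the result is actually proved in \cite{DMS}. Two remarks. First, your invocation of Remark~\ref{alm_qg_rmk} overstates what that remark says: it does not give a general equivalence ``Morse $\Leftrightarrow$ stability under length-tame paths'', but only observes that in the proof of the Morse Lemma for a strongly contracting geodesic the weaker hypothesis on $\alpha$ suffices. The equivalence you want is true, but it needs its own (short) argument, essentially by replacing a quasi-geodesic by a tame one as in Lemma~\ref{taming}. Second, you correctly flag the genuine content: going from ``every interior point of $q_\omega$ is a cut point'' to ``$q_\omega$ lies in a single transversal tree for some tree-graded structure'' requires the structural theory of \cite{DS} (existence of the canonical tree-graded decomposition with pieces the maximal cut-point-free subsets, and the fact that an arc all of whose points are cut points lies in a transversal tree). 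Your sketch gestures at this but does not carry it out; since the paper is content to cite \cite{DMS}, that is consistent with the paper's treatment.
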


We get the following corollary from Theorem \ref{Morse geod in Cayley}.

\begin{cor}
The image of an axis of an irreducible automorphism in $\Cone_\omega \cayout$ is either empty or is contained in a transversal tree for some tree graded structure of $\Cone_\omega \cayout$.
\end{cor}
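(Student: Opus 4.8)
The plan is to deduce this directly from Theorem~\ref{Morse geod in Cayley} together with the quoted characterization of Morse quasi-geodesics (Proposition 3.24 in \cite{DMS}); the corollary is essentially a one-line consequence once those two inputs are in hand.

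First I would fix a fully irreducible $\phi$ and let $L$ be its axis in $\cayout$. By Theorem~\ref{Morse geod in Cayley}, $L$ is a Morse geodesic: for every pair of constants $a,b$ there is a single $C$ so that any $(a,b)$-quasi-geodesic of $\cayout$ with endpoints on $L$ lies in the $C$-neighbourhood of $L$ (and conversely $L$ near it). Since a geodesic is in particular a $(1,0)$-quasi-geodesic, $L$ is a Morse \emph{quasi}-geodesic in the sense of \cite{DMS}; here one only needs to observe that the two formulations of ``Morse'' agree, which is immediate from the definitions — the condition used in \cite{DMS} is exactly the Hausdorff-closeness bound recorded in Theorem~\ref{morse_axis}, with the quasi-geodesic constants of the subsegments allowed to vary and the bound depending only on those constants.

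Second, I would apply the implication ``Morse quasi-geodesic $\Rightarrow$ image in every asymptotic cone is empty or contained in a transversal tree of some tree-graded structure'' of the cited theorem, taking $X=\cayout$ and $\mathfrak{q}=L$. This yields precisely the statement: for every non-principal maximal ultrafilter $\omega$ and every scaling sequence $\{k_i\}$, the image of $L$ in $\Cone_\omega\cayout$ is either empty or contained in a transversal tree for some tree-graded structure on $\Cone_\omega\cayout$. (A minor wording point: the corollary says ``irreducible''; what is actually used, and all that is available, is \emph{fully} irreducible, since the axis and its Morse property were established only under that hypothesis.)

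There is no substantive obstacle in this last step — the entire weight of the argument sits in Theorem~\ref{Morse geod in Cayley}, which in turn rests on the strong-contraction property of $\LL_f$ (Theorem~\ref{strongly_cont_cor}) pushed through the embedding $\iota:\cayout\into\os$ and the Morse Lemma. The only care required is the routine bookkeeping identification of the two ``Morse'' conditions mentioned above and the (already established) comparison of distances along $L$ in $\cayout$ with distances in $\os$; both are handled by the lemmas preceding Theorem~\ref{Morse geod in Cayley}.
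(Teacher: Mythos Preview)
Your proposal is correct and matches the paper's approach exactly: the paper simply states ``We get the following corollary from Theorem~\ref{Morse geod in Cayley}'' and provides no further argument, so the deduction is precisely the combination of Theorem~\ref{Morse geod in Cayley} with the quoted DMS characterization that you spell out. Your parenthetical remark that ``fully irreducible'' is what is actually needed (and used throughout) is also apt.
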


\begin{remark}
It is tempting to try to define the asymptotic cone of Outer Space itself. One would like to conclude that the cone is itself an asymmetric metric space. We choose a basepoint $x_0 \in \os$ and define:
\[ \displaystyle \lim_\omega (\os,\{ x_0 \}, d_\text{Lip}) =  \left\{ \left. \{ y_i \} \in \prod_{i \in \NN}\os ~ \right| ~ d_\omega (\{ y_i \}, \{x_0\}) < \infty \right\} \]
Note that by the asymmetry theorem $d_\omega (\{x_0\}, \{ y_i \})< \infty$ but now it does not make sense to mod out by the equivalence relation $ y \sim z$ if $d(y,z) = 0$ because $d(z,y)$ might be positive. For example, if we choose $y_i$ so that $d_\text{Lip}(y_i, x_0) = i$ and $d_\text{Lip}(x_0, y_i)$ is bounded then $d(\{ y_i\}, \{ x_0\}) = 1$ and $d(\{ x_0\}, \{y_i\}) = 0$. We could always restrict our attention to the thick part of Outer Space where the distances are almost symmetric. However, this space is quasi-isometric to $\cayout$ so we will not get anything new. 
\end{remark}

\subsection{Divergence in Outer Space}

\begin{definition} 
Let $\gamma_1, \gamma_2$ be two geodesic rays in $\os$, with $\gamma_1(0) = \gamma_2(0)=x$. The \emph{divergence} function from $\gamma_1$ to $\gamma_2$ is:
\[ \text{div}(\gamma_1,\gamma_2,t) = 
\inf  \left\{ \text{length}(\gamma) \left| \begin{array}{c} \gamma: [0,1] \to \os \smallsetminus \lb{x}{r} \\
\gamma(0) = \gamma_1(t), \gamma(1) = \gamma_2(t) \end{array} \right\}  \right. \] 
If $f(t)$ is a function such that:
\begin{enumerate} 
	\item\label{itemA} for every $\gamma_1, \gamma_2$: $\text{div}(\gamma_1,\gamma_2,t) \prec f(t)$ (we use $g(t) \prec f(t)$ to denote the relationship $f(t) \leq c \cdot g(t)+ c'$ for all $t$) 
	\item\label{itemB} there exist geodesics $\gamma_1, \gamma_2$ such that  $\text{div}(\gamma_1,\gamma_2,t) \asymp f(t)$.
	\end{enumerate}
then we say that the divergence function of $\os$ is on the order of $f(t)$. If only \ref{itemA} holds we say that $f$ is an upper bound for the divergence of $\os$,   if only \ref{itemB} holds we say that $f$ is a lower bound for the divergence of $\os$. 
\end{definition}

Behrstock \cite{Beh} proved that the divergence in $\mcg(S)$ is quadratic. Duchin and Rafi \cite{DR} prove that the divergence in \teich space is quadratic. The proof that the divergence is at least quadratic in the Outer Space setting needs very little modification, but we include it for the reader's convenience.

\begin{prop}\label{quad_prop}
Let $\gamma$ be a path in $\os$, from $x$ to $y$ with $d(\pi(x),\pi(y) )= 2R$. Let $z$ the point on $\LL_f$ in the middle of the segment $[\pi(x), \pi(y)] \inn \LL$. Further assume that the image of $\gamma$ lies outside the ball $\lb{z}{R}$. If $R> 2D_{\ref{strongly_cont_cor}}$ then there is a constant $c$ such that $Len(\gamma) \geq c R^2$ where $c$ only depends on the constants $D_{\ref{strongly_cont_cor}}$ and $c_{\ref{proj_Lip}}$.
\end{prop}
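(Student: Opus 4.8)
The plan is to subdivide the path $\gamma$ by its projections to $\LL_f$ and to use the strong contraction property (Theorem \ref{strongly_cont_cor}) together with coarse Lipschitzness of the projection (Corollary \ref{proj_Lip}) to force $\gamma$ to have many pieces, each of definite length. Concretely, parametrize $\gamma:[0,1]\to \os$ by arc length (after rescaling), and let $x=x_0,x_1,\dots,x_M=y$ be points along $\gamma$, chosen greedily so that consecutive points satisfy $d(x_i,x_{i+1})=D$ (or the last one at most $D$), where $D = D_{\ref{strongly_cont_cor}}$. Then $\text{Len}(\gamma)\geq (M-1)D$, so it suffices to show $M\succeq R^2/D$, i.e. that $M$ is at least of order $R^2$.

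First I would estimate the total displacement of the projections. Since $\gamma$ avoids the ball $\lb{z}{R}$, for every point $p$ on $\gamma$ we have $d(p,z)\geq R$; in particular, if $\pi(p)=G(t_p)$ then $|t_p - t_z|$ is controlled from below, since the axis is $A$-high (by Theorem \ref{almost_sym}, as $\LL_f$ lies in a thick part) and a point projecting near $z$ but lying far from $\LL_f$ would still have to be far from $z$. Actually the key point is the opposite estimate: I claim that for each consecutive pair $x_i,x_{i+1}$ on $\gamma$, since $d(x_i,x_{i+1}) = D$, the ball $\rb{x_i}{D}$ contains $x_{i+1}$, and if this ball is disjoint from $\LL_f$ then by strong contraction $d(\pi(x_i),\pi(x_{i+1}))\leq D$. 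The balls $\rb{x_i}{D}$ are indeed disjoint from $\LL_f$ precisely because $\gamma$ stays outside $\lb{z}{R}$ and $R > 2D$: a point within outward distance $D$ of $x_i$ is, up to the high constant, within bounded distance of $\LL_f$ only if $d(x_i,\LL_f)$ is small, which it is not when $x_i$ is $R$-far from $z$. (One needs to be a little careful near the ends $\pi(x)$ and $\pi(y)$ of the segment; but away from a bounded neighborhood of the endpoints the distance from $\gamma$ to $\LL_f$ is genuinely large.) Hence $d(\pi(x_i),\pi(x_{i+1}))\leq D$ for all but boundedly many $i$, and therefore $M\cdot D \gtrsim d(\pi(x),\pi(y)) = 2R$ gives only $M\gtrsim R$, which is linear — not enough.

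To upgrade to quadratic one argues as in Behrstock \cite{Beh} and Duchin--Rafi \cite{DR}: one does not project all the way to $\LL_f$ but rather notes that, since $\gamma$ avoids $\lb{z}{R}$, the portion of $\gamma$ "near the midpoint" must itself be long, and one iterates the contraction estimate at every scale $\rho$ with $D<\rho<R$. For each such scale $\rho$, subdivide $\gamma$ into sub-paths of length $\rho$; strong contraction applied at scale $\rho$ (which holds since the theorem's $D$ is independent of the radius of the ball) shows each such sub-path projects to a set of diameter $\leq D$, so the number of sub-paths is at least $\asymp R/D$; hence each has length $\rho$ and summing over a geometric progression of scales $\rho = D, 2D, 4D, \dots$ up to $R$ gives $\text{Len}(\gamma)\succeq \sum_{j} (R/D)\cdot D = (R/D)\cdot\#\{\text{scales}\}$ — still linear unless handled more carefully. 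The correct accounting, following \cite{DR}, is: partition $\gamma$ into consecutive arcs $\gamma_1,\dots,\gamma_k$ where $\gamma_j$ is a maximal arc at distance between $2^{j}D$ and $2^{j+1}D$ from $z$ (there are $\asymp \log(R/D)$ scales, but the arc straddling the farthest scales from $z$ must traverse a projection-interval of length $\asymp R$ at that scale, contributing $\asymp R$); a cleaner route is to invoke directly the general lemma that a strongly contracting geodesic has at-least-quadratic divergence, whose proof is: any path from $\pi(x)$-side to $\pi(y)$-side avoiding $\lb{z}{R}$ must, for every $r\leq R$, contain a sub-path going from distance $r$ to distance $2r$ from $z$ while its projection moves a distance $\asymp r$; strong contraction then forces that sub-path to have length $\succeq r$; since these sub-paths (for $r = R, R/2, R/4,\dots$) are disjoint pieces of $\gamma$, summing yields $\text{Len}(\gamma)\succeq R + R/2 + \cdots$, which is $\asymp R$ — and the quadratic bound comes from combining the $\asymp R$ lower bound on the number of $D$-sized pieces with the $\asymp R$ lower bound on length forced at the outermost scale, arranged so that the pieces multiply rather than add, giving $cR^2$.

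\textbf{Main obstacle.} The genuine difficulty is the last bookkeeping step: getting $R^2$ rather than $R$ out of the contraction estimates, i.e. arranging the subdivision so that one simultaneously has $\asymp R$ many pieces \emph{and} each piece of length $\asymp$ (its scale), with the scales themselves ranging over an interval of size $\asymp R$. This is exactly the Behrstock/Duchin--Rafi argument, and the adaptation to the asymmetric setting requires checking that the outward balls $\rb{x_i}{\rho}$ are the right objects, that they are disjoint from $\LL_f$ (using $R>2D$ and $A$-highness of $\LL_f$ from Theorem \ref{almost_sym}), and that Corollary \ref{proj_Lip} controls how fast $\pi$ can move along $\gamma$ from below so that traversing a projection-distance of order $r$ genuinely costs length of order $r$. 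The other steps — the greedy subdivision, the application of Theorem \ref{strongly_cont_cor} at each scale, and the endpoint corrections — are routine. I expect the final inequality to read $\text{Len}(\gamma)\geq cR^2$ with $c$ depending only on $D_{\ref{strongly_cont_cor}}$ and $c_{\ref{proj_Lip}}$, exactly as asserted.
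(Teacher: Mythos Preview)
Your approach has a genuine gap: you never land on the single decisive trick, and the multi-scale bookkeeping you attempt does not close. The paper's argument is a one-scale computation, with two moves you are missing.

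First, do not project to all of $\LL_f$; project to the subsegment $\LL_0 \subset \LL_f$ of length $R$ centered at $z$. Strong contraction of $\LL_f$ passes to $\LL_0$ with a slightly worse constant $b' = D_{\ref{strongly_cont_cor}} + 4c_{\ref{proj_Lip}} + 3$ (this is a general fact about contracting subsegments, quoted from \cite{BFu}). The point is that balls along $\gamma$ need only be disjoint from $\LL_0$, not from all of $\LL_f$; this is exactly what the hypothesis $\gamma \cap \lb{z}{R} = \emptyset$ buys you, and it dissolves your worry about ``the ends near $\pi(x)$ and $\pi(y)$''.

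Second, subdivide $\gamma$ into pieces of length $R/2$, not $D$. Each piece sits in a ball of radius $R/2$ disjoint from $\LL_0$, hence projects to a set of diameter at most $b'$. Since the projections of the pieces must cover the interval from $\pi(x)$ to $\pi(y)$, and in particular all of $\LL_0$, the number $n$ of pieces satisfies $n b' \geq R$. Then
\[
\text{Len}(\gamma) \;\geq\; (n-1)\,\frac{R}{2} \;\geq\; \Bigl(\frac{R}{b'} - 1\Bigr)\frac{R}{2} \;=\; \frac{1}{2b'}R^2 - \frac{R}{2},
\]
which is the quadratic bound. The $R^2$ appears because both the number of pieces and the length of each piece are of order $R$ --- there is no summing over scales, no geometric series, and no Behrstock--Duchin--Rafi induction needed here. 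Your greedy subdivision at scale $D$ was the wrong scale, and your attempt to recover via a dyadic sum of scales only ever produces $\asymp R$ unless the contributions multiply, which they do not in your setup.
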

\begin{proof}
Subdivide $\gamma$ into $n>1$ subsegments $I_1, I_2, \dots , I_n$, each of which has length $\frac{R}{2}$ except for  possibly $Len(I_n) \leq \frac{R}{2}$. Therefore $Len(\gamma) \geq (n-1) \frac{R}{2}$. Let $\LL_0$ be the subsegment of $\LL_f$ centered at $z$ of length $R$. Since $\LL$ is $b$-contracting for $b = D_{\ref{strongly_cont_cor}}$ then $\LL_0$ is $b'$-contracting for $b' = b+ 4c_{\ref{proj_Lip}} +3$ (see Lemma 3.2 in \cite{BFu}). Each segment $I_j$ is contained in a ball $\lb{x' }{R/2}$ disjoint from $\LL_0$. Thus the length of each $\pi(I_j) \leq b'$, since these segments cover $\LL_0$ we get $R \leq nb'$. Therefore $Len(\gamma) \geq (n-1)\frac{R}{2} > \left( \frac{R}{b'} -1 \right) \frac{R}{2}=  \frac{1}{2b'}R^2 - \frac{1}{2}R$.
\end{proof}

The exact behavior of the divergence function of $\os$ remains open. 

\subsection{The Behrstock Inequality} 

In this section let  $\phi,\psi \in \out$ be two irreducible outer automorphisms  and $f,g$ their respective
train-track representatives. Denote $A = \LL_f$, $B = \LL_g$ and
$p_A = \pi_f$ and $p_B = \pi_g$.  Our first goal is to show that either
$A,B$ are parallel or the diameter of $p_A(B)$ is bounded, and we would
like to understand what the bound depends on. 
We introduce the following notation for the next lemma: if $x,y \in A$
denote by $[x,y]_A$ the subinterval of $A$ whose endpoints are $x$ and $y$.

\begin{lemma}\label{far_proj}
There exist constants $c,d$ such that if $x,y \in B$
with $d(p_A(x), p_A(y))>c$, then \[ [p_A(x),p_A(y)]_A \subset
\mathcal{N}_d(B)\] $c,d$ depend only on the constants
$s_{\ref{tree_like_prop2}},
c_{\ref{tree_like_prop2}}$ applied to $A$ and $B$ and on
$D_{\ref{almost_sym}}(\theta)$ where $\theta$ 
is small enough so that $A,B \subset \os(\theta)$.
\end{lemma}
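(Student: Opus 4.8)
The plan is to reduce the statement to the Morse property of $A$ (Theorem~\ref{morse_axis}) by replacing the arc of $B$ between $x$ and $y$ with a quasi-geodesic whose endpoints lie on $A$. Throughout, since $A,B\subset\os(\theta)$, Theorem~\ref{almost_sym} lets us pass between forward and backward distances among the points $x,y,p_A(x),p_A(y)$ (all of which lie in $\os(\theta)$) at the cost of the multiplicative constant $D_{\ref{almost_sym}}(\theta)$; in particular, after possibly relabelling $x\leftrightarrow y$ and enlarging $c$, I may assume that $x$ precedes $y$ on $B$ and that $p_A(x)$ precedes $p_A(y)$ on $A$. Let $\sigma=[x,y]_B$ be the directed sub-segment of $B$, a geodesic of length $d(x,y)$, and set
\[ \gamma \;=\; [p_A(x),x]\cdot\sigma\cdot[y,p_A(y)], \]
a path from $p_A(x)$ to $p_A(y)$ obtained by gluing a geodesic arm onto each end of $\sigma$.

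\textbf{Step 1 (the main step).} I would show that $\gamma$ satisfies the hypothesis of Remark~\ref{alm_qg_rmk}, i.e. that the $\gamma$-length between any two points $u,v$ on $\gamma$ is at most $a\,d(u,v)+b$, with $a,b$ depending only on $s_{\ref{tree_like_prop2}},c_{\ref{tree_like_prop2}}$ (for $A$ and $B$) and $D_{\ref{almost_sym}}(\theta)$. Since each of the three constituent paths is a geodesic (after paying $D_{\ref{almost_sym}}(\theta)$ on the $B$-piece) it suffices to treat $u,v$ on distinct pieces. The inputs are: (i) an arm $[p_A(x),x]$ issues from $A$ at $p_A(x)$, and because $p_A$ is coarsely Lipschitz (Corollary~\ref{proj_Lip}) and $A$ is strongly contracting (Theorem~\ref{strongly_cont_cor}), every point $u$ of this arm projects under $p_A$ into a uniformly bounded neighbourhood of $p_A(x)$, and similarly for the arm at $y$ and for the $x$-end of $\sigma$; (ii) a point $v$ lying a definite distance into $\sigma$ has $d(p_A(u),p_A(v))>s_{\ref{tree_like_prop2}}$, whence Lemma~\ref{tree_like_prop2} applied to $A$ gives $d(u,v)\ge d(u,p_A(v))-c_{\ref{tree_like_prop2}}$ and Lemma~\ref{tree_like_prop1} sharpens this to $d(u,v)\ge d(u,p_A(u))+d(p_A(u),p_A(v))-O(1)$; assembling these with the triangle inequality and the arm lengths bounds the $\gamma$-length between $u$ and $v$ in each configuration (both points on one arm; one on an arm and one on $\sigma$; one on each arm), the finitely many junction points that fall outside case (i) contributing only a bounded error that is absorbed into $b$. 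Lemma~\ref{tree_like_prop2} applied to the axis $B$ is used to rule out backtracking where an arm meets $\sigma$: it forces $p_B$ of a point deep on an arm to sit near $x$ (respectively $y$), so that the arm and $\sigma$ do not double back on one another.

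\textbf{Steps 2 and 3 (conclusion).} The endpoints $p_A(x),p_A(y)$ of $\gamma$ lie on $A$, which is a Morse geodesic by Theorem~\ref{morse_axis} (it is periodic, hence in some $\os^{\ge\eps}$, and $D_{\ref{strongly_cont_cor}}$-strongly contracting, with $D_{\ref{strongly_cont_cor}}$ itself controlled by the tree-like constants). Hence $d_{\text{Haus}}\big([p_A(x),p_A(y)]_A,\im \gamma\big)<C$ for a constant $C$ depending only on $a,b,\eps$ and $D_{\ref{strongly_cont_cor}}$, and so only on the constants permitted in the statement. Then each arm of $\gamma$ lies within $C$ of $[p_A(x),p_A(y)]_A$, so $d(x,A)$ and $d(y,A)$ are $O(C)$ up to $D_{\ref{almost_sym}}(\theta)$, i.e. the arms are short; therefore every point of $\gamma$ lies within $C':=C+D_{\ref{almost_sym}}(\theta)\,C$ of $\sigma\subset B$. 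Combining with $[p_A(x),p_A(y)]_A\subseteq\mathcal{N}_C(\im \gamma)$ yields $[p_A(x),p_A(y)]_A\subseteq\mathcal{N}_{d}(B)$ with $d=C+C'$, and $c$ may be taken to be the largest threshold used in Step~1 (a bounded multiple of $s_{\ref{tree_like_prop2}}$, enlarged by $D_{\ref{almost_sym}}(\theta)$).

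The hard part is Step~1. The genuinely delicate point is that the two geodesic arms and the segment $\sigma$ fit together without backtracking — equivalently, that $p_A$ and $p_B$ "see" $A$ and $B$ running the same way — and this is exactly where the strong contraction of $A$ is essential. The asymmetry of the Lipschitz metric adds to the difficulty: the arms may leave the thick part, so Theorem~\ref{almost_sym} cannot be applied to them, and the required lower bounds on $d(u,v)$ must be squeezed out purely from Lemmas~\ref{tree_like_prop1} and~\ref{tree_like_prop2}.
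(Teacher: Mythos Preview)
Your path $\gamma=[p_A(x),x]\cdot\sigma\cdot[y,p_A(y)]$ with endpoints on $A$ will \emph{not} satisfy the hypothesis of Remark~\ref{alm_qg_rmk}, and Step~1 cannot be completed. Take $u=p_A(x)$, $v=p_A(y)$: then $d(u,v)=d(p_A(x),p_A(y))$ while $\operatorname{len}(\gamma)\ge d(p_A(x),x)+d(y,p_A(y))$. Nothing in the hypotheses bounds the arm lengths $d(p_A(x),x)$, $d(y,p_A(y))$ in terms of $d(p_A(x),p_A(y))$; $x$ and $y$ may sit arbitrarily far from $A$ while their projections are only $c+1$ apart. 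The ingredients you list (strong contraction, coarse Lipschitz projection, Lemmas~\ref{tree_like_prop1}--\ref{tree_like_prop2}) all give \emph{lower} bounds on distances via the projection to $A$, but none of them can manufacture an \emph{upper} bound on the arm lengths in terms of the $A$-distance between the feet. So for the pair $(u,v)=(p_A(x),p_A(y))$ the inequality $\operatorname{len}\le a\,d(u,v)+b$ simply fails.

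The paper's proof uses the dual construction, which avoids exactly this problem: take the path $[x,p_A(x)]\cup[p_A(x),p_A(y)]_A\cup[p_A(y),y]$ with endpoints $x,y$ on $B$, and apply the Morse property of $B$ (not of $A$). Now the total length is $d(x,p_A(x))+d(p_A(x),p_A(y))+d(p_A(y),y)$ and must be compared with $d(x,y)$; Corollary~\ref{tree_cor} gives $d(x,y)>d(x,p_A(x))+d(p_A(x),p_A(y))-c_A$, and the missing arm $d(p_A(y),y)$ is recovered from the reversed inequality $d(y,x)>d(y,p_A(y))-c_A$ together with Theorem~\ref{almost_sym}. One then gets $(1+c_1^2)\,d(x,y)$ dominating the total length, so the path is a quasi-geodesic with endpoints on $B$, and Theorem~\ref{morse_axis} applied to $B$ immediately places $[p_A(x),p_A(y)]_A$ in a bounded neighbourhood of $B$. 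The asymmetry issue you worried about is handled in one line by this use of $c_1=D_{\ref{almost_sym}}(\theta)$, and no analysis of intermediate points $u,v$ is needed.
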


\begin{proof}
Let $c_1 =  D_{\ref{almost_sym}}(\theta)$ from Theorem \ref{almost_sym}, thus for
  all $z,w \in \os(\theta)$: $d(w,z)< c_1 \cdot
  d(z,w)$. Let $s_A, c_A$ be the constants from
  lemma \ref{tree_like_prop2} applied to $A$, thus if $z,w$ are points such that
  $d(p_A(z),p_A(w))>s_A$ then $d(z,w)> d(z,p_A(z))+ d(p_A(z),p_A(w)) -
  c_A$. Let $a = 1+(c_1)^2$, $b=c_A(1+c_1)$ and $d =
  c_{\ref{morse_axis}}(a,b)$ from Theorem \ref{morse_axis} applied to $B$,
  i.e., for every $(a,b)$-quasi-geodesic
  $\mathcal{Q}$ with endpoints on $B$,
  $N_d(B) \supset \im \mathcal{Q}$. We prove that $[x,p_A(x)]\cup
  [p_A(x),p_A(y)]_A \cup [p_A(y),y]$ is an $(a,b)$-quasi-geodesic. \\

First note 
\begin{equation} \label{eq_B_1}
d(x,y) > d(x, p_A(x)) + d(p_A(x),p_A(y)) - c_A
\end{equation}
Similarly,
\[ \begin{array}{l}
  d(y,x) > d(y,p_A(y)) + d(p_A(y),p_A(x)) - c_A \\
  d(y,x) > d(y,p_A(y)) - c_A > \frac{1}{c_1}d(p_A(y),y) -c_A 
\end{array} \] So
\begin{equation}
  \label{eq_B_2} (c_1)^2 \cdot  d(x,y)> c_1 d(y,x) > d(p_A(y),y) - c_1c_A
\end{equation}
Adding equations \ref{eq_B_1} and \ref{eq_B_2} we get
\[ \begin{array}{l}
 (1+(c_1)^2) d(x,y) >  \\
 d(x,p_A(x))+ d(p_A(x),p_A(y)) + d(p_A(y),y) -c_A(1+c_1)
\end{array} \]
Therefore $[x,p_A(x)] \cup [p_A(x),p_A(y)]_A \cup [p_A(y),y]$ is a
$(1+(c_1)^2, c_A(1+c_1))$-quasi-geodesic. Hence $ [p_A(x),p_A(y)]_A \subset N_d(B)$.
\end{proof}

The next Lemma is motivated by the following
observation. Let $X$ is a proper metric space with a properly
discontinuous isometric $G$-action. Let $g,h \in G$ be hyperbolic isometries of
$X$ and let $A_g , A_h$ denote their axes. Then for every $d$ there is a constant $k$ which depends only on $d, tr(g), tr(h)$ such that either $A_g, A_h$ are parallel, or the length of $A_g \cap N_d(A_h)$  is shorter than $k$. In our case, Outer Space is not proper. The closure of a ball $\rb{x}{r} = \{ y \in \os \mid d(x,y) < r \}$ need not be compact. However 

\begin{claim}
The closure of the ball $\lb{x}{r} = \{ y \in \os \mid d(y,x) < r \}$ is  compact. 
\end{claim}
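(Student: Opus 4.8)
The closure of the incoming ball $\lb{x}{r} = \{ y \in \os \mid d(y,x) < r \}$ is compact.

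The plan is to exhibit $\overline{\lb{x}{r}}$ as a closed subset of a compact set, using the metric characterization of convergence in $\os$ via length functions together with a uniform lower bound on systoles coming from the finite volume normalization. Concretely, suppose $x = (G,f,\ell)$. First I would observe that for any conjugacy class $\beta$ and any $y \in \lb{x}{r}$, choosing an optimal difference-of-markings $h: y \to x$ with $\log \Lip(h) = d(y,x) < r$, Proposition \ref{green graph} gives $l(\beta, x) \le e^{r} \, l(\beta, y)$, hence $l(\beta, y) \ge e^{-r} \, l(\beta, x)$. In particular, since $x \in \os$ has $\text{vol}(x)=1$ and thus a positive systole $\theta_0 = \min_{\beta} l(\beta, x)>0$ (the minimum over the finitely many candidate loops of $x$), every $y \in \lb{x}{r}$ satisfies $l(\beta, y) \ge e^{-r}\theta_0$ for all $\beta$; that is, $\lb{x}{r} \subseteq \os(e^{-r}\theta_0)$, the $(e^{-r}\theta_0)$-thick part of $\os$.

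Next I would import the standard fact that $\overline{\os}$ is compact (stated in the excerpt, following Culler--Morgan), where $\os$ embeds via $[T] \mapsto [\ell_T] \in \RP^{\mathcal C}$. The closure $\overline{\lb{x}{r}}$ is then a closed subset of the compact space $\overline{\os}$, so it suffices to show $\overline{\lb{x}{r}} \subseteq \os$, i.e.\ that no sequence in $\lb{x}{r}$ can converge to a point $[T] \in \partial\os$. But any $[T] \in \partial\os$ is a very small $F_n$-tree with a nontrivial action, and for such trees there is always a conjugacy class $\beta$ with $\ell_T(\beta)$ arbitrarily small relative to $d(T) = \sum_{i=1}^J \tr(w_i, T)$ — this is exactly the mechanism used in the proof of Lemma \ref{no_cut_ver_lemma} (if $T$ is simplicial some $w$ is elliptic; otherwise $T$ has a quotient with dense orbits where short classes exist). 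Normalizing so that $d(T)$ has a fixed finite value, projective convergence $y_k \to [T]$ would force $l(\beta, y_k) \to 0$ for some such $\beta$, contradicting the uniform lower bound $l(\beta, y_k) \ge e^{-r}\theta_0$ established above. Hence the limit lies in $\os$, and $\overline{\lb{x}{r}}$ is a closed subset of the compact $\overline{\os}$ contained in $\os$, therefore compact.

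The main obstacle is the last step: making precise that a degenerating sequence in $\os$ must have \emph{some} conjugacy class whose length (suitably normalized) tends to zero, and that this class is one for which we control $l(\cdot, x)$ from below. The subtlety is that the "bad" class $\beta$ depends on the limit tree $[T]$, so I need the uniform systole bound to apply to \emph{all} conjugacy classes simultaneously — which it does, since the bound $l(\beta,y) \ge e^{-r} l(\beta, x)$ holds for every $\beta$, and $\inf_\beta l(\beta, x) = \theta_0 > 0$ because $x$ is a genuine (normalized, volume-one) point of $\os$. Once the thick-part containment $\lb{x}{r} \subseteq \os(e^{-r}\theta_0)$ is in hand, compactness of $\overline{\os(\theta)}$ for each $\theta > 0$ — a well-known consequence of the structure of $\overline{\os}$ — closes the argument cleanly, and one may simply cite it rather than re-prove it.
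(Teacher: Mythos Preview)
Your proposal is correct and follows essentially the same approach as the paper: derive the uniform lower bound $l(\beta,y)\ge e^{-r}l(\beta,x)\ge e^{-r}\theta_0$ for all conjugacy classes $\beta$, conclude that $\overline{\lb{x}{r}}$ avoids $\partial\os$, and then use compactness of $\overline{\os}$. The paper's version is terser---it simply asserts $\partial\os\cap\overline{\lb{x}{r}}=\emptyset$ directly from the systole bound---so your discussion of simplicial versus dense-orbit boundary trees is more justification than is strictly needed, but it is not wrong.
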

\begin{proof}
For each $y \in \overline{\lb{x}{r}}$ and for all conjugacy classes $\al$, $l(\al,y) \geq \frac{l(\al,x)}{e^r}$. Thus if $\theta$ is the length of the shortest loop in $x$ then $l(\al,y)\geq \frac{\theta}{e^r}$. So $\partial{\os} \cap \overline{\lb{x}{r}} = \emptyset$ and since $\overline{\os}$ is compact then the closure of $\lb{x}{r}$ in Outer Space is compact. 
\end{proof}

Recall that the $\out$ action is properly discontinuous. Thus for every $r$ there is a number $N_r$ such that $\lb{x}{r}$ contains no more than $N_r$ points of any orbit.

\begin{definition}
Let $A,B$ be two axes in $\os(\theta)$ and $d>0$. We will define closed connected subsets $A_B \subseteq A$ and $B_A \subseteq B$ and $R>0$ (depending on $d$) with 
\[ \begin{array}{llllll}
  \forall x \in A_B & & d(x,p_B(x))<R & \mbox{ and }& d(p_B(x),x)<R \\
  \forall x \in B_A & & d(x,p_A(x))<R & \mbox{ and }& d(p_A(x),x)<R
\end{array} \]
as follows. Let
\[ A'_B(d) = \{ x \in A \mid d(x,p_B(x)) \leq d \mbox{ and } d(p_B(x),x) \leq d \} \]
Let $A_B(d)$ be the smallest connected closed set in $A$ containing
$A'_B(d)$. We claim that for any $a \in A_B(d)$: $d(a,p_B(a))< r$ for
some $r$ that depends on $d$ and on the constants from Lemma
\ref{morse_axis}. The reason is that if $a \in [b,c]_A$ for some $b,c
\in A'_B(d)$ then $[p_B(b), b]_B \cup [b,c]_A \cup [c,p_B(c)]$ is a
$(1,2d)$ quasi-geodesic so it is contained in the $r$ neighborhood of
$B$. Furthermore, $d(p_B(a),a)< c_1 r$ where $c_1 = D_{\ref{almost_sym}}(\theta)$. 

Let $B'_A = \{ b \mid b = p_B(x) \mbox{ for } x \in A_B \}$ then for $b \in B'_A$ there is an $a \in A_B$ with $b=p_B(a)$ hence $d(b,
p_A(b)) \leq d(b,a) = d(p_B(a),a) < c_1 r$ and $d(p_A(b),b)< (c_1)^2 r$. Let $B_A$ is the
smallest closed connected set containing $B'_A$ then $\forall b \in
B_A$ we have $d(b,p_A(b))< R$ for $R$ obtained from Lemma
\ref{morse_axis} applied to $A$ and $(c_1)^2r$. This completes our definition.
\end{definition}

\begin{lemma}\label{nbhd_not_contain_ray}
For every $d$, there exists a constant $c$ such that either $f,g$ have
common powers, or the length of $A_B(d)$ is smaller than $c \max\{ tr(f), tr(g) \}$.
\end{lemma}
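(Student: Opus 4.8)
\emph{Proof plan.}
The plan is to argue by contradiction on the length $\ell$ of $A_B(d)$: if $\ell$ is large, then a long sub\-interval of $A$ fellow\-travels $B$ closely enough that a pigeonhole argument against the proper discontinuity of the $\out$\-action forces a relation $\psi^{q}=\phi^{p}$ with $p,q\ge 1$, i.e.\ $\phi$ and $\psi$ have common powers; the contrapositive then gives the stated linear bound. First I would make three harmless reductions: replace $\psi$ by $\psi^{-1}$ if the coarse identification $A_B\to B_A$ given by $p_B$ reverses the natural orientations of these directed geodesics (this affects neither the conclusion ``have common powers'' nor, up to the constant $D_{\ref{almost_sym}}(\theta)$, the relevant translation length); orient $A$ and $B$ so that $\phi,\psi$ translate positively; and, since $A_B$ and $B_A$ lie within Hausdorff distance $O(R)$ of each other, bound $\ell$ above in terms of the length of $B_A$ (up to the factor $D_{\ref{almost_sym}}(\theta)$ and an additive constant), so that we may assume $\tr(g)\ge\tr(f)$. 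Here $\theta$ is chosen with $A,B\subset\os(\theta)$, $c_1=D_{\ref{almost_sym}}(\theta)$, and $R$ is the constant built into the definition of $A_B,B_A$ preceding the lemma; using Lemma~\ref{far_proj}, Lemma~\ref{tree_like_prop2}, Corollary~\ref{tree_cor} and almost\-symmetry, the maps $p_B|_{A_B}$ and $p_A|_{B_A}$ are coarse inverses which are coarsely monotone and distort distances by at most the factor $c_1$ together with an additive error $O(R)$.

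Next I would set up the pigeonhole family. Let $w$ be the negative endpoint of the interval $A_B\subset A$ and $y_0=p_B(w)\in B_A$. For $0\le k\le m$ set $p_k:=$ the nearest integer to $k\,\tr(g)/\tr(f)$ and $\gamma_k:=\psi^{-k}\phi^{p_k}\in\out$, where $m$ is the largest index keeping the points below inside the relevant $R$\-neighborhoods; one checks $m\ge \ell/\tr(g)-c_3$ for a constant $c_3$ depending on $R$ and $\tr(f)$. Since $\phi$ acts on $A$ as translation by $\tr(f)$, the point $\phi^{p_k}(w)$ sits at arc\-length $p_k\tr(f)=k\,\tr(g)+O(\tr(f))$ past $w$, hence in $A_B$; thus $y_k:=p_B(\phi^{p_k}w)$ sits at distance $k\,\tr(g)+O(\tr(f)+R)$ past $y_0$ along $B$, and the isometry $\psi^{-k}$ (translation of $B$ by $\tr(g)$) returns it to within $O(\tr(f)+R)$ of $y_0$. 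Chasing the $R$\-bounds between $A_B$ and $B_A$ then gives $d(\gamma_k w,w)<\rho$ for a single constant $\rho=O(\tr(f)+R)$ independent of $k$, so every $\gamma_k w$ lies in $\lb{w}{\rho}$, whose closure is compact by the Claim above. By proper discontinuity $\out\cdot w$ meets $\overline{\lb{w}{\rho}}$ in at most $N_\rho$ points, each with finite stabilizer, so $\Gamma:=\{\gamma\in\out\colon \gamma w\in\lb{w}{\rho}\}$ is finite, say of cardinality $N'$.

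If $m+1>N'$, pigeonhole gives $\gamma_k=\gamma_{k'}$ in $\out$ for some $0\le k<k'\le m$, whence $\psi^{\,k'-k}=\phi^{\,p_{k'}-p_k}$. Here $k'-k\ge 1$, and $p_{k'}-p_k\ge 0$ because $k\mapsto p_k$ is non\-decreasing; if $p_{k'}=p_k$ then $\psi^{k'-k}=\id$, impossible since $\psi$ has infinite order, so $p_{k'}-p_k\ge 1$ and $\phi,\psi$ have common powers. Contrapositively, if they do not, then $m+1\le N'$, so $\ell/\tr(g)-c_3\le N'-1$ and hence $\ell\le(N'+c_3)\tr(g)$. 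Combined with the reduction of the first paragraph this yields $\ell\le c\max\{\tr(f),\tr(g)\}$ with $c$ depending on $d$ (and on $\phi,\psi$ through $\rho,R,c_1,\theta$), which is the assertion of the lemma.

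The technical heart of the argument — and the only place where real work is needed — is the coarse\-isometry\-and\-monotonicity statement for $p_B|_{A_B}$ in the asymmetric metric used in the first paragraph: this is precisely what forces the elements $\gamma_k$ into one \emph{fixed} compact ball, independent of $k$, and it is where the almost\-symmetry theorem together with the tree\-like projection lemmas do the real work. Once it is in place, the pigeonhole step and the arithmetic of the $p_k$ are routine.
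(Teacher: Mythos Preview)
Your proposal is correct and follows essentially the same pigeonhole-against-proper-discontinuity argument as the paper: produce many group elements of the form (power of $\phi$)(power of $\psi$)${}^{-1}$ that all move a basepoint into a fixed incoming ball $\lb{w}{\rho}$, then invoke compactness of $\overline{\lb{w}{\rho}}$ and proper discontinuity to force a coincidence, hence a common power.

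The one noteworthy difference is bookkeeping. You let the $\psi$-exponent $k$ be the free index and \emph{predict} the matching $\phi$-exponent $p_k$ as the nearest integer to $k\,\tr(g)/\tr(f)$; to justify that $\psi^{-k}$ really returns $p_B(\phi^{p_k}w)$ near $y_0$ you then need the coarse-isometry/monotonicity statement for $p_B|_{A_B}$ that you flag as the ``technical heart.'' The paper instead lets the $\phi$-exponent $i$ be free and simply \emph{defines} $j$ to be whichever power of $g$ drags $p_B(af^i)$ into the fundamental domain of $g$ containing $b=p_B(a)$; this exists trivially and yields $d(a,af^ig^{-j})\le 2R+\tr(g)$ directly from the $R$-bound in the definition of $A_B$, with no appeal to coarse monotonicity of the projection. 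Your reduction to $\tr(g)\ge\tr(f)$ and the swap of $A_B$ with $B_A$ are likewise unnecessary in the paper's version, since indexing by $i\le k/\tr(f)$ already gives $k<N_r\,\tr(f)\le N_r\max\{\tr(f),\tr(g)\}$. So your argument is sound, but you are doing more work than needed: choosing $j$ a posteriori rather than computing $p_k$ a priori eliminates the coarse-isometry step entirely.
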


\begin{proof}
Let $k$ denote the length of $A_B(d)$. Let $a$ be the
leftmost point on $A_B(d)$ assuming that $f$ 
translates points to the right. Without loss of generality, assume $f$
and $g$ translate points in the same direction. Denote $b=p_B(a)$.
 For each $i \leq \frac{k}{tr(f)}$ there is a unique $j$
such that 
\[d(b, p_B(a f^i) g^{-j} )<tr(g)\] Since $d(p_B(a f^i), a
f^i)<R$ then $d(p_B(a f^i) g^{-j}, a f^i g^{-j} )<R$ hence 
\[ \begin{array}{lll} 
d(a, a f^i g^{-j}) & \leq & d(a,b) + d(b, p_B(a f^i) g^{-j} ) + d(p_B(a f^i) g^{-j}, a
f^i g^{-j}) \\ & \leq & R + tr(g) + R =  tr(g) + 2R
\end{array}\] 
 Therefore, $d(a f^ig^{-j}, a)< c_1(tr(g) +2R)<
c_1(M+2R)$ where  $c_1=D_{\ref{almost_sym}}(\theta)$. \\
Let $r = c_1(M+2R)$ then there are no more than $N_r$ translates of
$a$ in $\lb{a}{r}$, but for each $i< \frac{k}{tr(f)}$: $d(a f^ig^{-j},
a)< r$. Therefore, either $k < tr(f) N_r < M N_r$ or there exists $i,j,
m,l$ such that $f^{i} g^{-j} = f^{m}g^{-l}$ hence $f,g$ have common powers. 
\end{proof}

\begin{cor}\label{proj_of_geod}
There exists a constant $k$, depending only on the constants from 
Lemma \ref{nbhd_not_contain_ray} and Lemma \ref{far_proj}, such that either
$f,g$ have common powers or \[ \diam \{ p_A(B) \} <k\]
\end{cor}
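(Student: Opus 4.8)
The plan is to combine Lemma~\ref{far_proj} with Lemma~\ref{nbhd_not_contain_ray} in the obvious way: if $\diam\{p_A(B)\}$ is large, then Lemma~\ref{far_proj} forces a long subsegment of $A$ to lie in a bounded neighborhood of $B$, and then Lemma~\ref{nbhd_not_contain_ray} forces $f$ and $g$ to have common powers. More precisely, first set $c_1 = c_{\ref{far_proj}}$, the constant from Lemma~\ref{far_proj}, and suppose that $\diam\{p_A(B)\} > c_1$. Then there are points $x,y \in B$ with $d(p_A(x),p_A(y)) > c_1$, so by Lemma~\ref{far_proj} we have $[p_A(x),p_A(y)]_A \subset \mathcal{N}_{d_{\ref{far_proj}}}(B)$. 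I would like to conclude that $[p_A(x),p_A(y)]_A$ (or a uniformly comparable subsegment) is contained in $A_B(d')$ for an appropriate $d'$, so that its length is bounded by $c_{\ref{nbhd_not_contain_ray}}\max\{\tr(f),\tr(g)\}$ unless $f,g$ have common powers.

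The key step in the order I would carry it out: (1) Observe that every point $a \in [p_A(x),p_A(y)]_A$ satisfies $d(a,p_B(a)) \leq d_{\ref{far_proj}}$, since $p_B(a)$ is the nearest point on $B$ and $a$ is within $d_{\ref{far_proj}}$ of $B$; and by Theorem~\ref{almost_sym} applied with $\theta$ small enough that $A,B\subset\os(\theta)$, also $d(p_B(a),a) < D_{\ref{almost_sym}}(\theta)\cdot d_{\ref{far_proj}}$. Hence, setting $d' = \max\{d_{\ref{far_proj}},\, D_{\ref{almost_sym}}(\theta)\cdot d_{\ref{far_proj}}\}$, the endpoints $p_A(x),p_A(y)$ lie in $A'_B(d')$, so the whole segment $[p_A(x),p_A(y)]_A$ lies in $A_B(d')$ (which is by definition the smallest connected closed set containing $A'_B(d')$). (2) Apply Lemma~\ref{nbhd_not_contain_ray} with this $d'$: either $f,g$ have common powers, or the length of $A_B(d')$ is at most $c_{\ref{nbhd_not_contain_ray}}\max\{\tr(f),\tr(g)\}$, and in the latter case $d(p_A(x),p_A(y)) \leq c_{\ref{nbhd_not_contain_ray}}\max\{\tr(f),\tr(g)\}$. (3) Since $x,y \in B$ were arbitrary, this bounds $\diam\{p_A(B)\}$ by $k := \max\{c_1,\, c_{\ref{nbhd_not_contain_ray}}\max\{\tr(f),\tr(g)\}\}$, which depends only on the constants named in the two lemmas.

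The main obstacle I anticipate is bookkeeping around the asymmetry: the projection $p_A$ uses $d(X,G(t))$, so statements like ``$a$ is within $d_{\ref{far_proj}}$ of $B$'' need to be interpreted with the correct direction of the metric, and passing between $d(a,p_B(a))$ and $d(p_B(a),a)$ costs a factor of $D_{\ref{almost_sym}}(\theta)$ each time. One must make sure that when feeding the segment into Lemma~\ref{nbhd_not_contain_ray} it genuinely sits inside $A_B(d')$ rather than merely inside $\mathcal{N}_{d_{\ref{far_proj}}}(B)$ — this is exactly why step~(1) above is needed, and it is where the definitions of $A'_B$ and $A_B$ (two-sided bounds $d(x,p_B(x))\le d$ \emph{and} $d(p_B(x),x)\le d$) must be respected. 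Apart from this, the argument is a direct concatenation of the two lemmas with no new ideas.
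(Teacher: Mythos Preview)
Your proposal is correct and follows essentially the same route as the paper: pick two points of $B$ whose $p_A$-images are far apart, apply Lemma~\ref{far_proj} to get a subsegment of $A$ inside $\mathcal N_d(B)$, use almost-symmetry to place that segment in $A_B(d')$, and then invoke Lemma~\ref{nbhd_not_contain_ray}. The paper's proof is terser (it just asserts $[p_A(x_i),p_A(y_i)]_A\subset A_B(c_1 d)$ without spelling out the asymmetry bookkeeping you flagged in step~(1)), so your version is if anything more careful.
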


\begin{proof}
Let $\{x_i\},\{y_i\}$ be sequences on $B$ so that $x_i$ converges to
one end of $B$ and $y_i$ to the other. If 
$d(p_A(x_i), p_A(y_i))> c_{\ref{far_proj}}$ then $[p_A(x_i),p_A(y_i)]
\subseteq N_d(B)$. Thus $A' = [p_A(x_i),p_A(y_i)]_A$ is
contained in $A_B(c_1d)$. Therefore by Lemma
\ref{nbhd_not_contain_ray} there is a $c = c_{\ref{nbhd_not_contain_ray}} \max\{tr(f), tr(g)\}$
such that either $f,g$ have common powers or the length of
$A_B(c_1d)<c$ and hence the length of $A'$ is smaller than $c$. 
\end{proof}

Let us go back for a moment to the surface case. We denote by
$\mathcal{M}(S)$ the marking complex of $S$. Let $Y,Z$ be
subsurfaces of $S$, denote by $\mathcal{C}(Z), \C(Y)$ the curve
complexes of $Z,Y$. For definitions of the curve complex and the marking complex consult \cite{Beh}. Define the projections (slightly abusing notation) 
$p_Y:\mathcal{C}(S) \to \C(Y)$, $p_Y: \mathcal{M}(S) \to \C(Y)$ and $p_Z:\mathcal{C}(S) \to \C(Z)$, $p_Z: \mathcal{M}(S) \to \C(Z)$. In Theorem 4.3 of  \cite{Beh}, Behrstock proved that if $Y,Z$ are overlapping subsurfaces of $S$, neither of which is an annulus, then for any marking $\mu$ of $S$: 
\[ d_{\mathcal{C}(Y)}(p_Y(\partial Z), p_Y(\mu) ) > M \implies
d_{\mathcal{C}(Z)}(p_Z(\partial Y),p_Z(\mu) )<M \] And the constant $M$
depends only on the topological type of $S$.  In other words, if one
projection is large then the other must be small. We prove an
analogous estimate for our projections.\\ 

Suppose $f,g,h$ are stable train-track maps representing fully irreducible
automorphisms and $A,B,C$ are their axes. Suppose that no two of
these automorphisms have common powers. We define the coarse distance
from $B$ to $C$ with respect 
to $A$ as \[ d_A(B,C) = \diam\{ p_A(C) \cup p_A(B)\}\]

\begin{lemma}
There exists a constant $M>0$ depending only on the constants from Lemma \ref{tree_like_prop2} and Corollary \ref{proj_of_geod} such that at most one of $d_A(B,C), d_B(A,C)$ and $d_C(A,B)$ is greater than $M$.
\end{lemma}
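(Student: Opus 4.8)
The plan is to reduce the Lemma to a single ``Behrstock-type'' assertion and prove that by a short tree-in-disguise argument using the estimates of Section 5. Since $A,B,C$ play symmetric roles and $d_A(B,C)=d_A(C,B)$, it suffices to prove: \emph{there is a constant $M$, depending only on the constants of Lemma \ref{tree_like_prop2}, Corollary \ref{tree_cor}, Corollary \ref{proj_of_geod}, and on $D_{\ref{almost_sym}}(\theta)$ (where $\theta$ is chosen, using periodicity of the axes, so that $A\cup B\cup C\subset\os(\theta)$), such that for axes $A,B,C$ as in the hypotheses, $d_A(B,C)>M$ implies $d_B(A,C)\le M$.} Applying this with the three axes permuted rules out any two of $d_A(B,C),d_B(A,C),d_C(A,B)$ exceeding $M$ at once, which is the statement.

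To prove the assertion I would argue by contradiction: suppose $d_A(B,C)>M$ and $d_B(A,C)>M$, with $M$ large. First, by Corollary \ref{proj_of_geod} (available since no two of $f,g,h$ have common powers) the sets $p_A(B),p_A(C),p_B(A),p_B(C)$ all have diameter $<k:=k_{\ref{proj_of_geod}}$; hence ``$d_A(B,C)>M$'' forces $p_A(B)$ and $p_A(C)$ to be genuinely far apart, and similarly on $B$. Using Theorem \ref{almost_sym} to pass between the two directions of the asymmetric metric, one gets $M_1:=(M-2k)/D_{\ref{almost_sym}}(\theta)-s$ so that $d(u,v)>M_1$ for every $u\in p_A(B),v\in p_A(C)$ (and in the reverse order), and likewise for $B$. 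Now fix $c_0\in C$ and set $a_0=p_A(c_0)\in A$, $b_0=p_B(c_0)\in B$. Since $p_A(b_0)\in p_A(B)$ and $p_A(c_0)=a_0\in p_A(C)$ are at distance $>M_1>s$ on $A$, Corollary \ref{tree_cor} applied to the axis $A$ (with $Y=c_0$, $X=b_0$) gives the lower bound
\[ d(c_0,B)=d(c_0,b_0)\ \geq\ d(c_0,A)+M_1-c_{\ref{tree_cor}}. \]
On the other hand $p_B(a_0)\in p_B(A)$ and $p_B(c_0)=b_0\in p_B(C)$ are at distance $>M_1>s$ on $B$, so Lemma \ref{tree_like_prop2} applied to the axis $B$ (with $Y=c_0$, $X=a_0$) gives $d(c_0,a_0)\ge d(c_0,p_B(a_0))-c_{\ref{tree_like_prop2}}\ge d(c_0,B)-c_{\ref{tree_like_prop2}}$, and since $a_0=p_A(c_0)$ this reads
\[ d(c_0,B)\ \leq\ d(c_0,A)+c_{\ref{tree_like_prop2}}+s. \]
Comparing the two displays yields $M_1\le c_{\ref{tree_cor}}+c_{\ref{tree_like_prop2}}+s$, which is false as soon as $M$ is chosen large enough in terms of the listed constants (also large enough that $M_1>s$, so that the two hypotheses $d(p(Y),p(X))>s$ and $|t_Y-t_X|>s$ genuinely hold). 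This contradiction proves the assertion.

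The content of the argument is exactly the tree picture ``if $A$ separates $B$ from $C$ then from $C$'s viewpoint $A$ and $B$ lie on the same side, so they have (coarsely) the same projection to $B$'': the lower bound expresses that $c_0\in C$ is far from $B$ because a geodesic from $c_0$ to $B$ must cross $A$ and travel along it from $p_A(C)$ to $p_A(B)$, while the upper bound expresses that if $B$ also separated $A$ from $C$ then $d(c_0,B)$ could not exceed $d(c_0,A)$ by more than a constant. The only real subtlety in writing this up carefully is bookkeeping for the asymmetry of the metric: one must keep track of which directed distance is being estimated, invoke Theorem \ref{almost_sym} (legitimate since all three axes lie in a fixed thick part) to convert between directions, and verify that the parameter gaps on $A$ and on $B$ produced by ``$d_A(B,C)>M$'' and ``$d_B(A,C)>M$'' really exceed the thresholds $s$ of Lemma \ref{tree_like_prop2} and Corollary \ref{tree_cor}. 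All constants entering $M$ are the thresholds and additive errors of Lemma \ref{tree_like_prop2} and Corollary \ref{tree_cor}, the bound $k$ of Corollary \ref{proj_of_geod}, and $D_{\ref{almost_sym}}(\theta)$, so $M$ depends only on these.
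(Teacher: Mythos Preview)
Your argument is correct and follows essentially the same contradiction scheme as the paper's own proof: assume two of the projection-distances exceed $M$, apply the tree-like estimates (Corollary \ref{tree_cor} and Lemma \ref{tree_like_prop2}) to two well-chosen pairs of points, and combine the resulting inequalities to bound $M$. The only cosmetic differences are that you anchor the argument at a point $c_0\in C$ and compare $d(c_0,A)$ with $d(c_0,B)$, whereas the paper anchors at points $y\in A$, $q\in B$ and compares $d(y,q)$ with $d(y,p_C(y))$; and you are somewhat more explicit than the paper about invoking Theorem \ref{almost_sym} to pass between the two directions of the asymmetric metric when turning ``$d_A(B,C)>M$'' into a lower bound on the directed parameter gap.
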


\begin{proof}
Let $s_A, c_A,s_B, c_B,s_C, c_C$ be the constants from lemma
\ref{tree_like_prop2} applied to any of the geodesics $A,B,C$
respectively. Let $b \geq k_{\ref{proj_of_geod}}$ the constant from
Corollary \ref{proj_of_geod} applied to any two of the three
geodesics. Let $M > \max \{s_A, c_A,s_B, c_B,s_C, c_C \} +2b$. We claim that if $d_B(A,C)> M$ then 
$d_C(A,B)<M$. Assume by way of contradiction that both are greater
than $M$. Let $y \in A$ and $q \in B$ such that $d(y,q) =
d_{\text{Haus}}(A,B)$. Let $z = p_C(y) \sim p_C(A)$, $p = p_B(z) \sim
p_B(C)$ and $x = p_C(q) \sim p_C(B)$ (see Figure \ref{BehInFig}).

\begin{figure}[ht]
\begin{center}
\input{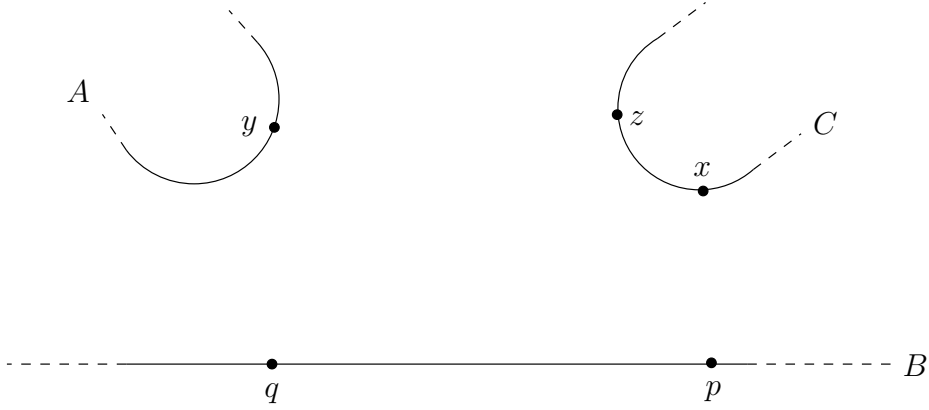}
\caption{\label{BehInFig}If $d(p,q)>M$ then $d(x,z)<M$.}
\end{center}
\end{figure}

Because $d(p_C(y),p_C(q)) = d(z,x)> M - 2b  > s_C$:
\[ d(y,q) > d(y,z) + d(z,x) - c_C > d(y,z) + M -2b - c_C \]
Since $d(p_B(y),p_B(z)) = d(q,p) > M - 2b > s_B $ we have
\[ d(y,z) > d(y,q) + d(q,p) - c_B > d(y,q) + M -2b - c_B \]
Therefore
\[ d(y,q) > d(y,q) + 2M - c_C - c_B -4b \] which
implies $2M < c_C + c_B +4b$ which is a contradiction.
\end{proof}

\begin{theorem} 
Let $\phi_1, \dots , \phi_k$ be fully irreducible outer automorphisms such that no two have common powers and $f_1, \dots , f_k$ their respective train track representatives with axes $A_1, \dots , A_k$. Let $\mathcal{F}$ be the set of  translates of $A_1, \dots , A_k$ under the action of $\out$. Then there exists a constant $M>0$ such that for any $B, C, D \in \mathcal{F}$ then
\[ d_A(B,C) > M \implies d_B(A,C)< M \]
\end{theorem}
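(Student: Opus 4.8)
The plan is to recognize this theorem as the $\out$-equivariant globalization of the preceding Lemma. There, the constant $M$ was assembled from the constants $s_{\ref{tree_like_prop2}},c_{\ref{tree_like_prop2}}$ of Lemma \ref{tree_like_prop2} applied to each of the three axes and from $k_{\ref{proj_of_geod}}$ of Corollary \ref{proj_of_geod} applied to pairs of them; once we check that these take only finitely many values over $\mathcal{F}$, taking maxima produces a single $M$ that works for every triple drawn from $\mathcal{F}$. So the theorem follows from the Lemma together with a uniformization argument, with no new geometric input.

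The key observation is that $\out$ acts on $\os$ by isometries and that all of the constructions and estimates of Sections 3--5 are $\out$-equivariant; hence every constant attached to an axis $L$ coincides with the constant attached to the translate $L\cdot\psi$ for any $\psi\in\out$. Since $\mathcal{F}$ is the $\out$-orbit of the finite family $A_1=\LL_{f_1},\dots,A_k=\LL_{f_k}$, each such constant therefore takes at most $k$ values over all of $\mathcal{F}$, and we may take maxima. Concretely I would record three uniform bounds. (i) The constants $s_{\ref{tree_like_prop2}},c_{\ref{tree_like_prop2}}$ attached to an $L\in\mathcal{F}$ depend only on the translation class of the automorphism defining $L$, so they take at most $k$ values; let $s_0,c_0$ be their maxima. (ii) Each $A_i$ is $\phi_i$-periodic, hence lies in $\os(\theta_i)$ for some $\theta_i>0$, and so do all its translates; with $\theta=\min_i\theta_i>0$ the whole of $\mathcal{F}$ lies in $\os(\theta)$, so $D_{\ref{almost_sym}}(\theta)$ is a single constant valid throughout $\mathcal{F}$. (iii) The translation lengths $\tr(f_i)$ are finite in number, so $\max\{\tr(f),\tr(g)\}$ over axes in $\mathcal{F}$ is bounded by some $T_0$. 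Feeding (i)--(iii) into Lemma \ref{far_proj}, Lemma \ref{nbhd_not_contain_ray} and Corollary \ref{proj_of_geod}, the constant $k_{\ref{proj_of_geod}}$ becomes a single number $b_0$ valid for any pair of non-parallel axes from $\mathcal{F}$.

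Finally, given three pairwise distinct axes $A,B,C\in\mathcal{F}$, the fully irreducible automorphisms defining them are pairwise without common powers: two fully irreducible automorphisms with a common power share all their attracting and repelling laminations, hence (after the choices made in defining the folding-line axes) the same axis, so distinct elements of $\mathcal{F}$ come from independent automorphisms, and no two of $A,B,C$ are parallel. Thus the hypotheses of the previous Lemma hold for the triple $A,B,C$, and with the uniform inputs $s_0,c_0,b_0$ the same argument yields that at most one of $d_A(B,C),d_B(A,C),d_C(A,B)$ exceeds $M:=\max\{s_0,c_0\}+2b_0$; in particular $d_A(B,C)>M$ forces $d_B(A,C)<M$. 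I expect the only real work to be items (i)--(iii): verifying that none of the constants produced in Lemmas \ref{tree_like_prop2}, \ref{nbhd_not_contain_ray} and \ref{far_proj} secretly depends on the chosen representative $(G_i,f_i)$ rather than on $\phi_i$ up to the $\out$-action, so that passing from ``one axis at a time'' to ``the whole orbit'' costs nothing beyond a maximum over the finite list $\phi_1,\dots,\phi_k$. There is also a small bookkeeping point — the statement must be read for pairwise distinct $A,B,C$, since $d_A(A,C)$ is infinite — which I would note explicitly.
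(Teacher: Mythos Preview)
Your proposal is correct and is exactly the intended argument: the paper gives no separate proof of this theorem, leaving it as an immediate consequence of the preceding Lemma once one observes that all the constants involved are $\out$-equivariant and hence assume only finitely many values over the orbit $\mathcal{F}$ of the finite list $A_1,\dots,A_k$. Your items (i)--(iii) spell out precisely this uniformization, and your final choice $M=\max\{s_0,c_0\}+2b_0$ matches the form of the constant in the Lemma.

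One small point worth tightening: your claim that distinct elements of $\mathcal{F}$ necessarily come from automorphisms without common powers is not quite automatic. Two translates $A_i\cdot\psi_1$ and $A_i\cdot\psi_2$ correspond to the conjugates $\psi_1^{-1}\phi_i\psi_1$ and $\psi_2^{-1}\phi_i\psi_2$, and these can share a power when $\psi_1\psi_2^{-1}$ lies in the (virtually cyclic) commensurator of $\langle\phi_i\rangle$; in that case, however, the two axes are parallel (bounded Hausdorff distance), so $d_A(B,C)$ is already bounded and there is nothing to prove. It would be cleanest to phrase the conclusion for triples $A,B,C\in\mathcal{F}$ that are pairwise non-parallel, which is the implicit hypothesis under which the Lemma applies; as you note, the statement in the paper (with its $B,C,D$/$A,B,C$ typo) should be read this way.
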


\bibliographystyle{siam}
\bibliography{ref}

\begin{thebibliography}{10}

\bibitem{AKThesis}
{\sc Y.~Algom-Kfir}, {\em The Lipschitz Metric on Outer Space}, PhD thesis, The
  University of Utah, 2010.

\bibitem{AKB}
{\sc Y.~Algom-Kfir and M.~Bestvina}, {\em Asymmetry of outer space}.
\newblock arXiv:0910.5408v1 [math.GR].

\bibitem{Beh}
{\sc J.~A. Behrstock}, {\em Asymptotic geometry of the mapping class group and
  {T}eichm\"uller space}, Geom. Topol., 10 (2006), pp.~1523--1578 (electronic).

\bibitem{BF}
{\sc M.~Bestvina and M.~Feighn}, {\em Outer limts}.
\newblock \url{http://andromeda.rutgers.edu/~feighn/research.html}, 1994.
\newblock Preprint.

\bibitem{BFH}
{\sc M.~Bestvina, M.~Feighn, and M.~Handel}, {\em Laminations, trees, and
  irreducible automorphisms of free groups}, Geom. Funct. Anal., 7 (1997),
  pp.~215--244.

\bibitem{BFH00}
{\sc M.~Bestvina, M.~Feighn, and M.~Handel}, {\em The {T}its alternative for
  {${\rm Out}(F\sb n)$}. {I}. {D}ynamics of exponentially-growing
  automorphisms}, Ann. of Math. (2), 151 (2000), pp.~517--623.

\bibitem{BFH05}
\leavevmode\vrule height 2pt depth -1.6pt width 23pt, {\em The {T}its
  alternative for {${\rm Out}(F\sb n)$}. {II}. {A} {K}olchin type theorem},
  Ann. of Math. (2), 161 (2005), pp.~1--59.

\bibitem{BFu}
{\sc M.~Bestvina and K.~Fujiwara}, {\em A characterization of higher rank
  symmetric spaces via bounded cohomology}.
\newblock arXiv:math/0702274v3 math.GR.

\bibitem{BH}
{\sc M.~Bestvina and M.~Handel}, {\em Train tracks and automorphisms of free
  groups}, Ann. of Math. (2), 135 (1992), pp.~1--51.

\bibitem{BrHa}
{\sc M.~R. Bridson and A.~Haefliger}, {\em Metric spaces of non-positive
  curvature}, vol.~319 of Grundlehren der Mathematischen Wissenschaften
  [Fundamental Principles of Mathematical Sciences], Springer-Verlag, Berlin,
  1999.

\bibitem{ChR}
{\sc Y.-E. Choi and K.~Rafi}, {\em Comparison between {T}eichm\"uller and
  {L}ipschitz metrics}, J. Lond. Math. Soc. (2), 76 (2007), pp.~739--756.

\bibitem{CohL}
{\sc M.~M. Cohen and M.~Lustig}, {\em Very small group actions on {${\bf
  R}$}-trees and {D}ehn twist automorphisms}, Topology, 34 (1995),
  pp.~575--617.

\bibitem{C}
{\sc D.~Cooper}, {\em Automorphisms of free groups have finitely generated
  fixed point sets}, J. Algebra, 111 (1987), pp.~453--356.

\bibitem{CM}
{\sc M.~Culler and J.~W. Morgan}, {\em Group actions on {${\bf R}$}-trees},
  Proc. London Math. Soc. (3), 55 (1987), pp.~571--604.

\bibitem{CV}
{\sc M.~Culler and K.~Vogtmann}, {\em Moduli of graphs and automorphisms of
  free groups}, Invent. Math., 84 (1986), pp.~91--119.

\bibitem{DMS}
{\sc C.~Dru{\c{t}}u, S.~Mozes, and M.~Sapir}, {\em Divergence in lattices in
  semisimple lie groups and graphs of groups}.
\newblock arXiv:0801.4141 to appear in Trans. AMS.

\bibitem{DS}
{\sc C.~Dru{\c{t}}u and M.~Sapir}, {\em Tree-graded spaces and asymptotic cones
  of groups}, Topology, 44 (2005), pp.~959--1058.
\newblock With an appendix by Denis Osin and Sapir.

\bibitem{DR}
{\sc M.~Duchin and K.~Rafi}, {\em Divergence of geodesics in
  {T}eich{\"{m}}uller space and the mapping class group}.
\newblock arXiv:math/0611359v2 math.GT.

\bibitem{FM}
{\sc S.~Francaviglia and A.~Martino}, {\em Metric properties of outer space}.
\newblock arXiv:0803.0640v2 math.GR.

\bibitem{HM}
{\sc M.~Handel and L.~Mosher}, {\em Axes in outer space}.
\newblock arXiv:math/0605355v2 math.GR.

\bibitem{HMpara}
\leavevmode\vrule height 2pt depth -1.6pt width 23pt, {\em Parageometric outer
  automorphisms of free groups}, Trans. Amer. Math. Soc., 359 (2007),
  pp.~3153--3183 (electronic).

\bibitem{LL}
{\sc G.~Levitt and M.~Lustig}, {\em Irreducible automorphisms of {$F\sb n$}
  have north-south dynamics on compactified outer space}, J. Inst. Math.
  Jussieu, 2 (2003), pp.~59--72.

\bibitem{M}
{\sc R.~Martin}, {\em Non-uniquely ergodic foliations of thin type, measured
  currents, and automorphisms of free groups}, PhD thesis, UCLA, 1995.

\bibitem{Ma}
{\sc H.~Masur}, {\em On a class of geodesics in {T}eichm\"uller space}, Ann. of
  Math. (2), 102 (1975), pp.~205--221.

\bibitem{MaW}
{\sc H.~A. Masur and M.~Wolf}, {\em Teichm\"uller space is not {G}romov
  hyperbolic}, Ann. Acad. Sci. Fenn. Ser. A I Math., 20 (1995), pp.~259--267.

\bibitem{Mi}
{\sc Y.~N. Minsky}, {\em Quasi-projections in {T}eichm\"uller space}, J. Reine
  Angew. Math., 473 (1996), pp.~121--136.

\bibitem{P}
{\sc F.~Paulin}, {\em The {G}romov topology on {${\bf R}$}-trees}, Topology
  Appl., 32 (1989), pp.~197--221.

\bibitem{T}
{\sc W.~Thurston}, {\em Minimal stretch maps between hyperbolic surfaces}.
\newblock arXiv:math.GT/9801039 math.GT.

\bibitem{W}
{\sc J.~H.~C. Whitehead}, {\em On certain sets of elements in a free group},
  Proc. London Math. Soc., 41 (1936), pp.~48--56.

\end{thebibliography}

\end{document}